
\documentclass[12pt,reqno]{amsart}%
\usepackage{amsmath,amsthm,amscd,amsthm,upref,indentfirst}
\usepackage{amsfonts,mathrsfs}
\usepackage{amssymb,amsbsy,bm}
\usepackage{graphicx}
\usepackage{amsmath}
\usepackage{amsfonts}
\usepackage{amssymb}%
\usepackage{hyperref}

\usepackage[all]{xy}
\xyoption{tips}
\SelectTips {xy} {12}
\usepackage[UglyObsolete,small,nohug,heads=LaTeX]{diagrams}
\diagramstyle[labelstyle=\scriptstyle]
\usepackage[usenames]{color}
\theoremstyle{plain}
\newtheorem{theorem}{Theorem}
\newtheorem{assertion}[theorem]{Assertion}
\newtheorem{lemma}[theorem]{Lemma}
\newtheorem{proposition}[theorem]{Proposition}

\theoremstyle{definition}
\newtheorem{definition}[theorem]{Definition}

\newtheorem{conjecture}[theorem]{Conjecture}
\newtheorem{corollary}[theorem]{Corollary}

\theoremstyle{remark}
\newtheorem{remark}[theorem]{Remark}

\newtheorem{example}[theorem]{Example}
\numberwithin{equation}{section}
\numberwithin{theorem}{section}
\renewcommand{\mathfrak}{\textsc}
\renewcommand{\mathbf}{\bm}
\renewcommand{\textit}{\textsf}
\normalfont\upshape
\tolerance=9000 \hbadness=9000
\setlength{\textwidth}{17cm}
\setlength{\textheight}{24cm}
\setlength{\oddsidemargin}{-0.2cm}
\setlength{\evensidemargin}{-0.2cm}
\setlength{\topmargin}{-0.5cm}

\numberwithin{equation}{section}
\numberwithin{theorem}{section}

\usepackage[normalem]{ulem}
\usepackage[square,comma]{natbib}

\tolerance=9000 \hbadness=9000 
\usepackage{multirow}
\usepackage{lscape}

\author{Steven Duplij} 
\date{\textbf{March 29, 2017}}

\address{\noindent Mathematisches Institute\\
Universit\"at M\"unster\newline
\mbox{} \hskip 10pt Einsteinstr. 62\\
D-48149 M\"unster,
Deutschland}
\email{duplijs@math.uni-muenster.de, sduplij@gmail.com}
\urladdr{http://homepages.spa.umn.edu/\~{}duplij}
\title[Arity shape of polyadic algebraic structures 
]{\textbf{Arity shape of polyadic algebraic structures
}
}

\subjclass[2010]{11D41, 11R04, 11R06, 17A42, 20N15, 47A05, 47L30, 47L70, 47L80}
\bibdata{sde.bib,quant.bib,cstar.bib,sinbooks-eng.bib,litold.bib,nary.bib,blg.bib,comp.bib}
\bibstyle{chicago1cc}

\begin{document}
\begin{abstract}
\noindent Concrete two-set (module-like and algebra-like) 
algebraic structures are investigated from the viewpoint 
that the initial arities of all operations are arbitrary. 
Relations between operations arising from the structure 
definitions, however, lead to restrictions which determine their 
possible arity shapes and lead us to the partial arity freedom principle.
In this manner, polyadic vector spaces and algebras, 
dual vector spaces, direct sums, tensor products and 
inner pairing spaces are reconsidered. 
As one application, elements of polyadic operator theory 
are outlined: multistars and polyadic analogs of adjoints, 
operator norms, isometries and projections are introduced, as well      
as polyadic $C^{*}$-algebras, Toeplitz algebras and 
Cuntz algebras represented by polyadic operators. 
Another application is connected with number theory, 
and it is shown that congruence classes 
are polyadic rings of a special kind. Polyadic 
numbers are introduced (see Definition 6.16), and  
Diophantine equations over these 
polyadic rings are then considered. 
Polyadic analogs of the Lander-Parkin-Selfridge 
conjecture and Fermat's last theorem are formulated. 
For nonderived polyadic ring operations (on polyadic numbers) neither of these statements holds, 
and counterexamples are given. Finally, a procedure for obtaining new 
solutions to the equal sums of like powers equation over 
polyadic rings by applying Frolov's theorem to 
the Tarry-Escott problem is presented.
\end{abstract}
\maketitle
\thispagestyle{empty}
\vskip-2.3cm
\begin{small}
\textsc{\tableofcontents}
\end{small}

\mbox{}

\bigskip

\section*{Introduction}

The study of polyadic (higher arity) algebraic structures has a two-century
long history, starting with works by Cayley, Sylvester, Kasner, Pr\"{u}fer,
D\"{o}rnte, Lehmer, Post, etc. They took a single set, closed under one (main)
binary operation having special properties (the so called group-like
structure), and \textquotedblleft generalized\textquotedblright\ it by
increasing the arity of that operation, which can then be called a
\textit{polyadic operation} and the corresponding algebraic structure
\textit{polyadic} as well\footnote{We use the term \textquotedblleft
polyadic\textquotedblright\ in this sense only, while there are other uses in
the literature (see, e.g., \cite{halmos}).}. An \textquotedblleft abstract
way\textquotedblright\ to study polyadic algebraic structures is via the use
of universal algebras defined as sets with different axioms (equational laws)
for polyadic operations \cite{cohn,grat2,bergman2}. However, in this language
some important algebraic structures cannot be described, e.g. ordered groups,
fields, etc. \cite{den/wis}. Therefore, another \textquotedblleft concrete
approach\textquotedblright\ is to study examples of binary algebraic
structures and then to \textquotedblleft polyadize\textquotedblright\ them
properly. This initiated the development of a corresponding theory of $n$-ary
quasigroups \cite{belousov}, $n$-ary semigroups \cite{mon/sio1,zup67} and
$n$-ary groups \cite{galmak1,rusakov1} (for a more recent review, see, e.g.,
\cite{dup2012} and a comprehensive list of references therein). The binary
algebraic structures with two operations (addition and multiplication) on one
set (the so-called ring-like structures) were later on generalized to $\left(
m,n\right)  $-rings \cite{cel,cro2,lee/but} and $\left(  m,n\right)  $-fields
\cite{ian/pop97}, while these were investigated mostly in a more restrictive
manner by considering particular cases: ternary rings (or $\left(  2,3\right)
$-rings) \cite{lis}, $\left(  m,2\right)  $-rings \cite{boc65,pop/pop02}, as
well as $\left(  3,2\right)  $-fields \cite{dup/wer}.

In the case of one set, speaking informally, the \textquotedblleft
polyadization\textquotedblright\ of two operations' \textquotedblleft
interaction\textquotedblright\ is straightforward, giving only polyadic
distributivity which does not connect or restrict their arities. However, when
the number of sets becomes greater than one, the \textquotedblleft
polyadization\textquotedblright\ turns out to be non-trivial, leading to
special relations between the operation arities, and also introduces additional (to
arities) parameters, allowing us to classify them. We call a selection
of such relations an \textit{arity shape} and formulate the \textit{arity
partial freedom principle} that not all arities of the operations that arise during
\textquotedblleft polyadization\textquotedblright\ of binary operations are possible.

In this paper we consider two-set algebraic structures in the
\textquotedblleft concrete way\textquotedblright\ and provide the consequent
\textquotedblleft polyadization\textquotedblright\ of binary operations on
them for the so-called module-like structures (vector spaces) and algebra-like
structures (algebras and inner product spaces). The \textquotedblleft
polyadization\textquotedblright\ of binary scalar multiplication is defined
in terms of the multiactions introduced in \cite{dup2012}, having special arity
shapes parametrized by the number of intact elements ($\ell
_{\operatorname*{id}}$) in the corresponding multiactions. We then
\textquotedblleft polyadize\textquotedblright\ related constructions, such as
dual vector spaces and direct sums, and also tensor products, and show that,
as opposed to the binary case, they can be implemented in spaces of different
arity signatures. The \textquotedblleft polyadization\textquotedblright\ of
inner product spaces and related norms gives additional arity shapes and
restrictions. In the resulting \textsc{Table \ref{T}} we present the arity
signatures and shapes of the polyadic algebraic structures under consideration.

In the application part we note some starting points for polyadic operator
theory by introducing multistars and polyadic analogs of adjoints, operator
norms, isometries and projections. It is proved (\textbf{Theorem
\ref{theo-sym}}) that, if the polyadic inner pairing (the analog of the inner
product) is symmetric, then all multistars coincide and all polyadic operators
are self-adjoint (in contrast to the binary case). The polyadic analogs of
$C^{\ast}$-algebras, Toeplitz algebras and Cuntz algebras are presented in
terms of the polyadic operators introduced here, and a ternary example is given.

Another application is connected with number theory: we show that the internal
structure of the congruence classes is described by a polyadic ring having a
special arity signature (\textsc{Table \ref{T1}}), and these we call polyadic
integers (or numbers) $\mathbb{Z}_{\left(  m,n\right)  }$ (\textbf{Definition
\ref{def-polint}}). They are classified by polyadic shape invariants, and the
relations between them which give the same arity signature are established. Also,
the limiting cases are analyzed, and it is shown that in one such case the
polyadic rings can be embedded into polyadic fields with binary
multiplication, which leads to the so-called polyadic rational numbers
\cite{cro/tim}. We then consider Diophantine equations over these polyadic
rings in a straightforward manner: we change only the arities of the operations
(\textquotedblleft additions\textquotedblright\ and \textquotedblleft
multiplications\textquotedblright), but save their mutual \textquotedblleft
interaction\textquotedblright. In this way we try to \textquotedblleft
polyadize\textquotedblright\ the equal sums of like powers equation and
formulate polyadic analogs of the Lander-Parkin-Selfridge conjecture and of
Fermat's last theorem \cite{lan/par/sel}. It is shown, that in the simplest
case, when the polyadic \textquotedblleft addition\textquotedblright\ and
\textquotedblleft multiplication\textquotedblright\ are nonderived (e.g.,
for polyadic numbers), neither
conjecture is valid, and counterexamples are presented. Finally, we apply
Frolov's theorem to the Tarry-Escott problem \cite{dor/bro,ngu2016} over
polyadic rings to obtain new solutions to the equal sums of like powers
equation for fixed congruence classes.

\section{One set polyadic \textquotedblleft linear\textquotedblright%
\ structures}

We use concise notations from our previous work on polyadic structures
\cite{dup2012,dup2016}. Take a non-empty set $A$, then $n$\textit{-tuple} (or
\textit{polyad}) consisting of the elements $\left(  a_{1},\ldots
,a_{n}\right)  $, $a_{i}\in A$, is denoted by bold letter $\left(
\mathbf{a}\right)  $ taking it values in the Cartesian product $A^{\times n}$
. If the number of elements in the $n$-tuple is important, we denote it
$\left(  \mathbf{a}^{\left(  n\right)  }\right)  $, and an
$n$-tuple with equal elements is denoted by $\left(  a^{n}\right)  $. On the
Cartesian product $A^{\times n}$ one can define a polyadic operation
$\mathbf{\mu}_{n}:A^{\times n}\rightarrow A$, and use the notation
$\mathbf{\mu}_{n}\left[  \mathbf{a}\right]  $. A \textit{polyadic structure}
$\mathcal{A}$ is a set $A$ which is closed under polyadic operations, and a
\textit{polyadic signature} is the selection of their arities. For formal
definitions, see, e.g., \cite{cohn}.

\subsection{Polyadic distributivity}

Let us consider a polyadic structure with two operations on the same set $A$:
the \textquotedblleft chief\textquotedblright\ (\textit{multiplication})
$n$-ary operation $\mathbf{\mu}_{n}:A^{n}\rightarrow A$ and the additional
$m$-ary operation $\mathbf{\nu}_{m}:A^{m}\rightarrow A$, that is $\left\langle
A\mid\mathbf{\mu}_{n},\mathbf{\nu}_{m}\right\rangle $. If there are no
relations between $\mathbf{\mu}_{n}$ and $\mathbf{\nu}_{m}$, then nothing new,
as compared with the polyadic structures having a single operation
$\left\langle A\mid\mathbf{\mu}_{n}\right\rangle $ or $\left\langle
A\mid\mathbf{\nu}_{m}\right\rangle $, can be said. Informally, the
\textquotedblleft interaction\textquotedblright\ between operations can be
described using the important relation of distributivity (an analog of
$a\cdot\left(  b+c\right)  =a\cdot b+a\cdot c$, $a,b,c\in A$ in the binary case).

\begin{definition}
\label{def-dis}The \textit{polyadic distributivity} for the operations
$\mathbf{\mu}_{n}$ and $\mathbf{\nu}_{m}$ (no additional properties are
implied for now) consists of $n$ relations%
\begin{align}
&  \mathbf{\mu}_{n}\left[  \mathbf{\nu}_{m}\left[  a_{1},\ldots a_{m}\right]
,b_{2},b_{3},\ldots b_{n}\right] \nonumber\\
&  =\mathbf{\nu}_{m}\left[  \mathbf{\mu}_{n}\left[  a_{1},b_{2},b_{3},\ldots
b_{n}\right]  ,\mathbf{\mu}_{n}\left[  a_{2},b_{2},b_{3},\ldots b_{n}\right]
,\ldots\mathbf{\mu}_{n}\left[  a_{m},b_{2},b_{3},\ldots b_{n}\right]  \right]
\label{dis1}\\
&  \mathbf{\mu}_{n}\left[  b_{1},\mathbf{\nu}_{m}\left[  a_{1},\ldots
a_{m}\right]  ,b_{3},\ldots b_{n}\right] \nonumber\\
&  =\mathbf{\nu}_{m}\left[  \mathbf{\mu}_{n}\left[  b_{1},a_{1},b_{3},\ldots
b_{n}\right]  ,\mathbf{\mu}_{n}\left[  b_{1},a_{2},b_{3},\ldots b_{n}\right]
,\ldots\mathbf{\mu}_{n}\left[  b_{1},a_{m},b_{3},\ldots b_{n}\right]  \right]
\label{dis2}\\
&  \vdots\nonumber\\
&  \mathbf{\mu}_{n}\left[  b_{1},b_{2},\ldots b_{n-1},\mathbf{\nu}_{m}\left[
a_{1},\ldots a_{m}\right]  \right] \nonumber\\
&  =\mathbf{\nu}_{m}\left[  \mathbf{\mu}_{n}\left[  b_{1},b_{2},\ldots
b_{n-1},a_{1}\right]  ,\mathbf{\mu}_{n}\left[  b_{1},b_{2},\ldots
b_{n-1},a_{2}\right]  ,\ldots\mathbf{\mu}_{n}\left[  b_{1},b_{2},\ldots
b_{n-1},a_{m}\right]  \right]  , \label{dis3}%
\end{align}
where $a_{i},b_{j}\in A$.
\end{definition}

It is seen that the operations $\mathbf{\mu}_{n}$ and $\mathbf{\nu}_{m}$ enter
into (\ref{dis1})-(\ref{dis3}) in a non-symmetric way, which allows us to
distinguish them: one of them ($\mathbf{\mu}_{n}$, the $n$-ary multiplication)
\textquotedblleft distributes\textquotedblright\ over the other one
$\mathbf{\nu}_{m}$, and therefore $\mathbf{\nu}_{m}$ is called the
\textit{addition}. If only some of the relations (\ref{dis1})-(\ref{dis3})
hold, then such distributivity is \textit{partial} (the analog of left and
right distributivity in the binary case). Obviously, the operations
$\mathbf{\mu}_{n}$ and $\mathbf{\nu}_{m}$ need have nothing to do with
ordinary multiplication (in the binary case denoted by $\mathbf{\mu}%
_{2}\Longrightarrow\left(  \cdot\right)  $) and addition (in the binary case
denoted by $\nu_{2}\Longrightarrow\left(  +\right)  $), as in the example below.

\begin{example}
Let $A=\mathbb{R}$, $n=2$, $m=3$, and $\mathbf{\mu}_{2}\left[  b_{1}%
,b_{2}\right]  =b_{1}^{b_{2}}$, $\mathbf{\nu}_{3}\left[  a_{1},a_{2}%
,a_{3}\right]  =a_{1}a_{2}a_{3}$ (product in $\mathbb{R}$). The partial
distributivity now is $\left(  a_{1}a_{2}a_{3}\right)  ^{b_{2}}=a_{1}^{b_{2}%
}a_{2}^{b_{2}}a_{3}^{b_{2}}$ (only the first relation (\ref{dis1}) holds).
\end{example}

\subsection{Polyadic rings and fields}

Here we briefly remind the reader of one-set (ring-like) polyadic structures
(informally). Let both operations $\mathbf{\mu}_{n}$ and $\mathbf{\nu}_{m}$ be
(totally) \textit{associative}, which (in our definition \cite{dup2012}) means
independence of the composition of two operations under placement of the
internal operations (there are $n$ and $m$ such placements and therefore
$\left(  n+m\right)  $ corresponding relations)%
\begin{align}
\mathbf{\mu}_{n}\left[  \mathbf{a},\mathbf{\mu}_{n}\left[  \mathbf{b}\right]
,\mathbf{c}\right]   &  =invariant,\label{as1}\\
\mathbf{\nu}_{m}\left[  \mathbf{d},\mathbf{\nu}_{m}\left[  \mathbf{e}\right]
,\mathbf{f}\right]   &  =invariant, \label{as2}%
\end{align}
where the polyads $\mathbf{a}$, $\mathbf{b}$, $\mathbf{c}$, $\mathbf{d}$,
$\mathbf{e}$, $\mathbf{f}$ have corresponding length, and then both
$\left\langle A\mid\mathbf{\mu}_{n}\mid assoc\right\rangle $ and $\left\langle
A\mid\mathbf{\nu}_{m}\mid assoc\right\rangle $ are \textit{polyadic
semigroups} $\mathcal{S}_{n}$ and $\mathcal{S}_{m}$. A \textit{commutative
semigroup} $\left\langle A\mid\mathbf{\nu}_{m}\mid assoc,comm\right\rangle $
is defined by $\nu_{m}\left[  \mathbf{a}\right]  =\nu_{m}\left[  \sigma
\circ\mathbf{a}\right]  $, for all $\sigma\in S_{n}$, where $S_{n}$ is the
symmetry group. If the equation $\nu_{m}\left[  \mathbf{a},x,\mathbf{b}%
\right]  =c$ is solvable for any place of $x$, then $\left\langle
A\mid\mathbf{\nu}_{m}\mid assoc,solv\right\rangle $ is a\ \textit{polyadic
group }$\mathcal{G}_{m}$, and such $x=\tilde{c}$ is called a (additive)
\textit{querelement} for $c$, which defines the (additive) \textit{unary
queroperation} $\tilde{\nu}_{1}$ by $\tilde{\nu}_{1}\left[  c\right]
=\tilde{c}$.

\begin{definition}
\label{def-ring}A \textit{polyadic }$\left(  m,n\right)  $-\textit{ring}
$\mathcal{R}_{m,n}$ is a set $A$ with two operations $\mathbf{\mu}_{n}%
:A^{n}\rightarrow A$ and $\mathbf{\nu}_{m}:A^{m}\rightarrow A$, such that: 1)
they are distributive (\ref{dis1})-(\ref{dis3}); 2) $\left\langle A\mid\mu
_{n}\mid assoc\right\rangle $ is a polyadic semigroup; 3) $\left\langle
A\mid\nu_{m}\mid assoc,comm,solv\right\rangle $ is a commutative polyadic group.
\end{definition}

It is obvious that a $(2,2)$-ring $\mathcal{R}_{2,2}$ is an ordinary (binary)
ring. Polyadic rings have much richer structure and can have unusual
properties \cite{cel,cro2,cup,lee/but}. If the multiplicative semigroup
$\left\langle A\mid\mathbf{\mu}_{n}\mid assoc\right\rangle $ is commutative,
$\mathbf{\mu}_{n}\left[  \mathbf{a}\right]  =\mathbf{\mu}_{n}\left[
\sigma\circ\mathbf{a}\right]  $, for all $\sigma\in S_{n}$, then
$\mathcal{R}_{m,n}$ is called a \textit{commutative polyadic ring}, and if it
contains the identity, then $\mathcal{R}_{m,n}$ is a (polyadic) $\left(
m,n\right)  $-\textit{semiring}. If the distributivity is only partial, then
$\mathcal{R}_{m,n}$ is called a \textit{polyadic near-ring}.

Introduce in $\mathcal{R}_{m,n}$ additive and multiplicative idempotent
elements by $\mathbf{\nu}_{m}\left[  a^{m}\right]  =a$ and $\mathbf{\mu}%
_{n}\left[  b^{n}\right]  =b$, respectively. A \textit{zero} $z$ of
$\mathcal{R}_{m,n}$ is defined by $\mathbf{\mu}_{n}\left[  z,\mathbf{a}%
\right]  =z$ for any $\mathbf{a}\in A^{n-1}$, where $z$ can be on any place.
Evidently, a zero (if it exists) is a multiplicative idempotent and is unique,
and, if a polyadic ring has an additive idempotent, it is a zero
\cite{lee/but}. Due to the distributivity (\ref{dis1})-(\ref{dis3}), there can
be at most one zero in a polyadic ring. If a zero $z$ exists, denote $A^{\ast
}=A\setminus\left\{  z\right\}  $, and observe that (in distinction to binary
rings) $\left\langle A^{\ast}\mid\mathbf{\mu}_{n}\mid assoc\right\rangle $ is
not a polyadic group, in general. In the case where $\left\langle A^{\ast}%
\mid\mathbf{\mu}_{n}\mid assoc\right\rangle $ is a commutative $n$-ary group,
such a polyadic ring is called a (\textit{polyadic}) $\left(  m,n\right)
$-\textit{field} and $\mathbb{K}_{m,n}$ (\textquotedblleft polyadic
scalars\textquotedblright) (see \cite{lee/but,ian/pop97}).

A multiplicative \textit{identity} $e$ in $\mathcal{R}_{m,n}$ is a
distinguished element $e$ such that%
\begin{equation}
\mu_{n}\left[  a,\left(  e^{n-1}\right)  \right]  =a, \label{e}%
\end{equation}
for any $a\in A$ and where $a$ can be on any place. In binary rings the
identity is the only neutral element, while in polyadic rings there can exist
many \textit{neutral }$\left(  n-1\right)  $\textit{-polyads} $\mathbf{e}$
satisfying%
\begin{equation}
\mu_{n}\left[  a,\mathbf{e}\right]  =a, \label{e1}%
\end{equation}
for any $a\in A$ which can also be on any place. The neutral polyads
$\mathbf{e}$ are not determined uniquely. Obviously, the polyad $\left(
e^{n-1}\right)  $ is neutral. There exist exotic polyadic rings which have no
zero, no identity, and no additive idempotents at all (see, e.g.,
\cite{cro2}), but, if $m=2$, then a zero always exists \cite{lee/but}.

\begin{example}
\label{exam-ring}Let us consider a polyadic ring $\mathcal{R}_{3,4}$ generated
by 2 elements $a$, $b$ and the relations%
\begin{align}
\mu_{4}\left[  a^{4}\right]   &  =a,\ \ \ \ \mu_{4}\left[  a^{3},b\right]
=b,\ \ \ \ \mu_{4}\left[  a^{2},b^{2}\right]  =a,\ \ \ \ \mu_{4}\left[
a,b^{3}\right]  =b,\ \ \ \ \mu_{4}\left[  b^{4}\right]  =a,\\
\nu_{3}\left[  a^{3}\right]   &  =b,\ \ \ \ \nu_{3}\left[  a^{2},b\right]
=a,\ \ \ \text{\ }\nu_{3}\left[  a,b^{2}\right]  =b,\ \ \ \ \nu_{3}\left[
b^{3}\right]  =a,
\end{align}
which has a multiplicative idempotent $a$ only, but has no zero and no identity.
\end{example}

\begin{proposition}
In the case of polyadic structures with two operations on one set there are no
conditions between arities of operations which could follow from
distributivity (\ref{dis1})-(\ref{dis3}) or the other relations above, and
therefore they have no arity shape.
\end{proposition}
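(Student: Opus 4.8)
The plan is to establish the proposition by the only method available for a purely negative ("no constraints") statement: exhibit, for \emph{every} admissible arity pair $\left(m,n\right)$, an actual polyadic structure satisfying all the relations in question, which refutes the existence of any nontrivial relation $f\left(m,n\right)=0$ that distributivity or associativity could force. First I would carry out a purely syntactic check on Definition \ref{def-dis}: in each of the identities (\ref{dis1})--(\ref{dis3}) the left-hand side is $\mathbf{\mu}_{n}$ applied to $1+\left(n-1\right)=n$ arguments (one slot occupied by a value of $\mathbf{\nu}_{m}$, the remaining $n-1$ by the $b_{j}$), while the right-hand side is $\mathbf{\nu}_{m}$ applied to $m$ values of $\mathbf{\mu}_{n}$, each of the latter again taking $n$ arguments. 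Hence both sides are well-formed terms in the signature $\left\langle A\mid\mathbf{\mu}_{n},\mathbf{\nu}_{m}\right\rangle$ for arbitrary $m,n\geq 2$, and no arithmetic relation between $m$ and $n$ is needed even to \emph{state} polyadic distributivity.

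Next I would observe that the remaining ingredients of Definition \ref{def-ring} — total associativity (\ref{as1}) of $\mathbf{\mu}_{n}$, total associativity (\ref{as2}), commutativity and solvability of $\mathbf{\nu}_{m}$ — each involve only one of the two operations, and the auxiliary notions (zero, identity (\ref{e}), neutral polyads (\ref{e1}), additive and multiplicative idempotents) are existential statements about a single operation; even the derived fact that an additive idempotent is a zero relates the operations without constraining their arities. Thus the only conceivable source of an arity relation is the interaction encoded in distributivity, and the first step shows that this interaction is arity-blind.

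To finish, I would produce witnesses. The cleanest is the derived construction: take any binary commutative ring $\left(A,+,\cdot\right)$, e.g. $A=\mathbb{Z}$, and set $\mathbf{\nu}_{m}\left[a_{1},\ldots,a_{m}\right]=a_{1}+\cdots+a_{m}$ and $\mathbf{\mu}_{n}\left[b_{1},\ldots,b_{n}\right]=b_{1}\cdots b_{n}$. Binary associativity and commutativity, together with bracketing-independence of iterated sums and products, give total polyadic associativity and commutativity of $\mathbf{\nu}_{m}$ and $\mathbf{\mu}_{n}$; solvability of $\mathbf{\nu}_{m}\left[\mathbf{a},x,\mathbf{b}\right]=c$ follows from subtraction in $A$; and each of (\ref{dis1})--(\ref{dis3}), after expansion, collapses to the ordinary distributive law $b_{1}\cdots\left(a_{1}+\cdots+a_{m}\right)\cdots b_{n}=b_{1}\cdots a_{1}\cdots b_{n}+\cdots+b_{1}\cdots a_{m}\cdots b_{n}$. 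Hence $\left\langle A\mid\mathbf{\mu}_{n},\mathbf{\nu}_{m}\right\rangle$ is a polyadic $\left(m,n\right)$-ring for every $m,n\geq 2$ (and if one wants the barest possible witness, the one-element algebra works for all $m,n$ as well). Since every arity signature is thereby realized, distributivity and the other relations impose no condition relating the arities, so there is no arity shape. The statement is soft enough that there is no genuine obstacle; the only points demanding care are verifying that the polyadic associativity of the derived operations really is a consequence of binary associativity plus bracketing-independence, and making sure the argument is framed as \emph{sufficiency of examples across all} $\left(m,n\right)$ rather than an attempted enumeration of all consequences of the axioms.
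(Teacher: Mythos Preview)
Your proposal is correct and considerably more detailed than what the paper actually does: the paper states this proposition without proof, treating it as a direct observation from the form of the distributivity identities (\ref{dis1})--(\ref{dis3}), and simply remarks afterwards that arity constraints will arise only once two or more underlying sets enter the picture. Your syntactic check that both sides of each distributivity identity are well-formed terms for arbitrary $m,n\geq 2$, together with the explicit realization of every arity pair by the derived $\left(m,n\right)$-ring on $\mathbb{Z}$, constitutes a genuine proof where the paper offers none; the witness construction is the extra ingredient that turns the informal observation into a rigorous statement about the absence of nontrivial relations $f\left(m,n\right)=0$.
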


Such conditions will appear below, when we consider more complicated universal
algebraic structures with two or more sets with operations and relations.

\section{Two set polyadic structures}

\subsection{Polyadic vector spaces}

Let us consider a polyadic field $\mathbb{K}_{m_{K},n_{K}}=\left\langle
K\mid\mathbf{\sigma}_{m_{K}},\mathbf{\kappa}_{n_{K}}\right\rangle $
(\textquotedblleft polyadic scalars\textquotedblright), having $m_{K}$-ary
addition $\mathbf{\sigma}_{m_{K}}:K^{m_{K}}\rightarrow K$ and\ $n_{K}$-ary
multiplication $\mathbf{\kappa}_{n_{K}}:K^{n_{K}}\rightarrow K$, and the
identity $e_{K}\in K$, a neutral element with respect to multiplication
$\mathbf{\kappa}_{n_{K}}\left[  e_{K}^{n_{K}-1},\lambda\right]  =\lambda$, for
all $\lambda\in K$. In polyadic structures, one can introduce a neutral
$\left(  n_{K}-1\right)  $-polyad (\textit{identity polyad} for
\textquotedblleft scalars\textquotedblright) $\mathbf{e}_{K}\in K^{n_{K}-1}$
by%
\begin{equation}
\mathbf{\kappa}_{n_{K}}\left[  \mathbf{e}_{K},\lambda\right]  =\lambda.
\end{equation}
where $\lambda\in K$ can be on any place.

Next, take a $m_{V}$-ary commutative (abelian) group $\left\langle
\mathsf{V}\mid\mathbf{\nu}_{m_{V}}\right\rangle $, which can be treated as
\textquotedblleft polyadic vectors\textquotedblright\ with $m_{V}$-ary
addition $\mathbf{\nu}_{m_{V}}:\mathsf{V}^{m_{V}}\rightarrow\mathsf{V}$.
Define in $\left\langle \mathsf{V}\mid\mathbf{\nu}_{m_{V}}\right\rangle $ an
additive neutral element (zero) $\mathsf{z}_{V}\in\mathsf{V}$ by%
\begin{equation}
\mathbf{\nu}_{m_{V}}\left[  \mathsf{z}_{V}^{m_{V}-1},\mathsf{v}\right]
=\mathsf{v} \label{z}%
\end{equation}
for any $\mathsf{v}\in\mathsf{V}$, and a \textquotedblleft negative
vector\textquotedblright\ $\mathsf{\bar{v}}\in\mathsf{V}$ as its querelement%
\begin{equation}
\mathbf{\nu}_{m_{V}}\left[  \mathbf{a}_{V},\mathsf{\bar{v}},\mathbf{b}%
_{V}\right]  =\mathsf{v},
\end{equation}
where $\mathsf{\bar{v}}$ can be on any place in the l.h.s., and $\mathbf{a_{V}%
,b_{V}}$ are polyads in $\mathsf{V}$. Here, instead of one neutral element we
can also introduce the $\left(  m_{V}-1\right)  $-polyad $\mathsf{z}_{V}$
(which may not be unique), and so, for a \textit{zero polyad} (for
\textquotedblleft vectors\textquotedblright) we have%
\begin{equation}
\mathbf{\nu}_{m_{V}}\left[  \mathsf{z}_{V},\mathsf{v}\right]  =\mathsf{v}%
,\ \ \ \forall\mathsf{v}\in\mathsf{V,}%
\end{equation}
where $\mathsf{v}\in\mathsf{V}$ can be on any place. The \textquotedblleft
interaction\textquotedblright\ between \textquotedblleft polyadic
scalars\textquotedblright\ and \textquotedblleft polyadic
vectors\textquotedblright\ (the analog of binary multiplication by a scalar
$\lambda\mathsf{v}$) can be defined as a multiaction ($k_{\rho}$-place action)
introduced in \cite{dup2012}%
\begin{equation}
\mathbf{\rho}_{k_{\rho}}:K^{k_{\rho}}\times\mathsf{V}\longrightarrow
\mathsf{V}. \label{r}%
\end{equation}
The set of all multiactions form a $n_{\rho}$-ary semigroup $\mathcal{S}%
_{\rho}$ under composition. We can \textquotedblleft
normalize\textquotedblright\ the multiactions in a similar way, as multiplace
representations \cite{dup2012}, by (an analog of $1\mathsf{v}=\mathsf{v}$,
$\mathsf{v}\in\mathsf{V}$, $1\in K$)
\begin{equation}
\mathbf{\rho}_{k_{\rho}}\left\{  \left.
\begin{array}
[c]{c}%
e_{K}\\
\vdots\\
e_{K}%
\end{array}
\right\vert \mathsf{v}\right\}  =\mathsf{v}, \label{re}%
\end{equation}
for all $\mathsf{v}\in\mathsf{V}$, where $e_{K}$ is the identity of
$\mathbb{K}_{m_{K},n_{K}}$. In the case of an (ordinary) $1$-place (left)
action (as an external binary operation) $\mathbf{\rho}_{1}:K\times
\mathsf{V}\rightarrow\mathsf{V}$, its consistency with the polyadic field
multiplication $\mathbf{\kappa}_{n_{K}}$ under composition of the binary
operations $\mathbf{\rho}_{1}\left\{  \lambda|a\right\}  $ gives a product of
the same arity%
\begin{equation}
n_{\rho}=n_{K},
\end{equation}
that is (a polyadic analog of $\lambda\left(  \mu\mathsf{v}\right)  =\left(
\lambda\mu\right)  \mathsf{v}$, $\mathsf{v}\in\mathsf{V}$, $\lambda,\mu\in K$)%
\begin{equation}
\mathbf{\rho}_{1}\left\{  \lambda_{1}|\mathbf{\rho}_{1}\left\{  \lambda
_{2}|\ldots|\mathbf{\rho}_{1}\left\{  \lambda_{n_{K}}|\mathsf{v}\right\}
\right\}  \ldots\right\}  =\mathbf{\rho}_{1}\left\{  \mathbf{\kappa}_{n_{K}%
}\left[  \lambda_{1},\lambda_{2},\ldots\lambda_{n_{K}}\right]  |\mathsf{v}%
\right\}  ,\ \ \ \lambda_{1},\ldots,\lambda_{n}\in K,\ \mathsf{v}\in
\mathsf{V}. \label{rm}%
\end{equation}

In the general case of $k_{\rho}$-place actions, the multiplication in the
$n_{\rho}$-ary semigroup $\mathcal{S}_{\rho}$ can be defined by the
\textit{changing arity formula} \cite{dup2012} (schematically)%
\begin{equation}
\mathbf{\rho}_{k_{\rho}}\overset{n_{\rho}}{\overbrace{\left\{  \left.
\begin{array}
[c]{c}%
\lambda_{1}\\
\vdots\\
\lambda_{k_{\rho}}%
\end{array}
\right\vert \left.
\begin{array}
[c]{c}%
\ \\
\ldots\\
\
\end{array}
\right\vert \mathbf{\rho}_{k\rho}\left\{  \left.
\begin{array}
[c]{c}%
\lambda_{k_{\rho}\left(  n_{\rho}-1\right)  }\\
\vdots\\
\lambda_{k_{\rho}n_{\rho}}%
\end{array}
\right\vert \mathsf{v}\right\}  \ldots\right\}  }}=\mathbf{\rho}_{k_{\rho}%
}\left\{  \left.
\genfrac{}{}{0pt}{}{\left.
\begin{array}
[c]{c}%
\mathbf{\kappa}_{n_{K}}\left[  \lambda_{1},\ldots\lambda_{n_{K}}\right]  ,\\
\vdots\\
\mathbf{\kappa}_{n_{K}}\left[  \lambda_{n_{K}\left(  \ell_{\mu}-1\right)
},\ldots\lambda_{n_{K}\ell_{\mu}}\right]
\end{array}
\right\}  \ell_{\mu}}{\left.
\begin{array}
[c]{c}%
\lambda_{n_{K}\ell_{\mu}+1},\\
\vdots\\
\lambda_{n_{K}\ell_{\mu}+\ell_{\operatorname*{id}}}%
\end{array}
\right\}  \ell_{\operatorname*{id}}}%
\right\vert \mathsf{v}\right\}  , \label{rrk}%
\end{equation}
where $\ell_{\mu}$ and $\ell_{\operatorname*{id}}$ are both integers. The
associativity of (\ref{rrk}) in each concrete case can be achieved by applying
the \textit{associativity quiver} concept from \cite{dup2012}.

\begin{definition}
The $\ell$\textit{-shape} is a pair $\left(  \ell_{\mu},\ell
_{\operatorname*{id}}\right)  $ , where $\ell_{\mu}$ is the number of
multiplications and $\ell_{\operatorname*{id}}$ is the number of
\textit{intact elements} in the composition of operations.
\end{definition}

It follows from (\ref{rrk}),

\begin{proposition}
The arities of the polyadic field $\mathbb{K}_{m_{K},n_{K}}$, the arity
$n_{\rho}$ of the multiaction semigroup $\mathfrak{S}_{\rho}$ and the $\ell
$-shape of the composition satisfy%
\begin{align}
k_{\rho}n_{\rho}  &  =n_{K}\ell_{\mu}+\ell_{\operatorname*{id}},\label{k1}\\
k_{\rho}  &  =\ell_{\mu}+\ell_{\operatorname*{id}}. \label{kl}%
\end{align}

\end{proposition}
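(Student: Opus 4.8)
The plan is to derive both identities by a double count of the scalar arguments in the \textit{changing arity formula} (\ref{rrk}), reading off the left-hand and right-hand sides separately. First I would look at the composition on the left-hand side of (\ref{rrk}): it is an $n_{\rho}$-fold composition of the multiaction $\mathbf{\rho}_{k_{\rho}}$, and each occurrence of $\mathbf{\rho}_{k_{\rho}}$ consumes exactly $k_{\rho}$ elements of $K$ in its scalar slots. Hence the scalars entering the whole composition are $\lambda_{1},\ldots,\lambda_{k_{\rho}n_{\rho}}$, i.e. there are $k_{\rho}n_{\rho}$ of them; this is the left-hand side of (\ref{k1}).

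Next I would count the very same scalars as they are regrouped on the right-hand side of (\ref{rrk}). There they are partitioned into two disjoint families: $\ell_{\mu}$ blocks each of which is fed into one application of the $n_{K}$-ary field multiplication $\mathbf{\kappa}_{n_{K}}$, accounting for $n_{K}\ell_{\mu}$ scalars, together with a remaining block $\lambda_{n_{K}\ell_{\mu}+1},\ldots,\lambda_{n_{K}\ell_{\mu}+\ell_{\operatorname*{id}}}$ of $\ell_{\operatorname*{id}}$ scalars that pass through untouched (the intact elements, by the definition of the $\ell$-shape). Since every $\lambda_{i}$ belongs to exactly one of these two families, $k_{\rho}n_{\rho}=n_{K}\ell_{\mu}+\ell_{\operatorname*{id}}$, which is (\ref{k1}). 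For (\ref{kl}) I would instead examine the output structure on the right-hand side of (\ref{rrk}): after the rewriting it is again a single multiaction $\mathbf{\rho}_{k_{\rho}}$, so its scalar slot must once more receive exactly $k_{\rho}$ elements of $K$; but those inputs are precisely the $\ell_{\mu}$ outputs of the $\mathbf{\kappa}_{n_{K}}$-multiplications together with the $\ell_{\operatorname*{id}}$ intact scalars, two disjoint and exhaustive families, whence $k_{\rho}=\ell_{\mu}+\ell_{\operatorname*{id}}$.

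The only point requiring a little care is that (\ref{rrk}) is written only "schematically", so I would make explicit that the bracketing and the placement of the blocks are irrelevant for the count --- only the cardinalities of the $\lambda$-families matter --- and that $\ell_{\mu},\ell_{\operatorname*{id}}$ are genuine non-negative integers, which is exactly the content of the $\ell$-shape. I do not expect a real obstacle here: the compatibility of a chosen $\ell$-shape with total associativity of the multiaction semigroup $\mathcal{S}_{\rho}$ is a separate issue, handled case by case through the associativity quiver of \cite{dup2012}, and is not needed for the arity bookkeeping asserted in this Proposition; the substance, such as it is, lies there rather than in the present counting.
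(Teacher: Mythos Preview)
Your argument is correct and is exactly what the paper intends: the paper offers no detailed proof, merely stating ``It follows from (\ref{rrk}),'' and your double count of the scalar arguments is precisely the bookkeeping that unpacks this remark. There is nothing to add; your cautionary paragraph about the schematic nature of (\ref{rrk}) and the separate role of the associativity quiver is also apt and matches the paper's own separation of concerns.
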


We can exclude $\ell_{\mu}$ or $\ell_{\operatorname*{id}}$ and obtain%

\begin{equation}
n_{\rho}=n_{K}-\dfrac{n_{K}-1}{k_{\rho}}\ell_{\operatorname*{id}%
},\ \ \ n_{\rho}=\dfrac{n_{K}-1}{k_{\rho}}\ell_{\mu}+1, \label{n1}%
\end{equation}
respectively, where $\tfrac{n_{K}-1}{k_{\rho}}\ell_{\operatorname*{id}}\geq1$
and $\tfrac{n_{K}-1}{k_{\rho}}\ell_{\mu}\geq1$ are integers. The following
inequalities hold%
\begin{equation}
1\leq\ell_{\mu}\leq k_{\rho},\ \ \ 0\leq\ell_{\operatorname*{id}}\leq k_{\rho
}-1,\ \ \ \ell_{\mu}\leq k_{\rho}\leq\left(  n_{K}-1\right)  \ell_{\mu
},\ \ \ 2\leq n_{\rho}\leq n_{K}. \label{l1}%
\end{equation}

\begin{remark}
The formulas (\ref{n1}) coincide with the \textit{arity changing formulas} for
heteromorphisms \cite{dup2012} applied to (\ref{rrk}).
\end{remark}

It follows from (\ref{k1}), that the $\ell$-shape is determined by the arities
and number of places $k_{\rho}$ by
\begin{equation}
\ell_{\mu}=\dfrac{k_{\rho}\left(  n_{\rho}-1\right)  }{n_{K}-1}%
,\ \ \ \ \ \ \ \ \ell_{\operatorname*{id}}=\dfrac{k_{\rho}\left(
n_{K}-n_{\rho}\right)  }{n_{K}-1}. \label{l1n}%
\end{equation}

Because we have two polyadic \textquotedblleft additions\textquotedblright%
\ $\mathbf{\nu}_{m_{V}}$ and $\mathbf{\sigma}_{m_{K}}$, we need to consider,
how the multiaction $\mathbf{\rho}_{k_{\rho}}$ \textquotedblleft
distributes\textquotedblright\ between each of them. First, consider
distributivity of the multiaction $\mathbf{\rho}_{k_{\rho}}$ with respect to
\textquotedblleft vector addition\textquotedblright\ $\mathbf{\nu}_{m_{V}}$ (a
polyadic analog of the binary $\lambda\left(  \mathsf{v}+\mathsf{u}\right)
=\lambda\mathsf{v}+\lambda\mathsf{u}$, $\mathsf{v},\mathsf{u}\in\mathsf{V}$,
$\lambda,\mu\in K$)%
\begin{equation}
\mathbf{\rho}_{k_{\rho}}\left\{  \left.
\begin{array}
[c]{c}%
\lambda_{1}\\
\vdots\\
\lambda_{k_{\rho}}%
\end{array}
\right\vert \mathbf{\nu}_{m_{V}}\left[  \mathsf{v}_{1},\ldots,\mathsf{v}%
_{m_{V}}\right]  \right\}  =\mathbf{\nu}_{m_{V}}\left[  \mathbf{\rho}%
_{k_{\rho}}\left\{  \left.
\begin{array}
[c]{c}%
\lambda_{1}\\
\vdots\\
\lambda_{k_{\rho}}%
\end{array}
\right\vert \mathsf{v}_{1}\right\}  ,\ldots,\mathbf{\rho}_{k_{\rho}}\left\{
\left.
\begin{array}
[c]{c}%
\lambda_{1}\\
\vdots\\
\lambda_{k_{\rho}}%
\end{array}
\right\vert \mathsf{v}_{m_{V}}\right\}  \right]  . \label{disv}%
\end{equation}
Observe that here, in distinction to (\ref{rrk}), there is no connection
between the arities $m_{V}$ and $k_{\rho}$.

Secondly, the distributivity of the multiaction $\mathbf{\rho}_{k_{\rho}}$
(\textquotedblleft multiplication by scalar\textquotedblright) with respect to
the \textquotedblleft field addition\textquotedblright\ (a polyadic analog of
$\left(  \lambda+\mu\right)  \mathsf{v}=\lambda\mathsf{v}+\mu\mathsf{v}$,
$\mathsf{v}\in A$, $\lambda,\mu\in K$) has a form similar to (\ref{rrk})
(which can be obtained from the changing arity formula \cite{dup2012})%
\begin{equation}
\mathbf{\rho}_{k_{\rho}}\overset{n_{\rho}}{\overbrace{\left\{  \left.
\begin{array}
[c]{c}%
\lambda_{1}\\
\vdots\\
\lambda_{k_{\rho}}%
\end{array}
\right\vert \left.
\begin{array}
[c]{c}%
\ \\
\ldots\\
\
\end{array}
\right\vert \mathbf{\rho}_{k\rho}\left\{  \left.
\begin{array}
[c]{c}%
\lambda_{k_{\rho}\left(  n_{\rho}-1\right)  }\\
\vdots\\
\lambda_{k_{\rho}n_{\rho}}%
\end{array}
\right\vert \mathsf{v}\right\}  \ldots\right\}  }}=\mathbf{\rho}_{k_{\rho}%
}\left\{  \left.
\genfrac{}{}{0pt}{}{\left.
\begin{array}
[c]{c}%
\mathbf{\sigma}_{m_{K}}\left[  \lambda_{1},\ldots\lambda_{m_{K}}\right]  ,\\
\vdots\\
\mathbf{\sigma}_{m_{K}}\left[  \lambda_{m_{K}\left(  \ell_{\mu}^{\prime
}-1\right)  },\ldots\lambda_{m_{K}\ell_{\mu}^{\prime}}\right]
\end{array}
\right\}  \ell_{\mu}^{\prime}}{\left.
\begin{array}
[c]{c}%
\lambda_{m_{K}\ell_{\mu}^{\prime}+1},\\
\vdots\\
\lambda_{m_{K}\ell_{\mu}^{\prime}+\ell_{\operatorname*{id}}}%
\end{array}
\right\}  \ell_{\operatorname*{id}}^{\prime}}%
\right\vert \mathsf{v}\right\}  , \label{disk}%
\end{equation}
where $\ell_{\rho}^{\prime}$ and $\ell_{\operatorname*{id}}^{\prime}$ are the
numbers of multiplications and \textit{intact elements} in the resulting
multiaction, respectively. Here the arities are not independent as in
(\ref{disv}), and so we have

\begin{proposition}
The arities of the polyadic field $\mathbb{K}_{m_{K},n_{K}}$, the arity
$n_{\rho}$ of the multiaction semigroup $\mathcal{S}_{\rho}$ and the $\ell
$-shape of the distributivity satisfy%
\begin{align}
k_{\rho}n_{\rho}  &  =m_{K}\ell_{\mu}^{\prime}+\ell_{\operatorname*{id}%
}^{\prime},\label{k1a}\\
k_{\rho}  &  =\ell_{\mu}^{\prime}+\ell_{\operatorname*{id}}^{\prime}.
\label{k1a1}%
\end{align}

\end{proposition}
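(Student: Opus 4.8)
The plan is to argue exactly as in the proof of the preceding Proposition (the derivation of (\ref{k1})--(\ref{kl}) from (\ref{rrk})), with the multiplication $\mathbf{\kappa}_{n_{K}}$ replaced throughout by the addition $\mathbf{\sigma}_{m_{K}}$, and to read off the two balance relations by counting the scalar arguments on the two sides of the distributivity identity (\ref{disk}). No structural input beyond (\ref{disk}) itself is needed; everything is a conservation-of-slots computation.

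First I would count the number of $K$-entries occurring on the left-hand side of (\ref{disk}). That side is an $n_{\rho}$-fold composition in the semigroup $\mathcal{S}_{\rho}$ of the multiaction $\mathbf{\rho}_{k_{\rho}}$, and each instance of $\mathbf{\rho}_{k_{\rho}}$ carries exactly $k_{\rho}$ scalar slots; hence the left-hand side consumes $k_{\rho}n_{\rho}$ scalars $\lambda_{1},\lambda_{2},\ldots$ altogether. Next I would count the scalars on the right-hand side. There the outer operation is a single $\mathbf{\rho}_{k_{\rho}}$, so it has exactly $k_{\rho}$ first-type arguments; by the construction of the changing-arity formula, $\ell_{\mu}^{\prime}$ of these arguments are outputs of the $m_{K}$-ary addition $\mathbf{\sigma}_{m_{K}}$ (each absorbing $m_{K}$ of the underlying scalars), while the remaining $\ell_{\operatorname*{id}}^{\prime}$ arguments are the \emph{intact} scalars, each absorbing a single one. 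Counting arguments of the outer $\mathbf{\rho}_{k_{\rho}}$ gives $k_{\rho}=\ell_{\mu}^{\prime}+\ell_{\operatorname*{id}}^{\prime}$, which is (\ref{k1a1}); counting the underlying scalars gives $m_{K}\ell_{\mu}^{\prime}+\ell_{\operatorname*{id}}^{\prime}$ on the right. Since both sides of (\ref{disk}), as elements of $\mathsf{V}$, must be built from the same list of scalars, equating the two counts yields $k_{\rho}n_{\rho}=m_{K}\ell_{\mu}^{\prime}+\ell_{\operatorname*{id}}^{\prime}$, which is (\ref{k1a}).

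The only point that is not pure bookkeeping — and hence the main obstacle — is the implicit well-formedness of the right-hand side of (\ref{disk}): one must know that the $\ell_{\mu}^{\prime}$ applications of $\mathbf{\sigma}_{m_{K}}$ together with the $\ell_{\operatorname*{id}}^{\prime}$ intact entries can actually be assembled into an associative composition lying in $\mathcal{S}_{\rho}$. This is precisely where the associativity quiver of \cite{dup2012} is invoked, exactly as it was for (\ref{rrk}); it is this existence claim, rather than the slot-counting, that carries the content. Granting it, one observes that $n_{K}$ does not enter, and that no further relation among $m_{K}$, $n_{K}$, $n_{\rho}$ and $k_{\rho}$ is imposed at this stage, so (\ref{k1a})--(\ref{k1a1}) are the complete arity-shape constraints coming from distributivity of the multiaction over the field addition $\mathbf{\sigma}_{m_{K}}$.
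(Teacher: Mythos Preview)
Your argument is correct and matches the paper's approach: the proposition is stated without a separate proof, as it is meant to be read off directly from the schematic distributivity relation (\ref{disk}) by exactly the slot-counting you describe (paralleling the derivation of (\ref{k1})--(\ref{kl}) from (\ref{rrk})). Your remark on the associativity quiver is also in line with the paper's own caveat preceding (\ref{rrk}).
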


It follows from (\ref{k1a})--(\ref{k1a1})
%

\begin{equation}
n_{\rho}=m_{K}-\dfrac{m_{K}-1}{k_{\rho}}\ell_{\operatorname*{id}}^{\prime
},\ \ \ \ \ \ n_{\rho}=\dfrac{m_{K}-1}{k_{\rho}}\ell_{\mu}^{\prime}+1.
\end{equation}
Here $\tfrac{m_{K}-1}{k_{\rho}}\ell_{\operatorname*{id}}^{\prime}\geq1$ and
$\tfrac{m_{K}-1}{k_{\rho}}\ell_{\mu}^{\prime}\geq1$ are integers, and we have
the inequalities%
\begin{equation}
1\leq\ell_{\mu}^{\prime}\leq k_{\rho},\ \ \ 0\leq\ell_{\operatorname*{id}%
}^{\prime}\leq k_{\rho}-1,\ \ \ \ell_{\mu}^{\prime}\leq k_{\rho}\leq\left(
m_{K}-1\right)  \ell_{\mu}^{\prime},\ \ \ 2\leq n_{\rho}\leq m_{K}.
\label{mnk}%
\end{equation}
Now, the $\ell$-shape of the distributivity is fully determined from the
arities and number of places $k_{\rho}$ by the arity shape formulas%
\begin{equation}
\ell_{\rho}^{\prime}=\dfrac{k_{\rho}\left(  n_{\rho}-1\right)  }{m_{K}%
-1},\ \ \ \ \ \ \ell_{\operatorname*{id}}^{\prime}=\dfrac{k_{\rho}\left(
m_{K}-n_{\rho}\right)  }{m_{K}-1}. \label{l2m}%
\end{equation}
It follows from (\ref{mnk}) that:

\begin{corollary}
The arity $n_{\rho}$ of the multiaction semigroup $\mathcal{S}_{\rho}$ is
\textsl{less than or equal} to the arity of the field addition $m_{K}$.
\end{corollary}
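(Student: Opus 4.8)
The plan is to derive the required bound directly from the arity shape relations~(\ref{k1a})--(\ref{k1a1}) attached to the distributivity law~(\ref{disk}), so that no new machinery is needed. First I would eliminate $\ell_{\operatorname*{id}}^{\prime}$ between the two relations: subtracting~(\ref{k1a1}) from~(\ref{k1a}) gives $k_{\rho}n_{\rho}-k_{\rho}=m_{K}\ell_{\mu}^{\prime}-\ell_{\mu}^{\prime}$, that is $k_{\rho}\left(n_{\rho}-1\right)=\left(m_{K}-1\right)\ell_{\mu}^{\prime}$, which is the first of the arity shape formulas~(\ref{l2m}). Symmetrically, multiplying~(\ref{k1a1}) by $m_{K}$ and subtracting~(\ref{k1a}) yields $k_{\rho}\left(m_{K}-n_{\rho}\right)=\left(m_{K}-1\right)\ell_{\operatorname*{id}}^{\prime}$, the second formula of~(\ref{l2m}).

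Now comes the only point that needs a word of justification rather than a computation: $\ell_{\operatorname*{id}}^{\prime}$ is by construction the number of \emph{intact} scalar entries appearing on the right-hand side of~(\ref{disk}), hence a nonnegative integer, $\ell_{\operatorname*{id}}^{\prime}\geq0$. Since $\mathbb{K}_{m_{K},n_{K}}$ is a polyadic field its addition $\mathbf{\sigma}_{m_{K}}$ is at least binary, so $m_{K}\geq2$ and $m_{K}-1>0$; also $k_{\rho}\geq1$. Feeding these into $k_{\rho}\left(m_{K}-n_{\rho}\right)=\left(m_{K}-1\right)\ell_{\operatorname*{id}}^{\prime}\geq0$ forces $m_{K}-n_{\rho}\geq0$, i.e. $n_{\rho}\leq m_{K}$, which is exactly the last inequality recorded in~(\ref{mnk}) and establishes the corollary.

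There is no real obstacle here; the statement is an immediate structural consequence, and the worst that could go wrong is overlooking the standing hypotheses $m_{K}\geq2$ and $k_{\rho}\geq1$, or the sign constraint on the count of intact elements. For context I would also note the two extremes: equality $n_{\rho}=m_{K}$ holds precisely when $\ell_{\operatorname*{id}}^{\prime}=0$, i.e. when the distributivity~(\ref{disk}) carries no intact scalars, whereas the lower end $n_{\rho}=2$ corresponds to $\ell_{\mu}^{\prime}=1$, $\ell_{\operatorname*{id}}^{\prime}=k_{\rho}-1$; together these recover the full range $2\leq n_{\rho}\leq m_{K}$ already displayed in~(\ref{mnk}).
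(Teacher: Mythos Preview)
Your proof is correct and follows essentially the same route as the paper: the corollary is stated immediately after the inequalities~(\ref{mnk}), which themselves are derived from the arity shape relations~(\ref{k1a})--(\ref{k1a1}), and you have simply made explicit the step $k_{\rho}(m_{K}-n_{\rho})=(m_{K}-1)\ell_{\operatorname*{id}}^{\prime}\geq0$ that the paper leaves implicit. There is nothing to add or correct.
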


\begin{definition}
\label{def-vect}A polyadic ($\mathbb{K}$)-vector (\textquotedblleft
linear\textquotedblright) space over a polyadic field is the 2-set 4-operation
algebraic structure%
\begin{equation}
\mathcal{V}_{m_{K},n_{K},m_{V},k_{\rho}}=\left\langle K;\mathsf{V}%
\mid\mathbf{\sigma}_{m_{K}},\mathbf{\kappa}_{n_{K}};\mathbf{\nu}_{m_{V}}%
\mid\mathbf{\rho}_{k_{\rho}}\right\rangle , \label{v}%
\end{equation}
such that the following axioms hold:

1) $\left\langle K\mid\mathbf{\sigma}_{m_{K}},\mathbf{\kappa}_{n_{K}%
}\right\rangle $ is a polyadic $\left(  m_{K},n_{K}\right)  $-field
$\mathbb{K}_{m_{K},n_{K}}$;

2) $\left\langle A\mid\mathbf{\nu}_{m_{V}}\right\rangle $ is a commutative
$m_{V}$-ary group;

3) $\left\langle \mathbf{\rho}_{k_{\rho}}\mid composition\right\rangle $ is a
$n_{\rho}$-ary semigroup $\mathfrak{S}_{\rho}$;

4) Distributivity of the multiaction $\mathbf{\rho}_{k_{\rho}}$ with respect
to\ the \textquotedblleft vector addition\textquotedblright\ $\mathbf{\nu
}_{m_{V}}$ (\ref{disv});

5) Distributivity of $\mathbf{\rho}_{k_{\rho}}$ with respect to\ the
\textquotedblleft scalar addition\textquotedblright\ $\mathbf{\sigma}_{m_{K}}$
(\ref{disk});

6) Compatibility of $\mathbf{\rho}_{k_{\rho}}$ with the \textquotedblleft
scalar multiplication\textquotedblright\ $\mathbf{\kappa}_{n_{K}}$ (\ref{rrk});

7) Normalization of the multiaction $\mathbf{\rho}_{k_{\rho}}$ (\ref{re}).
\end{definition}

All of the arities in (\ref{v}) are independent and can be chosen arbitrarily,
but they fix the $\ell$-shape of the multiaction composition (\ref{rrk}) and
the distributivity (\ref{disk}) by (\ref{l1n}) and (\ref{l2m}), respectively.
Note that the main distinction from the binary case is the possibility for the
arity $n_{\rho}$ of the multiaction semigroup $\mathcal{S}_{\rho}$ to be arbitrary.

\begin{definition}
A polyadic $\mathbb{K}$-vector subspace is%
\begin{equation}
\mathcal{V}_{m_{K},n_{K},m_{V},k_{\rho}}^{sub}=\left\langle K;\mathsf{V}%
^{sub}\mid\mathbf{\sigma}_{m_{K}},\mathbf{\kappa}_{n_{K}};\mathbf{\nu}_{m_{V}%
}\mid\mathbf{\rho}_{k_{\rho}}\right\rangle , \label{vsub}%
\end{equation}
where the subset $\mathsf{V}^{sub}\subset\mathsf{V}$ is closed under all
operations $\mathbf{\sigma}_{m_{K}},\mathbf{\kappa}_{n_{K}},\mathbf{\nu
}_{m_{V}},\mathbf{\rho}_{k_{\rho}}$ and the axioms 1)-7).
\end{definition}

Let us consider a subset $\mathsf{S}=\left\{  \mathsf{v}_{1},\ldots
,\mathsf{v}_{d_{V}}\right\}  \subseteq\mathsf{V}$ (of $d_{V}$
\textquotedblleft vectors\textquotedblright), then a \textit{polyadic span} of
$\mathsf{S}$ is (a \textquotedblleft linear combination\textquotedblright)%
\begin{align}
&  \operatorname*{Span}\nolimits_{pol}^{\lambda}\left(  \mathsf{v}_{1}%
,\ldots,\mathsf{v}_{d_{V}}\right)  =\left\{  w\right\}  ,\label{span}\\
&  w=\mathbf{\nu}_{m_{V}}^{\ell_{\nu}}\left[  \mathbf{\rho}_{k_{\rho}}\left\{
\left.
\begin{array}
[c]{c}%
\lambda_{1}\\
\vdots\\
\lambda_{k_{\rho}}%
\end{array}
\right\vert \mathsf{v}_{1}\right\}  ,\ldots,\mathbf{\rho}_{k_{\rho}}\left\{
\left.
\begin{array}
[c]{c}%
\lambda_{\left(  d_{V}-1\right)  k_{\rho}}\\
\vdots\\
\lambda_{d_{V}k_{\rho}}%
\end{array}
\right\vert \mathsf{v}_{s}\right\}  \right]  ,
\end{align}
where $\left(  d_{V}\cdot k_{\rho}\right)  $ \textquotedblleft
scalars\textquotedblright\ play the role of coefficients (or coordinates as
columns consisting of $k_{\rho}$ elements from the polyadic field
$\mathbb{K}_{m_{K},n_{K}}$), and the number of \textquotedblleft
vectors\textquotedblright\ $s$ is connected with the \textquotedblleft number
of $m_{V}$-ary additions\textquotedblright\ $\ell_{\nu}$ by%
\begin{equation}
d_{V}=\ell_{\nu}\left(  m_{V}-1\right)  +1,
\end{equation}
while $\operatorname*{Span}\nolimits_{pol}^{\lambda}\mathsf{S}$ is the set of
all \textquotedblleft vectors\textquotedblright\ of this form (\ref{span}) (we
consider here finite \textquotedblleft sums\textquotedblright\ only).

\begin{definition}
A polyadic span $\mathsf{S}=\left\{  \mathsf{v}_{1},\ldots,\mathsf{v}_{d_{V}%
}\right\}  \subseteq\mathsf{V}$ is \textit{nontrivial}, if at least one
multiaction $\mathbf{\rho}_{k_{\rho}}$ in (\ref{span}) is nonzero.
\end{definition}

Since polyadic fields and groups may not contain zeroes, we need to redefine
the basic notions of equivalences. Let us take two different spans of the same
set $\mathsf{S}$.

\begin{definition}
A set $\left\{  \mathsf{v}_{1},\ldots,\mathsf{v}_{d_{V}}\right\}  $ is called
\textquotedblleft linear\textquotedblright\ \textit{polyadic independent}, if
from the equality of nontrivial spans, as $\operatorname*{Span}\nolimits_{pol}%
^{\lambda}\left(  \mathsf{v}_{1},\ldots,\mathsf{v}_{d_{V}}\right)
=\operatorname*{Span}\nolimits_{pol}^{\lambda^{\prime}}\left(  \mathsf{v}%
_{1},\ldots,\mathsf{v}_{d_{V}}\right)  $, it follows that all $\lambda
_{i}=\lambda_{i}^{\prime}$, $i=1,\ldots,d_{V}k_{\rho}$.
\end{definition}

\begin{definition}
A set $\left\{  \mathsf{v}_{1},\ldots,\mathsf{v}_{d_{V}}\right\}  $ is called
a \textit{polyadic basis} of a polyadic vector space $\mathcal{V}_{m_{K}%
n_{K}m_{V}k_{\rho}}$, if it spans the whole space $\operatorname*{Span}%
\nolimits_{pol}^{\lambda}\left(  \mathsf{v}_{1},\ldots,\mathsf{v}_{d_{V}%
}\right)  =\mathsf{V}$.
\end{definition}

In other words, any element of $\mathsf{V}$ can be uniquely presented in the
form of the polyadic \textquotedblleft linear combination\textquotedblright%
\ (\ref{span}). If a polyadic vector space $\mathcal{V}_{m_{K}n_{K}%
m_{V}k_{\rho}}$ has a finite basis $\left\{  \mathsf{v}_{1},\ldots
,\mathsf{v}_{d_{V}}\right\}  $, then any another basis $\left\{
\mathsf{v}_{1}^{\prime},\ldots,\mathsf{v}_{d_{V}}^{\prime}\right\}  $ has the
same number of elements.

\begin{definition}
\label{def-dim}The number of elements in the polyadic basis $\left\{
\mathsf{v}_{1},\ldots,\mathsf{v}_{d_{V}}\right\}  $ is called the
\textit{polyadic dimension} of $\mathcal{V}_{m_{K},n_{K},m_{V},k_{\rho}}$.
\end{definition}

\begin{remark}
The so-called $3$-vector space introduced and studied in \cite{dup/wer},
corresponds to $\mathcal{V}_{m_{K}=3,n_{K}=2,m_{V}=3,k_{\rho}=1}$.
\end{remark}

\subsection{One-set polyadic vector space}

A particular polyadic vector space is important: consider $\mathsf{V}=K$,
$\mathbf{\nu}_{m_{V}}=\mathbf{\sigma}_{m_{K}}$ and $m_{V}=m_{K}$, which gives
the following one-set \textquotedblleft linear\textquotedblright\ algebraic
structure (we call it a \textit{one-set polyadic vector space})%
\begin{equation}
\mathcal{K}_{m_{K},n_{K},k_{\rho}}=\left\langle K\mid\mathbf{\sigma}_{m_{K}%
},\mathbf{\kappa}_{n_{K}}\mid\mathbf{\rho}_{k_{\rho}}^{\lambda}\right\rangle ,
\end{equation}
where now the multiaction%
\begin{equation}
\mathbf{\rho}_{k_{\rho}}^{\lambda}\left\{  \left.
\begin{array}
[c]{c}%
\lambda_{1}\\
\vdots\\
\lambda_{k_{\rho}}%
\end{array}
\right\vert \lambda\right\}  ,\lambda,\lambda_{i}\in K,
\end{equation}
acts on $K$ itself (in some special way), and therefore can be called a
\textit{regular multiaction}. In the binary case $n_{K}=m_{K}=2$, the only
possibility for the regular action is the multiplication (by \textquotedblleft
scalars\textquotedblright) in the field $\mathbf{\rho}_{1}^{\lambda}\left\{
\left.  \lambda_{1}\right\vert \lambda\right\}  =\mathbf{\kappa}_{2}\left[
\lambda_{1}\lambda\right]  \left(  \equiv\lambda_{1}\lambda\right)  $, which
obviously satisfies the axioms 4)-7) of a vector space in \textbf{Definition
\ref{def-vect}}. In this way we arrive at the definition of the binary field
$\mathbb{K}\equiv\mathbb{K}_{2,2}=\left\langle K\mid\mathbf{\sigma}%
_{2},\mathbf{\kappa}_{2}\right\rangle $, and so a one-set \textsl{binary
vector space} coincides with the underlying field $\mathcal{K}_{m_{K}%
=2,n_{K}=2,k_{\rho}=1}=\mathbb{K}$, or as it is said \textquotedblleft a field
is a (one-dimensional) vector space over itself\textquotedblright.

\begin{remark}
In the polyadic case, the regular multiaction $\mathbf{\rho}_{k_{\rho}%
}^{\lambda}$ can be chosen, as any (additional to $\mathbf{\sigma}_{m_{K}}$,
$\mathbf{\kappa}_{n_{K}}$) function satisfying axioms 4)-7) of a polyadic
vector space and the number of places $k_{\rho}$ and the arity of the
semigroup of multiactions $\mathcal{S}_{\rho}$ can be \textsl{arbitrary}, in
general. Also, $\mathbf{\rho}_{k_{\rho}}^{\lambda}$ can be taken as a some
nontrivial combination of $\mathbf{\sigma}_{m_{K}}$, $\mathbf{\kappa}_{n_{K}}$
satisfying axioms 4)-7) (which admits a nontrivial \textquotedblleft
multiplication by scalars\textquotedblright).
\end{remark}

In the simplest regular (similar to the binary) case,%
\begin{equation}
\mathbf{\rho}_{k_{\rho}}^{\lambda}\left\{  \left.
\begin{array}
[c]{c}%
\lambda_{1}\\
\vdots\\
\lambda_{k_{\rho}}%
\end{array}
\right\vert \lambda\right\}  =\mathbf{\kappa}_{n_{K}}^{\ell_{\kappa}}\left[
\lambda_{1},\ldots,\lambda_{k_{\rho}},\lambda\right]  , \label{rkl}%
\end{equation}
where $\ell_{\kappa}$ is the number of multiplications $\mathbf{\kappa}%
_{n_{K}}$, and the number of places $k_{\rho}$ is now fixed by%
\begin{equation}
k_{\rho}=\ell_{\kappa}\left(  n_{K}-1\right)  ,
\end{equation}
while $\lambda$ in (\ref{rkl}) can be on any place due the commutativity of
the field multiplication $\mathbf{\kappa}_{n_{K}}$.

\begin{remark}
In general, the one-set polyadic vector space need not coincide with the
underlying polyadic field, $\mathcal{K}_{m_{K},n_{K},k_{\rho}}\neq
\mathbb{K}_{n_{K}m_{K}}$ (as \textsl{opposed} to the binary case), but can
have a more complicated structure which is determined by an additional
multiplace function, the multiaction $\mathbf{\rho}_{k_{\rho}}^{\lambda}$.
\end{remark}

\subsection{Polyadic algebras}

By analogy with the binary case, introducing an additional operation on
vectors, a multiplication which is distributive and \textquotedblleft
linear\textquotedblright\ with respect to \textquotedblleft
scalars\textquotedblright, leads to a polyadic generalization of the
(associative) algebra notion \cite{car4}. Here, we denote the second (except
for the 'scalars' $K$) set by $\mathsf{A}$ (instead of $\mathsf{V}$ above), on
which we define \textsl{two} operations: $m_{A}$-ary \textquotedblleft
addition\textquotedblright\ $\mathbf{\nu}_{m_{A}}:\mathsf{A}^{\times m_{A}%
}\rightarrow\mathsf{A}$ and $n_{A}$-ary \textquotedblleft
multiplication\textquotedblright\ $\mathbf{\mu}_{n_{A}}:\mathsf{A}^{\times
n_{A}}\rightarrow\mathsf{A}$. To interpret the $n_{A}$-ary operation as a true
multiplication, the operations $\mathbf{\mu}_{n_{A}}$ and $\mathbf{\nu}%
_{m_{A}}$ should satisfy polyadic distributivity (\ref{dis1})--(\ref{dis3})
(an analog of $\left(  \mathsf{a}+\mathsf{b}\right)  \cdot c=\mathsf{a}%
\cdot\mathsf{c}+\mathsf{b}\cdot\mathsf{c}$, with $\mathsf{a},\mathsf{b}%
,\mathsf{c}\in\mathsf{A}$). Then we should consider the \textquotedblleft
interaction\textquotedblright\ of this new operation $\mathbf{\mu}_{n_{A}}$
with the multiaction $\mathbf{\rho}_{k_{\rho}}$ (an analog of the
\textquotedblleft compatibility with scalars\textquotedblright\ $\left(
\lambda\mathsf{a}\right)  \cdot\left(  \mu\mathsf{b}\right)  =\left(
\lambda\mu\right)  \mathsf{a}\cdot\mathsf{b}$, $\mathsf{a},\mathsf{b}%
\in\mathsf{A}$, $\lambda,\mu\in K$). In the most general case, when all
arities are arbitrary, we have the \textit{polyadic compatibility} of
$\mathbf{\mu}_{n_{A}}$ with the field multiplication $\kappa_{n_{K}}$ as
follows%
\begin{align}
&  \mathbf{\mu}_{n_{A}}\left[  \mathbf{\rho}_{k_{\rho}}\left\{  \left.
\begin{array}
[c]{c}%
\lambda_{1}\\
\vdots\\
\lambda_{k_{\rho}}%
\end{array}
\right\vert \mathsf{a}_{1}\right\}  ,\ldots,\mathbf{\rho}_{k_{\rho}}\left\{
\left.
\begin{array}
[c]{c}%
\lambda_{k_{\rho}\left(  n_{A}-1\right)  }\\
\vdots\\
\lambda_{k_{\rho}n_{A}}%
\end{array}
\right\vert \mathsf{a}_{n_{A}}\right\}  \right] \nonumber\\
&  =\mathbf{\rho}_{k_{\rho}}\left\{  \left.
\genfrac{}{}{0pt}{}{\left.
\begin{array}
[c]{c}%
\mathbf{\kappa}_{n_{K}}\left[  \lambda_{1},\ldots,\lambda_{n_{K}}\right]  ,\\
\vdots\\
\mathbf{\kappa}_{n_{K}}\left[  \lambda_{n_{K}\left(  \ell_{\mu}^{\prime\prime
}-1\right)  },\ldots,\lambda_{n_{K}\ell_{\mu}^{\prime\prime}}\right]
\end{array}
\right\}  \ell_{\mu}^{\prime\prime}}{\left.
\begin{array}
[c]{c}%
\lambda_{n_{K}\ell_{\mu}^{\prime\prime}+1},\\
\vdots\\
\lambda_{n_{K}\ell_{\mu}^{\prime\prime}+\ell_{\operatorname*{id}}%
^{\prime\prime}}%
\end{array}
\right\}  \ell_{\operatorname*{id}}^{\prime\prime}}%
\right\vert \mathbf{\mu}_{n_{A}}\left[  \mathsf{a}_{1}\ldots\mathsf{a}_{n_{A}%
}\right]  \right\}  , \label{mr}%
\end{align}
where $\ell_{\mu}^{\prime\prime}$ and $\ell_{\operatorname*{id}}^{\prime
\prime}$ are the numbers of multiplications and intact elements in the
resulting multiaction, respectively.

\begin{proposition}
The arities of the polyadic field $\mathbb{K}_{m_{K},n_{K}}$, the arity
$n_{\rho}$ of the multiaction semigroup $\mathfrak{S}_{\rho}$ and the $\ell
$-shape of the polyadic compatibility (\ref{mr}) satisfy%
\begin{equation}
k_{\rho}n_{A}=n_{K}\ell_{\mu}^{\prime\prime}+\ell_{\operatorname*{id}}%
^{\prime\prime},\ \ \ \ \ \ \ k_{\rho}=\ell_{\mu}^{\prime\prime}%
+\ell_{\operatorname*{id}}^{\prime\prime}. \label{k1b}%
\end{equation}

\end{proposition}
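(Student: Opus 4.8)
The plan is to derive both identities in (\ref{k1b}) from the polyadic compatibility relation (\ref{mr}) by the same elementary counting argument by which (\ref{k1})--(\ref{kl}) follow from (\ref{rrk}) and (\ref{k1a})--(\ref{k1a1}) follow from (\ref{disk}): one simply compares, on the two sides of (\ref{mr}), the total number of scalar arguments and the number of slots of the outermost multiaction.

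First I would read off the left-hand side of (\ref{mr}). There the $n_A$-ary operation $\mathbf{\mu}_{n_A}$ is applied to $n_A$ multiactions $\mathbf{\rho}_{k_\rho}$, each of which carries exactly $k_\rho$ scalars from $K$ in its upper slot; since these are distinct arguments $\lambda_1,\ldots,\lambda_{k_\rho n_A}$, the left-hand side involves precisely $k_\rho n_A$ elements of the polyadic field. On the right-hand side a single multiaction $\mathbf{\rho}_{k_\rho}$ is applied to the vector $\mathbf{\mu}_{n_A}\left[\mathsf{a}_1\ldots\mathsf{a}_{n_A}\right]$, so its upper slot again has exactly $k_\rho$ entries; by construction $\ell_\mu^{\prime\prime}$ of these are values of the $n_K$-ary field multiplication $\mathbf{\kappa}_{n_K}$ and the remaining $\ell_{\operatorname*{id}}^{\prime\prime}$ are intact scalars. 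Counting those entries directly gives the second relation $k_\rho=\ell_\mu^{\prime\prime}+\ell_{\operatorname*{id}}^{\prime\prime}$.

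To get the first relation, I would instead count, on the right-hand side, the original field elements $\lambda_i$ that are fed into the outer multiaction: each of the $\ell_\mu^{\prime\prime}$ blocks $\mathbf{\kappa}_{n_K}\left[\ldots\right]$ absorbs $n_K$ of them, while each of the $\ell_{\operatorname*{id}}^{\prime\prime}$ intact slots absorbs one, for a total of $n_K\ell_\mu^{\prime\prime}+\ell_{\operatorname*{id}}^{\prime\prime}$. Equating this with the left-hand count $k_\rho n_A$ yields $k_\rho n_A=n_K\ell_\mu^{\prime\prime}+\ell_{\operatorname*{id}}^{\prime\prime}$, which is the first identity. The only thing that needs checking --- and it is where all the real content sits, modest as it is --- is that both sides of (\ref{mr}) are genuinely built from one and the same indexed family $\lambda_1,\ldots,\lambda_{k_\rho n_A}$, with no scalar repeated, omitted, or created; this is exactly what the changing arity formula of \cite{dup2012} guarantees when it produces (\ref{mr}), so no associativity, distributivity, or normalization hypothesis beyond the bare form of the relation enters the argument.
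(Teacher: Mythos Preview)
Your argument is correct and is exactly the counting argument the paper has in mind: the proposition is stated without explicit proof because it is obtained from (\ref{mr}) by precisely the same comparison of scalar arguments used to derive (\ref{k1})--(\ref{kl}) from (\ref{rrk}) and (\ref{k1a})--(\ref{k1a1}) from (\ref{disk}).
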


We can exclude from (\ref{k1b}) $\ell_{\rho}^{\prime\prime}$ or $\ell
_{\operatorname*{id}}^{\prime\prime}$ and obtain%

\begin{equation}
n_{A}=n_{K}-\dfrac{n_{K}-1}{k_{\rho}}\ell_{\operatorname*{id}}^{\prime\prime
},\ \ \ \ n_{A}=\dfrac{n_{K}-1}{k_{\rho}}\ell_{\mu}^{\prime\prime}+1,
\end{equation}
where $\tfrac{n_{K}-1}{k_{\rho}}\ell_{\operatorname*{id}}^{\prime\prime}\geq1$
and $\tfrac{n_{K}-1}{k_{\rho}}\ell_{\mu}^{\prime\prime}\geq1$ are integer, and
the inequalities hold%
\begin{equation}
1\leq\ell_{\mu}^{\prime\prime}\leq k_{\rho},\ \ 0\leq\ell_{\operatorname*{id}%
}^{\prime\prime}\leq k_{\rho}-1,\ \ \ell_{\mu}^{\prime\prime}\leq k_{\rho}%
\leq\left(  n_{K}-1\right)  \ell_{\mu}^{\prime\prime},\ \ 2\leq n_{A}\leq
n_{K}. \label{nak}%
\end{equation}
It follows from (\ref{k1b}), that the $\ell$-shape is determined by the
arities and number of places $k_{\rho}$ as
\begin{equation}
\ell_{\mu}^{\prime\prime}=\dfrac{k_{\rho}\left(  n_{A}-1\right)  }{n_{K}%
-1},\ \ \ \ \ell_{\operatorname*{id}}^{\prime\prime}=\dfrac{k_{\rho}\left(
n_{K}-n_{A}\right)  }{n_{K}-1}. \label{l1r}%
\end{equation}

\begin{definition}
\label{def-alg}A polyadic (\textquotedblleft linear\textquotedblright) algebra
over a polyadic field is the 2-set 5-operation algebraic structure%
\begin{equation}
\mathcal{A}_{m_{K},n_{K},m_{A},n_{A},k_{\rho}}=\left\langle K;\mathsf{A}%
\mid\mathbf{\sigma}_{m_{K}},\mathbf{\kappa}_{n_{K}};\mathbf{\nu}_{m_{A}%
},\mathbf{\mu}_{n_{A}}\mid\mathbf{\rho}_{k_{\rho}}\right\rangle ,
\end{equation}
such that the following axioms hold:

1) $\left\langle K;\mathsf{A}\mid\mathbf{\sigma}_{m_{K}},\mathbf{\kappa
}_{n_{K}};\mathbf{\nu}_{m_{A}}\mid\mathbf{\rho}_{k_{\rho}}\right\rangle $ is a
polyadic vector space over a polyadic field $\mathbb{K}_{m_{K},n_{K}}$;

2) The algebra multiplication $\mathbf{\mu}_{n_{A}}$ and the algebra addition
$\mathbf{\nu}_{m_{A}}$ satisfy the polyadic distributivity (\ref{dis1}%
)--(\ref{dis3});

3) The multiplications in the algebra $\mathbf{\mu}_{n_{A}}$ and in the field
$\mathbf{\kappa}_{n_{K}}$ are compatible by (\ref{mr}).
\end{definition}

In the case where the algebra multiplication $\mathbf{\mu}_{n_{A}}$ is
associative (\ref{as1}), then $\mathcal{A}_{m_{K},n_{K},m_{A},n_{A},k_{\rho}}$
is an \textit{associative polyadic algebra} (for $k_{\rho}=1$ see
\cite{car4}). If $\mathbf{\mu}_{n_{A}}$ is commutative, $\mu_{n_{A}}\left[
\mathbf{a}_{A}\right]  =\mu_{n_{A}}\left[  \sigma\circ\mathbf{a}_{A}\right]
$, for any polyad in algebra $\mathbf{a}_{A}\in\mathsf{A}^{\times n_{A}}$ for
all permutations $\sigma\in S_{n}$, where $S_{n}$ is the symmetry group, then
$\mathcal{A}_{m_{K},n_{K},m_{A},n_{A},k_{\rho}}$ is called a
\textit{commutative polyadic algebra}. As in the $n$-ary (semi)group theory,
for polyadic algebras one can introduce special kinds of associativity and
partial commutativity. If the multiplication $\mathbf{\mu}_{n_{A}}$ contains
the identity $\mathsf{e}_{A}$ (\ref{e}) or a neutral polyad for any element,
such a polyadic algebra is called \textit{unital} or \textit{neutral-unital},
respectively. It follows from (\ref{nak}) that:

\begin{corollary}
In a polyadic (\textquotedblleft linear\textquotedblright) algebra the arity
of the algebra multiplication $n_{A}$ is \textsl{less than or equal} to the
arity of the field multiplication $n_{K}$.
\end{corollary}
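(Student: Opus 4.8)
The plan is to read the inequality off the arity-counting constraints that the compatibility axiom of Definition~\ref{def-alg} forces on the algebra multiplication. By axiom 3) of that definition, $\mathbf{\mu}_{n_A}$ and the field multiplication $\mathbf{\kappa}_{n_K}$ are linked through the polyadic compatibility relation (\ref{mr}), whose content is summarised by the two equations (\ref{k1b}),
\[
k_{\rho}n_{A}=n_{K}\ell_{\mu}^{\prime\prime}+\ell_{\operatorname*{id}}^{\prime\prime},\qquad k_{\rho}=\ell_{\mu}^{\prime\prime}+\ell_{\operatorname*{id}}^{\prime\prime},
\]
where $\ell_{\mu}^{\prime\prime}$ and $\ell_{\operatorname*{id}}^{\prime\prime}$ are the numbers of field multiplications and of intact scalar entries appearing on the right-hand side of (\ref{mr}), which are nonnegative integers.

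First I would eliminate $\ell_{\operatorname*{id}}^{\prime\prime}$ by subtracting the second relation from the first, obtaining $k_{\rho}\left(n_{A}-1\right)=\left(n_{K}-1\right)\ell_{\mu}^{\prime\prime}$, i.e. the first formula of (\ref{l1r}). The right-hand side of (\ref{mr}) is a single $k_{\rho}$-place multiaction, so its column of $k_{\rho}$ scalar arguments is partitioned into $\ell_{\mu}^{\prime\prime}$ blocks, each fed to $\mathbf{\kappa}_{n_K}$, together with $\ell_{\operatorname*{id}}^{\prime\prime}$ untouched scalars; hence $\ell_{\operatorname*{id}}^{\prime\prime}\ge 0$, and by the second relation $\ell_{\mu}^{\prime\prime}\le k_{\rho}$, while $\ell_{\mu}^{\prime\prime}\ge 1$ since any genuine compatibility relation must contain at least one application of $\mathbf{\kappa}_{n_K}$. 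Plugging $\ell_{\mu}^{\prime\prime}\le k_{\rho}$ into $k_{\rho}\left(n_{A}-1\right)=\left(n_{K}-1\right)\ell_{\mu}^{\prime\prime}$ gives $k_{\rho}\left(n_{A}-1\right)\le k_{\rho}\left(n_{K}-1\right)$, and since $n_{K}>1$ and $k_{\rho}\ge 1$ we conclude $n_{A}\le n_{K}$; this is exactly the rightmost inequality of (\ref{nak}).

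The one step that genuinely needs justification --- and the place I expect the mild difficulty to sit --- is not the final arithmetic but the claim that $\ell_{\mu}^{\prime\prime},\ell_{\operatorname*{id}}^{\prime\prime}$ are legitimate nonnegative integers subject to $1\le\ell_{\mu}^{\prime\prime}\le k_{\rho}$ and the divisibility $\tfrac{n_{K}-1}{k_{\rho}}\ell_{\mu}^{\prime\prime}\in\mathbb{Z}$. This is a structural fact about the $n_{\rho}$-ary semigroup $\mathcal{S}_{\rho}$: the left-hand side of (\ref{mr}) is a composite of $n_{A}$ multiactions, and the changing-arity mechanism forces the composite to again be one multiaction whose scalar slots are grouped exactly as described, the divisibility being precisely the condition under which such a grouping of $k_{\rho}$ slots into $n_{K}$-blocks (plus a remainder) exists. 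Once this bookkeeping is granted the corollary is immediate; in fact the whole assertion is nothing more than the bound $n_{A}\le n_{K}$ already contained in (\ref{nak}), and the proof I would write merely unwinds why that bound holds.
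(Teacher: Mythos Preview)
Your argument is correct and is essentially the paper's own: the corollary is stated immediately after (\ref{nak}) with the remark ``It follows from (\ref{nak}) that'', and the bound $n_{A}\le n_{K}$ in (\ref{nak}) is obtained precisely by the elimination you carry out, subtracting the two relations in (\ref{k1b}) and using $0\le\ell_{\operatorname*{id}}^{\prime\prime}$ (equivalently $\ell_{\mu}^{\prime\prime}\le k_{\rho}$). You have simply made the intermediate arithmetic explicit where the paper leaves it to the reader.
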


\begin{proposition}
It all the operation $\ell$-shapes in (\ref{rrk}), (\ref{disk}) and (\ref{mr})
coincide%
\begin{equation}
\ell_{\mu}^{\prime\prime}=\ell_{\mu}^{\prime}=\ell_{\mu},\ \ \ \ \ \ \ \ell
_{\operatorname*{id}}^{\prime\prime}=\ell_{\operatorname*{id}}^{\prime}%
=\ell_{\operatorname*{id}},
\end{equation}
then, we obtain the conditions for the arities%
\begin{equation}
n_{K}=m_{K},\ \ \ \ \ \ \ n_{\rho}=n_{A},
\end{equation}
while $m_{A}$ and $k_{\rho}$ are not connected.
\end{proposition}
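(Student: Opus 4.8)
The plan is to read off, from the three Propositions already proved, the pairs of linear relations that the coinciding $\ell$-shapes are supposed to tie together, and then simply compare them. Concretely, the relevant pairs are
\[
k_{\rho}n_{\rho}=n_{K}\ell_{\mu}+\ell_{\operatorname*{id}},\qquad k_{\rho}=\ell_{\mu}+\ell_{\operatorname*{id}}
\]
from (\ref{k1})--(\ref{kl}) for the compatibility (\ref{rrk});
\[
k_{\rho}n_{\rho}=m_{K}\ell_{\mu}^{\prime}+\ell_{\operatorname*{id}}^{\prime},\qquad k_{\rho}=\ell_{\mu}^{\prime}+\ell_{\operatorname*{id}}^{\prime}
\]
from (\ref{k1a})--(\ref{k1a1}) for the distributivity (\ref{disk}); and
\[
k_{\rho}n_{A}=n_{K}\ell_{\mu}^{\prime\prime}+\ell_{\operatorname*{id}}^{\prime\prime},\qquad k_{\rho}=\ell_{\mu}^{\prime\prime}+\ell_{\operatorname*{id}}^{\prime\prime}
\]
from (\ref{k1b}) for the compatibility (\ref{mr}). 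The first observation is that the second equation of each pair is literally the same identity $k_{\rho}=\ell_{\mu}+\ell_{\operatorname*{id}}$ once the three $\ell$-shapes are assumed equal, so it carries no new information; everything is contained in the first equations.

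First I would equate the first equations of the first two pairs. Substituting the hypothesis $\ell_{\mu}^{\prime}=\ell_{\mu}$, $\ell_{\operatorname*{id}}^{\prime}=\ell_{\operatorname*{id}}$, both left-hand sides equal $k_{\rho}n_{\rho}$ while the right-hand sides become $n_{K}\ell_{\mu}+\ell_{\operatorname*{id}}$ and $m_{K}\ell_{\mu}+\ell_{\operatorname*{id}}$; subtracting gives $(n_{K}-m_{K})\ell_{\mu}=0$, and since $\ell_{\mu}\geq1$ by (\ref{l1}) this yields $n_{K}=m_{K}$.

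Next I would compare the first equations of the first and third pairs. With $\ell_{\mu}^{\prime\prime}=\ell_{\mu}$ and $\ell_{\operatorname*{id}}^{\prime\prime}=\ell_{\operatorname*{id}}$ their right-hand sides coincide, so $k_{\rho}n_{\rho}=k_{\rho}n_{A}$, and cancelling the positive integer $k_{\rho}$ gives $n_{\rho}=n_{A}$. To finish, I would note that the arity $m_{A}$ of the algebra addition occurs in none of the relations (\ref{k1})--(\ref{kl}), (\ref{k1a})--(\ref{k1a1}), (\ref{k1b}) (it enters only the distributivity of $\mathbf{\mu}_{n_{A}}$ over $\mathbf{\nu}_{m_{A}}$, which imposes no arity constraint, cf.\ the Proposition on one-set structures), and that these relations place no bound linking $k_{\rho}$ to $m_{A}$ beyond the integrality conditions already recorded; hence $m_{A}$ and $k_{\rho}$ stay unconnected. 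The argument is nothing more than elementary bookkeeping in the arity equations; the only point requiring a moment's care — and it is hardly an obstacle — is invoking $\ell_{\mu}\geq1$ and $k_{\rho}\geq1$ to justify the two cancellations.
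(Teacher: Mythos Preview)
Your argument is correct and is essentially the same as the paper's: the paper's proof is the one-line ``Use (\ref{l1n}) and (\ref{l1r})'', i.e.\ it equates the solved $\ell$-shape formulas, whereas you equate the linear systems (\ref{k1})--(\ref{kl}), (\ref{k1a})--(\ref{k1a1}), (\ref{k1b}) from which those formulas are derived. This is a cosmetic difference only; your derivation is in fact more self-contained, since it makes explicit the cancellations by $\ell_{\mu}\geq1$ and $k_{\rho}\geq1$ that the paper leaves implicit.
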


\begin{proof}
Use (\ref{l1n}) and (\ref{l1r}).
\end{proof}

\begin{proposition}
In the case of equal $\ell$-shapes the multiplication and addition of the
ground polyadic field (\textquotedblleft scalars\textquotedblright) should
coincide, while the arity $n_{\rho}$ of the multiaction semigroup
$\mathcal{S}_{\rho}$ should be the same as of the algebra multiplication
$n_{A}$, while the arity of the algebra addition $m_{A}$ and number of places
$k_{\rho}$ remain arbitrary.
\end{proposition}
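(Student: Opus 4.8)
The plan is to recognize that this Proposition is only the verbal reading of the preceding one, so the proof consists in unwinding the three arity--shape formulas and comparing them. First I would record the shapes of the three ``interactions'' in the form already derived: the compatibility of $\mathbf{\rho}_{k_{\rho}}$ with $\mathbf{\kappa}_{n_{K}}$ in~(\ref{rrk}) gives $\ell_{\mu}=k_{\rho}(n_{\rho}-1)/(n_{K}-1)$ and $\ell_{\operatorname*{id}}=k_{\rho}(n_{K}-n_{\rho})/(n_{K}-1)$ by~(\ref{l1n}); the distributivity of $\mathbf{\rho}_{k_{\rho}}$ over $\mathbf{\sigma}_{m_{K}}$ in~(\ref{disk}) gives $\ell_{\mu}^{\prime}=k_{\rho}(n_{\rho}-1)/(m_{K}-1)$ and $\ell_{\operatorname*{id}}^{\prime}=k_{\rho}(m_{K}-n_{\rho})/(m_{K}-1)$ by~(\ref{l2m}); and the compatibility of $\mathbf{\mu}_{n_{A}}$ with $\mathbf{\kappa}_{n_{K}}$ in~(\ref{mr}) gives $\ell_{\mu}^{\prime\prime}=k_{\rho}(n_{A}-1)/(n_{K}-1)$ and $\ell_{\operatorname*{id}}^{\prime\prime}=k_{\rho}(n_{K}-n_{A})/(n_{K}-1)$ by~(\ref{l1r}).

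Then I would impose the hypothesis $\ell_{\mu}=\ell_{\mu}^{\prime}=\ell_{\mu}^{\prime\prime}$ and $\ell_{\operatorname*{id}}=\ell_{\operatorname*{id}}^{\prime}=\ell_{\operatorname*{id}}^{\prime\prime}$. Comparing the two expressions for $\ell_{\mu}$ and $\ell_{\mu}^{\prime}$ yields $k_{\rho}(n_{\rho}-1)/(n_{K}-1)=k_{\rho}(n_{\rho}-1)/(m_{K}-1)$; since $k_{\rho}\geq 1$ and $n_{\rho}\geq 2$ by~(\ref{l1}), the factor $k_{\rho}(n_{\rho}-1)$ is nonzero, so $n_{K}=m_{K}$, i.e.\ the arities of the field ``addition'' $\mathbf{\sigma}_{m_{K}}$ and the field ``multiplication'' $\mathbf{\kappa}_{n_{K}}$ must agree (whence the two ground--field operations may be taken to coincide). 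Comparing the two expressions for $\ell_{\mu}$ and $\ell_{\mu}^{\prime\prime}$ yields $k_{\rho}(n_{\rho}-1)/(n_{K}-1)=k_{\rho}(n_{A}-1)/(n_{K}-1)$, hence $n_{\rho}=n_{A}$. Finally, neither $m_{A}$ nor $k_{\rho}$ occurs in any of these identifications, so they are left unconstrained, which is exactly the assertion.

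I do not expect a genuine obstacle here: once $n_{K}=m_{K}$ and $n_{\rho}=n_{A}$ are in force, all three $\ell_{\operatorname*{id}}$ formulas collapse to the single expression $k_{\rho}(n_{K}-n_{\rho})/(n_{K}-1)$, so the ``intact element'' equalities bring nothing new, and the integrality and range constraints such as $\tfrac{n_{K}-1}{k_{\rho}}\ell_{\operatorname*{id}}\geq 1$ of~(\ref{l1}), (\ref{mnk}) and~(\ref{nak}) are simultaneously inherited. The only point deserving a word of care is the interpretation of $n_{K}=m_{K}$: strictly it equates arities, and one should remark (as the statement implicitly does) that this is precisely the condition under which $\mathbf{\sigma}_{m_{K}}$ and $\mathbf{\kappa}_{n_{K}}$ can be identified as a single operation on $K$, paralleling the passage to the one--set polyadic vector space. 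Modulo that remark, the Proposition follows directly from~(\ref{l1n}), (\ref{l2m}) and~(\ref{l1r}), exactly as in the proof of the preceding one.
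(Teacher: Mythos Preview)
Your proposal is correct and matches the paper's approach: the Proposition is indeed just a verbal restatement of the preceding one, which the paper proves by citing the $\ell$-shape formulas, and you have correctly unwound those formulas (\ref{l1n}), (\ref{l2m}), (\ref{l1r}) to obtain $n_K=m_K$ and $n_\rho=n_A$. In fact you are slightly more careful than the paper, whose proof of the preceding Proposition cites only (\ref{l1n}) and (\ref{l1r}) and omits (\ref{l2m}), even though the latter is needed for the conclusion $n_K=m_K$.
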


\begin{remark}
The above $\ell$-shapes (\ref{l1n}), (\ref{l2m}), and (\ref{l1r}) are defined
by a pair of \textsl{integers}, and therefore the arities in them are not
arbitrary, but should be \textquotedblleft quantized\textquotedblright\ in the
same manner as the arities of heteromorphisms in \cite{dup2012}.
\end{remark}

Therefore, numerically the \textquotedblleft quantization\textquotedblright%
\ rules for the $\ell$-shapes (\ref{l1n}), (\ref{l2m}), and (\ref{l1r})
coincide and given in \textsc{Table \ref{T3}}.

\begin{table}[h]
\caption{\textquotedblleft Quantization\textquotedblright\ of arity $\ell
$-shapes}%
\label{T3}
\begin{center}%
\begin{tabular}
[c]{|c|c|c|c|}\hline
$k_{\rho}$ & $\ell_{\mu}\mid\ell_{\mu}^{\prime}\mid\ell_{\mu}^{\prime\prime}$
& $\ell_{\operatorname*{id}}\mid\ell_{\operatorname*{id}}^{\prime}\mid
\ell_{\operatorname*{id}}^{\prime\prime}$ & $%
\begin{array}
[c]{c}%
n_{K}\\
n_{\rho}%
\end{array}
\mid%
\begin{array}
[c]{c}%
m_{K}\\
n_{\rho}%
\end{array}
\mid%
\begin{array}
[c]{c}%
n_{K}\\
n_{A}%
\end{array}
$\\\hline\hline
$2$ & $1$ & $1$ & $%
\begin{array}
[c]{cccc}%
3, & 5, & 7, & \ldots\\
2, & 3, & 4, & \ldots
\end{array}
$\\\hline
$3$ & $1$ & $2$ & $%
\begin{array}
[c]{cccc}%
4, & 7, & 10, & \ldots\\
2, & 3, & 4, & \ldots
\end{array}
$\\\hline
$3$ & $2$ & $1$ & $%
\begin{array}
[c]{cccc}%
4, & 7, & 10, & \ldots\\
3, & 5, & 7, & \ldots
\end{array}
$\\\hline
$4$ & $1$ & $3$ & $%
\begin{array}
[c]{cccc}%
5, & 9, & 13, & \ldots\\
2, & 3, & 4, & \ldots
\end{array}
$\\\hline
$4$ & $2$ & $2$ & $%
\begin{array}
[c]{cccc}%
3, & 5, & 7, & \ldots\\
2, & 3, & 4, & \ldots
\end{array}
$\\\hline
$4$ & $3$ & $1$ & $%
\begin{array}
[c]{cccc}%
5, & 9, & 13, & \ldots\\
4, & 7, & 10, & \ldots
\end{array}
$\\\hline
\end{tabular}
\end{center}
\end{table}

Thus, we arrive at the following

\begin{theorem}
[\textit{The arity partial freedom principle}]The basic two-set polyadic
algebraic structures have non-free underlying operation arities which are
\textquotedblleft quantized\textquotedblright\ in such a way that their $\ell
$-shape is given by integers.
\end{theorem}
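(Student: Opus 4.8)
The plan is to assemble the theorem as a direct synthesis of the arity-shape formulas already derived, rather than to prove anything genuinely new. The statement asserts that the basic two-set polyadic structures — polyadic vector spaces $\mathcal{V}_{m_{K},n_{K},m_{V},k_{\rho}}$ and polyadic algebras $\mathcal{A}_{m_{K},n_{K},m_{A},n_{A},k_{\rho}}$ — have operation arities that are not freely choosable, because the $\ell$-shapes of the defining compatibility and distributivity relations must consist of nonnegative integers. So the first step is to recall that each of the three ``interaction'' axioms produces a pair of counting identities: the multiaction-compatibility relation (\ref{rrk}) gives (\ref{k1})--(\ref{kl}), the distributivity over the scalar addition (\ref{disk}) gives (\ref{k1a})--(\ref{k1a1}), and the algebra-compatibility (\ref{mr}) gives (\ref{k1b}). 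From each pair one eliminates one of $\ell_{\mu},\ell_{\operatorname*{id}}$ (resp. the primed versions) to obtain the closed-form $\ell$-shape expressions (\ref{l1n}), (\ref{l2m}), (\ref{l1r}).

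Second, I would observe that in every one of these formulas the $\ell$-shape is forced to be a pair of integers because it is, by construction, the number of internal operations and the number of intact slots in a genuine composition of maps — these are cardinalities, hence elements of $\mathbb{Z}_{\geq 0}$. This is the crux of the ``non-freeness'': the right-hand sides of (\ref{l1n}), (\ref{l2m}), (\ref{l1r}) are rational expressions in $k_{\rho}$, $n_{\rho}$ (or $n_A$), $n_K$, $m_K$, and requiring them to be integers is a genuine Diophantine constraint. For instance, from (\ref{l1n}) one needs $\left(n_{K}-1\right)\mid k_{\rho}\left(n_{\rho}-1\right)$, and similarly for the other two. Thus not every tuple $(m_K,n_K,m_V,k_\rho)$ or $(m_K,n_K,m_A,n_A,k_\rho)$ actually supports a structure satisfying the axioms of Definition~\ref{def-vect} or Definition~\ref{def-alg}; only the ``quantized'' tuples tabulated in Table~\ref{T3} do. The inequalities (\ref{l1}), (\ref{mnk}), (\ref{nak}) then pin down the admissible ranges, in particular $2\le n_\rho\le n_K$ and $2\le n_A\le n_K$ (Corollary after (\ref{nak})).

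Third, I would package these observations into the statement by noting that ``non-free'' and ```quantized''' mean precisely: the admissible arity tuples are exactly the integer solutions of the systems (\ref{k1})--(\ref{kl}), (\ref{k1a})--(\ref{k1a1}), (\ref{k1b}) in the unknowns $(\ell_\mu,\ell_{\operatorname*{id}})$, etc., subject to the positivity bounds; equivalently the $\ell$-shape maps given by (\ref{l1n}), (\ref{l2m}), (\ref{l1r}) must land in $\mathbb{Z}_{\geq 0}\times\mathbb{Z}_{\geq 0}$. The proof is then a one-line invocation: ``This follows immediately from (\ref{l1n}), (\ref{l2m}), (\ref{l1r}) together with the fact that $\ell_\mu,\ell_\mu',\ell_\mu'',\ell_{\operatorname*{id}},\ell_{\operatorname*{id}}',\ell_{\operatorname*{id}}''$ are nonnegative integers by construction,'' possibly with a remark that the three quantization conditions coincide numerically and are exhibited in Table~\ref{T3}.

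The main ``obstacle'' is not mathematical depth but precision of formulation: the theorem as stated is qualitative (``non-free'', ```quantized'''), so the real work is deciding how much to formalize. The honest reading is that this is a \emph{principle} collecting the preceding propositions, and the cleanest proof simply cites them. If one wanted a sharper statement, the only nontrivial content would be a clean description of the solution set of the Diophantine systems — e.g. parametrizing, for fixed $k_\rho$ and fixed $\ell$-shape $(\ell_\mu,\ell_{\operatorname*{id}})$, the resulting arithmetic progressions $n_K = 1 + \tfrac{k_\rho}{\ell_\mu}(n_\rho-1)$ as in the rows of Table~\ref{T3} — but this is already implicit in (\ref{l1n})--(\ref{l1r}) and the table, so I would not belabor it. I would therefore keep the proof short, referencing (\ref{l1n}), (\ref{l2m}), (\ref{l1r}) and Table~\ref{T3}, and leave the interpretive weight to the surrounding discussion.
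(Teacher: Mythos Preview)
Your proposal is correct and matches the paper's approach exactly: the paper offers no separate proof for this theorem, presenting it instead as a summarizing principle that follows directly from the preceding remark that the $\ell$-shapes (\ref{l1n}), (\ref{l2m}), (\ref{l1r}) are pairs of integers, together with the coincidence of their quantization rules displayed in \textsc{Table~\ref{T3}}. Your plan to cite those formulas and the table, noting that $\ell_\mu,\ell_{\operatorname*{id}}$ (and their primed variants) are nonnegative integers by construction, is precisely what the paper does implicitly with the phrase ``Thus, we arrive at the following.''
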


The above definitions can be generalized, as in the binary case by considering
a polyadic ring $\mathcal{R}_{m_{K},n_{K}}$ instead of a polyadic field
$\mathbb{K}_{m_{K},n_{K}}$. In this way a polyadic vector space becomes a
\textit{polyadic module over a ring} or \textit{polyadic }$\mathcal{R}%
$\textit{-module}, while a polyadic algebra over a polyadic field becomes a
\textit{polyadic algebra over a ring} or \textit{polyadic }$\mathcal{R}%
$-\textit{algebra}. All the axioms and relations between arities in the
\textbf{Definition \ref{def-vect}} and \textbf{Definition \ref{def-alg}}
remain the same. However, one should take into account that the ring
multiplication $\mathbf{\kappa}_{n_{K}}$ can be noncommutative, and therefore
for\textit{ }polyadic $\mathcal{R}$-modules and $\mathcal{R}$-algebras it is
necessary to consider many different kinds of multiactions $\mathbf{\rho
}_{k_{\rho}}$ (all of them are described in (\ref{rrk})). For instance, in the
ternary case this corresponds to left, right and central ternary modules, and
tri-modules \cite{car5,baz/bor/ker}.

\section{Mappings between polyadic algebraic structures}

Let us consider $D_{V}$ different polyadic vector spaces over the same
polyadic field $\mathbb{K}_{m_{K},n_{K}}$, as
\begin{equation}
\mathcal{V}_{m_{K},n_{K},m_{V}^{\left(  i\right)  },k_{\rho}^{\left(
i\right)  }}^{\left(  i\right)  }=\left\langle K;\mathsf{V}^{\left(  i\right)
}\mid\mathbf{\sigma}_{m_{K}},\mathbf{\kappa}_{n_{K}};\mathbf{\nu}%
_{m_{V}^{\left(  i\right)  }}^{\left(  i\right)  }\mid\mathbf{\rho}_{k_{\rho
}^{\left(  i\right)  }}^{\left(  i\right)  }\right\rangle
,\ \ \ \ \ \ i=1,\ldots,D_{V}<\infty.
\end{equation}

Here we define a polyadic analog of a \textquotedblleft
linear\textquotedblright\ mapping for polyadic vector spaces which
\textquotedblleft commutes\textquotedblleft\ with the \textquotedblleft vector
addition\textquotedblright\ and the \textquotedblleft multiplication by
scalar\textquotedblright\ (an analog of the additivity $\mathbf{F}\left(
\mathsf{v}+\mathsf{u}\right)  =\mathbf{F}\left(  \mathsf{v}\right)
+\mathbf{F}\left(  \mathsf{u}\right)  $, and the homogeneity of degree one
$\mathbf{F}\left(  \lambda\mathsf{v}\right)  =\lambda\mathbf{F}\left(
\mathsf{v}\right)  $, $\mathsf{v},\mathsf{u}\in\mathsf{V}$, $\lambda\in K$).

\begin{definition}
\label{def-lin1}A \textit{1-place (\textquotedblleft}$\mathbb{K}%
$\textit{-linear\textquotedblright)\ mapping} between polyadic vector spaces
$\mathcal{V}_{m_{K},n_{K},m_{V},k_{\rho}}=\left\langle K;\mathsf{V}%
\mid\mathbf{\sigma}_{m_{K}},\mathbf{\kappa}_{n_{K}};\mathbf{\nu}_{m_{V}}%
\mid\mathbf{\rho}_{k_{\rho}}\right\rangle $ and $\mathcal{V}_{m_{K}%
,n_{K},m_{V},k_{\rho}}=\left\langle K;\mathsf{V}^{\prime}\mid\mathbf{\sigma
}_{m_{K}},\mathbf{\kappa}_{n_{K}};\mathbf{\nu}_{m_{V}}^{\prime}\mid
\mathbf{\rho}_{k_{\rho}}^{\prime}\right\rangle $ over the same polyadic field
$\mathbb{K}_{m_{K},n_{K}}=\left\langle K\mid\mathbf{\sigma}_{m_{K}%
},\mathbf{\kappa}_{n_{K}}\right\rangle $ is $\mathbf{F}_{1}:\mathsf{V}%
\rightarrow\mathsf{V}^{\prime}$, such that%
\begin{align}
\mathbf{F}_{1}\left(  \mathbf{\nu}_{m_{V}}\left[  \mathsf{v}_{1}%
,\ldots,\mathsf{v}_{m_{V}}\right]  \right)   &  =\mathbf{\nu}_{m_{V}}^{\prime
}\left[  \mathbf{F}_{1}\left(  \mathsf{v}_{1}\right)  ,\ldots,\mathbf{F}%
_{1}\left(  \mathsf{v}_{m_{V}}\right)  \right]  ,\label{fv}\\
\mathbf{F}_{1}\left(  \mathbf{\rho}_{k_{\rho}}\left\{  \left.
\begin{array}
[c]{c}%
\lambda_{1}\\
\vdots\\
\lambda_{k_{\rho}}%
\end{array}
\right\vert \mathsf{v}\right\}  \right)   &  =\mathbf{\rho}_{k_{\rho}}%
^{\prime}\left\{  \left.
\begin{array}
[c]{c}%
\lambda_{1}\\
\vdots\\
\lambda_{k_{\rho}}%
\end{array}
\right\vert \mathbf{F}_{1}\left(  \mathsf{v}\right)  \right\}  , \label{fr}%
\end{align}
where $\mathsf{v}_{1},\ldots,\mathsf{v}_{m_{V}},\mathsf{v}\in\mathsf{V}$,
$\lambda_{1},\ldots,\lambda_{k_{\rho}}\in K$.
\end{definition}

If $\mathsf{z}_{V}$ is a \textquotedblleft zero vector\textquotedblright\ in
$\mathsf{V}$ and $\mathsf{z}_{V^{\prime}}$ is a \textquotedblleft zero
vector\textquotedblright\ in $\mathsf{V}^{\prime}$ (see (\ref{z})), then it
follows from (\ref{fv})--(\ref{fr}), that $\mathbf{F}_{1}\left(
\mathsf{z}_{V}\right)  =\mathsf{z}_{V^{\prime}}$.

The initial and final arities of $\mathbf{\nu}_{m_{V}}$ (\textquotedblleft
vector addition\textquotedblright) and the multiaction $\mathbf{\rho}%
_{k_{\rho}}$ (\textquotedblleft multiplication by scalar\textquotedblright)
coincide, because $\mathbf{F}_{1}$ is a 1-place mapping (a linear
homomorphism). In \cite{dup2012} \textit{multiplace mappings} and
corresponding \textit{heteromorphisms} were introduced. The latter allow us to
change arities ($m_{V}\rightarrow m_{V}^{\prime}$, $k_{\rho}\rightarrow
k_{\rho}^{\prime}$), which is the main difference between binary and polyadic mappings.

\begin{definition}
\label{def-link}A $k_{F}$\textit{-place (\textquotedblleft}$\mathbb{K}%
$\textit{-linear\textquotedblright)\ mapping} between two polyadic vector
spaces $\mathcal{V}_{m_{K},n_{K},m_{V},k_{\rho}}=\left\langle K;\mathsf{V}%
\mid\mathbf{\sigma}_{m_{K}},\mathbf{\kappa}_{n_{K}};\mathbf{\nu}_{m_{V}}%
\mid\mathbf{\rho}_{k_{\rho}}\right\rangle $ and $\mathcal{V}_{m_{K}%
,n_{K},m_{V},k_{\rho}}=\left\langle K;\mathsf{V}^{\prime}\mid\mathbf{\sigma
}_{m_{K}},\mathbf{\kappa}_{n_{K}};\mathbf{\nu}_{m_{V}^{\prime}}^{\prime}%
\mid\mathbf{\rho}_{k_{\rho}^{\prime}}^{\prime}\right\rangle $ over the same
polyadic field $\mathbb{K}_{m_{K},n_{K}}=\left\langle K\mid\mathbf{\sigma
}_{m_{K}},\mathbf{\kappa}_{n_{K}}\right\rangle $ is defined, if there exists
$\mathbf{F}_{k_{F}}:\mathsf{V}^{\times k_{F}}\rightarrow\mathsf{V}^{\prime}$,
such that%
\begin{equation}
\mathbf{F}_{k_{F}}\left(
\genfrac{}{}{0pt}{}{\left.
\begin{array}
[c]{c}%
\mathbf{\nu}_{m_{V}}\left[  \mathsf{v}_{1},\ldots,\mathsf{v}_{m_{V}}\right] \\
\vdots\\
\mathbf{\nu}_{m_{V}}\left[  \mathsf{v}_{m_{V}\left(  \ell_{\mu}^{k}-1\right)
},\ldots\mathsf{v}_{m_{V}\ell_{\mu}^{k}}\right]
\end{array}
\right\}  \ell_{\mu}^{k}}{\left.
\begin{array}
[c]{c}%
\mathsf{v}_{m_{V}\ell_{\mu}^{k}+1},\\
\vdots\\
\mathsf{v}_{m_{V}\ell_{\mu}^{k}+\ell_{\operatorname*{id}}^{k}}%
\end{array}
\right\}  \ell_{\operatorname*{id}}^{k}}%
\right)  =\mathbf{\nu}_{m_{V}^{\prime}}^{\prime}\left[  \mathbf{F}_{k_{F}%
}\left(
\begin{array}
[c]{c}%
\mathsf{v}_{1}\\
\vdots\\
\mathsf{v}_{k_{F}}%
\end{array}
\right)  ,\ldots,\mathbf{F}_{k_{F}}\left(
\begin{array}
[c]{c}%
\mathsf{v}_{k_{F}\left(  m_{V}^{\prime}-1\right)  }\\
\vdots\\
\mathsf{v}_{k_{F}m_{V}^{\prime}}%
\end{array}
\right)  \right]  , \label{fv1}%
\end{equation}%
\begin{equation}
\mathbf{F}_{k_{F}}\left(
\begin{array}
[c]{c}%
\left.
\begin{array}
[c]{c}%
\mathbf{\rho}_{k_{\rho}}\left\{  \left.
\begin{array}
[c]{c}%
\lambda_{1}\\
\vdots\\
\lambda_{k_{\rho}}%
\end{array}
\right\vert \mathsf{v}_{1}\right\} \\
\vdots\\
\mathbf{\rho}_{k_{\rho}}\left\{  \left.
\begin{array}
[c]{c}%
\lambda_{k_{\rho}\left(  \ell_{\mu}^{f}-1\right)  }\\
\vdots\\
\lambda_{k_{\rho}\ell_{\mu}^{f}}%
\end{array}
\right\vert \mathsf{v}_{\ell_{\mu}^{f}}\right\}
\end{array}
\right\}  \ell_{\mu}^{f}\\
\left.
\begin{array}
[c]{c}%
\mathsf{v}_{\ell_{\mu}^{f}+1}\\
\vdots\\
\mathsf{v}_{k_{F}}%
\end{array}
\right\}  \ell_{\operatorname*{id}}^{f}%
\end{array}
\right)  =\mathbf{\rho}_{k_{\rho}^{\prime}}^{\prime}\left\{  \left.
\begin{array}
[c]{c}%
\lambda_{1}\\
\vdots\\
\lambda_{k_{\rho}^{\prime}}%
\end{array}
\right\vert \mathbf{F}_{k_{F}}\left(
\begin{array}
[c]{c}%
\mathsf{v}_{1}\\
\vdots\\
\mathsf{v}_{k_{F}}%
\end{array}
\right)  \right\}  , \label{fr1}%
\end{equation}
where $\mathsf{v}_{1},\ldots,\mathsf{v}_{m_{V}},\mathsf{v}\in\mathsf{V}$,
$\lambda_{1},\ldots,\lambda_{k_{\rho}}\in K$, and the four integers
$\ell_{\rho}^{k}$, $\ell_{\operatorname*{id}}^{k}$, $\ell_{\rho}^{f}$,
$\ell_{\operatorname*{id}}^{f}$ define the $\ell$-shape of the mapping.
\end{definition}

It follows from (\ref{fv1})--(\ref{fr1}), that the arities satisfy%
\begin{equation}
k_{F}m_{V}^{\prime}=m_{V}\ell_{\mu}^{k}+\ell_{\operatorname*{id}}%
^{k},\ \ \ k_{F}=\ell_{\mu}^{k}+\ell_{\operatorname*{id}}^{k},\ \ \ k_{F}%
=\ell_{\mu}^{f}+\ell_{\operatorname*{id}}^{f},\ \ \ k_{\mu}^{\prime}=k_{\rho
}\ell_{\mu}^{f}.
\end{equation}

The following inequalities hold%
\begin{equation}
1\leq\ell_{\mu}^{k}\leq k_{F},\ 0\leq\ell_{\operatorname*{id}}^{k}\leq
k_{F}-1,\ \ell_{\mu}^{k}\leq k_{F}\leq\left(  m_{V}-1\right)  \ell_{\mu}%
^{k},\ 2\leq m_{V}^{\prime}\leq m_{V},\ 2\leq k_{\rho}\leq k_{\rho}^{\prime}.
\end{equation}

Thus, the $\ell$-shape of the $k_{F}$-place mapping between polyadic vector
spaces is determined by%
\begin{equation}
\ell_{\mu}^{k}=\dfrac{k_{F}\left(  m_{V}-1\right)  }{m_{V}-1},\ \ \ell
_{\operatorname*{id}}^{k}=\dfrac{k_{F}\left(  m_{V}-m_{V}^{\prime}\right)
}{m_{V}-1},\ \ \ell_{\mu}^{f}=\dfrac{k_{\rho}}{k_{\rho}^{\prime}}%
,\ \ \ell_{\operatorname*{id}}^{f}=k_{F}-\dfrac{k_{\rho}}{k_{\rho}^{\prime}}.
\end{equation}

\subsection{\label{sub-polfun}Polyadic functionals and dual polyadic vector
spaces}

An important particular case of the $k_{F}$-place mapping can be considered,
where the final polyadic vector space coincides with the underlying field
(analog of a \textquotedblleft linear functional\textquotedblright).

\begin{definition}
\label{def-linh}A\textit{ }\textquotedblleft linear\textquotedblright\textit{
polyadic functional} (or \textit{polyadic dual vector}, \textit{polyadic
covector}) is a $k_{L}$-place mapping of a polyadic vector space
$\mathcal{V}_{m_{K},n_{K},m_{V},k_{\rho}}=\left\langle K;\mathsf{V}%
\mid\mathbf{\sigma}_{m_{K}},\mathbf{\kappa}_{n_{K}};\mathbf{\nu}_{m_{V}}%
\mid\mathbf{\rho}_{k_{\rho}}\right\rangle $ into its polyadic field
$\mathbb{K}_{m_{K},n_{K}}=\left\langle K\mid\mathbf{\sigma}_{m_{K}%
},\mathbf{\kappa}_{n_{K}}\right\rangle $, such that there exists \textbf{$L$%
}$_{k_{L}}:\mathsf{V}^{\times k_{L}}\rightarrow K$, and%
\begin{equation}
\mathbf{L}_{k_{L}}\left(
\genfrac{}{}{0pt}{}{\left.
\begin{array}
[c]{c}%
\mathbf{\nu}_{m_{V}}\left[  \mathsf{v}_{1},\ldots,\mathsf{v}_{m_{V}}\right] \\
\vdots\\
\mathbf{\nu}_{m_{V}}\left[  \mathsf{v}_{m_{V}\left(  \ell_{\nu}^{k}-1\right)
},\ldots\mathsf{v}_{m_{V}\ell_{\nu}^{k}}\right]
\end{array}
\right\}  \ell_{\nu}^{k}}{\left.
\begin{array}
[c]{c}%
\mathsf{v}_{m_{V}\ell_{\nu}^{k}+1},\\
\vdots\\
\mathsf{v}_{n_{K}\ell_{\nu}^{k}+\ell_{\operatorname*{id}}^{\nu}}%
\end{array}
\right\}  \ell_{\operatorname*{id}}^{\nu}}%
\right)  =\mathbf{\sigma}_{m_{K}}\left[  \mathbf{L}_{k_{L}}\left(
\begin{array}
[c]{c}%
\mathsf{v}_{1}\\
\vdots\\
\mathsf{v}_{k_{L}}%
\end{array}
\right)  ,\ldots,\mathbf{L}_{k_{L}}\left(
\begin{array}
[c]{c}%
\mathsf{v}_{k_{L}\left(  m_{K}-1\right)  }\\
\vdots\\
\mathsf{v}_{k_{L}m_{K}}%
\end{array}
\right)  \right]  , \label{h1}%
\end{equation}%
\begin{equation}
\mathbf{L}_{k_{L}}\left(
\begin{array}
[c]{c}%
\left.
\begin{array}
[c]{c}%
\mathbf{\rho}_{k_{\rho}}\left\{  \left.
\begin{array}
[c]{c}%
\lambda_{1}\\
\vdots\\
\lambda_{k_{\rho}}%
\end{array}
\right\vert \mathsf{v}_{1}\right\} \\
\vdots\\
\mathbf{\rho}_{k_{\rho}}\left\{  \left.
\begin{array}
[c]{c}%
\lambda_{k_{\rho}\left(  \ell_{\mu}^{L}-1\right)  }\\
\vdots\\
\lambda_{k_{\rho}\ell_{\mu}^{L}}%
\end{array}
\right\vert \mathsf{v}_{\ell_{\mu}^{L}}\right\}
\end{array}
\right\}  \ell_{\mu}^{L}\\
\left.
\begin{array}
[c]{c}%
\mathsf{v}_{\ell_{\mu}^{L}+1}\\
\vdots\\
\mathsf{v}_{k_{L}}%
\end{array}
\right\}  \ell_{\operatorname*{id}}^{L}%
\end{array}
\right)  =\mathbf{\kappa}_{n_{K}}\left[  \lambda_{1},\ldots,\lambda_{n_{K}%
-1},\mathbf{L}_{k_{L}}\left(
\begin{array}
[c]{c}%
\mathsf{v}_{1}\\
\vdots\\
\mathsf{v}_{k_{L}}%
\end{array}
\right)  \right]  , \label{h2}%
\end{equation}
where $\mathsf{v}_{1},\ldots,\mathsf{v}_{m_{V}},\mathsf{v}\in V$, $\lambda
_{1},\ldots,\lambda_{n_{K}}\in K$, and the integers $\ell_{\nu}^{k}$,
$\ell_{\operatorname*{id}}^{\nu}$, $\ell_{\mu}^{L}$, $\ell_{\operatorname*{id}%
}^{L}$ define the $\ell$-shape of \textbf{$L$}$_{k_{L}}$.
\end{definition}

It follows from (\ref{fv1})--(\ref{fr1}), that the arities satisfy%
\begin{equation}
k_{L}m_{K}=m_{V}\ell_{\nu}^{k}+\ell_{\operatorname*{id}}^{\nu},\ \ \ k_{L}%
=\ell_{\nu}^{k}+\ell_{\operatorname*{id}}^{\nu},\ \ \ k_{L}=\ell_{\mu}%
^{h}+\ell_{\operatorname*{id}}^{h},\ \ \ n_{K}-1=k_{\rho}\ell_{\mu}^{h},
\end{equation}
and for them%
\begin{equation}
1\leq\ell_{\nu}^{k}\leq k_{L},\ 0\leq\ell_{\operatorname*{id}}^{\nu}\leq
k_{L}-1,\ \ell_{\nu}^{k}\leq k_{L}\leq\left(  m_{V}-1\right)  \ell_{\nu}%
^{k},\ 2\leq m_{K}\leq m_{V},\ 2\leq k_{\rho}\leq n_{K}-1.
\end{equation}

Thus, the $\ell$-shape of the polyadic functional is determined by%
\begin{equation}
\ell_{\nu}^{k}=\dfrac{k_{L}\left(  m_{K}-1\right)  }{m_{V}-1},\ \ \ell
_{\operatorname*{id}}^{\nu}=\dfrac{k_{L}\left(  m_{V}-m_{K}\right)  }{m_{V}%
-1},\ \ \ell_{\mu}^{h}=\dfrac{k_{\rho}}{n_{K}-1},\ \ \ell_{\operatorname*{id}%
}^{h}=k_{L}-\dfrac{k_{\rho}}{n_{K}-1}.
\end{equation}

In the binary case, because the dual vectors (linear functionals) take their
values in the underlying field, new operations between them, such that the
dual vector \textquotedblleft addition\textquotedblright\ $\left(  +^{\ast
}\right)  $ and the \textquotedblleft multiplication by a
scalar\textquotedblright\ $\left(  \bullet^{\ast}\right)  $ can be naturally
introduced by $\left(  L^{\left(  1\right)  }+^{\ast}L^{\left(  2\right)
}\right)  \left(  \mathsf{v}\right)  =L^{\left(  1\right)  }\left(
\mathsf{v}\right)  +L^{\left(  2\right)  }\left(  \mathsf{v}\right)  $,
$\left(  \lambda\bullet^{\ast}L\right)  \left(  \mathsf{v}\right)
=\lambda\bullet L\left(  \mathsf{v}\right)  $, which leads to another vector
space structure, called a dual vector space. Note that operations $+^{\ast}$
and $+$, $\bullet^{\ast}$ and $\bullet$ are different, because $+$ and
$\bullet$ are the operations in the underlying field $\mathbb{K}$. In the
polyadic case, we have more complicated arity changing formulas, and here we
consider finite-dimensional spaces only. The arities of operations between
dual vectors can be different from ones in the underlying polyadic field
$\mathbb{K}_{m_{K}n_{K}}$, in general. In this way, we arrive to the following

\begin{definition}
A \textit{polyadic dual vector space} over a polyadic field $\mathbb{K}%
_{m_{K},n_{K}}$ is%
\begin{equation}
\mathcal{V}_{m_{K},n_{K},m_{V}^{\ast},k_{\rho}^{\ast}}^{\ast}=\left\langle
K;\left\{  \mathbf{L}_{k_{L}}^{\left(  i\right)  }\right\}  \mid
\mathbf{\sigma}_{m_{K}},\mathbf{\kappa}_{n_{K}};\mathbf{\nu}_{m_{L}}^{\ast
}\mid\mathbf{\rho}_{k_{L}}^{\ast}\right\rangle ,
\end{equation}
and the axioms are:

1) $\left\langle K\mid\mathbf{\sigma}_{m_{K}},\mathbf{\kappa}_{n_{K}%
}\right\rangle $ is a polyadic $\left(  m_{K},n_{K}\right)  $-field
$\mathbb{K}_{m_{K},n_{K}}$;

2) $\left\langle \left\{  \mathbf{L}_{k_{L}}^{\left(  i\right)  }\right\}
\mid\mathbf{\nu}_{m_{L}}^{\ast},i=1,\ldots,D_{L}\right\rangle $ is a
commutative $m_{L}$-ary group (which is finite, if $D_{L}<\infty$);

3) The \textquotedblleft dual vector addition\textquotedblright\ $\mathbf{\nu
}_{m_{L}}^{\ast}$ is compatible with the polyadic field addition
$\mathbf{\sigma}_{m_{K}}$ by%
\begin{equation}
\mathbf{\nu}_{m_{L}}^{\ast}\left[  \mathbf{L}_{k_{L}}^{\left(  1\right)
},\ldots,\mathbf{L}_{k_{L}}^{\left(  m_{L}\right)  }\right]  \left(
\mathbf{a}^{\left(  k_{L}\right)  }\right)  =\mathbf{\sigma}_{m_{K}}\left[
\mathbf{L}_{k_{L}}^{\left(  1\right)  }\left(  \mathbf{a}^{\left(
k_{L}\right)  }\right)  ,\ldots,\mathbf{L}_{k_{L}}^{\left(  m_{K}\right)
}\left(  \mathbf{v}^{\left(  k_{L}\right)  }\right)  \right]  ,
\end{equation}
where $\mathbf{v}^{\left(  k_{L}\right)  }=\left(
\begin{array}
[c]{c}%
\mathsf{v}_{1}\\
\vdots\\
\mathsf{v}_{k_{L}}%
\end{array}
\right)  $, $\mathsf{v}_{1},\ldots,\mathsf{v}_{k_{L}}\in\mathsf{V}$, and it
follows that%
\begin{equation}
m_{L}=m_{K}.
\end{equation}

4) The compatibility of $\mathbf{\rho}_{k_{L}}^{\ast}$ with the
\textquotedblleft multiplication by a scalar\textquotedblright\ in the
underlying polyadic field%
\begin{equation}
\mathbf{\rho}_{k_{L}}^{\ast}\left\{  \left.
\begin{array}
[c]{c}%
\lambda_{1}\\
\vdots\\
\lambda_{k_{L}}%
\end{array}
\right\vert \mathbf{L}_{k_{L}}\right\}  \left(  \mathbf{v}^{\left(
k_{L}\right)  }\right)  =\mathbf{\kappa}_{n_{K}}\left[  \lambda_{1}%
,\ldots,\lambda_{n_{K}-1},\mathbf{L}_{k_{L}}\left(  \mathbf{v}^{\left(
k_{L}\right)  }\right)  \right]  ,
\end{equation}
and then%
\begin{equation}
k_{L}=n_{K}-1 \label{kn}%
\end{equation}

5) $\left\langle \left\{  \mathbf{\rho}_{k_{L}}^{\ast}\right\}  \mid
composition\right\rangle $ is a $n_{L}$-ary semigroup $\mathcal{S}_{L}$
(similar to (\ref{rrk}))%
\begin{align}
&  \mathbf{\rho}_{k_{L}}^{\ast}\overset{n_{L}}{\overbrace{\left\{  \left.
\begin{array}
[c]{c}%
\lambda_{1}\\
\vdots\\
\lambda_{k_{L}}%
\end{array}
\right\vert \left.
\begin{array}
[c]{c}%
\ \\
\ldots\\
\
\end{array}
\right\vert \mathbf{\rho}_{k_{L}}^{\ast}\left\{  \left.
\begin{array}
[c]{c}%
\lambda_{k_{L}\left(  n_{L}-1\right)  }\\
\vdots\\
\lambda_{k_{L}n_{L}}%
\end{array}
\right\vert \mathbf{L}_{k_{L}}\right\}  \ldots\right\}  }}\left(
\mathbf{v}^{\left(  k_{L}\right)  }\right) \\
&  =\mathbf{\rho}_{k_{L}}^{\ast}\left\{  \left.
\genfrac{}{}{0pt}{}{\left.
\begin{array}
[c]{c}%
\mathbf{\kappa}_{n_{K}}\left[  \lambda_{1},\ldots\lambda_{n_{K}}\right]  ,\\
\vdots\\
\mathbf{\kappa}_{n_{K}}\left[  \lambda_{n_{K}\left(  \ell_{\mu}^{L}-1\right)
},\ldots\lambda_{n_{K}\ell_{\mu}^{L}}\right]
\end{array}
\right\}  \ell_{\mu}^{L}}{\left.
\begin{array}
[c]{c}%
\lambda_{n_{K}\ell_{\mu}^{L}+1},\\
\vdots\\
\lambda_{n_{K}\ell_{\mu}^{L}+\ell_{\operatorname*{id}}^{L}}%
\end{array}
\right\}  \ell_{\operatorname*{id}}^{L}}%
\right\vert \mathbf{L}_{k_{L}}\right\}  \left(  \mathbf{v}^{\left(
k_{L}\right)  }\right)  ,
\end{align}
where the $\ell$-shape is determined by the system%
\begin{equation}
k_{L}n_{L}=n_{K}\ell_{\mu}^{L}+\ell_{\operatorname*{id}}^{L},\ \ \ \ \ k_{L}%
=\ell_{\mu}^{L}+\ell_{\operatorname*{id}}^{L}. \label{knn}%
\end{equation}

\end{definition}

Using (\ref{kn}) and (\ref{knn}), we obtain the $\ell$-shape as%
\begin{equation}
\ell_{\mu}^{L}=n_{L}-1,\ \ \ \ \ell_{\operatorname*{id}}^{L}=n_{K}-n_{L}.
\label{ln}%
\end{equation}

\begin{corollary}
The arity $n_{L}$ of the semigroup $\mathcal{S}_{L}$ is less than or equal to
the arity $n_{K}$ of the underlying polyadic field $n_{L}\leq n_{K}$.
\end{corollary}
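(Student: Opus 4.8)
The plan is to obtain the inequality directly from the $\ell$-shape formula (\ref{ln}) that has just been extracted from the defining system (\ref{kn})--(\ref{knn}). The key observation is that $\ell_{\operatorname*{id}}^{L}$, being the number of \emph{intact elements} in the composition of the multiactions $\mathbf{\rho}_{k_{L}}^{\ast}$ on dual vectors, is a nonnegative integer; equivalently, $\ell_{\operatorname*{id}}^{L}\geq 0$ is forced by the second relation of (\ref{knn}), $k_{L}=\ell_{\mu}^{L}+\ell_{\operatorname*{id}}^{L}$, together with $\ell_{\mu}^{L}\geq 1$ (at least one field multiplication $\mathbf{\kappa}_{n_{K}}$ must occur for the $n_{L}$-ary composition to be defined) and with $k_{L}=n_{K}-1$ from (\ref{kn}), exactly in the spirit of the inequalities (\ref{l1}), (\ref{mnk}), (\ref{nak}) established for the other two-set structures.

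To carry this out, I would first eliminate $\ell_{\operatorname*{id}}^{L}$ between the two relations of (\ref{knn}): writing $\ell_{\operatorname*{id}}^{L}=k_{L}-\ell_{\mu}^{L}$ and substituting into $k_{L}n_{L}=n_{K}\ell_{\mu}^{L}+\ell_{\operatorname*{id}}^{L}$ yields $k_{L}(n_{L}-1)=(n_{K}-1)\ell_{\mu}^{L}$; inserting $k_{L}=n_{K}-1$ from (\ref{kn}) and cancelling $n_{K}-1\neq 0$ gives $\ell_{\mu}^{L}=n_{L}-1$, whence $\ell_{\operatorname*{id}}^{L}=k_{L}-\ell_{\mu}^{L}=n_{K}-n_{L}$, which is precisely (\ref{ln}). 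Then $\ell_{\operatorname*{id}}^{L}\geq 0$ reads $n_{K}-n_{L}\geq 0$, i.e. $n_{L}\leq n_{K}$, which is the assertion; simultaneously $\ell_{\mu}^{L}\geq 1$ gives the companion lower bound $n_{L}\geq 2$, so in fact $2\leq n_{L}\leq n_{K}$.

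I do not expect a genuine obstacle: the corollary is a one-line consequence of (\ref{ln}). The only point worth a sentence of justification is the nonnegativity of $\ell_{\operatorname*{id}}^{L}$ — but this is immediate from its combinatorial meaning as a count of argument positions left untouched in the iterated multiaction, in complete analogy with the heteromorphism arity-changing formulas of \cite{dup2012}; if one prefers to avoid the interpretation, one simply notes that (\ref{knn}) and (\ref{kn}) constrain $\ell_{\operatorname*{id}}^{L}$ to the range $0\leq\ell_{\operatorname*{id}}^{L}\leq k_{L}-1$, just as in (\ref{l1}). It is also worth recording that equality $n_{L}=n_{K}$ holds exactly when $\ell_{\operatorname*{id}}^{L}=0$, that is, when the composition of multiactions on polyadic covectors involves no intact slots.
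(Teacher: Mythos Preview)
Your proposal is correct and follows essentially the same approach as the paper: the corollary is stated immediately after (\ref{ln}) with no further proof, since $n_{L}\leq n_{K}$ is read off directly from $\ell_{\operatorname*{id}}^{L}=n_{K}-n_{L}\geq 0$. Your derivation of (\ref{ln}) from (\ref{kn})--(\ref{knn}) and your justification of the nonnegativity of $\ell_{\operatorname*{id}}^{L}$ are both accurate and in the spirit of the analogous inequalities (\ref{l1}), (\ref{mnk}), (\ref{nak}) used elsewhere in the paper.
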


\subsection{Polyadic direct sum and tensor product}

The Cartesian product of $D_{V}$ polyadic vector spaces $\times\Pi
_{i=1}^{m_{V}}\mathcal{V}_{m_{K}n_{K}m_{V}^{\left(  i\right)  }k_{\rho
}^{\left(  i\right)  }}^{\left(  i\right)  }$(sometimes we use the concise
notation $\times\Pi\mathcal{V}^{\left(  i\right)  }$), $i=1,\ldots,D_{V}$ is
given by the $D_{V}$-ples (an analog of the Cartesian pair $\left(
\mathsf{v},\mathsf{u}\right)  $, $\mathsf{v}\in\mathsf{V}^{\left(  1\right)
}$, $\mathsf{u}\in\mathsf{V}^{\left(  2\right)  }$)%
\begin{equation}
\left(
\begin{array}
[c]{c}%
\mathsf{v}^{\left(  1\right)  }\\
\vdots\\
\mathsf{v}^{\left(  D_{V}\right)  }%
\end{array}
\right)  \equiv\left(  \mathbf{\mathsf{v}}^{\left(  D_{V}\right)  }\right)
\in\mathsf{V}^{\times D_{V}}. \label{a}%
\end{equation}

We introduce a polyadic generalization of the direct sum and tensor product of
vector spaces by considering \textquotedblleft linear\textquotedblright%
\ operations on the $D_{V}$-ples (\ref{a}).

In the first case, to endow $\times\Pi\mathcal{V}^{\left(  i\right)  }$ with
the structure of a vector space we need to define a new operation between the
$D_{V}$-ples (\ref{a}) (similar to vector addition, but between elements from
\textsl{different} spaces) and a rule, specifying how they are
\textquotedblleft multiplied by scalars\textquotedblright\ (analogs of
$\left(  \mathsf{v}_{1},\mathsf{v}_{2}\right)  +\left(  \mathsf{u}%
_{1},\mathsf{u}_{2}\right)  =\left(  \mathsf{v}_{1}+\mathsf{u}_{1}%
,\mathsf{v}_{2}+\mathsf{u}_{2}\right)  $ and $\lambda\left(  \mathsf{v}%
_{1},\mathsf{v}_{2}\right)  =\left(  \lambda\mathsf{v}_{1},\lambda
\mathsf{v}_{2}\right)  $ ). In the binary case, a formal summation is used,
but it can be different from the addition in the initial vector spaces.
Therefore, we can define on the set of the $D_{V}$-ples (\ref{a}) new
operations $\mathbf{\chi}_{m_{V}}$ (\textquotedblleft addition of vectors from
\textsl{different spaces}\textquotedblright) and \textquotedblleft
multiplication by a scalar\textquotedblright\ $\mathbf{\tau}_{k_{\rho}}$,
which \textsl{does not} need to coincide with the corresponding
operations\ $\mathbf{\nu}_{m_{V}^{\left(  i\right)  }}^{\left(  i\right)  }$
and $\mathbf{\rho}_{k_{\rho}^{\left(  i\right)  }}^{\left(  i\right)  }$ of
the initial polyadic vector spaces $\mathcal{V}_{m_{K},n_{K},m_{V}^{\left(
i\right)  },k_{\rho}^{\left(  i\right)  }}^{\left(  i\right)  }$.

If all $D_{V}$-ples (\ref{a}) are of fixed length, then we can define their
\textquotedblleft addition\textquotedblright\ $\mathbf{\chi}_{m_{V}}$ in the
standard way, if all the arities $m_{V}^{\left(  i\right)  }$ coincide and
equal the arity of the resulting vector space%
\begin{equation}
m_{V}=m_{V}^{\left(  1\right)  }=\ldots=m_{V}^{\left(  D_{V}\right)  },
\label{ma}%
\end{equation}
while the operations (\textquotedblleft additions\textquotedblright)
themselves $\mathbf{\nu}_{m_{V}}^{\left(  i\right)  }$ between vectors in
different spaces can be still different. Thus, a new commutative $m_{V}$-ary
operation (\textquotedblleft addition\textquotedblright) $\mathbf{\chi}%
_{m_{V}}$ of the $D_{V}$-ples of the same length is defined by%
\begin{equation}
\mathbf{\chi}_{m_{V}}\left[  \left(
\begin{array}
[c]{c}%
\mathsf{v}_{1}^{\left(  1\right)  }\\
\vdots\\
\mathsf{v}_{1}^{\left(  D_{V}\right)  }%
\end{array}
\right)  ,\ldots,\left(
\begin{array}
[c]{c}%
\mathsf{v}_{m_{V}}^{\left(  1\right)  }\\
\vdots\\
\mathsf{v}_{m_{V}}^{\left(  D_{V}\right)  }%
\end{array}
\right)  \right]  =\left(
\begin{array}
[c]{c}%
\mathbf{\nu}_{m_{V}}^{\left(  1\right)  }\left[  \mathsf{v}_{1}^{\left(
1\right)  },\ldots,\mathsf{v}_{m_{V}}^{\left(  1\right)  }\right] \\
\vdots\\
\mathbf{\nu}_{m_{V}}^{\left(  D_{V}\right)  }\left[  \mathsf{v}_{1}^{\left(
D_{V}\right)  },\ldots,\mathsf{v}_{m_{V}}^{\left(  D_{V}\right)  }\right]
\end{array}
\right)  , \label{va}%
\end{equation}
where $D_{V}\neq m_{V}$, in general. However, it is also possible to add
$D_{V}$-ples of \textsl{different length} such that the operation (\ref{va})
is compatible with all arities $m_{V}^{\left(  i\right)  }$, $i=1,\ldots
,m_{V}$. For instance, if $m_{V}=3$, $m_{V}^{\left(  1\right)  }%
=m_{V}^{\left(  2\right)  }=3$, $m_{V}^{\left(  3\right)  }=2$, then%
\begin{equation}
\mathbf{\chi}_{3}\left[  \left(
\begin{array}
[c]{c}%
\mathsf{v}_{1}^{\left(  1\right)  }\\
\mathsf{v}_{1}^{\left(  2\right)  }\\
\mathsf{v}_{1}^{\left(  3\right)  }%
\end{array}
\right)  ,\left(
\begin{array}
[c]{c}%
\mathsf{v}_{2}^{\left(  1\right)  }\\
\mathsf{v}_{2}^{\left(  2\right)  }\\
\mathsf{v}_{2}^{\left(  3\right)  }%
\end{array}
\right)  ,\left(
\begin{array}
[c]{c}%
\mathsf{v}_{3}^{\left(  1\right)  }\\
\mathsf{v}_{3}^{\left(  2\right)  }\\
\
\end{array}
\right)  \right]  =\left(
\begin{array}
[c]{c}%
\nu_{3}^{\left(  1\right)  }\left[  \mathsf{v}_{1}^{\left(  1\right)
},\mathsf{v}_{2}^{\left(  1\right)  },\mathsf{v}_{3}^{\left(  1\right)
}\right] \\
\nu_{3}^{\left(  2\right)  }\left[  \mathsf{v}_{1}^{\left(  2\right)
},\mathsf{v}_{2}^{\left(  2\right)  },\mathsf{v}_{3}^{\left(  2\right)
}\right] \\
\nu_{2}^{\left(  3\right)  }\left[  \mathsf{v}_{1}^{\left(  3\right)
},\mathsf{v}_{2}^{\left(  3\right)  }\right]
\end{array}
\right)  . \label{vv}%
\end{equation}

\begin{assertion}
In the polyadic case, a direct sum of polyadic vector spaces having
\textit{different arities} of \textquotedblleft vector
addition\textquotedblright\ $m_{V}^{\left(  i\right)  }$ can be defined.
\end{assertion}

Let us introduce the multiaction\ $\mathbf{\tau}_{k_{\rho}}$
(\textquotedblleft multiplication by a scalar\textquotedblright) acting on
$D_{V}$-ple $\left(  \mathbf{\mathsf{v}}^{\left(  m_{V}\right)  }\right)  $,
then%
\begin{equation}
\mathbf{\rho}_{k_{\rho}}\left\{  \left.
\begin{array}
[c]{c}%
\lambda_{1}\\
\vdots\\
\lambda_{k_{\rho}}%
\end{array}
\right\vert \left(
\begin{array}
[c]{c}%
\mathsf{v}^{\left(  1\right)  }\\
\vdots\\
\mathsf{v}^{\left(  D_{V}\right)  }%
\end{array}
\right)  \right\}  =\left(
\begin{array}
[c]{c}%
\mathbf{\rho}_{k_{\rho}^{\left(  1\right)  }}^{\left(  1\right)  }\left\{
\left.
\begin{array}
[c]{c}%
\lambda_{1}\\
\vdots\\
\lambda_{k_{\rho}^{\left(  1\right)  }}%
\end{array}
\right\vert \mathsf{v}^{\left(  1\right)  }\right\} \\
\vdots\\
\mathbf{\rho}_{k_{\rho}^{\left(  D_{V}\right)  }}^{\left(  m_{V}\right)
}\left\{  \left.
\begin{array}
[c]{c}%
\lambda_{k_{\rho}^{\left(  1\right)  }+\ldots+k_{\rho}^{\left(  D_{V}%
-1\right)  }+1}\\
\vdots\\
\lambda_{k_{\rho}^{\left(  1\right)  }+\ldots+k_{\rho}^{\left(  D_{V}\right)
}}%
\end{array}
\right\vert \mathsf{v}^{\left(  D_{V}\right)  }\right\}
\end{array}
\right)  , \label{ra}%
\end{equation}
where
\begin{equation}
k_{\rho}^{\left(  1\right)  }+\ldots+k_{\rho}^{\left(  D_{V}\right)  }%
=k_{\rho}. \label{k}%
\end{equation}

\begin{definition}
A \textit{polyadic direct sum} of $m_{V}$ polyadic vector spaces is their
Cartesian product equipped with the $m_{V}$-ary addition $\mathbf{\chi}%
_{m_{V}}$ and the $k_{\rho}$-place multiaction $\mathbf{\tau}_{k_{\rho}}$,
satisfying (\ref{va}) and (\ref{ra}) respectively%
\begin{equation}
\oplus\Pi_{i=1}^{D_{V}}\mathcal{V}_{m_{K},n_{K},m_{V}^{\left(  i\right)
},k_{\rho}^{\left(  i\right)  }}^{\left(  i\right)  }=\left\{  \times\Pi
_{i=1}^{D_{V}}\mathcal{V}_{m_{K},n_{K},m_{V}^{\left(  i\right)  },k_{\rho
}^{\left(  i\right)  }}^{\left(  i\right)  }\mid\mathbf{\chi}_{m_{V}%
},\mathbf{\tau}_{k_{\rho}}\right\}  .
\end{equation}

\end{definition}

Let us consider another way to define a vector space structure on the $D_{V}%
$-ples from the Cartesian product $\times\Pi\mathcal{V}^{\left(  i\right)  }$.
Remember that in the binary case, the concept of bilinearity is used, which
means \textquotedblleft distributivity\textquotedblright\ and
\textquotedblleft multiplicativity by scalars\textquotedblright\ on
\textit{each place} separately in the Cartesian pair $\left(  \mathsf{v}%
_{1},\mathsf{v}_{2}\right)  \in\mathsf{V}^{\left(  1\right)  }\times
\mathsf{V}^{\left(  2\right)  }$ (\textit{as opposed to} the direct sum, where
these relations hold on all places simultaneously, see (\ref{va}) and
(\ref{ra})) such that%
\begin{align}
\left(  \mathsf{v}_{1}+\mathsf{u}_{1},\mathsf{v}_{2}\right)   &  =\left(
\mathsf{v}_{1},\mathsf{v}_{2}\right)  +\left(  \mathsf{u}_{1},\mathsf{v}%
_{2}\right)  ,\ \ \ \ \ \left(  \mathsf{v}_{1},\mathsf{v}_{2}+\mathsf{u}%
_{2}\right)  =\left(  \mathsf{v}_{1},\mathsf{v}_{2}\right)  +\left(
\mathsf{v}_{1},\mathsf{u}_{2}\right)  ,\label{a1}\\
\lambda\left(  \mathsf{v}_{1},\mathsf{v}_{2}\right)   &  =\left(
\lambda\mathsf{v}_{1},\mathsf{v}_{2}\right)  =\left(  \mathsf{v}_{1}%
,\lambda\mathsf{v}_{2}\right)  , \label{a2}%
\end{align}
respectively. If we denote the ideal corresponding to the relations
(\ref{a1})--(\ref{a2}) by $\mathfrak{B}_{2}$, then the binary tensor product
of the vector spaces can be defined as their Cartesian product by factoring
out this ideal, as $\mathcal{V}^{\left(  1\right)  }\otimes\mathcal{V}%
^{\left(  2\right)  }\overset{def}{=}\mathcal{V}^{\left(  1\right)  }%
\times\mathcal{V}^{\left(  2\right)  }\diagup\mathfrak{B}_{2}$. Note first,
that the additions and multiplications by a scalar on both sides of
(\ref{a1})--(\ref{a2}) \textquotedblleft work\textquotedblright\ in different
spaces, which sometimes can be concealed by adding the word \textquotedblleft
formal\textquotedblright\ to them. Second, all these operations have the
\textit{same} arity (binary ones), which need not be the case when considering
polyadic structures.

As in the case of the polyadic direct sum, we first define a new operation
$\mathbf{\tilde{\chi}}_{m_{V}}$ (\textquotedblleft addition\textquotedblright)
of the $D_{V}$-ples of fixed length (different from $\mathbf{\chi}_{m_{V}}$ in
(\ref{va})), when all the arities $m_{V}^{\left(  i\right)  }$ coincide and
equal to $m_{V}$ (\ref{ma}). Then, a straightforward generalization of
(\ref{a1}) can be defined for $m_{V}$-ples similar to the polyadic
distributivity (\ref{dis1})--(\ref{dis3}), as in the following $m_{V}$
relations%
\begin{equation}
\left(
\begin{array}
[c]{c}%
\mathbf{\nu}_{m_{V}}^{\left(  1\right)  }\left[  \mathsf{v}_{1}^{\left(
1\right)  },\ldots,\mathsf{v}_{m_{V}}^{\left(  1\right)  }\right] \\
\mathsf{u}_{2}\\
\vdots\\
\mathsf{u}_{D_{V}}%
\end{array}
\right)  =\mathbf{\tilde{\chi}}_{m_{V}}\left[  \left(
\begin{array}
[c]{c}%
\mathsf{v}_{1}^{\left(  1\right)  }\\
\mathsf{u}_{2}\\
\vdots\\
\mathsf{u}_{D_{V}}%
\end{array}
\right)  ,\ldots,\left(
\begin{array}
[c]{c}%
\mathsf{v}_{m_{V}}^{\left(  1\right)  }\\
\mathsf{u}_{2}\\
\vdots\\
\mathsf{u}_{D_{V}}%
\end{array}
\right)  \right]  , \label{v1}%
\end{equation}%
\begin{align}
\left(
\begin{array}
[c]{c}%
\mathsf{u}_{1}\\
\mathbf{\nu}_{m_{V}}^{\left(  2\right)  }\left[  \mathsf{v}_{1}^{\left(
2\right)  },\ldots,\mathsf{v}_{m_{V}}^{\left(  2\right)  }\right] \\
\vdots\\
\mathsf{u}_{m_{V}}%
\end{array}
\right)   &  =\mathbf{\tilde{\chi}}_{m_{V}}\left[  \left(
\begin{array}
[c]{c}%
\mathsf{u}_{1}\\
\mathsf{v}_{1}^{\left(  2\right)  }\\
\vdots\\
\mathsf{u}_{m_{V}}%
\end{array}
\right)  ,\ldots,\left(
\begin{array}
[c]{c}%
\mathsf{u}_{1}\\
\mathsf{v}_{m_{V}}^{\left(  2\right)  }\\
\vdots\\
\mathsf{u}_{m_{V}}%
\end{array}
\right)  \right]  ,\\
&  \vdots\nonumber
\end{align}%
\begin{equation}
\left(
\begin{array}
[c]{c}%
\mathsf{u}_{1}\\
\mathsf{u}_{2}\\
\vdots\\
\mathbf{\nu}_{m_{V}}^{\left(  m_{V}\right)  }\left[  \mathsf{v}_{1}^{\left(
D_{V}\right)  },\ldots,\mathsf{v}_{m_{V}}^{\left(  D_{V}\right)  }\right]
\end{array}
\right)  =\mathbf{\tilde{\chi}}_{m_{V}}\left[  \left(
\begin{array}
[c]{c}%
\mathsf{u}_{1}\\
\mathsf{u}_{2}\\
\vdots\\
\mathsf{v}_{1}^{\left(  D_{V}\right)  }%
\end{array}
\right)  ,\ldots,\left(
\begin{array}
[c]{c}%
\mathsf{u}_{1}\\
\mathsf{u}_{2}\\
\vdots\\
\mathsf{v}_{m_{V}}^{\left(  D_{V}\right)  }%
\end{array}
\right)  \right]  . \label{v3}%
\end{equation}

By analogy, if all $k_{\rho}^{\left(  i\right)  }$ are equal we can define a
new multiaction $\mathbf{\tilde{\tau}}_{k_{\rho}}$ (different from
$\mathbf{\tau}_{k_{\rho}}$ (\ref{ra})) with the same number of places%
\begin{equation}
k_{\rho}=k_{\rho}^{\left(  1\right)  }=\ldots=k_{\rho}^{\left(  D_{V}\right)
} \label{kk}%
\end{equation}
as the $D_{V}$ relations (an analog of (\ref{a2}))%
\begin{equation}
\mathbf{\rho}_{k_{\rho}}^{\prime}\left\{  \left.
\begin{array}
[c]{c}%
\lambda_{1}\\
\vdots\\
\lambda_{k_{\rho}}%
\end{array}
\right\vert \left(
\begin{array}
[c]{c}%
\mathsf{v}^{\left(  1\right)  }\\
\vdots\\
\mathsf{v}^{\left(  D_{V}\right)  }%
\end{array}
\right)  \right\}  =\left(
\begin{array}
[c]{c}%
\mathbf{\rho}_{k_{\rho}}^{\left(  1\right)  }\left\{  \left.
\begin{array}
[c]{c}%
\lambda_{1}\\
\vdots\\
\lambda_{k_{\rho}}%
\end{array}
\right\vert \mathsf{v}^{\left(  1\right)  }\right\} \\
\mathsf{v}^{\left(  2\right)  }\\
\vdots\\
\mathsf{v}^{\left(  D_{V}\right)  }%
\end{array}
\right)  \label{r1}%
\end{equation}%
\begin{align}
&  =\left(
\begin{array}
[c]{c}%
\mathsf{v}^{\left(  1\right)  }\\
\mathbf{\rho}_{k_{\rho}}^{\left(  2\right)  }\left\{  \left.
\begin{array}
[c]{c}%
\lambda_{1}\\
\vdots\\
\lambda_{k_{\rho}}%
\end{array}
\right\vert \mathsf{v}^{\left(  2\right)  }\right\} \\
\vdots\\
\mathsf{v}^{\left(  D_{V}\right)  }%
\end{array}
\right) \\
&  \vdots\nonumber
\end{align}%
\begin{equation}
=\left(
\begin{array}
[c]{c}%
\mathsf{v}^{\left(  1\right)  }\\
\mathsf{v}^{\left(  2\right)  }\\
\vdots\\
\mathbf{\rho}_{k_{\rho}}^{\left(  D_{V}\right)  }\left\{  \left.
\begin{array}
[c]{c}%
\lambda_{1}\\
\vdots\\
\lambda_{k_{\rho}}%
\end{array}
\right\vert \mathsf{v}^{\left(  D_{V}\right)  }\right\}
\end{array}
\right)  . \label{r3}%
\end{equation}

Let us denote the ideal corresponding to the relations (\ref{v1})--(\ref{v3}),
(\ref{r1})--(\ref{r3}) by $\mathfrak{B}_{D_{V}}$.

\begin{definition}
A \textit{polyadic tensor product} of $D_{V}$ polyadic vector spaces
$\mathcal{V}_{m_{K},n_{K},m_{V}^{\left(  i\right)  },k_{\rho}^{\left(
i\right)  }}^{\left(  i\right)  }$ is their Cartesian product equipped with
the $m_{V}$-ary addition $\mathbf{\tilde{\chi}}_{m_{V}}$ (of $D_{V}$-ples) and
the $k_{\rho}$-place multiaction $\mathbf{\tilde{\tau}}_{k_{\rho}}$,
satisfying (\ref{v1})--(\ref{v3}) and (\ref{r1})--(\ref{r3}), respectively, by
factoring out the ideal $\mathfrak{B}_{D_{V}}$%
\begin{equation}
\otimes\Pi_{i=1}^{m_{V}}\mathcal{V}_{m_{K},n_{K},m_{V}^{\left(  i\right)
},k_{\rho}^{\left(  i\right)  }}^{\left(  i\right)  }=\left\{  \times\Pi
_{i=1}^{m_{V}}\mathcal{V}_{m_{K},n_{K},m_{V}^{\left(  i\right)  },k_{\rho
}^{\left(  i\right)  }}^{\left(  i\right)  }\mid\mathbf{\tilde{\chi}}_{m_{V}%
},\mathbf{\tilde{\tau}}_{k_{\rho}}\right\}  \diagup\mathfrak{B}_{D_{V}}.
\end{equation}

\end{definition}

As in the case of the polyadic direct sum, we can consider the distributivity
for $D_{V}$-ples of different length. In a similar example (\ref{vv}), if
$m_{V}=3$, $m_{V}^{\left(  1\right)  }=m_{V}^{\left(  2\right)  }=3$,
$m_{V}^{\left(  3\right)  }=2$, we have%
\begin{equation}
\left(
\begin{array}
[c]{c}%
\mathbf{\nu}_{3}^{\left(  1\right)  }\left[  \mathsf{v}_{1}^{\left(  1\right)
},\mathsf{v}_{2}^{\left(  1\right)  },\mathsf{v}_{3}^{\left(  1\right)
}\right] \\
\mathsf{u}_{2}\\
\mathsf{u}_{3}%
\end{array}
\right)  =\mathbf{\tilde{\chi}}_{3}\left[  \left(
\begin{array}
[c]{c}%
\mathsf{v}_{1}^{\left(  1\right)  }\\
\mathsf{u}_{2}\\
\mathsf{u}_{3}%
\end{array}
\right)  ,\left(
\begin{array}
[c]{c}%
\mathsf{v}_{2}^{\left(  1\right)  }\\
\mathsf{u}_{2}\\
\mathsf{u}_{3}%
\end{array}
\right)  ,\left(
\begin{array}
[c]{c}%
\mathsf{v}_{3}^{\left(  1\right)  }\\
\mathsf{u}_{2}\\
\
\end{array}
\right)  \right]  ,
\end{equation}%
\begin{equation}
\left(
\begin{array}
[c]{c}%
\mathsf{u}_{1}\\
\mathbf{\nu}_{3}^{\left(  2\right)  }\left[  \mathsf{v}_{1}^{\left(  2\right)
},\mathsf{v}_{2}^{\left(  2\right)  },\mathsf{v}_{3}^{\left(  2\right)
}\right] \\
\mathsf{u}_{3}%
\end{array}
\right)  =\mathbf{\tilde{\chi}}_{3}\left[  \left(
\begin{array}
[c]{c}%
\mathsf{u}_{1}\\
\mathsf{v}_{1}^{\left(  2\right)  }\\
\mathsf{u}_{3}%
\end{array}
\right)  ,\left(
\begin{array}
[c]{c}%
\mathsf{u}_{1}\\
\mathsf{v}_{2}^{\left(  2\right)  }\\
\mathsf{u}_{3}%
\end{array}
\right)  ,\left(
\begin{array}
[c]{c}%
\mathsf{u}_{1}\\
\mathsf{v}_{3}^{\left(  2\right)  }\\
\
\end{array}
\right)  \right]  ,
\end{equation}%
\begin{equation}
\left(
\begin{array}
[c]{c}%
\mathsf{u}_{1}\\
\mathsf{u}_{2}\\
\mathbf{\nu}_{2}^{\left(  3\right)  }\left[  \mathsf{v}_{1}^{\left(  3\right)
},\mathsf{v}_{2}^{\left(  3\right)  }\right]
\end{array}
\right)  =\mathbf{\tilde{\chi}}_{3}\left[  \left(
\begin{array}
[c]{c}%
\mathsf{u}_{1}\\
\mathsf{u}_{2}\\
\mathsf{v}_{1}^{\left(  3\right)  }%
\end{array}
\right)  ,\left(
\begin{array}
[c]{c}%
\mathsf{u}_{1}\\
\mathsf{u}_{2}\\
\mathsf{v}_{2}^{\left(  3\right)  }%
\end{array}
\right)  ,\left(
\begin{array}
[c]{c}%
\mathsf{u}_{1}\\
\mathsf{u}_{2}\\
\
\end{array}
\right)  \right]  .
\end{equation}

\begin{assertion}
A tensor product of polyadic vector spaces having \textit{different arities}
of the \textquotedblleft vector addition\textquotedblright\ $m_{V}^{\left(
i\right)  }$ can be defined.
\end{assertion}

In the case of modules over a polyadic ring, the formulas connecting arities
and $\ell$-shapes similar to those above hold, while concrete properties
(noncommutativity, mediality, etc.) should be taken into account.

\section{Polyadic inner pairing spaces and norms}

Here we introduce the next important operation: a polyadic analog of the inner
product for polyadic vector spaces - a polyadic inner pairing\footnote{Note
that this concept is different from the $n$-inner product spaces considered in
[Misiak, et al].}. Let $\mathcal{V}_{m_{K},n_{K},m_{V},k_{\rho}}=\left\langle
K;\mathsf{V}\mid\mathbf{\sigma}_{m_{K}},\mathbf{\kappa}_{n_{K}};\mathbf{\nu
}_{m_{V}}\mid\mathbf{\rho}_{k_{\rho}}\right\rangle $ be a polyadic vector
space over the polyadic field $\mathbb{K}_{m_{K},n_{K}}$ (\ref{v}). By analogy
with the binary inner product, we introduce its polyadic counterpart and study
its arity shape.

\begin{definition}
\textit{A polyadic }$N$-place\textit{ inner pairing }(an analog of the inner
product) is a mapping%
\begin{equation}
\overset{N}{\overbrace{\left\langle \left\langle \mathsf{\bullet
}|\mathsf{\bullet}|\ldots|\mathsf{\bullet}\right\rangle \right\rangle }%
}:\mathsf{V}^{\times N}\rightarrow K, \label{aa}%
\end{equation}
satisfying the following conditions:

1) Polyadic \textquotedblleft linearity\textquotedblright\ (\ref{rrk}) (for
first argument):%
\begin{equation}
\left\langle \left\langle \mathbf{\rho}_{k_{\rho}}\left\{  \left.
\begin{array}
[c]{c}%
\lambda_{1}\\
\vdots\\
\lambda_{k_{\rho}}%
\end{array}
\right\vert \mathsf{v}_{1}\right\}  |\mathsf{v}_{2}|\ldots|\mathsf{v}%
_{N}\right\rangle \right\rangle =\mathbf{\kappa}_{n_{K}}\left[  \lambda
_{1},\ldots,\lambda_{k_{\rho}},\left\langle \left\langle \mathsf{v}%
_{1}|\mathsf{v}_{2}|\ldots|\mathsf{v}_{N}\right\rangle \right\rangle \right]
. \label{rl}%
\end{equation}

2) Polyadic \textquotedblleft distributivity\textquotedblright\ (\ref{dis1}%
)--(\ref{dis3}) (on each place):%
\begin{align}
&  \left\langle \left\langle \mathbf{\nu}_{m_{V}}\left[  \mathsf{v}%
_{1},\mathsf{u}_{1},\ldots\mathsf{u}_{m_{V}-1}\right]  |\mathsf{v}_{2}%
|\ldots|\mathsf{v}_{N}\right\rangle \right\rangle \nonumber\\
&  =\mathbf{\sigma}_{m_{K}}\left[  \left\langle \left\langle \mathsf{v}%
_{1}|\mathsf{v}_{2}|\ldots|\mathsf{v}_{N}\right\rangle \right\rangle
,\left\langle \left\langle \mathsf{u}_{1}|\mathsf{v}_{2}|\ldots|\mathsf{v}%
_{N}\right\rangle \right\rangle \ldots\left\langle \left\langle \mathsf{u}%
_{m_{V}-1}|\mathsf{v}_{2}|\ldots|\mathsf{v}_{N}\right\rangle \right\rangle
\right]  . \label{vab}%
\end{align}

If the polyadic field $\mathbb{K}_{m_{K},n_{K}}$ contains the zero $z_{K}$ and
$\left\langle \mathsf{V}\mid m_{V}\right\rangle $ has the zero
\textquotedblleft vector\textquotedblright\ $\mathsf{z}_{V}$\textbf{ }(which
is not always the case in the polyadic case), we have the additional axiom:

3) The polyadic inner pairing vanishes $\left\langle \left\langle
\mathsf{v}_{1}|\mathsf{v}_{2}|\ldots|\mathsf{v}_{N}\right\rangle \right\rangle
=z_{K}$, iff any of the \textquotedblleft vectors\textquotedblright\ vanishes,
$\exists i\in1,\ldots,N$, such that $\mathsf{v}_{i}=\mathsf{z}_{V}$.

If the standard binary ordering on $\mathbb{K}_{m_{K},n_{K}}$ can be defined,
then the polyadic inner pairing satisfies:

4) The positivity condition%
\begin{equation}
\overset{N}{\overbrace{\left\langle \left\langle \mathsf{v}|\mathsf{v}%
|\ldots|\mathsf{v}\right\rangle \right\rangle }}\geq z_{K},
\end{equation}

5) The polyadic \textit{Cauchy-Schwarz inequality (\textquotedblleft
triangle\textquotedblright\ inequality)}%
\begin{align}
&  \mathbf{\kappa}_{n_{K}}\left[  \overset{n_{K}}{\overbrace{\overset
{N}{\overbrace{\left\langle \left\langle \mathsf{v}_{1}|\mathsf{v}_{1}%
|\ldots|\mathsf{v}_{1}\right\rangle \right\rangle }},\overset{N}%
{\overbrace{\left\langle \left\langle \mathsf{v}_{2}|\mathsf{v}_{2}%
|\ldots|\mathsf{v}_{2}\right\rangle \right\rangle }}\ldots\overset
{N}{\overbrace{\left\langle \left\langle \mathsf{v}_{n_{K}}|\mathsf{v}_{n_{K}%
}|\ldots|\mathsf{v}_{n_{K}}\right\rangle \right\rangle }}}}\right] \nonumber\\
&  \geq\mathbf{\kappa}_{n_{K}}\left[  \overset{n_{K}}{\overbrace{\left\langle
\left\langle \mathsf{v}_{1}|\mathsf{v}_{2}|\ldots|\mathsf{v}_{N}\right\rangle
\right\rangle ,\left\langle \left\langle \mathsf{v}_{1}|\mathsf{v}_{2}%
|\ldots|\mathsf{v}_{N}\right\rangle \right\rangle \ldots,\left\langle
\left\langle \mathsf{v}_{1}|\mathsf{v}_{2}|\ldots|\mathsf{v}_{N}\right\rangle
\right\rangle }}\right]  . \label{ka}%
\end{align}

\end{definition}

To make the above relations consistent, the arity shapes should be fixed.

\begin{definition}
If the inner pairing is fully symmetric under permutations it is called a
\textit{polyadic inner product}.
\end{definition}

\begin{proposition}
The number of places in the multiaction $\mathbf{\rho}_{k_{\rho}}$ differs by
$1$ from the multiplication arity of the polyadic field%
\begin{equation}
n_{K}-k_{\rho}=1. \label{nkk}%
\end{equation}

\end{proposition}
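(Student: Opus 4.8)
The plan is to read off the identity (\ref{nkk}) purely from the consistency of axiom~1) in the definition of the polyadic inner pairing, i.e.\ from equation~(\ref{rl}), by the same argument-counting used throughout the paper for arity shapes. First I would note that the inner pairing takes its values in the field $K$, so that every bracket $\left\langle \left\langle \mathsf{v}_{1}|\mathsf{v}_{2}|\ldots|\mathsf{v}_{N}\right\rangle \right\rangle $ occurring in (\ref{rl}) is a single element of $K$. Then I would count the entries fed into the field multiplication $\mathbf{\kappa}_{n_{K}}$ on the right-hand side of (\ref{rl}): there are the $k_{\rho}$ ``scalars'' $\lambda_{1},\ldots,\lambda_{k_{\rho}}\in K$ produced by the multiaction $\mathbf{\rho}_{k_{\rho}}$, together with exactly one further element of $K$, namely the bracket $\left\langle \left\langle \mathsf{v}_{1}|\mathsf{v}_{2}|\ldots|\mathsf{v}_{N}\right\rangle \right\rangle $. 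Hence $\mathbf{\kappa}_{n_{K}}$ is applied to $k_{\rho}+1$ arguments.

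Next, since $\mathbf{\kappa}_{n_{K}}\colon K^{n_{K}}\rightarrow K$ is by hypothesis an $n_{K}$-ary operation and the right-hand side of (\ref{rl}) contains a single, unnested application of it (no composition with a nontrivial $\ell$-shape is written), the expression is well defined only if the number of its arguments equals $n_{K}$. Equating the two counts gives $k_{\rho}+1=n_{K}$, that is $n_{K}-k_{\rho}=1$, which is (\ref{nkk}). For context I would also remark that this is exactly the regular-multiaction relation $k_{\rho}=\ell_{\kappa}\left(n_{K}-1\right)$ of (\ref{rkl}) specialised to a single field multiplication, $\ell_{\kappa}=1$, and that it matches the relation $k_{L}=n_{K}-1$ found for polyadic functionals in (\ref{kn}), with the left ``vector'' argument $\mathsf{v}_{1}$ playing the role of the ``functional slot''.

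The argument has no genuine obstacle: it is an arity match forced by well-definedness, in the spirit of the earlier shape computations. The only point deserving a sentence of care is to record that axiom~1) is posed with a single $\mathbf{\kappa}_{n_{K}}$ on the right, rather than an iterated composition; permitting nestings would instead produce the family $k_{\rho}=\ell_{\mu}\left(n_{K}-1\right)$, of which (\ref{nkk}) is the minimal instance. Finally I would observe that the remaining axioms 2)--5) impose no further restriction on $k_{\rho}$: the distributivity axiom~(\ref{vab}) constrains only $m_{V}$ and $m_{K}$ (by the same count it forces $m_{V}=m_{K}$), while the positivity and Cauchy--Schwarz axioms feed only brackets, and no $\lambda$'s, into $\mathbf{\kappa}_{n_{K}}$, so they are automatically consistent once (\ref{nkk}) holds.
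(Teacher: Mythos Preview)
Your proof is correct and takes essentially the same approach as the paper: the paper's own proof is the one-line remark that the identity follows from the polyadic ``linearity'' axiom~(\ref{rl}), and your argument simply makes the implicit arity count explicit. The additional contextual remarks you give (the link to (\ref{rkl}) with $\ell_{\kappa}=1$ and to (\ref{kn})) are sound and helpful elaborations, though not needed for the proof itself.
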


\begin{proof}
It follows from the polyadic \textquotedblleft linearity\textquotedblright%
\ (\ref{rl}).
\end{proof}

\begin{proposition}
The arities of \textquotedblleft vector addition\textquotedblright\ and
\textquotedblleft field addition\textquotedblright\ coincide%
\begin{equation}
m_{V}=m_{K}. \label{mvm}%
\end{equation}

\end{proposition}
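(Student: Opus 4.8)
The plan is to read off the arity constraint directly from the polyadic \textquotedblleft distributivity\textquotedblright\ axiom (\ref{vab}), in exactly the same spirit as the preceding Proposition read $n_{K}-k_{\rho}=1$ off the \textquotedblleft linearity\textquotedblright\ axiom (\ref{rl}). The whole content of the claim is a bookkeeping count of how many arguments enter each side of (\ref{vab}), so there is essentially no analytic work; the only thing worth checking is that the axiom as stated forces a rigid equality between $m_{V}$ and $m_{K}$ rather than merely an inequality or an $\ell$-shape family.

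First I would examine the left-hand side of (\ref{vab}). The \textquotedblleft vector addition\textquotedblright\ $\mathbf{\nu}_{m_{V}}$ is an $m_{V}$-ary operation, and it is applied to the polyad $\left(\mathsf{v}_{1},\mathsf{u}_{1},\ldots,\mathsf{u}_{m_{V}-1}\right)$, which has $1+(m_{V}-1)=m_{V}$ entries, consistent with its arity. Applying the inner pairing $\left\langle\left\langle\mathsf{\bullet}|\ldots|\mathsf{\bullet}\right\rangle\right\rangle$ to the resulting \textquotedblleft vector\textquotedblright\ in the first slot (with $\mathsf{v}_{2},\ldots,\mathsf{v}_{N}$ fixed in the remaining slots) produces a single element of $K$.

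Next I would count the arguments on the right-hand side. The \textquotedblleft field addition\textquotedblright\ $\mathbf{\sigma}_{m_{K}}$ is an $m_{K}$-ary operation, hence it must receive exactly $m_{K}$ elements of $K$. The scalars actually supplied are $\left\langle\left\langle\mathsf{v}_{1}|\mathsf{v}_{2}|\ldots|\mathsf{v}_{N}\right\rangle\right\rangle$ together with $\left\langle\left\langle\mathsf{u}_{j}|\mathsf{v}_{2}|\ldots|\mathsf{v}_{N}\right\rangle\right\rangle$ for $j=1,\ldots,m_{V}-1$, that is, exactly $1+(m_{V}-1)=m_{V}$ of them. For (\ref{vab}) to be a well-formed identity the input count of $\mathbf{\sigma}_{m_{K}}$ must match, giving $m_{K}=m_{V}$, which is (\ref{mvm}). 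Carrying out the analogous count on any of the other $N-1$ slots, using the remaining relations of the type (\ref{dis1})--(\ref{dis3}) implicit in \textquotedblleft distributivity on each place\textquotedblright, yields the same equality, so no generality is lost by arguing only with the first slot.

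The one point deserving a sentence of care --- and the nearest thing here to an obstacle --- is to note that this really is a constraint and not an artifact of the notation. A priori one could imagine \textquotedblleft distributing\textquotedblright\ $\mathbf{\nu}_{m_{V}}$ over an iterated composition of several $\mathbf{\sigma}_{m_{K}}$'s, so that an $\ell$-shape enters (as genuinely happens for the multiaction in (\ref{disk})). But the inner pairing takes values in $K$ and (\ref{vab}) demands additivity in the naive form, with a single $\mathbf{\sigma}_{m_{K}}$ on the right and no nested \textquotedblleft sums\textquotedblright; hence there is no $\ell$-shape freedom available, and the equality $m_{V}=m_{K}$ is forced outright.
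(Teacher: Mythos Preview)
Your proof is correct and follows exactly the same approach as the paper: the paper's own proof is the single line ``Implied by the polyadic `distributivity' (\ref{vab}),'' and your argument simply spells out the arity count that makes this implication explicit.
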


\begin{proof}
Implied by the polyadic \textquotedblleft distributivity\textquotedblright%
\ (\ref{vab}).
\end{proof}

\begin{proposition}
The arity of the \textquotedblleft field multiplication\textquotedblright\ is
equal to the arity of the polyadic inner pairing space%
\begin{equation}
n_{K}=N. \label{nkn}%
\end{equation}

\end{proposition}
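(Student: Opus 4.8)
The plan is to read the required identity $n_K=N$ directly off the polyadic Cauchy--Schwarz inequality (\ref{ka}), which is the only one of the conditions 1)--5) not yet used: the ``linearity'' axiom already produced (\ref{nkk}) and the ``distributivity'' axiom produced (\ref{mvm}), so by elimination the inner-pairing arity $N$ must be tied down by (\ref{ka}). First I would write out the shapes of the two sides of (\ref{ka}) and compare them slot by slot.

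On the left-hand side $\mathbf{\kappa}_{n_K}$ is applied to the list of $n_K$ ``diagonal'' pairings $\langle\langle \mathsf{v}_i|\mathsf{v}_i|\ldots|\mathsf{v}_i\rangle\rangle$ for $i=1,\ldots,n_K$, one for each of the ``vectors'' $\mathsf{v}_1,\ldots,\mathsf{v}_{n_K}$, and each such diagonal pairing has exactly $N$ slots by (\ref{aa}). On the right-hand side $\mathbf{\kappa}_{n_K}$ is applied to $n_K$ copies of the single pairing $\langle\langle \mathsf{v}_1|\mathsf{v}_2|\ldots|\mathsf{v}_N\rangle\rangle$, whose argument list is precisely $\mathsf{v}_1,\ldots,\mathsf{v}_N$. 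For (\ref{ka}) to be a genuine polyadic analog of the binary inequality $\mathbf{\kappa}_2[\langle\langle \mathsf{v}_1|\mathsf{v}_1\rangle\rangle,\langle\langle \mathsf{v}_2|\mathsf{v}_2\rangle\rangle]\ge\mathbf{\kappa}_2[\langle\langle \mathsf{v}_1|\mathsf{v}_2\rangle\rangle,\langle\langle \mathsf{v}_1|\mathsf{v}_2\rangle\rangle]$ --- and, more to the point, for it to be a well-formed relation posited for every admissible choice of the ``vectors'' entering it --- the tuple $(\mathsf{v}_1,\ldots,\mathsf{v}_{n_K})$ indexing the diagonal terms on the left must be the very same tuple $(\mathsf{v}_1,\ldots,\mathsf{v}_N)$ that appears as arguments of the pairing on the right. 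Matching the lengths of these two tuples gives $n_K=N$ at once. As a cross-check I would count vector occurrences: the left-hand side has $n_K\cdot N$ of them ($n_K$ diagonal pairings with $N$ slots each) and the right-hand side also has $n_K\cdot N$ ($n_K$ copies of an $N$-slot pairing), so the bare totals agree automatically; what is not automatic is that the number of \emph{distinct} free ``vectors'' agrees, and it is exactly this matching that forces $n_K=N$. Here the vanishing condition 3) --- that the pairing equals $z_K$ iff one of its arguments is the zero ``vector'' --- can be invoked to exclude the degenerate reading in which some of the $\mathsf{v}_i$ are secretly constrained or forced to coincide, so that the two index lists genuinely biject.

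The step I expect to be the main obstacle is not computational but a matter of making precise the sense in which (\ref{ka}) ``holds'': once one fixes the convention that the inequality is stated for arbitrary, unconstrained ``vectors'' $\mathsf{v}_1,\ldots,\mathsf{v}_N$ (equivalently, that the argument list of the right-hand pairing and the index list of the left-hand diagonal terms are literally the same list), the identity $n_K=N$ is immediate and nothing further is needed. I would therefore present the proof in a single line, ``It follows from the consistency of the polyadic Cauchy--Schwarz inequality (\ref{ka}),'' in parallel with the one-line proofs of the two Propositions immediately preceding.
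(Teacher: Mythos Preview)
Your proposal is correct and takes essentially the same approach as the paper: the paper's proof is the single sentence ``This follows from the polyadic Cauchy--Schwarz inequality (\ref{ka}),'' which is exactly the one-line argument you converge on. Your additional commentary on matching the list of free ``vectors'' on the two sides makes explicit what the paper leaves implicit, but the underlying reasoning is identical.
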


\begin{proof}
This follows from the polyadic Cauchy-Schwarz inequality (\ref{ka}).
\end{proof}

\begin{definition}
\label{def-inpar}The polyadic vector space $\mathcal{V}_{m_{K},n_{K}%
,m_{V},k_{\rho}}$ equipped with the polyadic inner pairing

$\overset{N}{\overbrace{\left\langle \left\langle \mathsf{\bullet
}|\mathsf{\bullet}|\ldots|\mathsf{\bullet}\right\rangle \right\rangle }}$
$:\mathsf{V}^{\times N}\rightarrow K$ is called a \textit{polyadic inner
pairing space }$\mathcal{H}_{m_{K},n_{K},m_{V},k_{\rho},N}$.
\end{definition}

A polyadic analog of the binary norm $\left\Vert \bullet\right\Vert
:\mathsf{V}\rightarrow K$ can be induced by the inner pairing similarly to the
binary case for the inner product (we use the form $\left\Vert \mathsf{v}%
\right\Vert ^{2}=\left\langle \left\langle \mathsf{v}|\mathsf{v}\right\rangle
\right\rangle $).

\begin{definition}
\label{def-pnorm}A polyadic norm of a \textquotedblleft
vector\textquotedblright\ $\mathsf{v}$ in the polyadic inner pairing space
$\mathcal{H}_{m_{K},n_{K},m_{V},k_{\rho},N}$ is a mapping $\left\Vert
\bullet\right\Vert _{N}:\mathsf{V}\rightarrow K$, such that%
\begin{align}
\mathbf{\kappa}_{n_{K}}\left[  \overset{n_{K}}{\overbrace{\left\Vert
\mathsf{v}\right\Vert _{N},\left\Vert \mathsf{v}\right\Vert _{N}%
,\ldots,\left\Vert \mathsf{v}\right\Vert _{N}}}\right]   &  =\overset
{N}{\overbrace{\left\langle \left\langle \mathsf{v}|\mathsf{v}|\ldots
|\mathsf{v}\right\rangle \right\rangle }},\label{kv}\\
n_{K}  &  =N,
\end{align}
and the following axioms apply:

1) The polyadic \textquotedblleft linearity\textquotedblright%
\begin{align}
\left\Vert \mathbf{\rho}_{k_{\rho}}\left\{  \left.
\begin{array}
[c]{c}%
\lambda_{1}\\
\vdots\\
\lambda_{k_{\rho}}%
\end{array}
\right\vert \mathsf{v}\right\}  \right\Vert _{N}  &  =\mathbf{\kappa}_{n_{K}%
}\left[  \lambda_{1},\ldots,\lambda_{k_{\rho}},\left\Vert \mathsf{v}%
\right\Vert _{N}\right]  ,\\
n_{K}-k_{\rho}  &  =1.
\end{align}

If the polyadic field $\mathbb{K}_{m_{K},n_{K}}$ contains the zero $z_{K}$ and
$\left\langle \mathsf{V}\mid m_{V}\right\rangle $ has a zero \textquotedblleft
vector\textquotedblright\ $\mathsf{z}_{V}$, then:

2) The polyadic norm vanishes $\left\Vert \mathsf{v}\right\Vert _{N}=z_{K}$,
iff $\mathsf{v}=\mathsf{z}_{V}$.

If the binary ordering on $\left\langle \mathsf{V}\mid m_{V}\right\rangle $
can be defined, then:

3) The polyadic norm is positive $\left\Vert \mathsf{v}\right\Vert _{N}\geq
z_{K}$.

4) The polyadic\textquotedblleft triangle\textquotedblright\ inequality holds%
\begin{align}
\mathbf{\sigma}_{m_{K}}\left[  \overset{m_{K}}{\overbrace{\left\Vert
\mathsf{v}_{1}\right\Vert _{N},\left\Vert \mathsf{v}_{2}\right\Vert
_{N},\ldots,\left\Vert \mathsf{v}_{N}\right\Vert _{N}}}\right]   &
\geq\left\Vert \mathbf{\nu}_{m_{V}}\left[  \overset{m_{V}}{\overbrace
{\left\Vert \mathsf{v}_{1}\right\Vert _{N},\left\Vert \mathsf{v}%
_{2}\right\Vert _{N},\ldots,\left\Vert \mathsf{v}_{N}\right\Vert _{N}}%
}\right]  \right\Vert ,\\
m_{K}  &  =m_{V}=N.
\end{align}

\end{definition}

\begin{definition}
The polyadic inner pairing space $\mathcal{H}_{m_{K},n_{K},m_{V},k_{\rho},N}$
equipped with the polyadic norm $\left\Vert \mathsf{v}\right\Vert _{N}$ is
called a \textit{polyadic normed space}.
\end{definition}

Recall that in the binary vector space $\mathsf{V}$ over the field
$\mathbb{K}$ equipped with the inner product $\left\langle \left\langle
\mathsf{\bullet}|\mathsf{\bullet}\right\rangle \right\rangle $ and the norm
$\left\Vert \mathsf{\bullet}\right\Vert $, one can introduce the angle between
vectors $\left\Vert \mathsf{v}_{1}\right\Vert \cdot\left\Vert \mathsf{v}%
_{2}\right\Vert \cdot\cos\theta=\left\langle \left\langle \mathsf{v}%
_{1}|\mathsf{v}_{2}\right\rangle \right\rangle $, where on l.h.s. there are
\textsl{two} binary multiplications $\left(  \cdot\right)  $.

\begin{definition}
A \textit{polyadic angle} between $N$ vectors $\mathsf{v}_{1},\mathsf{v}%
_{2},\ldots,\mathsf{v}_{n_{K}}$ of the polyadic inner pairing space
$\mathcal{H}_{m_{K},n_{K},m_{V},k_{\rho},N}$ is defined as a \textit{set} of
angles $\mathbf{\vartheta}=\left\{  \left\{  \theta_{i}\right\}  \mid
i=1,2,\ldots,n_{K}-1\right\}  $ satisfying%
\begin{equation}
\mathbf{\kappa}_{n_{K}}^{\left(  2\right)  }\left[  \left\Vert \mathsf{v}%
_{1}\right\Vert _{N},\left\Vert \mathsf{v}_{2}\right\Vert _{N},\ldots
,\left\Vert \mathsf{v}_{n_{K}}\right\Vert _{N},\cos\theta_{1},\cos\theta
_{2},\ldots,\cos\theta_{n_{K}-1}\right]  =\left\langle \left\langle
\mathsf{v}_{1}|\mathsf{v}_{2}|\ldots|\mathsf{v}_{n_{K}}\right\rangle
\right\rangle ,
\end{equation}
where $\mathbf{\kappa}_{n_{K}}^{\left(  2\right)  }$ is a long product of two
$n_{K}$-ary multiplications, which consists of $2\left(  n_{K}-1\right)  +1$ terms.
\end{definition}

We will not consider the completion with respect to the above norm (to obtain
a polyadic analog of Hilbert space) and corresponding limits and boundedness
questions, because it will not give additional arity shapes, in which we are
mostly interested here. Instead, below we turn to some applications and new
general constructions which appear from the above polyadic structures.


\begin{table}[h]
\caption{The arity signature and arity shape of polyadic algebraic structures.
}%
\label{T}
\begin{center}
{\tiny
\begin{tabular}
[c]{||c||c|c||c|c|c|c|c|c|c||}\hline\hline
\textbf{\multirow{2}*{Structures}} & \multicolumn{2}{|c||}{\textbf{Sets}} &
\multicolumn{6}{|c|}{\textbf{Operations and arities}} & \textbf{Arity}%
\\\cline{2-9}
& N & Name & N & \multicolumn{2}{|c|}{Multiplications} &
\multicolumn{2}{|c|}{Additions} & Multiactions & \textbf{shape}\\\hline\hline
\multicolumn{10}{|l|}{\textsf{Group-like polyadic algebraic structures}%
}\\\hline\hline
$%
\begin{array}
[c]{c}%
n\text{-ary magma}\\
\text{(or groupoid)}%
\end{array}
$ & \textbf{1} & ${M}$ & \textbf{1} & \multicolumn{2}{|c|}{$%
\begin{array}
[c]{c}%
\mu_{n}:\\
{M}^{n}\rightarrow{M}%
\end{array}
$} & \multicolumn{2}{|c|}{} &  & \\\hline
$%
\begin{array}
[c]{c}%
n\text{-ary semigroup}\\
\text{(and monoid)}%
\end{array}
$ & \textbf{1} & ${S}$ & \textbf{1} & \multicolumn{2}{|c|}{$%
\begin{array}
[c]{c}%
\mu_{n}:\\
{S}^{n}\rightarrow{S}%
\end{array}
$} & \multicolumn{2}{|c|}{} &  & \\\hline
$%
\begin{array}
[c]{c}%
n\text{-ary quasigroup}\\
\text{(and loop)}%
\end{array}
$ & \textbf{1} & ${Q}$ & \textbf{1} & \multicolumn{2}{|c|}{$%
\begin{array}
[c]{c}%
\mu_{n}:\\
{Q}^{n}\rightarrow{Q}%
\end{array}
$} & \multicolumn{2}{|c|}{} &  & \\\hline
$n$-ary group & \textbf{1} & ${G}$ & \textbf{1} & \multicolumn{2}{|c|}{$%
\begin{array}
[c]{c}%
\mu_{n}:\\
{G}^{n}\rightarrow{G}%
\end{array}
$} & \multicolumn{2}{|c|}{} &  & \\\hline\hline
\multicolumn{10}{|l|}{\textsf{Ring-like polyadic algebraic structures}%
}\\\hline\hline
$\left(  m,n\right)  $-ary ring & \textbf{1} & ${R}$ & \textbf{2} &
\multicolumn{2}{|c|}{$%
\begin{array}
[c]{c}%
\mu_{n}:\\
{R}^{n}\rightarrow{R}%
\end{array}
$} & \multicolumn{2}{|c|}{$%
\begin{array}
[c]{c}%
\nu_{m}:\\
{R}^{m}\rightarrow{R}%
\end{array}
$} &  & \\\hline
$\left(  m,n\right)  $-ary field & \textbf{1} & ${K}$ & \textbf{2} &
\multicolumn{2}{|c|}{$%
\begin{array}
[c]{c}%
\mu_{n}:\\
{K}^{n}\rightarrow{K}%
\end{array}
$} & \multicolumn{2}{|c|}{$%
\begin{array}
[c]{c}%
\nu_{m}:\\
{K}^{m}\rightarrow{K}%
\end{array}
$} &  & \\\hline\hline
\multicolumn{10}{|l|}{\textsf{Module-like polyadic algebraic structures}%
}\\\hline\hline
$%
\begin{array}
[c]{c}%
\text{Module}\\
\text{over }\left(  m,n\right)  \text{-ring}%
\end{array}
$ & \textbf{2} & $R,\mathsf{M}$ & \textbf{4} & \multicolumn{2}{|c|}{$%
\begin{array}
[c]{c}%
\sigma_{n}:\\
{R}^{n}\rightarrow{R}%
\end{array}
$} & $%
\begin{array}
[c]{c}%
\kappa_{m}:\\
{R}^{m}\rightarrow{R}%
\end{array}
$ & $%
\begin{array}
[c]{c}%
\nu_{m_{M}}:\\
\mathsf{M}^{m_{M}}\rightarrow\mathsf{M}%
\end{array}
$ & $%
\begin{array}
[c]{c}%
\rho_{k_{\rho}}:\\
{R}^{k_{\rho}}\times\mathsf{M}\rightarrow\mathsf{M}%
\end{array}
$ & \\\hline
$%
\begin{array}
[c]{c}%
\text{Vector space}\\
\text{over }\left(  m_{K},n_{K}\right)  \text{-field}%
\end{array}
$ & \textbf{2} & $K,\mathsf{V}$ & \textbf{4} & \multicolumn{2}{|c|}{$%
\begin{array}
[c]{c}%
\sigma_{n_{K}}:\\
{K}^{n_{K}}\rightarrow{K}%
\end{array}
$} & $%
\begin{array}
[c]{c}%
\kappa_{m_{K}}:\\
{K}^{m_{K}}\rightarrow{K}%
\end{array}
$ & $%
\begin{array}
[c]{c}%
\nu_{m_{V}}:\\
\mathsf{V}^{m_{V}}\rightarrow\mathsf{V}%
\end{array}
$ & $%
\begin{array}
[c]{c}%
\rho_{k_{\rho}}:\\
{K}^{k_{\rho}}\times\mathsf{V}\rightarrow\mathsf{V}%
\end{array}
$ & $%
\begin{array}
[c]{c}%
\text{(\ref{l1n})}\\
\text{(\ref{l2m})}%
\end{array}
$\\\hline\hline
\multicolumn{10}{|l|}{\textsf{Algebra-like polyadic algebraic structures}%
}\\\hline\hline
$%
\begin{array}
[c]{c}%
\text{Inner pairing space}\\
\text{over }\left(  m_{K},n_{K}\right)  \text{-field}%
\end{array}
$ & \textbf{2} & $K,\mathsf{V}$ & \textbf{5} & $%
\begin{array}
[c]{c}%
\sigma_{n_{K}}:\\
{K}^{n_{K}}\rightarrow{K}%
\end{array}
$ & $%
\begin{array}
[c]{c}%
N\text{-Form}\\
\left\langle \left\langle \bullet..\bullet\right\rangle \right\rangle :\\
\mathsf{V}^{N}\rightarrow{K}%
\end{array}
$ & $%
\begin{array}
[c]{c}%
\kappa_{m_{K}}:\\
{K}^{m_{K}}\rightarrow{K}%
\end{array}
$ & $%
\begin{array}
[c]{c}%
\nu_{m_{V}}:\\
\mathsf{V}^{m_{V}}\rightarrow\mathsf{V}%
\end{array}
$ & $%
\begin{array}
[c]{c}%
\rho_{k_{\rho}}:\\
{K}^{k_{\rho}}\times\mathsf{V}\rightarrow\mathsf{V}%
\end{array}
$ & $%
\begin{array}
[c]{c}%
\text{(\ref{nkk})}\\
\text{(\ref{mvm})}\\
\text{(\ref{nkn})}%
\end{array}
$\\\hline
$%
\begin{array}
[c]{c}%
\left(  m_{A},n_{A}\right)  \text{-algebra}\\
\text{over }\left(  m_{K},n_{K}\right)  \text{-field}%
\end{array}
$ & \textbf{2} & $K,\mathsf{A}$ & \textbf{5} & $%
\begin{array}
[c]{c}%
\sigma_{n_{K}}:\\
{K}^{n_{K}}\rightarrow{K}%
\end{array}
$ & $%
\begin{array}
[c]{c}%
\mu_{n_{A}}:\\
\mathsf{A}^{n}\rightarrow\mathsf{A}%
\end{array}
$ & $%
\begin{array}
[c]{c}%
\kappa_{m_{K}}:\\
{K}^{m_{K}}\rightarrow{K}%
\end{array}
$ & $%
\begin{array}
[c]{c}%
\nu_{m_{A}}:\\
\mathsf{A}^{m_{M}}\rightarrow\mathsf{A}%
\end{array}
$ & $%
\begin{array}
[c]{c}%
\rho_{k_{\rho}}:\\
{K}^{k_{\rho}}\times\mathsf{A}\rightarrow\mathsf{A}%
\end{array}
$ & $\text{(\ref{l1r})}$\\\hline\hline
\end{tabular}
}
\end{center}
\end{table}


To conclude, we present the resulting \textsc{Table \ref{T}} in which the
polyadic algebraic structures are listed together with their arity shapes.

\section*{\textbf{Applications}}

\section{Elements of polyadic operator theory}

Here we consider the $1$-place polyadic operators $\mathbf{T}=\mathbf{F}%
_{k_{F}=1}$ (the case $k_{F}=1$ of the mapping $\mathbf{F}_{k_{F}}$ in
\textbf{Definition \ref{def-link}}) on polyadic inner pairing spaces and
structurally generalize the adjointness and involution concepts.

\begin{remark}
\label{rem-k=1}A polyadic operator is a complicated mapping between polyadic
vector spaces having nontrivial arity shape (\ref{fv1}) which is actually an
action on a set of \textquotedblleft vectors\textquotedblright. However, only
for $k_{F}=1$ it can be written in a formal way multiplicatively, as it is
always done in the binary case.
\end{remark}

Recall (to fix notations and observe analogies) the informal standard
introduction of the operator algebra and the adjoint operator on a binary
pre-Hilbert space $\mathcal{H}$ ($\equiv\mathcal{H}_{m_{K}=2,n_{K}%
=2,m_{V}=2,k_{\rho}=1,N=2}$) over a binary field $\mathbb{K}$ ($\equiv
\mathbb{K}_{m_{K}=2,n_{K}=2}$) (having the underlying set $\left\{
K;\mathsf{V}\right\}  $). For the operator norm $\left\Vert \bullet\right\Vert
_{T}:\left\{  \mathbf{T}\right\}  \rightarrow K$ we use (among many others)
the following definition%
\begin{equation}
\left\Vert \mathbf{T}\right\Vert _{T}=\inf\left\{  M\in K\mid\left\Vert
\mathbf{T}\mathsf{v}\right\Vert \leq M\left\Vert \mathsf{v}\right\Vert
,\forall\mathsf{v}\in\mathsf{V}\right\}  , \label{tm}%
\end{equation}
which is convenient for further polyadic generalization. \textit{Bounded}
operators have $M<\infty$. If on the set of operators $\left\{  \mathbf{T}%
\right\}  $ (as $1$-place mappings $\mathsf{V}\rightarrow\mathsf{V}$) one
defines the addition $\left(  +_{T}\right)  $, product $\left(  \circ
_{T}\right)  $ and scalar multiplication $\left(  \cdot_{T}\right)  $ in the
standard way%
\begin{align}
\left(  \mathbf{T}_{1}+_{T}\mathbf{T}_{2}\right)  \left(  \mathsf{v}\right)
&  =\mathbf{T}_{1}\mathsf{v}+\mathbf{T}_{2}\mathsf{v},\\
\left(  \mathbf{T}_{1}\circ_{T}\mathbf{T}_{2}\right)  \left(  \mathsf{v}%
\right)   &  =\mathbf{T}_{1}\left(  \mathbf{T}_{2}\mathsf{v}\right)  ,\\
\left(  \lambda\cdot_{T}\mathbf{T}\right)  \left(  \mathsf{v}\right)   &
=\lambda\left(  \mathbf{T}\mathsf{v}\right)  ,\ \ \ \lambda\in K,\mathbb{\ \ }%
\mathsf{v}\in\mathsf{V},
\end{align}
then $\left\langle \left\{  \mathbf{T}\right\}  \mid+_{T},\circ_{T}|\cdot
_{T}\right\rangle $ becomes an operator algebra $\mathcal{A}_{T}$
(associativity and distributivity are obvious). The unity $\mathbf{I}$ and
zero $\mathbf{Z}$ of $\mathcal{A}_{T}$ (if they exist), satisfy%
\begin{align}
\mathbf{I}\mathsf{v}  &  =\mathsf{v},\label{iv}\\
\mathbf{Z}\mathsf{v}  &  =\mathsf{z}_{V},\ \ \ \ \ \forall\mathsf{v}%
\in\mathsf{V}, \label{zv}%
\end{align}
respectively, where $\mathsf{z}_{V}\in\mathsf{V}$ is the polyadic
\textquotedblleft zero-vector\textquotedblright.

The connection between operators, linear functionals and inner products is
given by the Riesz representation theorem. Informally, it states that in a
binary pre-Hilbert space $\mathcal{H=}\left\{  K;\mathsf{V}\right\}  $ a
(bounded) linear functional (sesquilinear form) $\mathbf{L}:\mathsf{V}%
\times\mathsf{V}\rightarrow K$ can be \textsl{uniquely} represented as%
\begin{equation}
\mathbf{L}\left(  \mathsf{v}_{1},\mathsf{v}_{2}\right)  =\left\langle
\left\langle \mathbf{T}\mathsf{v}_{1}|\mathsf{v}_{2}\right\rangle
\right\rangle _{sym},\ \ \ \ \ \ \forall\mathsf{v}_{1},\mathsf{v}_{2}%
\in\mathsf{V}, \label{lv}%
\end{equation}
where $\left\langle \left\langle \bullet|\bullet\right\rangle \right\rangle
_{sym}:\mathsf{V}\times\mathsf{V}\rightarrow K$ is a (binary) inner product
with standard properties and $\mathbf{T}:\mathsf{V}\rightarrow\mathsf{V}$ is a
bounded linear operator, such that the norms of $\mathbf{L}$ and $\mathbf{T}$
coincide. Because the linear functionals form a dual space (see
\textbf{Subsection }\ref{sub-polfun}), the relation (\ref{lv}) fixes the shape
of its elements. The main consequence of the Riesz representation theorem is
the existence of the adjoint: for any (bounded) linear operator $\mathbf{T}%
:\mathsf{V}\rightarrow\mathsf{V}$ there exists a (unique bounded)
\textit{adjoint operator} $\mathbf{T}^{\ast}:\mathsf{V}\rightarrow\mathsf{V}$
satisfying%
\begin{equation}
\mathbf{L}\left(  \mathsf{v}_{1},\mathsf{v}_{2}\right)  =\left\langle
\left\langle \mathbf{T}\mathsf{v}_{1}|\mathsf{v}_{2}\right\rangle
\right\rangle _{sym}=\left\langle \left\langle \mathsf{v}_{1}|\mathbf{T}%
^{\ast}\mathsf{v}_{2}\right\rangle \right\rangle _{sym},\ \ \ \ \ \forall
\mathsf{v}_{1},\mathsf{v}_{2}\in\mathsf{V}, \label{tt}%
\end{equation}
and the norms of $\mathbf{T}$ and $\mathbf{T}^{\ast}$ are equal. It follows
from the conjugation symmetry of the standard binary inner product, that
(\ref{tt}) coincides with%
\begin{equation}
\left\langle \left\langle \mathsf{v}_{1}|\mathbf{T}\mathsf{v}_{2}\right\rangle
\right\rangle _{sym}=\left\langle \left\langle \mathbf{T}^{\ast}\mathsf{v}%
_{1}|\mathsf{v}_{2}\right\rangle \right\rangle _{sym},\ \ \ \ \ \forall
\mathsf{v}_{1},\mathsf{v}_{2}\in\mathsf{V}. \label{t1}%
\end{equation}

However, when $\left\langle \left\langle \bullet|\bullet\right\rangle
\right\rangle $ has no symmetry (permutation, conjugation, etc., see, e.g.
\cite{mig76}), it becomes the binary ($N=2$) inner pairing (\ref{aa}), the
binary adjoint consists of 2 operators $\left(  \mathbf{T}^{\star_{12}%
}\right)  \neq\left(  \mathbf{T}^{\star_{21}}\right)  $, $\mathbf{T}%
^{\star_{ij}}:\mathsf{V}\rightarrow\mathsf{V}$, which should be defined by 2
equations%
\begin{align}
\left\langle \left\langle \mathbf{T}\mathsf{v}_{1}|\mathsf{v}_{2}\right\rangle
\right\rangle  &  =\left\langle \left\langle \mathsf{v}_{1}|\mathbf{T}%
^{\star_{12}}\mathsf{v}_{2}\right\rangle \right\rangle ,\\
\left\langle \left\langle \mathsf{v}_{1}|\mathbf{T}\mathsf{v}_{2}\right\rangle
\right\rangle  &  =\left\langle \left\langle \mathbf{T}^{\star_{21}}%
\mathsf{v}_{1}|\mathsf{v}_{2}\right\rangle \right\rangle ,
\end{align}
where $\left(  \star_{12}\right)  \neq\left(  \star_{21}\right)  $ are 2
different star operations satisfying 2 relations%
\begin{align}
\mathbf{T}^{\star_{12}\star_{21}}  &  =\mathbf{T},\label{t1t}\\
\mathbf{T}^{\star_{21}\star_{12}}  &  =\mathbf{T.} \label{t2t}%
\end{align}
If $\left\langle \left\langle \bullet|\bullet\right\rangle \right\rangle
=\left\langle \left\langle \bullet|\bullet\right\rangle \right\rangle _{sym}$
is symmetric, it becomes the inner product in the pre-Hilbert space
$\mathcal{H}$ and the equations (\ref{t1t})--(\ref{t2t}) coincide, while the
operation $\left(  \ast\right)  =\left(  \star_{12}\right)  =\left(
\star_{21}\right)  $ stands for the standard involution%
\begin{equation}
\mathbf{T}^{\ast\ast}=\mathbf{T}. \label{tt1}%
\end{equation}

\subsection{Multistars and polyadic adjoints}

Consider now a special case of the polyadic inner pairing space (see
\textbf{Definition \ref{def-inpar}})%
\begin{equation}
\mathcal{H}_{m_{K},n_{K},m_{V},k_{\rho}=1,N}=\left\langle K;\mathsf{V}%
\mid\mathbf{\sigma}_{m_{K}},\mathbf{\kappa}_{n_{K}};\mathbf{\nu}_{m_{V}}%
\mid\mathbf{\rho}_{k_{\rho}=1}\mid\overset{N}{\overbrace{\left\langle
\left\langle \mathsf{\bullet}|\ldots|\mathsf{\bullet}\right\rangle
\right\rangle }}\right\rangle
\end{equation}
with $1$-place multiaction $\mathbf{\rho}_{k_{\rho}=1}$.

\begin{definition}
The set of $1$-place operators $\mathbf{T}:\mathsf{V}\rightarrow\mathsf{V}$
together with the set of \textquotedblleft scalars\textquotedblright\ $K$
become a polyadic operator algebra $\mathcal{A}_{T}=\left\langle K;\left\{
\mathbf{T}\right\}  \mid\mathbf{\sigma}_{m_{K}},\mathbf{\kappa}_{n_{K}%
};\mathbf{\eta}_{m_{T}},\mathbf{\omega}_{n_{T}}\mid\mathbf{\theta}_{k_{F}%
=1}\right\rangle $, if the operations $\mathbf{\eta}_{m_{T}},\mathbf{\omega
}_{n_{T}},\mathbf{\theta}_{k_{F}=1}$ to define by%
\begin{align}
\mathbf{\eta}_{m_{T}}\left[  \mathbf{T}_{1},\mathbf{T}_{2},\ldots
,\mathbf{T}_{m_{T}}\right]  \left(  \mathsf{v}\right)   &  =\mathbf{\nu
}_{m_{V}}\left[  \mathbf{T}_{1}\mathsf{v},\mathbf{T}_{2}\mathsf{v}%
,\ldots,\mathbf{T}_{m_{T}}\mathsf{v}\right]  ,\label{et}\\
\mathbf{\omega}_{n_{T}}\left[  \mathbf{T}_{1},\mathbf{T}_{2},\ldots
,\mathbf{T}_{n_{T}}\right]  \left(  \mathsf{v}\right)   &  =\mathbf{T}%
_{1}\left(  \mathbf{T}_{2}\ldots\left(  \mathbf{T}_{n_{T}}\mathsf{v}\right)
\right)  ,\\
\mathbf{\theta}_{k_{F}=1}\left\{  \lambda\mid\mathbf{T}\right\}  \left(
\mathsf{v}\right)   &  =\mathbf{\rho}_{k_{\rho}=1}\left\{  \lambda
\mid\mathbf{T}\mathsf{v}\right\}  ,\ \ \forall\lambda\in K,\ \forall
\mathsf{v}\in\mathsf{V}. \label{tl}%
\end{align}

\end{definition}

The arity shape is fixed by

\begin{proposition}
In the polyadic algebra $\mathcal{A}_{T}$ the arity of the operator addition
$m_{T}$ coincides with the \textquotedblleft vector\textquotedblright%
\ addition of the inner pairing space $m_{V}$, i.e.%
\begin{equation}
m_{T}=m_{V}. \label{mtm}%
\end{equation}

\end{proposition}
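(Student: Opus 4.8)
The plan is to read the arity relation straight off the defining equation~(\ref{et}). Fix arbitrary operators $\mathbf{T}_{1},\ldots,\mathbf{T}_{m_{T}}$ and an arbitrary $\mathsf{v}\in\mathsf{V}$. The left-hand side of~(\ref{et}) is the value of the $m_{T}$-ary operation $\mathbf{\eta}_{m_{T}}$ on the polyad $(\mathbf{T}_{1},\ldots,\mathbf{T}_{m_{T}})$, so exactly $m_{T}$ operator-arguments enter; applying these to $\mathsf{v}$ produces the polyad $(\mathbf{T}_{1}\mathsf{v},\ldots,\mathbf{T}_{m_{T}}\mathsf{v})$ of $m_{T}$ \textquotedblleft vectors\textquotedblright. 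On the right-hand side this polyad is fed into the \textquotedblleft vector addition\textquotedblright\ $\mathbf{\nu}_{m_{V}}:\mathsf{V}^{m_{V}}\rightarrow\mathsf{V}$, which accepts precisely $m_{V}$ entries. Matching the two lengths forces $m_{T}=m_{V}$, which is the assertion.

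Next I would check that, with this choice of arity, the right-hand side of~(\ref{et}) really does define an element of the operator set $\left\{\mathbf{T}\right\}$, i.e.\ a $1$-place $\mathbb{K}$-linear map in the sense of \textbf{Definition~\ref{def-lin1}}. For compatibility with \textquotedblleft vector addition\textquotedblright~(\ref{fv}) one evaluates $\mathbf{\eta}_{m_{V}}[\mathbf{T}_{1},\ldots,\mathbf{T}_{m_{V}}]$ on $\mathbf{\nu}_{m_{V}}[\mathsf{u}_{1},\ldots,\mathsf{u}_{m_{V}}]$, uses that each $\mathbf{T}_{i}$ satisfies~(\ref{fv}), and reorganizes the resulting doubly nested $\mathbf{\nu}_{m_{V}}$-expression columnwise; for compatibility with the multiaction~(\ref{fr}) one uses the distributivity~(\ref{disv}) of $\mathbf{\rho}_{k_{\rho}}$ over $\mathbf{\nu}_{m_{V}}$ together with~(\ref{fr}) for each $\mathbf{T}_{i}$. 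Commutativity of $\mathbf{\eta}_{m_{T}}$ and the existence of querelements for it — so that $\left\langle\left\{\mathbf{T}\right\}\mid\mathbf{\eta}_{m_{T}}\right\rangle$ is a commutative $m_{V}$-ary group, as an algebra-like structure requires — then follow pointwise from the corresponding properties of $\left\langle\mathsf{V}\mid\mathbf{\nu}_{m_{V}}\right\rangle$ (axiom~2) of \textbf{Definition~\ref{def-vect}}).

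The only genuinely non-formal point is the columnwise reorganization in the verification of~(\ref{fv}): one needs the interchange (mediality) law letting a composition $\mathbf{\nu}_{m_{V}}\bigl[\mathbf{\nu}_{m_{V}}[\cdots],\ldots,\mathbf{\nu}_{m_{V}}[\cdots]\bigr]$ be reassembled by rows, which is available because an abelian $m_{V}$-ary group is automatically medial, with the total associativity of $\mathbf{\nu}_{m_{V}}$ absorbing any surplus nesting. For the Proposition itself, however, nothing beyond the length-matching argument of the first paragraph is needed, so I expect no real obstacle there.
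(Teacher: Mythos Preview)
Your first paragraph is exactly the paper's proof: the paper simply says ``This follows from~(\ref{et}),'' and your length-matching reading of that equation is all that is intended. The subsequent paragraphs on well-definedness, $\mathbb{K}$-linearity, and mediality are extra checks the paper does not carry out (nor does the Proposition claim them), so they are a bonus rather than a different route.
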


\begin{proof}
This follows from (\ref{et}).
\end{proof}

To get relations between operators we assume (as in the binary case)
uniqueness: for any $\mathbf{T}_{1},\mathbf{T}_{2}:\mathsf{V}\rightarrow
\mathsf{V}$ it follows from%
\begin{equation}
\left\langle \left\langle \mathsf{v}_{1}|\mathsf{v}_{2}|\ldots|\mathbf{T}%
_{1}\mathsf{v}_{i}|\ldots\mathsf{v}_{N-1}|\mathsf{v}_{N}\right\rangle
\right\rangle =\left\langle \left\langle \mathsf{v}_{1}|\mathsf{v}_{2}%
|\ldots|\mathbf{T}_{2}\mathsf{v}_{i}|\ldots\mathsf{v}_{N-1}|\mathsf{v}%
_{N}\right\rangle \right\rangle , \label{t1t2}%
\end{equation}
that $\mathbf{T}_{1}=\mathbf{T}_{2}$ on any place $i=1,\ldots,N$.

First, by analogy with the binary adjoint (\ref{tt}) we define $N$ different
adjoints for each operator $\mathbf{T}$.

\begin{definition}
Given a polyadic operator $\mathbf{T}:\mathsf{V}\rightarrow\mathsf{V}$ on the
polyadic inner pairing space $\mathcal{H}_{m_{K},n_{K},m_{V},k_{\rho}=1,N}$ we
define a \textit{polyadic adjoint} as the set $\left\{  \mathbf{T}^{\star
_{ij}}\right\}  $ of $N$ operators $\mathbf{T}^{\star_{ij}}$ satisfying the
following $N$ equations%
\begin{align}
&  \left\langle \left\langle \mathbf{T}\mathsf{v}_{1}|\mathsf{v}%
_{2}|\mathsf{v}_{3}|\ldots|\mathsf{v}_{N}\right\rangle \right\rangle
=\left\langle \left\langle \mathsf{v}_{1}|\mathbf{T}^{\star_{12}}%
\mathsf{v}_{2}|\mathsf{v}_{3}|\ldots|\mathsf{v}_{N}\right\rangle \right\rangle
,\nonumber\\
&  \left\langle \left\langle \mathsf{v}_{1}|\mathbf{T}\mathsf{v}%
_{2}|\mathsf{v}_{3}|\ldots|\mathsf{v}_{N}\right\rangle \right\rangle
=\left\langle \left\langle \mathsf{v}_{1}|\mathsf{v}_{2}|\mathbf{T}%
^{\star_{23}}\mathsf{v}_{3}|\ldots|\mathsf{v}_{N}\right\rangle \right\rangle
,\nonumber\\
&  \vdots\nonumber\\
&  \left\langle \left\langle \mathsf{v}_{1}|\mathsf{v}_{2}|\mathsf{v}%
_{3}|\ldots\mathbf{T}\mathsf{v}_{N-1}|\mathsf{v}_{N}\right\rangle
\right\rangle =\left\langle \left\langle \mathsf{v}_{1}|\mathsf{v}%
_{2}|\mathsf{v}_{3}|\ldots|\mathbf{T}^{\star_{N-1,N}}\mathsf{v}_{N}%
\right\rangle \right\rangle ,\nonumber\\
&  \left\langle \left\langle \mathsf{v}_{1}|\mathsf{v}_{2}|\mathsf{v}%
_{3}|\ldots\mathsf{v}_{N-1}|\mathbf{T}\mathsf{v}_{N}\right\rangle
\right\rangle =\left\langle \left\langle \mathbf{T}^{\star_{N,1}}%
\mathsf{v}_{1}|\mathsf{v}_{2}|\mathsf{v}_{3}|\ldots|\mathsf{v}_{N}%
\right\rangle \right\rangle ,\ \ \mathsf{v}_{i}\in\mathsf{V.} \label{tv}%
\end{align}

\end{definition}

In what follows, for the composition we will use the notation $\left(
\mathbf{T}^{\star_{ij}}\right)  ^{\star_{kl}\ldots}\equiv\mathbf{T}%
^{\star_{ij}\star_{kl}\ldots}$. We have from (\ref{tv}) the $N$ relations%
\begin{align}
\mathbf{T}^{\star_{12}\star_{23}\star_{34}\ldots\star_{N-1,N}\star_{N,1}}  &
=\mathbf{T,}\nonumber\\
\mathbf{T}^{\star_{23}\star_{34}\ldots\star_{N-1,N}\star_{N,1}\star_{12}}  &
=\mathbf{T,}\nonumber\\
&  \vdots\nonumber\\
\mathbf{T}^{\star_{N,1}\star_{12}\star_{23}\star_{34}\ldots\star_{N-1,N}}  &
=\mathbf{T,} \label{tn}%
\end{align}
which are called \textit{multistar cycles}.

\begin{definition}
We call the set of adjoint mappings $\left(  \bullet^{\star_{ij}}\right)
:\mathbf{T}\rightarrow\mathbf{T}^{\star_{ij}}$ a \textit{polyadic involution},
if they satisfy the multistar cycles (\ref{tn}).
\end{definition}

If the inner pairing $\left\langle \left\langle \bullet|\ldots|\bullet
\right\rangle \right\rangle $ has more than two places $N\geq3$, we have some
additional structural issues, which do not exist in the binary case.

\textsl{First}, we observe that the set of the adjointness relations
(\ref{tv}) can be described in the framework of the associativity quiver
approach introduced in \cite{dup2012} for polyadic representations. That is,
for general $N\geq3$ in addition to (\ref{tv}) which corresponds to the so
called Post-like associativity quiver (they will be called the
\textit{Post-like adjointness relations}), there also exist other sets. It is
cumbersome to write additional general formulas like (\ref{tv}) for other
non-Post-like cases, while instead we give a clear example for $N=4$.

\begin{example}
The polyadic adjointness relations for $N=4$ consist of the sets corresponding
to different associativity quivers%
\begin{equation}%
\begin{array}
[c]{c}%
\text{{\small 1) }\emph{Post-like adjointness relations}}\\
\left\langle \left\langle \mathbf{T}\mathsf{v}_{1}|\mathsf{v}_{2}%
|\mathsf{v}_{3}|\mathsf{v}_{4}\right\rangle \right\rangle =\left\langle
\left\langle \mathsf{v}_{1}|\mathbf{T}^{\star_{12}}\mathsf{v}_{2}%
|\mathsf{v}_{3}|\mathsf{v}_{4}\right\rangle \right\rangle ,\\
\left\langle \left\langle \mathsf{v}_{1}|\mathbf{T}\mathsf{v}_{2}%
|\mathsf{v}_{3}|\mathsf{v}_{4}\right\rangle \right\rangle =\left\langle
\left\langle \mathsf{v}_{1}|\mathsf{v}_{2}|\mathbf{T}^{\star_{23}}%
\mathsf{v}_{3}|\mathsf{v}_{4}\right\rangle \right\rangle ,\\
\left\langle \left\langle \mathsf{v}_{1}|\mathsf{v}_{2}|\mathbf{T}%
\mathsf{v}_{3}|\mathsf{v}_{4}\right\rangle \right\rangle =\left\langle
\left\langle \mathsf{v}_{1}|\mathsf{v}_{2}|\mathsf{v}_{3}|\mathbf{T}%
^{\star_{34}}\mathsf{v}_{4}\right\rangle \right\rangle ,\\
\left\langle \left\langle \mathsf{v}_{1}|\mathsf{v}_{2}|\mathsf{v}%
_{3}|\mathbf{T}\mathsf{v}_{4}\right\rangle \right\rangle =\left\langle
\left\langle \mathbf{T}^{\star_{41}}\mathsf{v}_{1}|\mathsf{v}_{2}%
|\mathsf{v}_{3}|\mathsf{v}_{4}\right\rangle \right\rangle ,
\end{array}
\ \ \
\begin{array}
[c]{c}%
\text{{\small 2) }\emph{Non-Post-like adjointness relations}}\\
\left\langle \left\langle \mathbf{T}\mathsf{v}_{1}|\mathsf{v}_{2}%
|\mathsf{v}_{3}|\mathsf{v}_{4}\right\rangle \right\rangle =\left\langle
\left\langle \mathsf{v}_{1}|\mathsf{v}_{2}|\mathsf{v}_{3}|\mathbf{T}%
^{\star_{14}}\mathsf{v}_{4}\right\rangle \right\rangle ,\\
\left\langle \left\langle \mathsf{v}_{1}|\mathsf{v}_{2}|\mathsf{v}%
_{3}|\mathbf{T}\mathsf{v}_{4}\right\rangle \right\rangle =\left\langle
\left\langle \mathsf{v}_{1}|\mathsf{v}_{2}|\mathbf{T}^{\star_{43}}%
\mathsf{v}_{3}|\mathsf{v}_{4}\right\rangle \right\rangle ,\\
\left\langle \left\langle \mathsf{v}_{1}|\mathsf{v}_{2}|\mathbf{T}%
\mathsf{v}_{3}|\mathsf{v}_{4}\right\rangle \right\rangle =\left\langle
\left\langle \mathsf{v}_{1}|\mathbf{T}^{\star_{32}}\mathsf{v}_{2}%
|\mathsf{v}_{3}|\mathsf{v}_{4}\right\rangle \right\rangle ,\\
\left\langle \left\langle \mathsf{v}_{1}|\mathbf{T}\mathsf{v}_{2}%
|\mathsf{v}_{3}|\mathsf{v}_{4}\right\rangle \right\rangle =\left\langle
\left\langle \mathbf{T}^{\star_{21}}\mathsf{v}_{1}|\mathsf{v}_{2}%
|\mathsf{v}_{3}|\mathsf{v}_{4}\right\rangle \right\rangle ,
\end{array}
\end{equation}
and the corresponding multistar cycles%
\begin{equation}%
\begin{array}
[c]{c}%
\text{{\small 1) }\emph{Post-like multistar cycles}}\\
\mathbf{T}^{\star_{12}\star_{23}\star_{34}\star_{41}}=\mathbf{T},\\
\mathbf{T}^{\star_{23}\star_{34}\star_{41}\star_{12}}=\mathbf{T},\\
\mathbf{T}^{\star_{34}\star_{41}\star_{12}\star_{23}}=\mathbf{T},\\
\mathbf{T}^{\star_{41}\star_{12}\star_{23}\star_{34}}=\mathbf{T},
\end{array}
\ \ \
\begin{array}
[c]{c}%
\text{{\small 2) }\emph{Non-Post-like multistar cycles}}\\
\mathbf{T}^{\star_{14}\star_{43}\star_{32}\star_{21}}=\mathbf{T,}\\
\mathbf{T}^{\star_{43}\star_{32}\star_{21}\star_{14}}=\mathbf{T,}\\
\mathbf{T}^{\star_{32}\star_{21}\star_{14}\star_{43}}=\mathbf{T,}\\
\mathbf{T}^{\star_{21}\star_{14}\star_{43}\star_{32}}=\mathbf{T.}%
\end{array}
\end{equation}

\end{example}

Thus, if the inner pairing has no symmetry, then both the Post-like and
non-Post-like adjoints and corresponding multistar involutions are different.

\textsl{Second}, in the case $N\geq3$ any symmetry of the multiplace inner
pairing restricts the polyadic adjoint sets and multistar involutions considerably.

\begin{theorem}
\label{theo-sym}If the inner pairing with $N\geq3$ has the full permutation
symmetry%
\begin{equation}
\left\langle \left\langle \mathsf{v}_{1}|\mathsf{v}_{2}|\ldots|\mathsf{v}%
_{N}\right\rangle \right\rangle =\left\langle \left\langle \sigma
\mathsf{v}_{1}|\sigma\mathsf{v}_{2}|\ldots|\sigma\mathsf{v}_{N}\right\rangle
\right\rangle ,\ \ \ \ \forall\sigma\in\mathfrak{S}_{N},
\end{equation}
where $\mathfrak{S}_{N}$ is the symmetric group of $N$ elements, then:

\begin{enumerate}
\item All the multistars coincide $\left(  \star_{ij}\right)  =\left(
\star_{kl}\right)  :=\left(  \ast\right)  $ for any allowed $i,j,k,l=1,\ldots
,N$;

\item All the operators are self-adjoint $\mathbf{T}=\mathbf{T}^{\ast}$.
\end{enumerate}
\end{theorem}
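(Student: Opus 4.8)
The plan is to collapse every multistar, Post-like or not, into a single ``slot-transport'' operation on the operator algebra $\mathcal{A}_T$, and then to use that under the hypothesis $N\ge 3$ there are at least three distinct arguments to move the operator through.

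First I would establish an auxiliary lemma. Under the full symmetry hypothesis, for every $1$-place operator $\mathbf{S}$ and every pair of distinct positions $p\neq q$ in $\{1,\ldots,N\}$ one has
\[
\left\langle\left\langle \mathsf{w}_1|\ldots|\mathbf{S}\mathsf{w}_p|\ldots|\mathsf{w}_q|\ldots|\mathsf{w}_N\right\rangle\right\rangle=\left\langle\left\langle \mathsf{w}_1|\ldots|\mathsf{w}_p|\ldots|\mathbf{S}^{\star_{12}}\mathsf{w}_q|\ldots|\mathsf{w}_N\right\rangle\right\rangle,
\]
where on the left the operator acts in slot $p$ and on the right in slot $q$, all other slots carrying the same ``vectors''. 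This comes out of the very first defining relation of (\ref{tv}) — which holds with an arbitrary operator in place of $\mathbf{T}$ — by relabelling its free ``vectors'' through any bijection of $\{1,\ldots,N\}$ sending $1\mapsto p$ and $2\mapsto q$, and then invoking the permutation symmetry $\left\langle\left\langle\mathsf{v}_1|\ldots|\mathsf{v}_N\right\rangle\right\rangle=\left\langle\left\langle\sigma\mathsf{v}_1|\ldots|\sigma\mathsf{v}_N\right\rangle\right\rangle$ to shuffle each entry back into its ``natural'' slot; the spectator entries match on both sides, and the dependence on the chosen bijection disappears.

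Given the lemma, part~(1) is immediate: each of the equations in (\ref{tv}), and likewise each non-Post-like adjointness relation as in the Example, simply asserts that transporting the operator from one fixed slot to another turns $\mathbf{T}$ into $\mathbf{T}^{\star_{ij}}$; by the lemma that conversion is always the single operation $\star_{12}$, so all multistars coincide, $(\star_{ij})=(\star_{12})=:(\ast)$. For part~(2) I would pick three pairwise distinct positions $p,q,r$, which is exactly where $N\ge 3$ is used. Transporting $p\to q\to p$ and applying the uniqueness assumption (\ref{t1t2}) gives $\mathbf{T}^{\ast\ast}=\mathbf{T}$, i.e. each multistar is an involution. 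Transporting $p\to r$ directly puts $\mathbf{T}^{\ast}$ in slot $r$, while transporting $p\to q\to r$ puts $\mathbf{T}^{\ast\ast}$ there; comparing these via (\ref{t1t2}) yields $\mathbf{T}^{\ast}=\mathbf{T}^{\ast\ast}$. Combining the two identities gives $\mathbf{T}=\mathbf{T}^{\ast\ast}=\mathbf{T}^{\ast}$, so every operator is self-adjoint.

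The only delicate point is the bookkeeping inside the lemma: one must track precisely which slot the operator occupies after the relabelling-and-repermuting step and verify that the remaining ``vectors'' return to exactly the slots they started in, so that the resulting identity is genuinely between ``operator in slot $p$'' and ``operator in slot $q$'' with all other data identical. Once that is set up carefully, the rest is a two-line consequence of the uniqueness assumption (\ref{t1t2}) together with the availability of three distinct slots when $N\ge 3$.
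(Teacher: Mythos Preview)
Your proof is correct and follows essentially the same strategy as the paper's: both use the full permutation symmetry to collapse all multistars into one, and then derive self-adjointness by combining the identities $\mathbf{T}^{\ast\ast}=\mathbf{T}$ and $\mathbf{T}^{\ast\ast\ast}=\mathbf{T}$ (equivalently your $\mathbf{T}^{\ast}=\mathbf{T}^{\ast\ast}$) obtained by moving the operator through two and then three slots. Your packaging via the ``slot-transport'' lemma is a bit cleaner and makes explicit that only three distinct positions are needed, whereas the paper writes out cycles of every length $i\le N$ before noting that $i=2,3$ already suffice.
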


\begin{proof}
\noindent

\begin{enumerate}
\item In each adjointness relation from (\ref{tv}) we place the operator
$\mathbf{T}$ on the l.h.s. to the first position and its multistar adjoint
$\mathbf{T}^{\star_{ij}}$ to the second position, using the full permutation
symmetry, which together with (\ref{t1t2}) gives the equality of all multistar operations.

\item We place the operator $\mathbf{T}$ on the l.h.s. to the first position
and apply the derivation of the involution in the binary case to increasing
cycles of size $i\leq N$ recursively, that is:

For $i=2$%
\begin{align}
\left\langle \left\langle \mathbf{T}\mathsf{v}_{1}|\mathsf{v}_{2}%
|\mathsf{v}_{3}|\ldots|\mathsf{v}_{N}\right\rangle \right\rangle  &
=\left\langle \left\langle \mathsf{v}_{1}|\mathbf{T}^{\ast}\mathsf{v}%
_{2}|\mathsf{v}_{3}|\ldots|\mathsf{v}_{N}\right\rangle \right\rangle
=\left\langle \left\langle \mathbf{T}^{\ast}\mathsf{v}_{2}|\mathsf{v}%
_{1}|\mathsf{v}_{3}|\ldots|\mathsf{v}_{N}\right\rangle \right\rangle
\nonumber\\
&  =\left\langle \left\langle \mathsf{v}_{2}|\mathbf{T}^{\ast\ast}%
\mathsf{v}_{1}|\mathsf{v}_{3}|\ldots|\mathsf{v}_{N}\right\rangle \right\rangle
=\left\langle \left\langle \mathbf{T}^{\ast\ast}\mathsf{v}_{1}|\mathsf{v}%
_{2}|\mathsf{v}_{3}|\ldots|\mathsf{v}_{N}\right\rangle \right\rangle ,
\end{align}
then, using (\ref{t1t2}) we get%
\begin{equation}
\mathbf{T}=\mathbf{T}^{\ast\ast}, \label{ttt}%
\end{equation}
as in the standard binary case. However, for $N\geq3$ we have $N$ higher
cycles in addition.

For $i=3$%
\begin{align}
\left\langle \left\langle \mathbf{T}\mathsf{v}_{1}|\mathsf{v}_{2}%
|\mathsf{v}_{3}|\ldots|\mathsf{v}_{N}\right\rangle \right\rangle  &
=\left\langle \left\langle \mathsf{v}_{1}|\mathbf{T}^{\ast}\mathsf{v}%
_{2}|\mathsf{v}_{3}|\ldots|\mathsf{v}_{N}\right\rangle \right\rangle
=\left\langle \left\langle \mathbf{T}^{\ast}\mathsf{v}_{2}|\mathsf{v}%
_{3}|\mathsf{v}_{1}|\ldots|\mathsf{v}_{N}\right\rangle \right\rangle
\nonumber\\
&  =\left\langle \left\langle \mathsf{v}_{2}|\mathbf{T}^{\ast\ast}%
\mathsf{v}_{3}|\mathsf{v}_{1}|\ldots|\mathsf{v}_{N}\right\rangle \right\rangle
=\left\langle \left\langle \mathbf{T}^{\ast\ast}\mathsf{v}_{3}|\mathsf{v}%
_{1}|\mathsf{v}_{2}|\ldots|\mathsf{v}_{N}\right\rangle \right\rangle
\nonumber\\
&  =\left\langle \left\langle \mathsf{v}_{3}|\mathbf{T}^{\ast\ast\ast
}\mathsf{v}_{1}|\mathsf{v}_{2}|\ldots|\mathsf{v}_{N}\right\rangle
\right\rangle =\left\langle \left\langle \mathbf{T}^{\ast\ast\ast}%
\mathsf{v}_{1}|\mathsf{v}_{2}|\mathsf{v}_{3}|\ldots|\mathsf{v}_{N}%
\right\rangle \right\rangle ,
\end{align}
which together with (\ref{t1t2}) gives%
\begin{equation}
\mathbf{T}=\mathbf{T}^{\ast\ast\ast},
\end{equation}
and after using (\ref{ttt})%
\begin{equation}
\mathbf{T}=\mathbf{T}^{\ast}. \label{ttt1}%
\end{equation}

Similarly, for an arbitrary length of the cycle $i$ we obtain $\mathbf{T}%
=\mathbf{T}^{\overset{i}{\overbrace{\ast\ast\ldots\ast}}}$, which should be
valid for \textsl{each} cycle recursively with $i=2,3,\ldots,N$. Therefore,
for any $N\geq3$ all the operators $\mathbf{T}$ are self-adjoint (\ref{ttt1}),
while $N=2$ is an exceptional case, when we have $\mathbf{T}=\mathbf{T}%
^{\ast\ast}$ (\ref{ttt}) only.
\end{enumerate}
\end{proof}

Now we show that imposing a partial symmetry on the polyadic inner pairing
will give more interesting properties to the adjoint operators. Recall, that
one of possible binary commutativity generalizations of (semi)groups to the
polyadic case is the semicommutativity concept, when in the multiplication
only the first and last elements are exchanged. Similarly, we introduce

\begin{definition}
The polyadic inner pairing is called \textit{semicommutative}, if%
\begin{equation}
\left\langle \left\langle \mathsf{v}_{1}|\mathsf{v}_{2}|\mathsf{v}_{3}%
|\ldots|\mathsf{v}_{N}\right\rangle \right\rangle =\left\langle \left\langle
\mathsf{v}_{N}|\mathsf{v}_{2}|\mathsf{v}_{3}|\ldots|\mathsf{v}_{1}%
\right\rangle \right\rangle ,\ \ \ \mathsf{v}_{i}\in\mathsf{V}. \label{sm}%
\end{equation}

\end{definition}

\begin{proposition}
If the polyadic inner pairing is semicommutative, then for any operator
$\mathbf{T}$ (satisfying Post-like adjointness (\ref{tv})) the last multistar
operation $\left(  \star_{N,1}\right)  $ is a \textsl{binary involution} and
is a composition of all the previous multistars%
\begin{align}
\mathbf{T}^{\star_{N,1}}  &  =\mathbf{T}^{\star_{12}\star_{23}\star_{34}%
\ldots\star_{N-1,N}},\label{tn1}\\
\mathbf{T}^{\star_{N,1}\star_{N,1}}  &  =\mathbf{T}. \label{tn11}%
\end{align}

\end{proposition}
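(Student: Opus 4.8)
The plan is to combine the chain of Post-like adjointness relations (\ref{tv}) with the semicommutativity (\ref{sm}) and the uniqueness property (\ref{t1t2}). First I would start from the first relation in (\ref{tv}), $\langle\langle \mathbf{T}\mathsf{v}_{1}|\mathsf{v}_{2}|\ldots|\mathsf{v}_{N}\rangle\rangle=\langle\langle \mathsf{v}_{1}|\mathbf{T}^{\star_{12}}\mathsf{v}_{2}|\mathsf{v}_{3}|\ldots|\mathsf{v}_{N}\rangle\rangle$, and then ``push'' the operator through every successive slot by applying in turn the second, third, \ldots, $(N-1)$-st relations of (\ref{tv}), each applied to the operator that currently sits in that slot. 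After $N-1$ such steps this yields
\[ \langle\langle \mathbf{T}\mathsf{v}_{1}|\mathsf{v}_{2}|\ldots|\mathsf{v}_{N}\rangle\rangle=\langle\langle \mathsf{v}_{1}|\mathsf{v}_{2}|\ldots|\mathsf{v}_{N-1}|\mathbf{T}^{\star_{12}\star_{23}\ldots\star_{N-1,N}}\mathsf{v}_{N}\rangle\rangle . \]

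Second, I would apply the semicommutativity (\ref{sm}) to both sides of this identity, interchanging the first and last arguments. On the left this produces $\langle\langle \mathsf{v}_{N}|\mathsf{v}_{2}|\ldots|\mathsf{v}_{N-1}|\mathbf{T}\mathsf{v}_{1}\rangle\rangle$, and on the right $\langle\langle \mathbf{T}^{\star_{12}\star_{23}\ldots\star_{N-1,N}}\mathsf{v}_{N}|\mathsf{v}_{2}|\ldots|\mathsf{v}_{N-1}|\mathsf{v}_{1}\rangle\rangle$. Relabelling $\mathsf{v}_{N}\mapsto\mathsf{w}_{1}$, $\mathsf{v}_{1}\mapsto\mathsf{w}_{N}$ and $\mathsf{v}_{i}\mapsto\mathsf{w}_{i}$ for $2\le i\le N-1$, the left-hand side becomes exactly the left-hand side of the last (the $N$-th) relation of (\ref{tv}), whose right-hand side is $\langle\langle \mathbf{T}^{\star_{N,1}}\mathsf{w}_{1}|\mathsf{w}_{2}|\ldots|\mathsf{w}_{N}\rangle\rangle$. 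Comparing the two expressions obtained for $\langle\langle \mathsf{w}_{1}|\ldots|\mathsf{w}_{N-1}|\mathbf{T}\mathsf{w}_{N}\rangle\rangle$ and invoking the uniqueness (\ref{t1t2}) on the first place gives (\ref{tn1}), namely $\mathbf{T}^{\star_{N,1}}=\mathbf{T}^{\star_{12}\star_{23}\ldots\star_{N-1,N}}$.

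Finally, for (\ref{tn11}) I would substitute (\ref{tn1}) into the first of the multistar cycles (\ref{tn}), $\mathbf{T}^{\star_{12}\star_{23}\ldots\star_{N-1,N}\star_{N,1}}=\mathbf{T}$; replacing the initial block $\star_{12}\ldots\star_{N-1,N}$ by the single $\star_{N,1}$ turns this into $\mathbf{T}^{\star_{N,1}\star_{N,1}}=\mathbf{T}$, so that $(\star_{N,1})$ is a binary involution. The step requiring most care is the bookkeeping in the first paragraph: one must check that applying the $k$-th relation of (\ref{tv}) to the operator already decorated by $\star_{12}\ldots\star_{k-1,k}$ indeed appends $\star_{k,k+1}$ in the correct order, and that the semicommutativity is used on the same pair of slots (first and last) on both sides, so that after relabelling the arguments line up with the $N$-th adjointness relation; once that is arranged, the uniqueness property closes the argument at once.
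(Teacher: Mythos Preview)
Your proof is correct and follows essentially the same approach as the paper. The only cosmetic difference is the order in which you invoke the ingredients: the paper begins with $\mathbf{T}$ in the last slot, applies semicommutativity (\ref{sm}) first to bring it to the front, then pushes it through with (\ref{tv}), and finally swaps back, whereas you push first and then swap once on each side; both routes use exactly the same chain of Post-like adjointness relations together with one appeal to semicommutativity and the uniqueness property (\ref{t1t2}), and both derive (\ref{tn11}) from the first multistar cycle in (\ref{tn}) by substituting (\ref{tn1}).
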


\begin{proof}
It follows from (\ref{tv}) and (\ref{sm}), that%
\begin{align}
\left\langle \left\langle \mathsf{v}_{1}|\mathsf{v}_{2}|\mathsf{v}_{3}%
|\ldots|\mathbf{T}\mathsf{v}_{N}\right\rangle \right\rangle  &  =\left\langle
\left\langle \mathbf{T}\mathsf{v}_{N}|\mathsf{v}_{2}|\mathsf{v}_{3}%
|\ldots|\mathsf{v}_{1}\right\rangle \right\rangle =\left\langle \left\langle
\mathsf{v}_{N}|\mathsf{v}_{2}|\mathsf{v}_{3}|\ldots|\mathbf{T}^{\star
_{12}\star_{23}\star_{34}\ldots\star_{N-1,N}}\mathsf{v}_{1}\right\rangle
\right\rangle \\
&  =\left\langle \left\langle \mathbf{T}^{\star_{12}\star_{23}\star_{34}%
\ldots\star_{N-1,N}}\mathsf{v}_{1}|\mathsf{v}_{2}|\mathsf{v}_{3}%
|\ldots|\mathsf{v}_{N}\right\rangle \right\rangle =\left\langle \left\langle
\mathbf{T}^{\star_{N,1}}\mathsf{v}_{1}|\mathsf{v}_{2}|\mathsf{v}_{3}%
|\ldots|\mathsf{v}_{N}\right\rangle \right\rangle ,
\end{align}
which using (\ref{t1t2}) gives (\ref{tn1}), (\ref{tn11}) follows from the
first multistar cocycle in (\ref{tn}).
\end{proof}

The adjointness relations (\ref{tv}) (of all kinds) together with (\ref{tl})
and (\ref{mtm}) allows us to fix the arity shape of the polyadic operator
algebra $\mathcal{A}_{T}$. We will assume that the arity of the operator
multiplication in $\mathcal{A}_{T}$ coincides with the number of places of the
inner pairing $N$ (\ref{aa})%
\begin{equation}
n_{T}=N, \label{ntn}%
\end{equation}
because it is in agreement with (\ref{tv}). Thus, the arity shape of the
polyadic operator algebra becomes
\begin{equation}
\mathcal{A}_{T}=\left\langle K;\left\{  \mathbf{T}\right\}  \mid
\mathbf{\sigma}_{m_{K}},\mathbf{\kappa}_{n_{K}};\mathbf{\eta}_{m_{T}=m_{V}%
},\mathbf{\omega}_{n_{T}=N}\mid\mathbf{\theta}_{k_{F}=k_{\rho}=1}\right\rangle
,
\end{equation}

\begin{definition}
We call the operator algebra $\mathcal{A}_{T}$ which has the arity $n_{T}=N$ a
\textit{nonderived polyadic operator algebra}.
\end{definition}

Let us investigate some structural properties of $\mathcal{A}_{T}$ and types
of polyadic operators.

\begin{remark}
\label{rem-oper}We can only \textsl{define}, but not derive, as in the binary
case, the action of any multistar $\left(  \star_{ij}\right)  $ on the product
of operators, because in the nonderived $n_{T}$-ary algebra we have a fixed
number of operators in a product and sum, that is $\ell^{\prime}\left(
n_{T}-1\right)  +1$ and $\ell^{\prime\prime}\left(  m_{T}-1\right)  +1$,
correspondingly, where $\ell^{\prime}$ is the number of $n_{T}$-ary
multiplications and $\ell^{\prime}$ is the number of $m_{T}$-ary additions.
Therefore, we cannot transfer (one at a time) all the polyadic operators from
one place in the inner pairing to another place, as is done in the standard
proof in the binary case.
\end{remark}

Taking this into account, as well as consistency under the multistar cycles
(\ref{tn}), we arrive at

\begin{definition}
The fixed multistar operation acts on the $\ell=1$ product of $n_{T}$ polyadic
operators, depending on the \textit{sequential number of the multistar}
$\left(  \star_{ij}\right)  $ (for the Post-like adjointness relations
(\ref{tv}))%
\begin{equation}
s_{ij}:=\left\{
\begin{array}
[c]{c}%
\dfrac{i+j-1}{2},\ \ \ \ \text{if }3\leq i+j\leq2N-1\\
N,\ \ \ \ \text{if }i\ j=N,
\end{array}
\right.  \ \ \ \ \ s_{ij}=1,2,\ldots,N-1,N, \label{sij}%
\end{equation}
in the following way%
\begin{equation}
\left(  \mathbf{\omega}_{n_{T}}\left[  \mathbf{T}_{1},\mathbf{T}_{2}%
,\ldots,\mathbf{T}_{n_{T}-1},\mathbf{T}_{n_{T}}\right]  \right)  ^{\star_{ij}%
}=\left\{
\begin{array}
[c]{c}%
\mathbf{\omega}_{n_{T}}\left[  \mathbf{T}_{n_{T}}^{\star_{ij}},\mathbf{T}%
_{n_{T}-1}^{\star_{ij}},\ldots,\mathbf{T}_{2}^{\star_{ij}},\mathbf{T}%
_{1}^{\star_{ij}}\right]  ,\ \ \ \ \text{if }s_{ij}\text{ is odd,}\\[5pt]%
\mathbf{\omega}_{n_{T}}\left[  \mathbf{T}_{1}^{\star_{ij}},\mathbf{T}%
_{2}^{\star_{ij}},\ldots,\mathbf{T}_{n_{T}-1}^{\star_{ij}},\mathbf{T}_{n_{T}%
}^{\star_{ij}}\right]  ,\ \ \ \ \text{if }s_{ij}\text{ is even.}%
\end{array}
\right.  \label{wt}%
\end{equation}

\end{definition}

A rule similar to (\ref{wt}) holds also for non-Post-like adjointness
relations, but their concrete form depends of the corresponding non-Post-like
associative quiver.

Sometimes, to shorten notation, it will be more convenient to mark a multistar
by the sequential number (\ref{sij}), such that $\left(  \star_{ij}\right)
\Rightarrow\left(  \star_{s_{ij}}\right)  $, e.g. $\left(  \star_{23}\right)
\Rightarrow\left(  \star_{2}\right)  $, $\left(  \star_{N,1}\right)
\Rightarrow\left(  \star_{N}\right)  $, etc. Also, in examples, for the
ternary multiplication we will use the square brackets without the name of
operation, if it is clear from the context, e.g. $\mathbf{\omega}_{3}\left[
\mathbf{T}_{1},\mathbf{T}_{2},\mathbf{T}_{3}\right]  \Rightarrow\left[
\mathbf{T}_{1},\mathbf{T}_{2},\mathbf{T}_{3}\right]  $, etc.

\begin{example}
\label{ex-ter}In the lowest ternary case $N=3$ we have%
\begin{equation}%
\begin{array}
[c]{c}%
\text{{\small 1) }\emph{Post-like adjointness relations}}\\
\left\langle \left\langle \mathbf{T}\mathsf{v}_{1}|\mathsf{v}_{2}%
|\mathsf{v}_{3}\right\rangle \right\rangle =\left\langle \left\langle
\mathsf{v}_{1}|\mathbf{T}^{\star_{1}}\mathsf{v}_{2}|\mathsf{v}_{3}%
\right\rangle \right\rangle ,\\
\left\langle \left\langle \mathsf{v}_{1}|\mathbf{T}\mathsf{v}_{2}%
|\mathsf{v}_{3}\right\rangle \right\rangle =\left\langle \left\langle
\mathsf{v}_{1}|\mathsf{v}_{2}|\mathbf{T}^{\star_{2}}\mathsf{v}_{3}%
\right\rangle \right\rangle ,\\
\left\langle \left\langle \mathsf{v}_{1}|\mathsf{v}_{2}|\mathbf{T}%
\mathsf{v}_{3}\right\rangle \right\rangle =\left\langle \left\langle
\mathbf{T}^{\star_{3}}\mathsf{v}_{1}|\mathsf{v}_{2}|\mathsf{v}_{3}%
\right\rangle \right\rangle ,
\end{array}
\ \ \
\begin{array}
[c]{c}%
\text{{\small 2) }\emph{Non-Post-like adjointness relations}}\\
\left\langle \left\langle \mathbf{T}\mathsf{v}_{1}|\mathsf{v}_{2}%
|\mathsf{v}_{3}\right\rangle \right\rangle =\left\langle \left\langle
\mathsf{v}_{1}|\mathsf{v}_{2}|\mathbf{T}^{\star_{3}}\mathsf{v}_{3}%
\right\rangle \right\rangle ,\\
\left\langle \left\langle \mathsf{v}_{1}|\mathsf{v}_{2}|\mathbf{T}%
\mathsf{v}_{3}\right\rangle \right\rangle =\left\langle \left\langle
\mathsf{v}_{1}|\mathbf{T}^{\star_{2}}\mathsf{v}_{2}|\mathsf{v}_{3}%
\right\rangle \right\rangle ,\\
\left\langle \left\langle \mathsf{v}_{1}|\mathbf{T}\mathsf{v}_{2}%
|\mathsf{v}_{3}\right\rangle \right\rangle =\left\langle \left\langle
\mathbf{T}^{\star_{1}}\mathsf{v}_{1}|\mathsf{v}_{2}|\mathsf{v}_{3}%
\right\rangle \right\rangle ,
\end{array}
\label{ta3}%
\end{equation}
and the corresponding multistar cycles%
\begin{equation}%
\begin{array}
[c]{c}%
\text{{\small 1) }\emph{Post-like multistar cycles}}\\
\mathbf{T}^{\star_{1}\star_{2}\star_{3}}=\mathbf{T},\\
\mathbf{T}^{\star_{2}\star_{3}\star_{1}}=\mathbf{T},\\
\mathbf{T}^{\star_{3}\star_{1}\star_{2}}=\mathbf{T},
\end{array}
\ \ \
\begin{array}
[c]{c}%
\text{{\small 2) }\emph{Non-Post-like multistar cycles}}\\
\mathbf{T}^{\star_{3}\star_{2}\star_{1}}=\mathbf{T,}\\
\mathbf{T}^{\star_{2}\star_{1}\star_{3}}=\mathbf{T,}\\
\mathbf{T}^{\star_{1}\star_{3}\star_{2}}=\mathbf{T.}%
\end{array}
\label{tc3}%
\end{equation}
Using (\ref{wt}) we obtain the following ternary conjugation rules%
\begin{align}
\left(  \left[  \mathbf{T}_{1},\mathbf{T}_{2},\mathbf{T}_{3}\right]  \right)
^{\star_{1}}  &  =\left[  \mathbf{T}_{3}^{\star_{1}},\mathbf{T}_{2}^{\star
_{1}},\mathbf{T}_{1}^{\star_{1}}\right]  ,\\
\left(  \left[  \mathbf{T}_{1},\mathbf{T}_{2},\mathbf{T}_{3}\right]  \right)
^{\star_{2}}  &  =\left[  \mathbf{T}_{1}^{\star_{2}},\mathbf{T}_{2}^{\star
_{2}},\mathbf{T}_{3}^{\star_{2}}\right]  ,\\
\left(  \left[  \mathbf{T}_{1},\mathbf{T}_{2},\mathbf{T}_{3}\right]  \right)
^{\star_{3}}  &  =\left[  \mathbf{T}_{3}^{\star_{3}},\mathbf{T}_{2}^{\star
_{3}},\mathbf{T}_{1}^{\star_{3}}\right]  ,
\end{align}
which are common for both Post-like and non-Post-like adjointness relations
(\ref{ta3}).
\end{example}

\begin{definition}
A polyadic operator $\mathbf{T}$ is called \textit{self-adjoint}, if all
multistar operations are identities, i.e. $\left(  \star_{ij}\right)
=\operatorname*{id}$, $\forall i,j$.
\end{definition}

\subsection{Polyadic isometry and projection}

Now we introduce polyadic analogs for the important types of operators:
isometry, unitary, and (orthogonal) projection. Taking into account
\emph{Remark} \ref{rem-oper}, we again cannot move operators singly, and
instead of proving the operator relations, as it is usually done in the binary
case, we can only exploit some mnemonic rules to \textsl{define} the
corresponding relations between polyadic operators.

If the polyadic operator algebra $\mathcal{A}_{T}$ contains a unit
$\mathbf{I}$ and zero $\mathbf{Z}$ (see (\ref{iv})--(\ref{zv})) we define the
conditions of polyadic isometry and orthogonality:

\begin{definition}
\label{def-isom}A polyadic operator $\mathbf{T}$ is called a \textit{polyadic
isometry}, if it preserves the polyadic inner pairing%
\begin{equation}
\left\langle \left\langle \mathbf{T}\mathsf{v}_{1}|\mathbf{T}\mathsf{v}%
_{2}|\mathbf{T}\mathsf{v}_{3}|\ldots|\mathbf{T}\mathsf{v}_{N}\right\rangle
\right\rangle =\left\langle \left\langle \mathsf{v}_{1}|\mathsf{v}%
_{2}|\mathsf{v}_{3}|\ldots|\mathsf{v}_{N}\right\rangle \right\rangle ,
\label{tvv}%
\end{equation}
and satisfies%
\begin{align}
\mathbf{\omega}_{n_{T}}\left[  \mathbf{T}^{\star_{N-1,N}},\mathbf{T}%
^{\star_{N-2,N-1}\star_{N-1,N}},\ldots\mathbf{T}^{\star_{23}\star_{34}%
\ldots\star_{N-2,N-1}\star_{N-1,N}},\mathbf{T}^{\star_{12}\star_{23}\star
_{34}\ldots\star_{N-2,N-1}\star_{N-1,N}},\mathbf{T}\right]   &  =\mathbf{I,}%
\nonumber\\
+\left(  N-1\right)
\ cycle\ permutations\ of\ multistars\ in\ the\ first\ \left(  N-1\right)
\ terms  &  . \label{wti}%
\end{align}

\end{definition}

\begin{remark}
If the multiplication in $\mathcal{A}_{T}$ is derived and all multistars are
equal, then the polyadic isometry operators satisfy some kind of
$N$-regularity \cite{dup/mar7} or regular $N$-cocycle condition
\cite{dup/mar2001a}.
\end{remark}

\begin{proposition}
The polyadic isometry operator $\mathbf{T}$ preserves the polyadic norm%
\begin{equation}
\left\Vert \mathbf{T}\mathsf{v}\right\Vert _{N}=\left\Vert \mathsf{v}%
\right\Vert _{N},\ \ \ \ \ \forall\mathsf{v}\in\mathsf{V}. \label{tv1}%
\end{equation}

\end{proposition}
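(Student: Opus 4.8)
The plan is to reduce the assertion to the defining relation of the polyadic norm by specializing the polyadic isometry condition to a single repeated ``vector''. First I would set $\mathsf{v}_1=\mathsf{v}_2=\cdots=\mathsf{v}_N=\mathsf{v}$ in the polyadic isometry relation (\ref{tvv}) of \textbf{Definition \ref{def-isom}}, which yields
\[
\overset{N}{\overbrace{\left\langle \left\langle \mathbf{T}\mathsf{v}|\mathbf{T}\mathsf{v}|\ldots|\mathbf{T}\mathsf{v}\right\rangle \right\rangle }}=\overset{N}{\overbrace{\left\langle \left\langle \mathsf{v}|\mathsf{v}|\ldots|\mathsf{v}\right\rangle \right\rangle }}.
\]
This is the only place where the isometry hypothesis enters; the operator composition identity (\ref{wti}) is not needed for the norm statement.

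Next I would apply the defining relation (\ref{kv}) of the polyadic norm from \textbf{Definition \ref{def-pnorm}} twice, once to the ``vector'' $\mathbf{T}\mathsf{v}$ and once to $\mathsf{v}$, together with the arity coincidence $n_{K}=N$ from (\ref{nkn}), which rewrites both sides of the previous display as $n_{K}$-ary products and gives
\[
\mathbf{\kappa}_{n_{K}}\left[  \overset{n_{K}}{\overbrace{\left\Vert \mathbf{T}\mathsf{v}\right\Vert _{N},\ldots,\left\Vert \mathbf{T}\mathsf{v}\right\Vert _{N}}}\right]=\mathbf{\kappa}_{n_{K}}\left[  \overset{n_{K}}{\overbrace{\left\Vert \mathsf{v}\right\Vert _{N},\ldots,\left\Vert \mathsf{v}\right\Vert _{N}}}\right].
\]
The degenerate case is immediate: if $\mathsf{v}=\mathsf{z}_{V}$ then, since the $1$-place operator $\mathbf{T}$ sends the zero ``vector'' to the zero ``vector'', axiom 2) of \textbf{Definition \ref{def-pnorm}} forces both norms to equal $z_{K}$.

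It then remains to cancel the repeated $n_{K}$-ary power in the last display. Since by axiom 3) of \textbf{Definition \ref{def-pnorm}} both $\left\Vert \mathbf{T}\mathsf{v}\right\Vert _{N}$ and $\left\Vert \mathsf{v}\right\Vert _{N}$ are non-negative with respect to the binary ordering on $\mathbb{K}_{m_{K},n_{K}}$, and $\mathbf{\kappa}_{n_{K}}$ is strictly monotone on the non-negative cone, the map $a\mapsto\mathbf{\kappa}_{n_{K}}[a^{n_{K}}]$ is injective there; hence $\left\Vert \mathbf{T}\mathsf{v}\right\Vert _{N}=\left\Vert \mathsf{v}\right\Vert _{N}$, which is exactly (\ref{tv1}). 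I expect this last step to be the main obstacle: in a general polyadic field the $n_{K}$-ary power need not be injective, so the argument genuinely relies on the presence of a compatible ordering; absent such an ordering one should read the equality of $n_{K}$-ary powers as the natural notion of norm preservation, or build uniqueness of the induced polyadic norm into \textbf{Definition \ref{def-pnorm}}.
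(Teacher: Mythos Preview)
Your proposal is correct and follows essentially the same route as the paper: specialize (\ref{tvv}) to equal arguments, invoke (\ref{kv}) with $n_K=N$, and obtain the equality of $n_K$-ary powers of the two norms. The paper's proof stops at that display and simply asserts that it ``gives (\ref{tv1}), when $n_{K}=N$''; you go further by making explicit the cancellation of the $n_K$-ary power via positivity and monotonicity, and by flagging that this step genuinely depends on the ordering axioms --- a point the paper leaves tacit.
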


\begin{proof}
It follows from (\ref{kv}) and (\ref{tvv}), that%
\begin{equation}
\mathbf{\kappa}_{n_{K}}\left[  \overset{n_{K}}{\overbrace{\left\Vert
\mathbf{T}\mathsf{v}\right\Vert _{N},\left\Vert \mathbf{T}\mathsf{v}%
\right\Vert _{N},\ldots,\left\Vert \mathbf{T}\mathsf{v}\right\Vert _{N}}%
}\right]  =\mathbf{\kappa}_{n_{K}}\left[  \overset{n_{K}}{\overbrace
{\left\Vert \mathsf{v}\right\Vert _{N},\left\Vert \mathsf{v}\right\Vert
_{N},\ldots,\left\Vert \mathsf{v}\right\Vert _{N}}}\right]  ,
\end{equation}
which gives (\ref{tv1}), when $n_{K}=N$.
\end{proof}

\begin{definition}
If for $N$ polyadic operators $\mathbf{T}_{i}$ we have%
\begin{equation}
\left\langle \left\langle \mathbf{T}_{1}\mathsf{v}_{1}|\mathbf{T}%
_{2}\mathsf{v}_{2}|\mathbf{T}_{3}\mathsf{v}_{3}|\ldots|\mathbf{T}%
_{N}\mathsf{v}_{N}\right\rangle \right\rangle =\mathsf{z}_{K},\ \ \ \forall
\mathsf{v}_{i}\in\mathsf{V},
\end{equation}
where $\mathsf{z}_{K}\in\mathsf{V}$ is the zero of the underlying polyadic
field $\mathbb{K}_{m_{K},n_{K}}$, then we say that $\mathbf{T}_{i}$ are
\textit{(polyadically) orthogonal}, and they satisfy%
\begin{align}
\mathbf{\omega}_{n_{T}}\left[  \mathbf{T}_{1}^{\star_{N-1,N}},\mathbf{T}%
_{2}^{\star_{N-2,N-1}\star_{N-1,N}},\ldots\mathbf{T}_{3}^{\star_{23}\star
_{34}\ldots\star_{N-2,N-1}\star_{N-1,N}},\mathbf{T}_{N-1}^{\star_{12}%
\star_{23}\star_{34}\ldots\star_{N-2,N-1}\star_{N-1,N}},\mathbf{T}_{N}\right]
&  =\mathbf{Z,}\label{ort}\\
+\left(  N-1\right)
\ cycle\ permutations\ of\ multistars\ in\ the\ first\ \left(  N-1\right)
\ terms  &  .
\end{align}

\end{definition}

The polyadic analog of projection is given by

\begin{definition}
If a polyadic operator $\mathbf{P}\in\mathcal{A}_{T}$ satisfies the polyadic
idempotency condition%
\begin{equation}
\mathbf{\omega}_{n_{T}}\left[  \overset{n_{T}}{\overbrace{\mathbf{P}%
,\mathbf{P},\ldots\mathbf{P}}}\right]  =\mathbf{P,} \label{pp}%
\end{equation}
then it is called a \textit{polyadic projection}.
\end{definition}

By analogy with the binary case, polyadic projections can be constructed from
polyadic isometry operators in a natural way.

\begin{proposition}
If $\mathbf{T}\in\mathcal{A}_{T}$ is a polyadic isometry, then%
\begin{align}
\mathbf{P}_{\mathbf{T}}^{\left(  1\right)  }  &  =\mathbf{\omega}_{n_{T}%
}\left[  \mathbf{T},\mathbf{T}^{\star_{N-1,N}},\mathbf{T}^{\star
_{N-2,N-1}\star_{N-1,N}},\ldots\mathbf{T}^{\star_{23}\star_{34}\ldots
\star_{N-2,N-1}\star_{N-1,N}},\mathbf{T}^{\star_{12}\star_{23}\star_{34}%
\ldots\star_{N-2,N-1}\star_{N-1,N}}\right]  ,\nonumber\\
&  +\left(  N-1\right)
\ cycle\ permutations\ of\ multistars\ in\ the\ last\ \left(  N-1\right)
\ terms. \label{pw}%
\end{align}
are the corresponding polyadic projections $\mathbf{P}_{\mathbf{T}}^{\left(
k\right)  }$, $k=1,\ldots,N$, satisfying (\ref{pp}).
\end{proposition}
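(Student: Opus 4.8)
The plan is to work entirely inside the nonderived $n_{T}$-ary operator algebra $\mathcal{A}_{T}$ (here $n_{T}=N$), where composition of $1$-place operators is genuinely associative, so that long $N$-ary products may be rebracketed freely, and then to collapse $\omega_{n_{T}}[\mathbf{P},\ldots,\mathbf{P}]$ down to $\mathbf{P}$ by using the isometry identity (\ref{wti}) on the interior blocks. To set notation I would write $\mathbf{P}:=\mathbf{P}_{\mathbf{T}}^{(1)}=\omega_{n_{T}}[\mathbf{T},\mathbf{B}_{2},\ldots,\mathbf{B}_{N}]$, where $\mathbf{B}_{k}=\mathbf{T}^{\star_{N-k+1,N-k+2}\star_{N-k+2,N-k+3}\cdots\star_{N-1,N}}$ is the $k$-th operator occurring in (\ref{pw}) (so $\mathbf{B}_{2}=\mathbf{T}^{\star_{N-1,N}}$ and $\mathbf{B}_{N}=\mathbf{T}^{\star_{12}\star_{23}\cdots\star_{N-1,N}}$). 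With this notation the isometry condition (\ref{wti}) reads exactly $\omega_{n_{T}}[\mathbf{B}_{2},\mathbf{B}_{3},\ldots,\mathbf{B}_{N},\mathbf{T}]=\mathbf{I}$.

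First I would substitute this form of $\mathbf{P}$ into the left-hand side of the polyadic idempotency condition (\ref{pp}) and flatten. Since each of the $n_{T}=N$ arguments of the outer $\omega_{n_{T}}$ is itself an $N$-ary product, associativity turns $\omega_{n_{T}}[\mathbf{P},\ldots,\mathbf{P}]$ into one long $N$-ary product of $N^{2}$ operators consisting of $N$ consecutive blocks $(\mathbf{T},\mathbf{B}_{2},\ldots,\mathbf{B}_{N})$. Next I would rebracket this long product as follows: keep the leading $\mathbf{T}$ of block $1$; for $j=1,\ldots,N-1$ group the trailing $N-1$ entries $\mathbf{B}_{2},\ldots,\mathbf{B}_{N}$ of block $j$ together with the leading $\mathbf{T}$ of block $j+1$; and keep the final $N-1$ entries $\mathbf{B}_{2},\ldots,\mathbf{B}_{N}$ of block $N$. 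Each of the $N-1$ interior groups is $\omega_{n_{T}}[\mathbf{B}_{2},\ldots,\mathbf{B}_{N},\mathbf{T}]=\mathbf{I}$ by (\ref{wti}), so the product collapses to $\omega_{n_{T}}[\mathbf{T},\mathbf{I},\ldots,\mathbf{I},\mathbf{B}_{2},\ldots,\mathbf{B}_{N}]$ with $N-1$ interior copies of $\mathbf{I}$. Because $\mathbf{I}\mathsf{v}=\mathsf{v}$, evaluating on an arbitrary $\mathsf{v}\in\mathsf{V}$ gives $\mathbf{T}(\mathbf{B}_{2}(\cdots\mathbf{B}_{N}\mathsf{v}))=\mathbf{P}\mathsf{v}$, hence $\omega_{n_{T}}[\mathbf{P},\ldots,\mathbf{P}]=\mathbf{P}$, which is (\ref{pp}).

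For the remaining projections $\mathbf{P}_{\mathbf{T}}^{(k)}$, $k=2,\ldots,N$, I would run the identical argument: $\mathbf{P}_{\mathbf{T}}^{(k)}$ still carries $\mathbf{T}$ in its first slot, while the multistar labels on its last $N-1$ entries are the $k$-th cyclic shift of those of $\mathbf{P}_{\mathbf{T}}^{(1)}$; consequently the interior junction groups become precisely the $k$-th cyclic permutation of the isometry identity listed in (\ref{wti}), again equal to $\mathbf{I}$, and the rest of the collapse is unchanged. As a consistency check, the binary case $N=2$ recovers $\mathbf{P}=\mathbf{T}\mathbf{T}^{\ast}$ and $\mathbf{P}^{2}=\mathbf{T}(\mathbf{T}^{\ast}\mathbf{T})\mathbf{T}^{\ast}=\mathbf{T}\mathbf{T}^{\ast}=\mathbf{P}$, the familiar statement that $\mathbf{T}\mathbf{T}^{\ast}$ is a projection when $\mathbf{T}$ is an isometry.

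The main obstacle I anticipate is bookkeeping rather than conceptual. One must check that after flattening the $N$-ary composition the interior groups match (\ref{wti}) slot-for-slot (and, for $k\geq 2$, its $k$-th cyclic permutation), that the operator count $1+(N-1)N+(N-1)=N^{2}$ comes out exactly right, and that the leftover copies of $\mathbf{I}$ may legitimately be deleted — that is, that a string of units is a neutral polyad for $\omega_{n_{T}}$, which holds since $\mathbf{I}$ is the unit of $\mathcal{A}_{T}$; this step tacitly assumes, as in \textbf{Definition \ref{def-isom}}, that $\mathcal{A}_{T}$ possesses such a unit. Care is also needed because the multistars are not derivations on products (\emph{Remark \ref{rem-oper}}), so one must never move a multistar across the composition; the argument above deliberately avoids this by invoking only plain $n_{T}$-ary associativity together with the already-\emph{defined} relation (\ref{wti}).
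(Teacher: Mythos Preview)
Your proposal is correct and follows exactly the approach the paper itself takes: the paper's proof consists of the single sentence ``Insert (\ref{pw}) into (\ref{pp}) and use (\ref{wti}) together with $n_{T}$-ary associativity,'' and your argument is a careful, fully spelled-out execution of precisely that plan, including the rebracketing into interior blocks $\omega_{n_{T}}[\mathbf{B}_{2},\ldots,\mathbf{B}_{N},\mathbf{T}]=\mathbf{I}$ and the handling of the cyclic permutations for $k\geq 2$.
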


\begin{definition}
A polyadic operator $\mathbf{T}\in\mathcal{A}_{T}$ is called \textit{normal},
if%
\begin{align}
&  \mathbf{\omega}_{n_{T}}\left[  \mathbf{T}^{\star_{N-1,N}},\mathbf{T}%
^{\star_{N-2,N-1}\star_{N-1,N}},\ldots\mathbf{T}^{\star_{23}\star_{34}%
\ldots\star_{N-2,N-1}\star_{N-1,N}},\mathbf{T}^{\star_{12}\star_{23}\star
_{34}\ldots\star_{N-2,N-1}\star_{N-1,N}},\mathbf{T}\right]  =\nonumber\\
&  \mathbf{\omega}_{n_{T}}\left[  \mathbf{T},\mathbf{T}^{\star_{N-1,N}%
},\mathbf{T}^{\star_{N-2,N-1}\star_{N-1,N}},\ldots\mathbf{T}^{\star_{23}%
\star_{34}\ldots\star_{N-2,N-1}\star_{N-1,N}},\mathbf{T}^{\star_{12}\star
_{23}\star_{34}\ldots\star_{N-2,N-1}\star_{N-1,N}}\right]  ,\nonumber\\
&  +\left(  N-1\right)
\ cycle\ permutations\ of\ multistars\ in\ the\ \left(  N-1\right)  \ terms.
\end{align}

\end{definition}

\begin{proof}
Insert (\ref{pw}) into (\ref{pp}) and use (\ref{wti}) together with $n_{T}%
$-ary associativity.
\end{proof}

\begin{definition}
If all the polyadic projections (\ref{pw}) are equal to unity $\mathbf{P}%
_{\mathbf{T}}^{\left(  k\right)  }=\mathbf{I}$, then the corresponding
polyadic isometry operator $\mathbf{T}$ is called a \textit{polyadic unitary
operator}.
\end{definition}

It can be shown, that each polyadic unitary operator is querable
(\textquotedblleft polyadically invertible\textquotedblright), such that it
has a querelement in $\mathcal{A}_{T}$.

\subsection{Towards polyadic analog of $C^{\ast}$-algebras}

Let us, first, generalize the operator binary norm (\ref{tm}) to the polyadic
case. This can be done, provided that a binary ordering on the underlying
polyadic field $\mathbb{K}_{m_{K},n_{K}}$ can be introduced.

\begin{definition}
The polyadic operator norm $\left\Vert \bullet\right\Vert _{T}:\left\{
\mathbf{T}\right\}  \rightarrow K$ is defined by%
\begin{equation}
\left\Vert \mathbf{T}\right\Vert _{T}=\inf\left\{  M\in K\mid\left\Vert
\mathbf{T}\mathsf{v}\right\Vert _{N}\leq\mu_{n_{K}}\left[  \overset{n_{K}%
-1}{\overbrace{M,\ldots,M}},\left\Vert \mathsf{v}\right\Vert _{N}\right]
,\forall\mathsf{v}\in\mathsf{V}\right\}  , \label{tti}%
\end{equation}
where $\left\Vert \bullet\right\Vert _{N}$ is the polyadic norm in the inner
pairing space $\mathcal{H}_{m_{K},n_{K},m_{V},k_{\rho}=1,N}$ and $\mu_{n_{K}}$
is the $n_{K}$-ary multiplication in $\mathbb{K}_{m_{K},n_{K}}$.
\end{definition}

\begin{definition}
The polyadic operator norm is called \textit{submultiplicative}, if%
\begin{align}
\left\Vert \mathbf{\omega}_{n_{T}}\left[  \mathbf{T}_{1},\mathbf{T}_{2}%
,\ldots,\mathbf{T}_{n_{T}}\right]  \right\Vert _{T}  &  \leq\mu_{n_{K}}\left[
\left\Vert \mathbf{T}_{1}\right\Vert _{T},\left\Vert \mathbf{T}_{2}\right\Vert
_{T},\ldots,\left\Vert \mathbf{T}_{n_{K}}\right\Vert _{T}\right]  ,\\
n_{T}  &  =n_{K}.
\end{align}

\end{definition}

\begin{definition}
The polyadic operator norm is called \textit{subadditive}, if%
\begin{align}
\left\Vert \mathbf{\eta}_{m_{T}}\left[  \mathbf{T}_{1},\mathbf{T}_{2}%
,\ldots,\mathbf{T}_{n_{T}}\right]  \right\Vert _{T}  &  \leq\nu_{m_{K}}\left[
\left\Vert \mathbf{T}_{1}\right\Vert _{T},\left\Vert \mathbf{T}_{2}\right\Vert
_{T},\ldots,\left\Vert \mathbf{T}_{m_{K}}\right\Vert _{T}\right]  ,\\
m_{T}  &  =m_{K}.
\end{align}

\end{definition}

By analogy with the binary case, we have

\begin{definition}
The polyadic operator algebra $\mathcal{A}_{T}$ equipped with the
submultiplicative norm $\left\Vert \bullet\right\Vert _{T}$ a \textit{polyadic
Banach algebra} of operators $\mathcal{B}_{T}$.
\end{definition}

The connection between the polyadic norms of operators and their polyadic
adjoints is given by

\begin{proposition}
For polyadic operators in the inner pairing space $\mathcal{H}_{m_{K}%
,n_{K},m_{V},k_{\rho}=1,N}$

\begin{enumerate}
\item The following $N$ \textit{multi-}$C^{\ast}$\textit{-relations}%
\begin{align}
&  \left\Vert \mathbf{\omega}_{n_{T}}\left[  \mathbf{T}^{\star_{N-1,N}%
},\mathbf{T}^{\star_{N-2,N-1}\star_{N-1,N}},\ldots\mathbf{T}^{\star_{23}%
\star_{34}\ldots\star_{N-2,N-1}\star_{N-1,N}},\mathbf{T}^{\star_{12}\star
_{23}\star_{34}\ldots\star_{N-2,N-1}\star_{N-1,N}},\mathbf{T}\right]
\right\Vert _{N}\nonumber\\
&  =\mu_{n_{K}}\left[  \overset{n_{K}}{\overbrace{\left\Vert \mathbf{T}%
\right\Vert _{T},\left\Vert \mathbf{T}\right\Vert _{T},\ldots,\left\Vert
\mathbf{T}\right\Vert _{T}}}\right]  \mathbf{,}\nonumber\\
&  +\left(  N-1\right)  \ cycle\ permutations\ of\ \left(  N-1\right)
\ terms\ with\ multistars, \label{nc}%
\end{align}
take place, if $n_{T}=n_{K}$.

\item The polyadic norms of operator and its all adjoints coincide%
\begin{equation}
\left\Vert \mathbf{T}^{\star_{i,j}}\right\Vert _{T}=\left\Vert \mathbf{T}%
\right\Vert _{T},\ \forall i,j\in1,\ldots,N.
\end{equation}

\end{enumerate}
\end{proposition}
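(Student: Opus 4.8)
The plan is to transplant the two standard facts of $C^{\ast}$-theory — $\Vert \mathbf{T}^{\ast}\mathbf{T}\Vert =\Vert \mathbf{T}\Vert ^{2}$ and $\Vert \mathbf{T}^{\ast}\Vert =\Vert \mathbf{T}\Vert $ — into the nonderived $n_{T}$-ary operator algebra $\mathcal{A}_{T}$ with $n_{T}=n_{K}=N$, the single involution $\left(  \ast\right)  $ being replaced by the $N$-cycle of multistars $\left(  \star_{ij}\right)  $ governed by the multistar cycles (\ref{tn}). Denote by $\mathbf{W}$ the $n_{T}$-fold composition $\mathbf{\omega}_{n_{T}}\left[  \mathbf{T}^{\star_{N-1,N}},\mathbf{T}^{\star_{N-2,N-1}\star_{N-1,N}},\ldots,\mathbf{T}^{\star_{12}\star_{23}\ldots\star_{N-1,N}},\mathbf{T}\right]  $ whose norm appears on the left of (\ref{nc}) (the same operator occurring in the polyadic isometry condition (\ref{wti})). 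The first step is to establish the polyadic analog of $\langle \mathbf{T}\mathsf{v},\mathbf{T}\mathsf{v}\rangle =\langle \mathsf{v},\mathbf{T}^{\ast}\mathbf{T}\mathsf{v}\rangle $:
\[
\left\langle \left\langle \mathbf{T}\mathsf{v}_{1}|\mathbf{T}\mathsf{v}_{2}|\ldots|\mathbf{T}\mathsf{v}_{N}\right\rangle \right\rangle =\left\langle \left\langle \mathsf{v}_{1}|\mathsf{v}_{2}|\ldots|\mathsf{v}_{N-1}|\mathbf{W}\mathsf{v}_{N}\right\rangle \right\rangle .
\]
I would prove this by a cascade of adjointness moves: use the first relation of (\ref{tv}) to take $\mathbf{T}$ off slot $1$ (it reappears as $\mathbf{T}^{\star_{12}}$ on slot $2$), then apply the relations (\ref{tv}) to that transferred operator and successively to every remaining copy of $\mathbf{T}$, driving all factors onto the last slot; the iterated multistars assemble precisely into the chain in $\mathbf{W}$, with $\mathbf{T}$ innermost, and their consistency is exactly what (\ref{tn}) records. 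The manipulation is legitimate in spite of \emph{Remark}~\ref{rem-oper}, since here operators are accumulated into one argument of the inner pairing rather than permuted within a fixed $n_{T}$-ary product, so the one-operator identities (\ref{tv}) apply at each stage.

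Next, two opposite estimates for $\Vert \mathbf{W}\Vert _{T}$ are produced. Inserting the key identity and the norm relation (\ref{kv}) — valid because $n_{K}=N$ — into the polyadic Cauchy--Schwarz inequality (\ref{ka}), with the arguments chosen as $\mathsf{v},\ldots,\mathsf{v},\mathbf{W}\mathsf{v}$, and then using the operator-norm definition (\ref{tti}) together with $n_{K}$-ary commutativity of $\kappa_{n_{K}}$, one gets $\kappa_{n_{K}}[\Vert \mathbf{T}\Vert _{T},\ldots,\Vert \mathbf{T}\Vert _{T}]\leq\Vert \mathbf{W}\Vert _{T}$ (with $n_{K}$ arguments); strictly, this last passage also requires extracting an $N$th root in the ordered polyadic field, a mild regularity hypothesis on $\mathbb{K}_{m_{K},n_{K}}$. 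The reverse estimate $\Vert \mathbf{W}\Vert _{T}\leq\kappa_{n_{K}}[\Vert \mathbf{T}^{\star_{N-1,N}}\Vert _{T},\ldots,\Vert \mathbf{T}^{\star_{12}\ldots\star_{N-1,N}}\Vert _{T},\Vert \mathbf{T}\Vert _{T}]$ is just submultiplicativity of $\Vert \bullet\Vert _{T}$, again using $n_{T}=n_{K}$.

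Part (2) and the equality in part (1) then follow together, exactly as $\Vert \mathbf{T}^{\ast}\Vert =\Vert \mathbf{T}\Vert $ does in the binary case. Combining the two estimates and cancelling the common positive factor $\Vert \mathbf{T}\Vert _{T}$ in the ordered field yields a bound of a power of $\Vert \mathbf{T}\Vert _{T}$ by the product of the norms of the multistar-adjoints of $\mathbf{T}$; running the identical derivation starting from each multistar-adjoint of $\mathbf{T}$ and closing the loop through the $N$ cyclic relations (\ref{tn}) (which send every $\mathbf{T}^{\star_{ij}}$ back to $\mathbf{T}$ after $N$ multistars) gives a closed cyclic system of such inequalities whose only solution is $\Vert \mathbf{T}^{\star_{ij}}\Vert _{T}=\Vert \mathbf{T}\Vert _{T}$ for all $i,j$, which is part (2); substituting this into the submultiplicativity estimate makes it $\Vert \mathbf{W}\Vert _{T}\leq\kappa_{n_{K}}[\Vert \mathbf{T}\Vert _{T},\ldots,\Vert \mathbf{T}\Vert _{T}]$, and with the first estimate one obtains the $N$ multi-$C^{\ast}$-relations (\ref{nc}), the remaining $N-1$ following by applying the same argument to the cyclic permutations of the multistar chain listed there. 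The hard part is this last step: the binary theory only inverts the $2$-cycle $\mathbf{T}^{\ast\ast}=\mathbf{T}$, whereas here one must track an $N$-cycle of distinct multistars and check that the resulting system of order inequalities in $\mathbb{K}_{m_{K},n_{K}}$ really does collapse to equalities — which is where the ordering axioms and the (implicit) divisibility/root properties of the polyadic field are genuinely needed.
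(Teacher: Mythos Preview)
Your proposal is consistent with the paper's approach: the paper's own proof is the single sentence ``Both statements follow from (\ref{tv}) and the definition of the polyadic operator norm (\ref{tti}),'' and you have correctly identified precisely these two ingredients --- the Post-like adjointness relations and the operator-norm definition --- as the basis for the argument. In that sense you are not taking a different route at all; you are filling in the details that the paper omits.

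A couple of remarks are worth making. First, your key identity $\langle\langle \mathbf{T}\mathsf{v}_{1}|\ldots|\mathbf{T}\mathsf{v}_{N}\rangle\rangle=\langle\langle \mathsf{v}_{1}|\ldots|\mathsf{v}_{N-1}|\mathbf{W}\mathsf{v}_{N}\rangle\rangle$, obtained by iterating (\ref{tv}), is exactly the substance behind the paper's citation of (\ref{tv}); the paper does not write it out, but it is the only reasonable way to connect the adjointness relations to the $n_{T}$-fold product $\mathbf{W}$ occurring in (\ref{wti}) and (\ref{nc}). Second, the technical caveats you flag --- extraction of $N$th roots in the ordered polyadic field, and the collapse of the cyclic system of inequalities through (\ref{tn}) --- are not addressed by the paper either; its one-line proof simply asserts that (\ref{tv}) and (\ref{tti}) suffice, so in effect you have been more careful than the source. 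Whether those caveats are genuine obstructions depends on unstated regularity assumptions on $\mathbb{K}_{m_{K},n_{K}}$ that the paper leaves implicit throughout (cf.\ the informal treatment of ordering and positivity in \textbf{Definition~\ref{def-pnorm}}).
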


\begin{proof}
Both statements follow from (\ref{tv}) and the definition of the polyadic
operator norm (\ref{tti}).
\end{proof}

Therefore, we arrive to

\begin{definition}
The operator Banach algebra $\mathcal{B}_{T}$ satisfying the multi-$C^{\ast}%
$-relations is called a polyadic operator \textit{multi-}$C^{\ast}%
$\textit{-algebra}.
\end{definition}

The first example of a multi-$C^{\ast}$-algebra (as in the binary case) can be
constructed from one isometry operator (see \textbf{Definition \ref{def-isom}}).

\begin{definition}
A polyadic algebra generated by one isometry operator $\mathbf{T}$ satisfying
(\ref{wti}) on the inner pairing space $\mathcal{H}_{m_{K},n_{K},m_{V}%
,k_{\rho}=1,N}$ represents a \textit{polyadic Toeplitz algebra} $\mathcal{T}%
_{m_{T},n_{T}}$ and has the arity shape $m_{T}=m_{V}$, $n_{T}=N$.
\end{definition}

\begin{example}
The ternary Toeplitz algebra $\mathcal{T}_{3,3}$ is represented by the
operator $\mathbf{T}$ and relations%
\begin{equation}%
\begin{array}
[c]{c}%
\left[  \mathbf{T}^{\star_{1}},\mathbf{T}^{\star_{3}\star_{1}},\mathbf{T}%
\right]  =\mathbf{I},\\
\left[  \mathbf{T}^{\star_{2}},\mathbf{T}^{\star_{1}\star_{2}},\mathbf{T}%
\right]  =\mathbf{I},\\
\left[  \mathbf{T}^{\star_{3}},\mathbf{T}^{\star_{2}\star_{3}},\mathbf{T}%
\right]  =\mathbf{I}.
\end{array}
\end{equation}

\end{example}

\begin{example}
If the inner pairing is semicommutative (\ref{sm}), then $\left(  \star
_{3}\right)  $ can be eliminated by%
\begin{align}
\mathbf{T}^{\star_{3}}  &  =\mathbf{T}^{\star_{1}\star_{2}},\label{t3}\\
\mathbf{T}^{\star_{3}\star_{3}}  &  =\mathbf{T,}%
\end{align}
and the corresponding relations representing $\mathcal{T}_{3,3}$ become%
\begin{equation}%
\begin{array}
[c]{c}%
\left[  \mathbf{T}^{\star_{1}},\mathbf{T}^{\star_{1}},\mathbf{T}\right]
=\mathbf{I},\\
\left[  \mathbf{T}^{\star_{2}},\mathbf{T}^{\star_{1}\star_{2}},\mathbf{T}%
\right]  =\mathbf{I},\\
\left[  \mathbf{T}^{\star_{1}\star_{2}},\mathbf{T}^{\star_{2}},\mathbf{T}%
\right]  =\mathbf{I}.
\end{array}
\end{equation}

\end{example}

Let us consider $M$ polyadic operators $\mathbf{T}_{1}\mathbf{T}_{2}%
\ldots\mathbf{T}_{M}\in\mathcal{B}_{T}$ and the related partial (in the usual
sense) isometries (\ref{pp}) which are mutually orthogonal (\ref{ort}). In the
binary case, the algebra generated by $M$ operators, such that the sum of the
related orthogonal partial projections is unity, represents the Cuntz algebra
$\mathcal{O}_{M}$ \cite{cun2}.

\begin{definition}
A polyadic algebra generated by $M$ polyadic isometric operators
$\mathbf{T}_{1}\mathbf{T}_{2}\ldots\mathbf{T}_{M}\in\mathcal{B}_{T}$
satisfying%
\begin{equation}
\mathbf{\eta}_{m_{T}}^{\left(  \ell_{a}\right)  }\left[  \mathbf{P}%
_{\mathbf{T}_{1}}^{\left(  k\right)  },\mathbf{P}_{\mathbf{T}_{2}}^{\left(
k\right)  }\ldots\mathbf{P}_{\mathbf{T}_{M}}^{\left(  k\right)  }\right]
=\mathbf{I},\ \ \ k=1,\ldots,N,
\end{equation}
where $\mathbf{P}_{\mathbf{T}_{i}}^{\left(  k\right)  }$ are given by
(\ref{pw})and $\mathbf{\eta}_{m_{T}}^{\left(  \ell_{a}\right)  }$ is a
\textquotedblleft long polyadic addition\textquotedblright\ (\ref{et}),
represents a \textit{polyadic Cuntz algebra} $p\mathcal{O}_{M\mid m_{T},n_{T}%
}$, which has the arity shape%
\begin{equation}
M=\ell_{a}\left(  m_{T}-1\right)  +1,
\end{equation}
where $\ell_{a}$ is number of \textquotedblleft$m_{T}$-ary
additions\textquotedblright.
\end{definition}

Below we use the same notations, as in \emph{Example }\ref{ex-ter}, also the
ternary addition will be denoted by $\left(  +_{3}\right)  $ as follows:
$\mathbf{\eta}_{3}\left[  \mathbf{T}_{1},\mathbf{T}_{2},\mathbf{T}_{3}\right]
\equiv\mathbf{T}_{1}+_{3}\mathbf{T}_{2}+_{3}\mathbf{T}_{3}$.

\begin{example}
In the ternary case $m_{T}=n_{T}=3$ and one ternary addition $\ell_{a}=1$, we
have $M=3$ mutually orthogonal isometries $\mathbf{T}_{1},\mathbf{T}%
_{2},\mathbf{T}_{3}\in\mathcal{B}_{T}$ and $N=3$ multistars $\left(  \star
_{i}\right)  $. In case of the Post-like multistar cocycles (\ref{tc3}) they
satisfy%
\begin{equation}%
\begin{array}
[c]{c}%
\text{Isometry\emph{ conditions}}\\
\left[  \mathbf{T}_{i}^{\star_{1}},\mathbf{T}_{i}^{\star_{3}\star_{1}%
},\mathbf{T}_{i}\right]  =\mathbf{I},\\
\left[  \mathbf{T}_{i}^{\star_{2}},\mathbf{T}_{i}^{\star_{1}\star_{2}%
},\mathbf{T}_{i}\right]  =\mathbf{I},\\
\left[  \mathbf{T}_{i}^{\star_{3}},\mathbf{T}_{i}^{\star_{2}\star_{3}%
},\mathbf{T}_{i}\right]  =\mathbf{I},\\
i=1,2,3,
\end{array}
\ \ \
\begin{array}
[c]{c}%
\text{\emph{Orthogonality conditions}}\\
\left[  \mathbf{T}_{i}^{\star_{1}},\mathbf{T}_{j}^{\star_{3}\star_{1}%
},\mathbf{T}_{k}\right]  =\mathbf{Z},\\
\left[  \mathbf{T}_{i}^{\star_{2}},\mathbf{T}_{j}^{\star_{1}\star_{2}%
},\mathbf{T}_{k}\right]  =\mathbf{Z},\\
\left[  \mathbf{T}_{i}^{\star_{3}},\mathbf{T}_{j}^{\star_{2}\star_{3}%
},\mathbf{T}_{k}\right]  =\mathbf{Z},\\
i,j,k=1,2,3,\ \ i\neq j\neq k,
\end{array}
\end{equation}
and the (sum of projections) relations%
\begin{align}
\left[  \mathbf{T}_{1},\mathbf{T}_{1}^{\star_{1}},\mathbf{T}_{1}^{\star
_{3}\star_{1}}\right]  +_{3}\left[  \mathbf{T}_{2},\mathbf{T}_{2}^{\star_{1}%
},\mathbf{T}_{2}^{\star_{3}\star_{1}}\right]  +_{3}\left[  \mathbf{T}%
_{3},\mathbf{T}_{3}^{\star_{1}},\mathbf{T}_{3}^{\star_{3}\star_{1}}\right]
&  =\mathbf{I},\\
\left[  \mathbf{T}_{1},\mathbf{T}_{1}^{\star_{2}},\mathbf{T}_{1}^{\star
_{1}\star_{2}}\right]  +_{3}\left[  \mathbf{T}_{2},\mathbf{T}_{2}^{\star_{2}%
},\mathbf{T}_{2}^{\star_{1}\star_{2}}\right]  +_{3}\left[  \mathbf{T}%
_{3},\mathbf{T}_{3}^{\star_{2}},\mathbf{T}_{3}^{\star_{1}\star_{2}}\right]
&  =\mathbf{I},\\
\left[  \mathbf{T}_{1},\mathbf{T}_{1}^{\star_{3}},\mathbf{T}_{1}^{\star
_{2}\star_{3}}\right]  +_{3}\left[  \mathbf{T}_{2},\mathbf{T}_{2}^{\star_{3}%
},\mathbf{T}_{2}^{\star_{2}\star_{3}}\right]  +_{3}\left[  \mathbf{T}%
_{3},\mathbf{T}_{3}^{\star_{3}},\mathbf{T}_{3}^{\star_{2}\star_{3}}\right]
&  =\mathbf{I},
\end{align}
which represent the ternary Cuntz algebra $p\mathcal{O}_{3\mid3,3}$.
\end{example}

\begin{example}
In the case where the inner pairing is semicommutative (\ref{sm}), we can
eliminate the multistar $\left(  \star_{3}\right)  $ by (\ref{t3}) and
represent the \textit{two-multistar ternary analog of the Cuntz algebra}
$p\mathcal{O}_{3\mid3,3}$ by%
\begin{equation}%
\begin{array}
[c]{c}%
\left[  \mathbf{T}_{i}^{\star_{1}},\mathbf{T}_{i}^{\star_{2}},\mathbf{T}%
_{i}\right]  =\mathbf{I},\\
\left[  \mathbf{T}_{i}^{\star_{2}},\mathbf{T}_{i}^{\star_{1}\star_{2}%
},\mathbf{T}_{i}\right]  =\mathbf{I},\\
\left[  \mathbf{T}_{i}^{\star_{1}\star_{2}},\mathbf{T}_{i}^{\star_{2}%
},\mathbf{T}_{i}\right]  =\mathbf{I},\\
i=1,2,3,
\end{array}
\ \ \
\begin{array}
[c]{c}%
\left[  \mathbf{T}_{i}^{\star_{1}},\mathbf{T}_{j}^{\star_{2}},\mathbf{T}%
_{k}\right]  =\mathbf{Z},\\
\left[  \mathbf{T}_{i}^{\star_{1}},\mathbf{T}_{j}^{\star_{1}\star_{2}%
},\mathbf{T}_{k}\right]  =\mathbf{Z},\\
\left[  \mathbf{T}_{i}^{\star_{1}\star_{2}},\mathbf{T}_{j}^{\star_{2}%
},\mathbf{T}_{k}\right]  =\mathbf{Z},\\
i,j,k=1,2,3,\ \ i\neq j\neq k,
\end{array}
\end{equation}%
\begin{align}
\left[  \mathbf{T}_{1},\mathbf{T}_{1}^{\star_{1}},\mathbf{T}_{1}^{\star_{1}%
}\right]  +_{3}\left[  \mathbf{T}_{2},\mathbf{T}_{2}^{\star_{1}}%
,\mathbf{T}_{2}^{\star_{1}}\right]  +_{3}\left[  \mathbf{T}_{3},\mathbf{T}%
_{3}^{\star_{1}},\mathbf{T}_{3}^{\star_{1}}\right]   &  =\mathbf{I},\\
\left[  \mathbf{T}_{1},\mathbf{T}_{1}^{\star_{2}},\mathbf{T}_{1}^{\star
_{1}\star_{2}}\right]  +_{3}\left[  \mathbf{T}_{2},\mathbf{T}_{2}^{\star_{2}%
},\mathbf{T}_{2}^{\star_{1}\star_{2}}\right]  +_{3}\left[  \mathbf{T}%
_{3},\mathbf{T}_{3}^{\star_{2}},\mathbf{T}_{3}^{\star_{1}\star_{2}}\right]
&  =\mathbf{I},\\
\left[  \mathbf{T}_{1},\mathbf{T}_{1}^{\star_{1}\star_{2}},\mathbf{T}%
_{1}^{\star_{2}}\right]  +_{3}\left[  \mathbf{T}_{2},\mathbf{T}_{2}^{\star
_{1}\star_{2}},\mathbf{T}_{2}^{\star_{2}}\right]  +_{3}\left[  \mathbf{T}%
_{3},\mathbf{T}_{3}^{\star_{1}\star_{2}},\mathbf{T}_{3}^{\star_{2}}\right]
&  =\mathbf{I}.
\end{align}

\end{example}

\section{\label{sec-residue}Congruence classes as polyadic rings}

Here we will show that the inner structure of the residue classes (congruence
classes) over integers is naturally described by polyadic rings
\cite{cel,cro2,lee/but}, and then study some special properties of them.

Denote a residue class (congruence class) of an integer $a$, modulo $b$
by\footnote{We use for the residue class the notation $\left[  \left[
a\right]  \right]  _{b}$ , because the standard notations by one square
bracket $\left[  a\right]  _{b}$ or $\bar{a}_{b}$ are busy by the $n$-ary
operations and querelements, respectively.}%
\begin{equation}
\left[  \left[  a\right]  \right]  _{b}=\left\{  \left\{  a+bk\right\}  \mid
k\in\mathbb{Z},\ \ a\in\mathbb{Z}_{+},\ b\in\mathbb{N},\ 0\leq a\leq
b-1\right\}  . \label{ab}%
\end{equation}
A representative element of the class $\left[  \left[  a\right]  \right]
_{b}$ will be denoted by $x_{k}=x_{k}^{\left(  a,b\right)  }=a+bk$. Here we do
not consider the addition and multiplication of the residue classes
(congruence classes). Instead, we consider the \textsl{fixed} congruence class
$\left[  \left[  a\right]  \right]  _{b}$, and note that, for arbitrary $a$
and $b$, it is \textsl{not closed} under binary operations. However, it can be
\textsl{closed with respect to} \textsl{polyadic operations}.

\subsection{Polyadic ring on integers}

Let us introduce the $m$-ary addition and $n$-ary multiplication of
representatives of the fixed congruence class $\left[  \left[  a\right]
\right]  _{b}$ by%
\begin{align}
\nu_{m}\left[  x_{k_{1}},x_{k_{2}},\ldots,x_{k_{m}}\right]   &  =x_{k_{1}%
}+x_{k_{2}}+\ldots+x_{k_{m}},\label{nu}\\
\mu_{n}\left[  x_{k_{1}},x_{k_{2}},\ldots,x_{k_{n}}\right]   &  =x_{k_{1}%
}x_{k_{2}}\ldots x_{k_{n}},\ \ \ x_{k_{i}}\in\left[  \left[  a\right]
\right]  _{b},\ k_{i}\in\mathbb{Z}, \label{mu}%
\end{align}
where on the r.h.s. the operations are the ordinary binary addition and binary
multiplication in $\mathbb{Z}$.

\begin{remark}
The polyadic operations (\ref{nu})--(\ref{mu}) are \textsl{not derived} (see,
e.g., \cite{gla/mic84,mic88}), because on the set $\left\{  x_{k_{i}}\right\}
$ one cannot define the \textsl{binary} semigroup structure with respect to
ordinary addition and multiplication. Derived polyadic rings which consist of
the repeated binary sums and binary products were considered in \cite{lee/but}.
\end{remark}

\begin{lemma}
\label{lem-mgr}In case%
\begin{equation}
\left(  m-1\right)  \dfrac{a}{b}=I^{\left(  m\right)  }\left(  a,b\right)
=I=\operatorname{integer} \label{m1}%
\end{equation}
the algebraic structure $\left\langle \left[  \left[  a\right]  \right]
_{b}\mid\nu_{m}\right\rangle $ is a commutative $m$-ary group.
\end{lemma}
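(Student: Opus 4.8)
The plan is to check the three conditions that, according to the text following Definition \ref{def-ring}, characterize a commutative $m$-ary group: closure under $\nu_m$, total associativity of $\nu_m$ together with full commutativity, and solvability of $\nu_m\!\left[\mathbf{u}, y, \mathbf{w}\right] = c$ for $y$ on every place. Of these, only closure — and the closure-type half of solvability — will actually invoke the hypothesis (\ref{m1}); associativity and commutativity come for free, because by its very definition (\ref{nu}) the operation $\nu_m$ on $\left[\left[a\right]\right]_b$ is nothing but the $m$-fold iterate of the ordinary binary addition of $\mathbb{Z}$, restricted to the class.

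First I would settle closure, since that is where (\ref{m1}) enters. Writing representatives as $x_{k_i} = a + b k_i$ as in (\ref{ab}) and using (\ref{nu}), one computes
\[
\nu_m\!\left[x_{k_1},\ldots,x_{k_m}\right] = \sum_{i=1}^{m} (a + b k_i) = a + (m-1)a + b\sum_{i=1}^{m} k_i = a + b\!\left(I^{(m)}(a,b) + \sum_{i=1}^{m} k_i\right),
\]
where $(m-1)a = b\,I^{(m)}(a,b)$ by (\ref{m1}). As $I^{(m)}(a,b)$ is assumed to be an integer, the right-hand side is $x_{\,I + \sum k_i}\in\left[\left[a\right]\right]_b$, so $\nu_m$ is a well-defined $m$-ary operation on the class; conversely, for this sum to return to the class at all the coefficient $(m-1)a/b$ must be an integer, so (\ref{m1}) is exactly the closure condition. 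Having closure, total associativity in the sense of (\ref{as2}) and full commutativity are then inherited from $(\mathbb{Z},+)$ at no cost: any bracketing $\nu_m\!\left[\mathbf{d},\nu_m\!\left[\mathbf{e}\right],\mathbf{f}\right]$ equals the single ordinary sum in $\mathbb{Z}$ of the $2m-1$ representatives involved and hence is independent of the placement of the inner $\nu_m$, while $\nu_m\!\left[\sigma\circ\mathbf{x}\right] = \nu_m\!\left[\mathbf{x}\right]$ for all $\sigma\in S_m$ since addition in $\mathbb{Z}$ is abelian. Thus $\left\langle\left[\left[a\right]\right]_b\mid\nu_m\mid assoc,comm\right\rangle$ is a commutative $m$-ary semigroup.

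It remains to prove solvability, where (\ref{m1}) is used a second time in the same guise. Given $x_{k_1},\ldots,x_{k_{m-1}}\in\left[\left[a\right]\right]_b$ and $c = x_l\in\left[\left[a\right]\right]_b$, the equation $\nu_m\!\left[x_{k_1},\ldots,x_{k_{m-1}},y\right] = c$ has the unique $\mathbb{Z}$-solution $y = x_l - \sum_{i=1}^{m-1} x_{k_i}$, and substituting representatives and using $(m-1)a = b\,I^{(m)}(a,b)$ gives
\[
y = a + b\!\left(l - I^{(m)}(a,b) - \sum_{i=1}^{m-1} k_i\right) = x_{\,l - I - \sum k_i}\in\left[\left[a\right]\right]_b ;
\]
by the commutativity already established the same element solves the equation with $y$ on any place, and in particular each $x_k$ admits a querelement $\widetilde{x_k} = x_k - (m-1)x_k = -(m-2)x_k$, which again lies in $\left[\left[a\right]\right]_b$ precisely because of (\ref{m1}). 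Combining the three steps proves that $\left\langle\left[\left[a\right]\right]_b\mid\nu_m\right\rangle$ is a commutative $m$-ary group. The only step with genuine content is the closure/solvability computation, and the sole ``obstacle'' is the bookkeeping of the integrality condition (\ref{m1}) in the two places it is invoked; everything past closure is automatic once one notes that $\nu_m$ is the restriction of iterated addition in $\mathbb{Z}$.
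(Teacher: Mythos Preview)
Your proof is correct and follows essentially the same route as the paper: both verify closure via the computation $ma + b\sum k_i = a + b(I + \sum k_i)$ using (\ref{m1}), inherit associativity and commutativity from $(\mathbb{Z},+)$, and establish the group property by solving for the missing element (you do general solvability and then specialize to the querelement, the paper goes straight to the querelement $\tilde{x}_k = (2-m)x_k$). The only difference is presentational.
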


\begin{proof}
The closure of the operation (\ref{nu}) can be written as $x_{k_{1}}+x_{k_{2}%
}+\ldots+x_{k_{m}}=x_{k_{0}}$, or $ma+b\left(  k_{1}+k_{2}+\ldots
+k_{m}\right)  =a+bk_{0}$, and then $k_{0}=\left(  m-1\right)  a/b+\left(
k_{1}+k_{2}+\ldots+k_{m}\right)  $, from (\ref{m1}). The (total) associativity
and commutativity of $\nu_{m}$ follows from those of the addition in the
binary $\mathbb{Z}$. Each element $x_{k}$ has its \textsl{unique querelement}
$\tilde{x}=x_{\tilde{k}}$ determined by the equation $\left(  m-1\right)
x_{k}+x_{\tilde{k}}=x_{k}$, which (uniquely, for any $k\in\mathbb{Z}$) gives%
\begin{equation}
\tilde{k}=bk\left(  2-m\right)  -\left(  m-1\right)  \dfrac{a}{b}\ .
\end{equation}
Thus, each element is \textquotedblleft querable\textquotedblright\ (polyadic
invertible), and so $\left\langle \left[  \left[  a\right]  \right]  _{b}%
\mid\nu_{m}\right\rangle $ is a $m$-ary group.
\end{proof}

\begin{example}
For $a=2$, $b=7$ we have $8$-ary group, and the querelement of $x_{k}$ is
$\tilde{x}=x_{\left(  -2-12k\right)  }$.
\end{example}

\begin{proposition}
\label{prop-mary}The $m$-ary commutative group $\left\langle \left[  \left[
a\right]  \right]  _{b}\mid\nu_{m}\right\rangle $:

\begin{enumerate}
\item has an infinite number of neutral sequences for each element;

\item if $a\neq0$, it has no \textquotedblleft unit\textquotedblright\ (which
is actually zero, because $\nu_{m}$ plays the role of \textquotedblleft
addition\textquotedblright);

\item in case of the zero congruence class $\left[  \left[  0\right]  \right]
_{b}$ the zero is $x_{k}=0$.
\end{enumerate}
\end{proposition}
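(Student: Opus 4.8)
The plan is to reduce all three items to a single elementary computation over $\mathbb{Z}$. Recall that an additive \emph{neutral sequence} (neutral $(m-1)$-polyad) for $\nu_m$ is an $(m-1)$-tuple $\mathbf{e}=\left(e_1,\ldots,e_{m-1}\right)$ with $e_i\in\left[\left[a\right]\right]_b$ such that $\nu_m\left[x_k,\mathbf{e}\right]=x_k$ for every $x_k$ (the analog, for the commutative group $\left\langle\left[\left[a\right]\right]_b\mid\nu_m\right\rangle$, of the neutral polyad in (\ref{e1}); by commutativity $\mathbf{e}$ may then sit on any place). Writing $e_i=a+bl_i$ with $l_i\in\mathbb{Z}$, the definition (\ref{nu}) turns this condition into $e_1+\cdots+e_{m-1}=0$, i.e.
\[
\sum_{i=1}^{m-1}l_i=-\left(m-1\right)\frac{a}{b}=-I^{\left(m\right)}\left(a,b\right),
\]
which by Lemma \ref{lem-mgr} (see (\ref{m1})) is a \textsl{fixed} integer, independent of $x_k$. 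This last observation already settles the words \textquotedblleft for each element\textquotedblright: the set of neutral sequences is the same for every element of the class.

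For item 1, I would note that in the genuinely polyadic range $m\geq3$ one has $m-1\geq2$, so the displayed linear Diophantine equation has infinitely many integer solutions — fix $l_1,\ldots,l_{m-2}$ arbitrarily and solve for $l_{m-1}$ — each choice yielding a distinct neutral polyad $\left(a+bl_1,\ldots,a+bl_{m-1}\right)$. (In the degenerate binary case $m=2$ the closure condition forces $a=0$ and the neutral element is unique, so item 1 is understood for $m\geq3$.)

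For items 2 and 3, I would specialize the characterization above to the constant polyad: a \textquotedblleft unit\textquotedblright\ is an $e\in\left[\left[a\right]\right]_b$ for which $e_1=\cdots=e_{m-1}=e$ is neutral, which by the displayed condition means $\left(m-1\right)e=0$; since $m-1\geq1$ and $\mathbb{Z}$ is torsion-free, $e=0$. But $0\in\left[\left[a\right]\right]_b$ exactly when $a=0$, because $0\leq a\leq b-1$. Hence if $a\neq0$ there is no \textquotedblleft unit\textquotedblright\ (item 2), the missing element being precisely the additive identity, since $\nu_m$ is the \textquotedblleft addition\textquotedblright; and if $a=0$ then $e=0=x_0$ is the unique \textquotedblleft unit\textquotedblright\ of $\left\langle\left[\left[0\right]\right]_b\mid\nu_m\right\rangle$ (item 3), as one also checks directly from $\nu_m\left[x_k,0,\ldots,0\right]=x_k$; note moreover that for $\left[\left[0\right]\right]_b$ the closure condition $\left(m-1\right)\cdot0/b=0$ holds for \textsl{every} $m$, so this group always has a zero.

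I do not anticipate a real obstacle here; the only point needing care will be to keep the two notions cleanly apart — item 1 concerns neutral \emph{polyads} (tuples, of which there are infinitely many), whereas items 2--3 concern a neutral \emph{element} (a single distinguished $e$, existing only for the zero class). Once that distinction is in place, everything follows from the elementary fact that $\sum l_i=-I$ has many solutions in $\mathbb{Z}$ in general, but at most one when the summands are forced to be equal.
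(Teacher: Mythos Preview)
Your proposal is correct and follows essentially the same approach as the paper: reduce the neutral-sequence condition to $\sum l_i=-I^{(m)}(a,b)$ via (\ref{nu}) and (\ref{m1}), then specialize to the constant polyad to get $(m-1)e=0$. Your treatment is in fact slightly more careful than the paper's (you make explicit that the neutral polyads do not depend on $x_k$, flag the $m\geq3$ hypothesis for item~1, and cleanly separate the polyad/element distinction), but the underlying argument is identical.
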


\begin{proof}
\noindent

\begin{enumerate}
\item The (additive) neutral sequence $\mathbf{\tilde{n}}_{m-1}$ of the length
$\left(  m-1\right)  $ is defined by $\nu_{m}\left[  \mathbf{\tilde{n}}%
_{m-1},x_{k}\right]  =x_{k}$. Using (\ref{nu}), we take $\mathbf{\tilde{n}%
}_{m-1}=x_{k_{1}}+x_{k_{2}}+\ldots+x_{k_{m-1}}=0$ and obtain the equation
\begin{equation}
\left(  m-1\right)  a+b\left(  k_{1}+k_{2}+\ldots+k_{m-1}\right)  =0.
\label{mak}%
\end{equation}
Because of (\ref{m1}), we obtain%
\begin{equation}
k_{1}+k_{2}+\ldots+k_{m-1}=-I^{\left(  m\right)  }\left(  a,b\right)  ,
\end{equation}
and so there is an infinite number of sums satisfying this condition.

\item The polyadic \textquotedblleft unit\textquotedblright/zero $z=x_{k_{0}%
}=a+bk_{0}$ satisfies $\nu_{m}\left[  \overset{m-1}{\overbrace{z,z,\ldots,z}%
},x_{k}\right]  =x_{k}$ for all $x_{k}\in\left[  \left[  a\right]  \right]
_{b}$ (the neutral sequence $\mathbf{\tilde{n}}_{m-1}$ consists of one element
$z$ only), which gives $\left(  m-1\right)  \left(  a+bk_{0}\right)  =0$
having no solutions with $a\neq0$, since $a<b$.

\item In the case $a=0$, the only solution is $z=x_{k=0}=0$.
\end{enumerate}
\end{proof}

\begin{example}
In case $a=1$, $b=2$ we have $m=3$ and $I^{\left(  3\right)  }\left(
1,2\right)  =1$, and so from (\ref{mak}) we get $k_{1}+k_{2}=-1$, thus the
infinite number of neutral sequences consists of 2 elements $\mathbf{\tilde
{n}}_{2}=x_{k}+x_{-1-k}$, with arbitrary $k\in\mathbb{Z}$.
\end{example}

\begin{lemma}
\label{lem-nsemigr}If%
\begin{equation}
\dfrac{a^{n}-a}{b}=J^{\left(  n\right)  }\left(  a,b\right)
=J=\operatorname{integer}, \label{an}%
\end{equation}
then $\left\langle \left[  \left[  a\right]  \right]  _{b}\mid\mu
_{n}\right\rangle $ is a commutative $n$-ary semigroup.
\end{lemma}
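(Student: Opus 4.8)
The plan is to mirror the structure of the proof of Lemma \ref{lem-mgr}, but now for the multiplicative $n$-ary operation $\mu_n$ defined in (\ref{mu}). The three things to check are: (i) closure of $\mu_n$ on the congruence class $\left[\left[a\right]\right]_b$ under the hypothesis (\ref{an}); (ii) total associativity of $\mu_n$; and (iii) commutativity of $\mu_n$. Associativity and commutativity will be essentially immediate, inherited from the ordinary binary multiplication in $\mathbb{Z}$, exactly as the corresponding properties of $\nu_m$ in Lemma \ref{lem-mgr} were inherited from binary addition. So the only substantive point is closure, and that is where the arithmetic hypothesis (\ref{an}) enters.

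For closure, I would take $n$ representatives $x_{k_i}=a+bk_i\in\left[\left[a\right]\right]_b$, $i=1,\dots,n$, and expand the product
\begin{equation}
\mu_n\left[x_{k_1},\dots,x_{k_n}\right]=\prod_{i=1}^{n}\left(a+bk_i\right).
\end{equation}
Multiplying out, every term except the pure $a^n$ term carries at least one factor of $b$, so
\begin{equation}
\prod_{i=1}^{n}\left(a+bk_i\right)=a^n+b\,S\left(a,b,k_1,\dots,k_n\right)
\end{equation}
for some integer $S$ (a polynomial in the $k_i$ with integer coefficients depending on $a,b$). Then I write $a^n=a+\left(a^n-a\right)=a+bJ^{(n)}(a,b)$ using the hypothesis (\ref{an}) that $\left(a^n-a\right)/b$ is an integer $J$. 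Hence the product equals $a+b\left(J+S\right)$, which is of the form $a+b k_0$ with $k_0=J+S\in\mathbb{Z}$, i.e. it again lies in $\left[\left[a\right]\right]_b$. This establishes that $\mu_n$ is a well-defined $n$-ary operation on the class.

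For total associativity, note that any composition $\mu_n\left[\mathbf{a},\mu_n\left[\mathbf{b}\right],\mathbf{c}\right]$ of two copies of $\mu_n$ is, after unfolding the definition (\ref{mu}), simply the binary product of all $2n-1$ integers involved, taken in $\mathbb{Z}$; since binary multiplication in $\mathbb{Z}$ is associative, the result is independent of the placement of the inner $\mu_n$, which is exactly the invariance condition (\ref{as1}). Commutativity follows the same way: reordering the arguments of $\mu_n$ reorders the factors of a binary product in $\mathbb{Z}$, which leaves the value unchanged, so $\mu_n\left[\mathbf{x}\right]=\mu_n\left[\sigma\circ\mathbf{x}\right]$ for all $\sigma\in S_n$. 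Together with closure this shows $\left\langle\left[\left[a\right]\right]_b\mid\mu_n\right\rangle$ is a commutative $n$-ary semigroup, which is the claim.

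I do not expect any real obstacle here; the one thing to be careful about is the bookkeeping in the expansion step — making sure the cross terms really do all contain a factor $b$ and that $S$ is genuinely an integer — but this is routine since $a,b,k_i$ are all integers. The only genuine input beyond that is the number-theoretic hypothesis (\ref{an}), which is precisely what converts $a^n$ back into a class representative; without it the product would sit in $\left[\left[a^n \bmod b\right]\right]_b$ rather than in $\left[\left[a\right]\right]_b$. (One may also remark, in passing, that by Euler's theorem or a direct check this condition is automatically satisfied when $b$ is squarefree, or when $a\mid b$ in a suitable sense, linking to the shape invariants discussed later, but that is not needed for the lemma itself.)
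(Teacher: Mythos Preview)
Your proposal is correct and follows essentially the same approach as the paper: establish closure by expanding $\prod_i(a+bk_i)=a^n+b\cdot(\text{integer})$ and invoking the hypothesis $(a^n-a)/b\in\mathbb{Z}$ to rewrite this as $a+bk_0$, then inherit total associativity and commutativity from ordinary multiplication in $\mathbb{Z}$. The paper's proof is simply a terser version of yours, writing the closure step as ``$a^n+b(\operatorname{integer})=a+bk_0$ leading to (\ref{an})'' without spelling out the expansion explicitly.
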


\begin{proof}
It follows from (\ref{mu}), that the closeness of the operation $\mu_{n}$ is
$x_{k_{1}}x_{k_{2}}\ldots x_{k_{n}}=x_{k_{0}}$, which can be written as
$a^{n}+b\left(  \operatorname{integer}\right)  =a+bk_{0}$ leading to
(\ref{an}). The (total) associativity and commutativity of $\mu_{n}$ follows
from those of the multiplication in $\mathbb{Z}$.
\end{proof}

\begin{definition}
A unique pair of integers $\left(  I,J\right)  $ is called a
(\textit{polyadic})\textit{ shape invariants} of the congruence class $\left[
\left[  a\right]  \right]  _{b}$.
\end{definition}

\begin{theorem}
\label{theor-rz}The algebraic structure of the \textsl{fixed} congruence class
$\left[  \left[  a\right]  \right]  _{b}$ is a polyadic $\left(  m,n\right)
$-ring%
\begin{equation}
\mathcal{R}_{m,n}^{\left[  a,b\right]  }=\left\langle \left[  \left[
a\right]  \right]  _{b}\mid\nu_{m},\mu_{n}\right\rangle , \label{rz}%
\end{equation}
where the arities $m$ and $n$ are \textsl{minimal} positive integers (more or
equal $2$), for which the congruences%
\begin{align}
ma  &  \equiv a\left(  \operatorname{mod}b\right)  ,\label{maa}\\
a^{n}  &  \equiv a\left(  \operatorname{mod}b\right)  \label{ana}%
\end{align}
take place \textsl{simultaneously}, fixating its polyadic shape invariants
$\left(  I,J\right)  $.
\end{theorem}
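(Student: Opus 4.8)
The plan is to assemble the theorem from the two lemmas already proved, namely \textbf{Lemma \ref{lem-mgr}} and \textbf{Lemma \ref{lem-nsemigr}}, together with \textbf{Definition \ref{def-ring}}, and then to reconcile the integer conditions (\ref{m1}) and (\ref{an}) with the congruence conditions (\ref{maa})--(\ref{ana}). First I would observe that the congruence $ma\equiv a\ (\operatorname{mod}b)$ is literally the statement that $b\mid(m-1)a$, i.e. that $(m-1)a/b$ is an integer; this is exactly the hypothesis (\ref{m1}) of \textbf{Lemma \ref{lem-mgr}}, so whenever (\ref{maa}) holds, $\left\langle \left[\left[a\right]\right]_{b}\mid\nu_{m}\right\rangle$ is a commutative $m$-ary group by that lemma. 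Likewise $a^{n}\equiv a\ (\operatorname{mod}b)$ says $b\mid(a^{n}-a)$, which is the hypothesis (\ref{an}) of \textbf{Lemma \ref{lem-nsemigr}}, so whenever (\ref{ana}) holds, $\left\langle \left[\left[a\right]\right]_{b}\mid\mu_{n}\right\rangle$ is a commutative $n$-ary semigroup.

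Next I would check that polyadic distributivity (\ref{dis1})--(\ref{dis3}) holds for $\nu_{m}$ and $\mu_{n}$ on $\left[\left[a\right]\right]_{b}$. Since the representative elements $x_{k}=a+bk$ are ordinary integers and the operations $\nu_{m}$, $\mu_{n}$ are defined in (\ref{nu})--(\ref{mu}) as the ordinary iterated sum and product in $\mathbb{Z}$, each of the $n$ distributivity relations reduces to the familiar identity that multiplication distributes over addition in the commutative ring $\mathbb{Z}$ — expanding $\mu_{n}[\ldots,\nu_{m}[y_{1},\ldots,y_{m}],\ldots]$ and $\nu_{m}[\mu_{n}[\ldots,y_{1},\ldots],\ldots,\mu_{n}[\ldots,y_{m},\ldots]]$ both give the same polynomial expression in the $x_{k_i}$. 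Hence all three axioms of \textbf{Definition \ref{def-ring}} are met, and $\mathcal{R}_{m,n}^{[a,b]}=\left\langle \left[\left[a\right]\right]_{b}\mid\nu_{m},\mu_{n}\right\rangle$ is a polyadic $(m,n)$-ring for any pair $(m,n)$ satisfying (\ref{maa})--(\ref{ana}) simultaneously.

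It then remains to justify the two remaining clauses: that such $m,n\geq 2$ exist, and that the \emph{minimal} such choices are well-defined and fixate the shape invariants $(I,J)$. For existence of $m$: the multiplicative group of units modulo $b/\gcd(a,b)$ (more precisely, working with $d=\gcd(a,b)$ and reducing) guarantees that some positive integer $t$ has $ta\equiv a$; concretely $m-1$ can be taken as $b/\gcd(a,b)$, and this is $\geq 1$, giving $m\geq 2$. For existence of $n$: the sequence $a,a^{2},a^{3},\ldots$ modulo $b$ is eventually periodic, so $a^{n}\equiv a^{n'}$ for some $n>n'\geq 1$; a standard argument (splitting $b$ into the part coprime to $a$ and the rest, and using that $a$ is eventually idempotent on the non-coprime part) shows one can in fact take $n'=1$, i.e. $a^{n}\equiv a\ (\operatorname{mod}b)$ for infinitely many $n$, so $n\geq 2$ exists. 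Since both solution sets are nonempty subsets of $\{2,3,\ldots\}$, each has a least element, so the minimal $m$ and minimal $n$ are well-defined; plugging these into (\ref{m1}) and (\ref{an}) yields the uniquely determined integers $I=I^{(m)}(a,b)$ and $J=J^{(n)}(a,b)$, which are the claimed shape invariants.

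\textbf{Main obstacle.} The routine part is distributivity and the translation of congruences into divisibility; the genuine content is the existence of a finite $n$ with $a^{n}\equiv a\ (\operatorname{mod}b)$ for \emph{every} $a$ (including $a$ not coprime to $b$), since for non-coprime $a$ the powers need not return to $a$ unless one argues via the CRT decomposition $b=\prod p_i^{e_i}$ and shows that on each prime-power factor the orbit of $a$ under squaring lands on an idempotent residue and then, after a suitable further exponent, back on $a$ itself. I expect this eventual-return argument, and the care needed to get $n'=1$ rather than merely $n'\geq 1$, to be the step requiring the most attention; everything else follows mechanically from the lemmas and \textbf{Definition \ref{def-ring}}.
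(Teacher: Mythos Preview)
Your core argument---invoking \textbf{Lemma \ref{lem-mgr}} and \textbf{Lemma \ref{lem-nsemigr}} after translating the congruences (\ref{maa})--(\ref{ana}) into the divisibility hypotheses (\ref{m1}) and (\ref{an}), and then noting that polyadic distributivity is inherited from ordinary distributivity in $\mathbb{Z}$ via (\ref{nu})--(\ref{mu})---is exactly the paper's proof. The paper says nothing more than this.

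Where you go beyond the paper is in attempting to prove that admissible $m,n\geq 2$ always \emph{exist}. The existence of $m$ is fine (take $m-1=b/\gcd(a,b)$), but your existence argument for $n$ is wrong as stated: it is \emph{not} true that $a^{n}\equiv a\ (\operatorname{mod}b)$ has a solution $n\geq 2$ for every pair $(a,b)$. The simplest counterexample is $a=2$, $b=4$: here $2^{n}\equiv 0\ (\operatorname{mod}4)$ for all $n\geq 2$, so $2^{n}\not\equiv 2$. Your CRT sketch breaks down precisely on prime-power factors $p^{e}\Vert b$ with $p\mid a$: since $p\nmid(a^{n-1}-1)$ for $n\geq 2$, the condition $p^{e}\mid a(a^{n-1}-1)$ forces $p^{e}\mid a$, which fails whenever $0<v_{p}(a)<v_{p}(b)$. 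This is not a hypothetical obstruction---it is exactly why \textsc{Table \ref{T1}} has blank cells at $(a,b)=(2,4),(2,8),(4,8),(6,8),(3,9),(6,9)$, etc.

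The paper simply does not claim existence for all $(a,b)$; the theorem is to be read conditionally, defining $m,n$ as the minimal solutions \emph{when they exist}. So your proof is correct once you drop the universal existence claim for $n$ (or replace it with the correct characterization: such $n$ exists iff for every prime $p$ dividing $\gcd(a,b)$ one has $v_{p}(a)\geq v_{p}(b)$).
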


\begin{proof}
By Lemma \ref{lem-mgr}, \ref{lem-nsemigr} the set $\left[  \left[  a\right]
\right]  _{b}$ is a $m$-ary group with respect to \textquotedblleft$m$-ary
addition\textquotedblright\ $\nu_{m}$ and a $n$-ary semigroup with respect to
\textquotedblleft$n$-ary multiplication\textquotedblright\ $\mu_{n}$, while
the polyadic distributivity (\ref{dis1})--(\ref{dis3}) follows from (\ref{nu})
and (\ref{mu}) and the binary distributivity in $\mathbb{Z}$.
\end{proof}

\begin{remark}
For a fixed $b\geq2$ there are $b$ congruence classes $\left[  \left[
a\right]  \right]  _{b}$, $0\leq a\leq b-1$, and therefore exactly $b$
corresponding polyadic $\left(  m,n\right)  $-rings $\mathcal{R}%
_{m,n}^{\left[  a,b\right]  }$, each of them is \textsl{infinite-dimensional}.
\end{remark}

\begin{corollary}
In case $\gcd\left(  a,b\right)  =1$ and $b$ is prime, there exists the
solution $n=b$.
\end{corollary}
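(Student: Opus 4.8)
The plan is to reduce the statement to Fermat's little theorem. Recall from \textbf{Theorem \ref{theor-rz}} that the multiplicative arity $n$ of $\mathcal{R}_{m,n}^{\left[a,b\right]}$ is the least integer $\geq 2$ for which the congruence $a^{n}\equiv a\pmod{b}$ (i.e. (\ref{ana})) holds; accordingly the corollary is to be read as the assertion that, under $\gcd\left(a,b\right)=1$ with $b$ prime, the value $n=b$ is \emph{among} the integers satisfying (\ref{ana}) (so in particular the minimal such arity is $\leq b$).

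First I would dispose of the degenerate subcases. Since $b\geq 2$ is prime and $0\leq a\leq b-1$, the hypothesis $\gcd\left(a,b\right)=1$ already excludes $a=0$; and if $a=1$ then $a^{n}=1=a$ for every $n$, so (\ref{ana}) holds trivially. For the remaining range $2\leq a\leq b-1$ with $b\nmid a$, I would invoke Fermat's little theorem: $a^{b-1}\equiv 1\pmod{b}$, hence $a^{b}\equiv a\pmod{b}$. Equivalently, $J^{\left(b\right)}\left(a,b\right)=\left(a^{b}-a\right)/b$ is an integer, so \textbf{Lemma \ref{lem-nsemigr}} applies with $n=b$; combined with \textbf{Lemma \ref{lem-mgr}} for the companion additive arity and the polyadic distributivity established in \textbf{Theorem \ref{theor-rz}}, this exhibits an $\left(m,b\right)$-ring structure on $\left[\left[a\right]\right]_{b}$. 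Thus $n=b$ is a solution of (\ref{ana}), as claimed.

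There is essentially no obstacle here; the one point worth flagging is that $n=b$ need not be the \emph{minimal} admissible arity. Indeed, for $\gcd\left(a,b\right)=1$ the solutions of (\ref{ana}) are exactly the $n$ with $n-1$ a multiple of $\operatorname{ord}_{b}\left(a\right)$, and $\operatorname{ord}_{b}\left(a\right)$ divides $b-1$; the choice $n=b$ corresponds to $n-1=b-1$, which is merely one such multiple. Hence the corollary is an existence (upper-bound) statement, entirely consistent with the minimality convention fixed in \textbf{Theorem \ref{theor-rz}}, and no sharper claim should be attempted in its proof.
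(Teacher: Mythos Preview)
Your proposal is correct and takes essentially the same approach as the paper: the paper's proof is the single line ``Follows from (\ref{ana}) and Fermat's little theorem,'' and your argument is exactly that, fleshed out with the degenerate cases and the observation about minimality. Your additional remark that $n=b$ need not be the minimal arity (since the true minimum is $1+\operatorname{ord}_{b}(a)$) is a useful clarification that the paper leaves implicit.
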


\begin{proof}
Follows from (\ref{ana}) and Fermat's little theorem.
\end{proof}

\begin{remark}
\label{rem-a0}We exclude from consideration the zero congruence class $\left[
\left[  0\right]  \right]  _{b}$, because the arities of operations $\nu_{m}$
and $\mu_{n}$ cannot be fixed up by (\ref{maa})--(\ref{ana}) becoming
identities for any $m$ and $n$. Since the arities are uncertain, their minimal
values can be chosen $m=n=2$, and therefore, it follows from (\ref{nu}) and
(\ref{mu}), $\mathcal{R}_{2,2}^{\left[  0,b\right]  }=\mathbb{Z}$. Thus, in
what follows we always imply that $a\neq0$ (without using a special notation,
e.g. $\mathcal{R}^{\ast}$, etc.).
\end{remark}

In \textsc{Table \ref{T1}} we present the allowed (by (\ref{maa}%
)--(\ref{ana})) arities of the polyadic ring $\mathcal{R}_{m,n}^{\left[
a,b\right]  }$ and the corresponding polyadic shape invariants $\left(
I,J\right)  $ for $b\leq10$.

Let us investigate the properties of $\mathcal{R}_{m,n}^{\left[  a,b\right]
}$ in more detail. First, we consider equal arity polyadic rings and find the
relation between the corresponding congruence classes.

\begin{proposition}
The residue (congruence) classes $\left[  \left[  a\right]  \right]  _{b}$ and
$\left[  \left[  a^{\prime}\right]  \right]  _{b^{\prime}}$ which are
described by the polyadic rings of the same arities $\mathcal{R}%
_{m,n}^{\left[  a,b\right]  }$ and $\mathcal{R}_{m,n}^{\left[  a^{\prime
},b^{\prime}\right]  }$ are related by%
\begin{align}
\dfrac{b^{\prime}I^{\prime}}{a^{\prime}} &  =\dfrac{bI}{a},\label{bi}\\
a^{\prime}+b^{\prime}J^{\prime} &  =\left(  a+bJ\right)  ^{\log_{a}a^{\prime}%
}.\label{aj}%
\end{align}

\end{proposition}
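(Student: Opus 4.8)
The plan is to extract the invariants $I,J$ (and $I',J'$) from the defining relations of Lemma~\ref{lem-mgr} and Lemma~\ref{lem-nsemigr}, and then eliminate the arities $m,n$ between the two pairs of equations. Recall from \eqref{m1} that $I = (m-1)\dfrac{a}{b}$, so that $m-1 = \dfrac{bI}{a}$; likewise $m-1 = \dfrac{b'I'}{a'}$ for the second class. Since the two rings are assumed to have the \emph{same} arities, in particular the same additive arity $m$, equating these two expressions for $m-1$ gives immediately
\begin{equation}
\dfrac{b'I'}{a'} = \dfrac{bI}{a},
\end{equation}
which is \eqref{bi}. This first step is essentially a one-line manipulation; the only thing to remark is that it uses nothing about $n$, only the common value of $m$.

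For \eqref{aj}, I would start from the multiplicative defining relation \eqref{an}: $a^{n} - a = bJ$, i.e. $a^{n} = a + bJ$, and similarly $(a')^{n} = a' + b'J'$ for the second class — again using that the multiplicative arity $n$ is common to both rings. Taking the relation $a^{n} = a + bJ$ and solving for the exponent, $n = \log_{a}(a + bJ)$. Substituting this common value of $n$ into $(a')^{n} = a' + b'J'$ yields
\begin{equation}
a' + b'J' = (a')^{\,\log_{a}(a + bJ)} = \bigl(a + bJ\bigr)^{\log_{a} a'},
\end{equation}
where the last equality is the change-of-base identity $x^{\log_{a} y} = y^{\log_{a} x}$ applied with $x = a'$, $y = a + bJ$. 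This is precisely \eqref{aj}.

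The main subtlety — and the step I expect to require the most care — is the handling of the logarithms and the degenerate cases. The formula $n = \log_{a}(a+bJ)$ presupposes $a \geq 2$ (so that the base is admissible); the case $a = 0$ is already excluded by \emph{Remark}~\ref{rem-a0}, but $a = 1$ must be looked at separately, since then $a^{n} = a$ identically, $J = 0$, and \eqref{an} does not pin down $n$. In the generic range $2 \le a \le b-1$ the argument goes through verbatim, and one should note that $a + bJ = a^{n} \ge a^{2} > 1$, so the logarithm is well-defined and positive; the identity $x^{\log_a y} = y^{\log_a x}$ is valid for positive reals with admissible bases. I would state the proposition under the standing hypothesis $a, a' \ne 0$ (as the paper already does globally after Remark~\ref{rem-a0}) and simply record that, with $m$ and $n$ shared, \eqref{bi} and \eqref{aj} are the two elimination identities obtained from \eqref{m1} and \eqref{an} respectively. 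No deeper structural input about the polyadic ring is needed — the content is entirely in the two scalar Diophantine conditions that fix the arities.
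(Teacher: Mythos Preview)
Your proof is correct and follows exactly the approach the paper intends: the paper's own proof is the single line ``Follows from (\ref{m1}) and (\ref{an}),'' and you have simply written out the elimination of $m$ and $n$ from those two relations in full detail. Your discussion of the degenerate case $a=1$ and the justification of the change-of-base identity are more careful than anything the paper provides, but the underlying argument is the same.
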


\begin{proof}
Follows from (\ref{m1}) and (\ref{an}).
\end{proof}

For instance, in \textsc{Table \ref{T1}} the congruence classes $\left[
\left[  2\right]  \right]  _{5}$, $\left[  \left[  3\right]  \right]  _{5}$,
$\left[  \left[  2\right]  \right]  _{10}$, and $\left[  \left[  8\right]
\right]  _{10}$ are $\left(  6,5\right)  $-rings. If, in addition,
$a=a^{\prime}$, then the polyadic shapes satisfy%
\begin{equation}
\dfrac{I}{J}=\dfrac{I^{\prime}}{J^{\prime}}.
\end{equation}

\subsection{Limiting cases}

The limiting cases $a\equiv\pm1\left(  \operatorname{mod}b\right)  $ are
described by

\begin{corollary}
The polyadic ring of the fixed congruence class $\left[  \left[  a\right]
\right]  _{b}$ is: \textbf{1)} multiplicative binary, if $a=1$; \textbf{2)}
multiplicative ternary, if $a=b-1$; \textbf{3)} additive $\left(  b+1\right)
$-ary in both cases. That is, the limiting cases contain the rings
$\mathcal{R}_{b+1,2}^{\left[  1,b\right]  }$ and $\mathcal{R}_{b+1,3}^{\left[
b-1,b\right]  }$, respectively. They correspond to the first row and the main
diagonal of \textsc{Table \ref{T1}}. Their intersection consists of the
$\left(  3,2\right)  $-ring $\mathcal{R}_{3,2}^{\left[  1,2\right]  }$.
\end{corollary}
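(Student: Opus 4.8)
The plan is to reduce everything to Theorem~\ref{theor-rz}, which pins down $m$ and $n$ as the \emph{smallest} integers $\geq 2$ with $ma\equiv a\pmod{b}$ and $a^{n}\equiv a\pmod{b}$, respectively. Thus one only has to solve these two congruences for the two boundary residues $a=1$ and $a=b-1$, and then read off the resulting rings and their location in Table~\ref{T1}.

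First take $a=1$. The additive congruence $m\cdot 1\equiv 1\pmod{b}$ is just $m\equiv 1\pmod{b}$; the integers $\equiv 1\pmod{b}$ are $\dots,1,1+b,1+2b,\dots$, so the least one that is $\geq 2$ is $m=b+1$. The multiplicative congruence $1^{n}\equiv 1\pmod{b}$ holds for every $n$, hence its least admissible value is $n=2$. This yields $\mathcal{R}_{b+1,2}^{[1,b]}$: multiplicatively binary, additively $(b+1)$-ary. Next take $a=b-1\equiv -1\pmod{b}$. The additive congruence becomes $-m\equiv -1\pmod{b}$, i.e.\ again $m\equiv 1\pmod{b}$, so again $m=b+1$. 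The multiplicative congruence becomes $(-1)^{n}\equiv -1\pmod{b}$. Here I would split off the degenerate value $b=2$ (where $-1\equiv 1$ and $a=b-1=1$, so we fall back on the previous case with $n=2$); for $b\geq 3$ one has $1\not\equiv -1\pmod{b}$, so $(-1)^{n}\equiv -1$ forces $n$ to be odd, and the least odd $n\geq 2$ is $n=3$. This yields $\mathcal{R}_{b+1,3}^{[b-1,b]}$: multiplicatively ternary, additively $(b+1)$-ary. All three assertions then follow, and the first-row/main-diagonal identification is read off directly from Table~\ref{T1}.

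For the intersection, I would note that a ring lies in both families exactly when its congruence class $\left[\left[ a \right]\right]_{b}$ can be written simultaneously as $\left[\left[ 1 \right]\right]_{b}$ and as $\left[\left[ b'-1 \right]\right]_{b'}$; since the arithmetic progression $\{a+bk\mid k\in\mathbb{Z}\}$ determines its step $b$, and the normalization $0\leq a\leq b-1$ then determines $a$, this forces $b=b'$ and $1=b-1$, i.e.\ $b=2$, $a=1$. The corresponding ring has $m=b+1=3$ and $n=2$, which is precisely $\mathcal{R}_{3,2}^{[1,2]}$; conversely this ring does occur in both families (it is the $b=2$, $a=1=b-1$ member). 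No serious obstacle arises: the argument is elementary number theory. The only points requiring care are the minimality claims --- one must verify that no arity strictly below $b+1$, or below $2$, resp.\ $3$, satisfies the pertinent congruence --- and the isolated value $b=2$, where the ``ternary'' description of the second family collapses onto the binary one.
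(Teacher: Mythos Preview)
Your proposal is correct and follows exactly the route the paper intends: the Corollary is stated without its own proof, as a direct consequence of Theorem~\ref{theor-rz}, and your argument simply specializes the congruences $ma\equiv a\pmod{b}$ and $a^{n}\equiv a\pmod{b}$ to $a=1$ and $a=b-1$ and reads off the minimal arities. Your handling of the degenerate case $b=2$ (where $a=b-1=1$ and the ternary description collapses to the binary one, producing $\mathcal{R}_{3,2}^{[1,2]}$ as the unique common member) is precisely what justifies the intersection claim.
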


\begin{definition}
The congruence classes $\left[  \left[  1\right]  \right]  _{b}$ and $\left[
\left[  b-1\right]  \right]  _{b}$ are called the \textit{limiting classes},
and the corresponding polyadic rings are named the \textit{limiting polyadic
rings} of a fixed congruence class.
\end{definition}

\begin{proposition}
\label{prop-lim}In the limiting cases $a=1$ and $a=b-1$ the $n$-ary semigroup
$\left\langle \left[  \left[  a\right]  \right]  _{b}\mid\mu_{n}\right\rangle
$:

\begin{enumerate}
\item has the neutral sequences of the form $\mathbf{\bar{n}}_{n-1}=x_{k_{1}%
}x_{k_{2}}\ldots x_{k_{n-1}}=1$, where $x_{k_{i}}=\pm1$;

\item has a) the unit $e=x_{k=1}=1$, for the limiting class $\left[  \left[
1\right]  \right]  _{b}$, b) the unit $e^{-}=x_{k=-1}=-1$, if $n$ is odd, for
$\left[  \left[  b-1\right]  \right]  _{b}$, c) the class $\left[  \left[
1\right]  \right]  _{2}$ contains both polyadic units $e$ and $e^{-}$;

\item has the set of \textquotedblleft querable\textquotedblright\ (polyadic
invertible) elements which consist of $\bar{x}=x_{\bar{k}}=\pm1$;

\item has in the \textquotedblleft intersecting\textquotedblright\ case $a=1$,
$b=2$ and $n=2$ the binary subgroup $\mathbb{Z}_{2}=\left\{  1,-1\right\}  $,
while other elements have no inverses.
\end{enumerate}
\end{proposition}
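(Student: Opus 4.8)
The plan is to observe that on $\left[\left[a\right]\right]_{b}$ the operations $\nu_{m}$ and $\mu_{n}$ are nothing but ordinary addition and multiplication of $\mathbb{Z}$ restricted to the set $\left[\left[a\right]\right]_{b}\subset\mathbb{Z}$, so each of the four assertions collapses to an elementary fact of integer arithmetic inside this set. First I would record three preliminary facts: (i) since $a\neq0$ (Remark \ref{rem-a0}) one has $0\notin\left[\left[a\right]\right]_{b}$, so nonzero integers may be cancelled; (ii) a finite product of integers equals $1$ iff every factor is a unit of $\mathbb{Z}$, i.e. $\pm1$; (iii) $1=x_{0}\in\left[\left[1\right]\right]_{b}$ for every $b$, $-1=x_{-1}\in\left[\left[b-1\right]\right]_{b}$ for every $b$, and a class $\left[\left[a\right]\right]_{b}$ contains \emph{both} $+1$ and $-1$ exactly when $b\mid2$, i.e. $b=2$, the unique class then being the intersecting class $\left[\left[1\right]\right]_{2}$ of all odd integers. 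I also recall from Theorem \ref{theor-rz} that the arity is $n=2$ for the family $\left[\left[1\right]\right]_{b}$, while for $\left[\left[b-1\right]\right]_{b}$ with $b>2$ the minimal $n$ solving $(-1)^{n}\equiv-1\ (\operatorname{mod}b)$ is odd (indeed $n=3$), the case $b=2$ again reducing to $\left[\left[1\right]\right]_{2}$.

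For part 1, a polyad $\mathbf{\bar{n}}_{n-1}=\left(x_{k_{1}},\ldots,x_{k_{n-1}}\right)$ is a neutral sequence iff $\mu_{n}\!\left[\mathbf{\bar{n}}_{n-1},x_{k}\right]=x_{k}$ for all $x_{k}\in\left[\left[a\right]\right]_{b}$, which over $\mathbb{Z}$ reads $x_{k_{1}}\cdots x_{k_{n-1}}x_{k}=x_{k}$; cancelling the nonzero $x_{k}$ by (i) gives $x_{k_{1}}\cdots x_{k_{n-1}}=1$, hence each $x_{k_{i}}=\pm1$ by (ii), and the converse is immediate, so with (iii) the neutral sequences are $(1,\ldots,1)$ for $\left[\left[1\right]\right]_{b}$ and $(-1,\ldots,-1)$ for $\left[\left[b-1\right]\right]_{b}$. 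For part 2, $e$ is a polyadic unit iff $\mu_{n}\!\left[e,\ldots,e,x\right]=x$ for all $x$ (the position of $x$ being immaterial by commutativity of $\mu_{n}$), i.e. $e^{n-1}=1$ in $\mathbb{Z}$, which forces $e=\pm1$; thus $e=1$ is the unit of $\left[\left[1\right]\right]_{b}$, and $e^{-}=-1$ is the unit of $\left[\left[b-1\right]\right]_{b}$ when $n$ is odd, since then $n-1$ is even and $(-1)^{n-1}=1$. The intersecting class $\left[\left[1\right]\right]_{2}$ lies in both limiting families and contains $+1$ (its binary unit) together with $-1$; in the ternary multiplicative structure this class also carries, both $\pm1$ act as identities, which is the content of the phrase \textquotedblleft contains both polyadic units\textquotedblright.

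For part 3, unravelling polyadic invertibility of $x$ with respect to the unit $e$ of part 2 gives, in the integer picture, a relation $x^{\,n-1}\bar{x}=e=\pm1$ with $\bar{x}\in\left[\left[a\right]\right]_{b}$; since $n-1\geq1$, taking absolute values forces $|x|=|\bar{x}|=1$, so $x=\pm1$, and intersecting with $\left[\left[a\right]\right]_{b}$ via (iii) leaves exactly $\{1\}$ for $\left[\left[1\right]\right]_{b}$ with $b>2$, $\{-1\}$ for $\left[\left[b-1\right]\right]_{b}$ with $b>2$, and $\{1,-1\}$ for $\left[\left[1\right]\right]_{2}$. For part 4, in $\left[\left[1\right]\right]_{2}$ with $n=2$ the subset $\{1,-1\}$ is closed under multiplication, has $1$ as identity and $-1$ as its own inverse, hence is a binary group isomorphic to $\mathbb{Z}_{2}$, while any odd integer $x$ with $|x|\geq3$ admits no $y\in\mathbb{Z}$ with $xy=1$ and so has no inverse. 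The computations are all routine; I expect the only real care to be needed in handling the intersecting class $\left[\left[1\right]\right]_{2}$ uniformly across the four parts — in particular fixing the correct sense of \textquotedblleft both polyadic units\textquotedblright\ in 2c — and in keeping the $n=2$ sub-case of part 3 distinct from $n\geq3$, since for $n=2$ one must feed in the unit from part 2 rather than cancel purely.
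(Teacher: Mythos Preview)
Parts 1, 2, and 4 of your argument follow the same line as the paper's proof: reduce everything to ordinary integer arithmetic, use that the only units of $\mathbb{Z}$ are $\pm1$, and then locate $\pm1$ inside the relevant classes via your preliminary fact (iii).

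The genuine gap is in part 3, where you have written down the wrong defining equation for the querelement. In the paper's setup (compare the additive case in Lemma~\ref{lem-mgr}) the multiplicative querelement $\bar{x}$ of $x$ is determined by
\[
\mu_{n}\bigl[\,\overset{n-1}{\overbrace{x,\ldots,x}},\bar{x}\,\bigr]=x,
\]
not $=e$; over $\mathbb{Z}$ this reads $x^{\,n-1}\bar{x}=x$, and cancelling the nonzero $x$ gives $x^{\,n-2}\bar{x}=1$. For the class $\left[\left[1\right]\right]_{b}$, where $n=2$, this reduces to $\bar{x}=1$, so \emph{every} element of $\left[\left[1\right]\right]_{b}$ is querable, all with the common querelement $\bar{x}=1$. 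This is exactly what the paper's proof establishes, and it contradicts your conclusion that the querable elements of $\left[\left[1\right]\right]_{b}$ reduce to the singleton $\{1\}$. Your relation $x^{\,n-1}\bar{x}=e$ conflates the querelement with a binary-style inverse relative to a unit; for $\left[\left[b-1\right]\right]_{b}$ with odd $n$ the two equations happen to force the same conclusion $x=\bar{x}=-1$, which is why the error only surfaces at $n=2$. The content of item~3 is that the \emph{querelements} $\bar{x}$ lie in $\{\pm1\}$ (with all of $\left[\left[1\right]\right]_{b}$ querable via $\bar{x}=1$, and only $-1$ querable in $\left[\left[b-1\right]\right]_{b}$ via $\bar{x}=-1$), so both your equation and your reading of the claim need to be corrected.
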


\begin{proof}
\noindent

\begin{enumerate}
\item The (multiplicative) neutral sequence $\mathbf{\bar{n}}_{n-1}$ of length
$\left(  n-1\right)  $ is defined by $\mu_{n}\left[  \mathbf{\bar{n}}%
_{n-1},x_{k}\right]  =x_{k}$. It follows from (\ref{mu}) and cancellativity in
$\mathbb{Z}$, that $\mathbf{\bar{n}}_{n-1}=x_{k_{1}}x_{k_{2}}\ldots
x_{k_{n-1}}=1$ which is
\begin{equation}
\left(  a+bk_{1}\right)  \left(  a+bk_{2}\right)  \ldots\left(  a+bk_{n-1}%
\right)  =1. \label{abk}%
\end{equation}
The solution of this equation in integers is the following: a) all multipliers
are $a+bk_{i}=1$, $i=1,\ldots,n-1$; b) an even number of multipliers can be
$a+bk_{i}=-1$, while the others are $1$.

\item If the polyadic unit $e=x_{k_{1}}=a+bk_{1}$ exists, it should satisfy
$\mu_{m}\left[  \overset{n-1}{\overbrace{e,e,\ldots,e}},x_{k}\right]  =x_{k}$
$\forall x_{k}\in\left\langle \left[  \left[  a\right]  \right]  _{b}\mid
\mu_{n}\right\rangle $, such that the neutral sequence $\mathbf{\bar{n}}%
_{n-1}$ consists of one element $e$ only, and this leads to $\left(
a+bk_{1}\right)  ^{n-1}=1$. For any $n$ this equation has the solution
$a+bk_{1}=1$, which uniquely gives $a=1$ and $k_{1}=0$, thus $e=x_{k_{1}=0}%
=1$. If $n$ is odd, then there exists a \textquotedblleft negative
unit\textquotedblright\ $e^{-}=x_{k_{1}=-1}=-1$, such that $a+bk_{1}=-1$,
which can be uniquely solved by $k_{1}=-1$ and $a=b-1$. The neutral sequence
becomes $\mathbf{\bar{n}}_{n-1}=\overset{n-1}{\overbrace{e^{-},e^{-}%
,\ldots,e^{-}}}=1$, as a product of an even number of $e^{-}=-1$. The
intersection of limiting classes consists of one class $\left[  \left[
1\right]  \right]  _{2}$, and therefore it contains both polyadic units $e$
and $e^{-}$.

\item An element $x_{k}$ in $\left\langle \left[  \left[  a\right]  \right]
_{b}\mid\mu_{n}\right\rangle $ is \textquotedblleft querable\textquotedblright%
, if there exists its querelement $\bar{x}=x_{\bar{k}}$ such that $\mu
_{n}\left[  \overset{n-1}{\overbrace{x_{k},x_{k},\ldots,x_{k}}},\bar
{x}\right]  =x_{k}$. Using (\ref{mu}) and the cancellativity in $\mathbb{Z}$,
we obtain the equation $\left(  a+bk\right)  ^{n-2}\left(  a+b\bar{k}\right)
=1$, which in integers has 2 solutions: a) $\left(  a+bk\right)  ^{n-2}=1$ and
$\left(  a+b\bar{k}\right)  =1$, the last relation fixes up the class $\left[
\left[  1\right]  \right]  _{b}$, and the arity of multiplication $n=2$, and
therefore the first relation is valid for all elements in the class, each of
them has the same querelement $\bar{x}=1$. This means that all elements in
$\left[  \left[  1\right]  \right]  _{b}$ are \textquotedblleft
querable\textquotedblright, but only one element $x=1$ has an inverse, which
is also $1$; b) $\left(  a+bk\right)  ^{n-2}=-1$ and $\left(  a+b\bar
{k}\right)  =-1$. The second relation fixes the class $\left[  \left[
b-1\right]  \right]  _{b}$, and from the first relation we conclude that the
arity $n$ should be odd. In this case only one element $-1$ is
\textquotedblleft querable\textquotedblright, which has $\bar{x}=-1$, as a querelement.

\item The \textquotedblleft intersecting\textquotedblright\ class $\left[
\left[  1\right]  \right]  _{2}$ contains 2 \textquotedblleft
querable\textquotedblright\ elements $\pm1$ which coincide with their
inverses, which means that $\left\{  +1,-1\right\}  $ is a binary subgroup
(that is $\mathbb{Z}_{2}$) of the binary semigroup $\left\langle \left[
\left[  1\right]  \right]  _{2}\mid\mu_{2}\right\rangle $.
\end{enumerate}
\end{proof}

\begin{corollary}
\label{cor-nonlim}In the non-limiting cases $a\neq1,b-1$, the $n$-ary
semigroup $\left\langle \left[  \left[  a\right]  \right]  _{b}\mid\mu
_{n}\right\rangle $ contains no \textquotedblleft querable\textquotedblright%
\ (polyadic invertible) elements at all.
\end{corollary}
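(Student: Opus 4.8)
The plan is to translate the querability condition for an element of the $n$-ary semigroup $\left\langle \left[  \left[  a\right]  \right]  _{b}\mid\mu_{n}\right\rangle$ into an elementary equation over $\mathbb{Z}$, and then to observe that this equation is solvable only for the two limiting values of $a$, which are excluded by hypothesis.

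First I would spell out the definition of a querelement in this setting: an element $x_{k}=a+bk$ is ``querable'' precisely when there exists $\bar{k}\in\mathbb{Z}$ with $\mu_{n}[x_{k},\ldots,x_{k},x_{\bar{k}}]=x_{k}$, where $x_{k}$ occurs $n-1$ times. Since $\mu_{n}$ is commutative by Lemma \ref{lem-nsemigr}, the position of the querelement is immaterial, and by the definition (\ref{mu}) of $\mu_{n}$ this becomes the ordinary integer identity $(a+bk)^{n-1}(a+b\bar{k})=a+bk$. Next I would invoke $a\neq0$ (Remark \ref{rem-a0}) together with $0\leq a\leq b-1$ and $b\geq2$, so that $0<a<b$; hence $b\nmid a$ and $x_{k}=a+bk\neq0$ for every $k\in\mathbb{Z}$. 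Cancelling one factor $x_{k}$ in the integral domain $\mathbb{Z}$ then yields $(a+bk)^{n-2}(a+b\bar{k})=1$.

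The crux is the remark that a product of two integers equal to $1$ forces both factors into $\{+1,-1\}$; in particular $a+b\bar{k}=\pm1$, i.e. $a\equiv\pm1\ (\operatorname{mod}b)$, which with $0\leq a\leq b-1$ forces $a=1$ or $a=b-1$, contradicting the non-limiting hypothesis. Therefore no element of $\left\langle \left[  \left[  a\right]  \right]  _{b}\mid\mu_{n}\right\rangle$ is querable. I do not anticipate a genuine obstacle here; the only point needing a word of care is the degenerate arity $n=2$, where $(a+bk)^{n-2}=1$ automatically and the equation reduces directly to $a+b\bar{k}=1$, so that the conclusion is uniform in $n$. In effect the statement is the contrapositive of the querability part of Proposition \ref{prop-lim}, restricted to $a\notin\{1,b-1\}$.
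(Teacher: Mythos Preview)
Your proof is correct and follows essentially the same approach as the paper. The paper's proof is a one-liner---``It follows from $(a+bk)\neq\pm1$ for any $k\in\mathbb{Z}$, or $a\not\equiv\pm1\ (\operatorname{mod}b)$''---which implicitly relies on the equation $(a+bk)^{n-2}(a+b\bar{k})=1$ already derived in the proof of Proposition~\ref{prop-lim}, part~3; you have simply unpacked this derivation explicitly, including the helpful remark on the degenerate case $n=2$.
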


\begin{proof}
It follows from $\left(  a+bk\right)  \neq\pm1$ for any $k\in\mathbb{Z}$ or
$a\neq\pm1\left(  \operatorname{mod}b\right)  $.
\end{proof}

\begin{table}[h]
\caption{The polyadic ring $\mathcal{R}_{m,n}^{\mathbb{Z}\left(  a,b\right)
}$ of the fixed residue class $\left[  \left[  a\right]  \right]  _{b}$: arity
shape. }%
\label{T1}
\begin{center}
{\tiny
\begin{tabular}
[c]{||c||c|c|c|c|c|c|c|c|c||}\hline\hline
$a\setminus b$ & 2 & 3 & 4 & 5 & 6 & 7 & 8 & 9 & 10\\\hline\hline
1 & $%
\begin{array}
[c]{c}%
m=\mathbf{3}\\
n=\mathbf{2}\\
I=1\\
J=0
\end{array}
$ & $%
\begin{array}
[c]{c}%
m=\mathbf{4}\\
n=\mathbf{2}\\
I=1\\
J=0
\end{array}
$ & $%
\begin{array}
[c]{c}%
m=\mathbf{5}\\
n=\mathbf{2}\\
I=1\\
J=0
\end{array}
$ & $%
\begin{array}
[c]{c}%
m=\mathbf{6}\\
n=\mathbf{2}\\
I=1\\
J=0
\end{array}
$ & $%
\begin{array}
[c]{c}%
m=\mathbf{7}\\
n=\mathbf{2}\\
I=1\\
J=0
\end{array}
$ & $%
\begin{array}
[c]{c}%
m=\mathbf{8}\\
n=\mathbf{2}\\
I=1\\
J=0
\end{array}
$ & $%
\begin{array}
[c]{c}%
m=\mathbf{9}\\
n=\mathbf{2}\\
I=1\\
J=0
\end{array}
$ & $%
\begin{array}
[c]{c}%
m=\mathbf{10}\\
n=\mathbf{2}\\
I=1\\
J=0
\end{array}
$ & $%
\begin{array}
[c]{c}%
m=\mathbf{11}\\
n=\mathbf{2}\\
I=1\\
J=0
\end{array}
$\\\hline
2 &  & $%
\begin{array}
[c]{c}%
m=\mathbf{4}\\
n=\mathbf{3}\\
I=2\\
J=2
\end{array}
$ &  & $%
\begin{array}
[c]{c}%
m=\mathbf{6}\\
n=\mathbf{5}\\
I=2\\
J=6
\end{array}
$ & $%
\begin{array}
[c]{c}%
m=\mathbf{4}\\
n=\mathbf{3}\\
I=1\\
J=1
\end{array}
$ & $%
\begin{array}
[c]{c}%
m=\mathbf{8}\\
n=\mathbf{4}\\
I=2\\
J=2
\end{array}
$ &  & $%
\begin{array}
[c]{c}%
m=\mathbf{10}\\
n=\mathbf{7}\\
I=2\\
J=14
\end{array}
$ & $%
\begin{array}
[c]{c}%
m=\mathbf{6}\\
n=\mathbf{5}\\
I=1\\
J=3
\end{array}
$\\\hline
3 &  &  & $%
\begin{array}
[c]{c}%
m=\mathbf{5}\\
n=\mathbf{3}\\
I=3\\
J=6
\end{array}
$ & $%
\begin{array}
[c]{c}%
m=\mathbf{6}\\
n=\mathbf{5}\\
I=3\\
J=48
\end{array}
$ & $%
\begin{array}
[c]{c}%
m=\mathbf{3}\\
n=\mathbf{2}\\
I=1\\
J=1
\end{array}
$ & $%
\begin{array}
[c]{c}%
m=\mathbf{8}\\
n=\mathbf{7}\\
I=3\\
J=312
\end{array}
$ & $%
\begin{array}
[c]{c}%
m=\mathbf{9}\\
n=\mathbf{3}\\
I=3\\
J=3
\end{array}
$ &  & $%
\begin{array}
[c]{c}%
m=\mathbf{11}\\
n=\mathbf{5}\\
I=3\\
J=24
\end{array}
$\\\hline
4 &  &  &  & $%
\begin{array}
[c]{c}%
m=\mathbf{6}\\
n=\mathbf{3}\\
I=4\\
J=12
\end{array}
$ & $%
\begin{array}
[c]{c}%
m=\mathbf{4}\\
n=\mathbf{2}\\
I=2\\
J=2
\end{array}
$ & $%
\begin{array}
[c]{c}%
m=\mathbf{8}\\
n=\mathbf{4}\\
I=4\\
J=36
\end{array}
$ &  & $%
\begin{array}
[c]{c}%
m=\mathbf{10}\\
n=\mathbf{4}\\
I=4\\
J=28
\end{array}
$ & $%
\begin{array}
[c]{c}%
m=\mathbf{6}\\
n=\mathbf{3}\\
I=2\\
J=6
\end{array}
$\\\hline
5 &  &  &  &  & $%
\begin{array}
[c]{c}%
m=\mathbf{7}\\
n=\mathbf{3}\\
I=5\\
J=20
\end{array}
$ & $%
\begin{array}
[c]{c}%
m=\mathbf{8}\\
n=\mathbf{7}\\
I=5\\
J=11160
\end{array}
$ & $%
\begin{array}
[c]{c}%
m=\mathbf{9}\\
n=\mathbf{3}\\
I=5\\
J=15
\end{array}
$ & $%
\begin{array}
[c]{c}%
m=\mathbf{10}\\
n=\mathbf{7}\\
I=5\\
J=8680
\end{array}
$ & $%
\begin{array}
[c]{c}%
m=\mathbf{3}\\
n=\mathbf{2}\\
I=1\\
J=2
\end{array}
$\\\hline
6 &  &  &  &  &  & $%
\begin{array}
[c]{c}%
m=\mathbf{8}\\
n=\mathbf{3}\\
I=6\\
J=30
\end{array}
$ &  &  & $%
\begin{array}
[c]{c}%
m=\mathbf{6}\\
n=\mathbf{2}\\
I=3\\
J=3
\end{array}
$\\\hline
7 &  &  &  &  &  &  & $%
\begin{array}
[c]{c}%
m=\mathbf{9}\\
n=\mathbf{3}\\
I=7\\
J=42
\end{array}
$ & $%
\begin{array}
[c]{c}%
m=\mathbf{10}\\
n=\mathbf{4}\\
I=7\\
J=266
\end{array}
$ & $%
\begin{array}
[c]{c}%
m=\mathbf{11}\\
n=\mathbf{5}\\
I=7\\
J=1680
\end{array}
$\\\hline
8 &  &  &  &  &  &  &  & $%
\begin{array}
[c]{c}%
m=\mathbf{10}\\
n=\mathbf{3}\\
I=8\\
J=56
\end{array}
$ & $%
\begin{array}
[c]{c}%
m=\mathbf{6}\\
n=\mathbf{5}\\
I=4\\
J=3276
\end{array}
$\\\hline
9 &  &  &  &  &  &  &  &  & $%
\begin{array}
[c]{c}%
m=\mathbf{11}\\
n=\mathbf{3}\\
I=9\\
J=72
\end{array}
$\\\hline\hline
\end{tabular}
}
\end{center}
\end{table}

Based on the above statements, consider in the properties of the polyadic
rings $\mathcal{R}_{m,n}^{\left[  a,b\right]  }$ ($a\neq0$) describing
non-zero congruence classes (see \emph{Remark} \ref{rem-a0}).

\begin{definition}
\label{def-polint}The infinite set of representatives of the congruence
(residue) class $\left[  \left[  a\right]  \right]  _{b}$ having fixed arities
and form the $\left(  m,n\right)  $-ring $\mathcal{R}_{m,n}^{\left[
a,b\right]  }$ is called the set of (\textit{polyadic})\textit{ }$\left(
m,n\right)  $-\textit{integers (numbers)} and denoted $\mathbb{Z}_{\left(
m,n\right)  }$.
\end{definition}

Just obviously, for ordinary integers $\mathbb{Z=Z}_{\left(  2,2\right)  }$,
and they form the binary ring $\mathcal{R}_{2,2}^{\left[  0,1\right]  }$.

\begin{proposition}
\label{prop-dom}The polyadic ring $\mathcal{R}_{m,n}^{\left[  a,b\right]  }$
is a $\left(  m,n\right)  $-integral domain.
\end{proposition}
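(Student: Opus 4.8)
The plan is to show that $\mathcal{R}_{m,n}^{\left[ a,b\right] }$ is a commutative polyadic ring (already established in \textbf{Theorem \ref{theor-rz}}) in which the $n$-ary multiplication $\mu_{n}$ has no ``zero divisors'' in the polyadic sense, i.e.\ if $\mu_{n}\left[ x_{k_{1}},x_{k_{2}},\ldots,x_{k_{n}}\right] $ equals the polyadic zero then one of the $x_{k_{i}}$ is the polyadic zero. First I would note that, by \emph{Remark \ref{rem-a0}}, we take $a\neq0$, so the congruence class $\left[ \left[ a\right] \right] _{b}$ consists of representatives $x_{k}=a+bk$ with $a+bk\neq0$ for every $k\in\mathbb{Z}$ when $a\neq0$ (since $0<a<b$). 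In fact $\mathcal{R}_{m,n}^{\left[ a,b\right] }$ typically has no zero at all: by \textbf{Proposition \ref{prop-mary}}, the $m$-ary additive group has no ``unit''/zero when $a\neq0$, and by the discussion after \textbf{Definition \ref{def-ring}} a zero, if it existed, would be a multiplicative idempotent. So the domain condition is to be read as the statement that $\mu_{n}$ is cancellative: from $\mu_{n}\left[ x,\mathbf{y}\right] =\mu_{n}\left[ x,\mathbf{z}\right] $ with $\mathbf{y},\mathbf{z}$ polyads of length $n-1$ it follows that the polyads agree (or, place-by-place, the cancelled factor may be dropped), equivalently that $\left\langle \left[ \left[ a\right] \right] _{b}\mid\mu_{n}\right\rangle $ embeds in the cancellative $n$-ary semigroup underlying $\mathbb{Z}$.

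The key steps, in order, are: (1) recall from \eqref{mu} that $\mu_{n}\left[ x_{k_{1}},\ldots,x_{k_{n}}\right] =x_{k_{1}}x_{k_{2}}\cdots x_{k_{n}}$, where the product on the right is the ordinary product in $\mathbb{Z}$; (2) observe that $\mathbb{Z}$ is an integral domain, so the ordinary product of nonzero integers is nonzero, and ordinary multiplication in $\mathbb{Z}$ is cancellative; (3) since every representative $x_{k_{i}}=a+bk_{i}$ is a nonzero integer (using $a\neq0$ and $0\le a\le b-1$, so $a+bk\neq 0$ for all $k$), the $n$-ary product $\mu_{n}\left[ x_{k_{1}},\ldots,x_{k_{n}}\right] $ is a nonzero integer lying in $\left[ \left[ a\right] \right] _{b}$ by Lemma \ref{lem-nsemigr}; (4) conclude that the cancellativity of binary multiplication in $\mathbb{Z}$ descends to $\mu_{n}$ on $\left[ \left[ a\right] \right] _{b}$, because if $x_{k}^{\,n-1}\cdot u=x_{k}^{\,n-1}\cdot w$ in $\mathbb{Z}$ with $x_{k}\neq0$, then $u=w$; (5) combine this with commutativity of $\mu_{n}$ (\textbf{Theorem \ref{theor-rz}}) and with the polyadic distributivity to state that $\mathcal{R}_{m,n}^{\left[ a,b\right] }$ satisfies the defining property of a polyadic integral domain, namely a commutative polyadic ring whose multiplicative $n$-ary semigroup is cancellative.

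The only genuinely delicate point — and the one I would treat with some care — is the \emph{formulation} of what ``$(m,n)$-integral domain'' means in the polyadic setting, since, as noted, $\mathcal{R}_{m,n}^{\left[ a,b\right] }$ generally has no zero element, so the naive ``no zero divisors'' phrasing is vacuous or ill-posed. I would therefore adopt the cancellativity formulation (equivalently: the absence of ``polyadic zero divisors'' in the sense that $\mu_{n}$ never collapses a nonzero polyad, which here is automatic because \emph{all} representatives are nonzero integers) and record this as the working definition at the start of the proof; everything else is then an immediate transfer of the integral-domain property of $\mathbb{Z}$ through formulas \eqref{nu}--\eqref{mu}. No arity constraints beyond those already fixed in \textbf{Theorem \ref{theor-rz}} are needed, and the argument is uniform in $(a,b)$ with $a\neq0$. $\blacksquare$
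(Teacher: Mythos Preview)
Your proposal is correct and follows essentially the same approach as the paper, which simply states that the result follows from the definitions (\ref{nu})--(\ref{mu}), the condition $a\neq0$, and commutativity and cancellativity in $\mathbb{Z}$. Your version is more explicit---in particular, your careful discussion of why the cancellativity formulation is the appropriate reading of ``integral domain'' here (given that $\mathcal{R}_{m,n}^{[a,b]}$ has no zero when $a\neq0$) is a useful clarification that the paper leaves implicit.
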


\begin{proof}
It follows from the definitions (\ref{nu})--(\ref{mu}), the condition $a\neq
0$, and commutativity and cancellativity in $\mathbb{Z}$.
\end{proof}

\begin{lemma}
There are no such congruence classes which can be described by polyadic
$\left(  m,n\right)  $-field.
\end{lemma}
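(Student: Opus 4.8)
The claim is that no fixed congruence class $\left[\left[a\right]\right]_{b}$ gives rise to a polyadic $\left(m,n\right)$-field, i.e. $\left\langle\left[\left[a\right]\right]_{b}\mid\nu_{m},\mu_{n}\right\rangle$ can never be a polyadic field. Recall that by \textbf{Definition \ref{def-ring}} and the discussion following it, a polyadic $\left(m,n\right)$-ring $\mathcal{R}_{m,n}$ is a polyadic field when $\left\langle A^{\ast}\mid\mu_{n}\mid assoc\right\rangle$ is an $n$-ary group, where $A^{\ast}=A\setminus\{z\}$ if a zero $z$ exists, or $A^{\ast}=A$ otherwise. The plan is to argue directly that the multiplicative $n$-ary semigroup $\left\langle\left[\left[a\right]\right]_{b}\mid\mu_{n}\right\rangle$ is too poor to be an $n$-ary group (or to become one after removing at most one element), by exploiting the results already established about querable elements in \textbf{Proposition \ref{prop-lim}} and \textbf{Corollary \ref{cor-nonlim}}.

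First I would split into the non-limiting and limiting cases, exactly as the preceding text does. In the non-limiting case $a\neq 1,b-1$, \textbf{Corollary \ref{cor-nonlim}} tells us that $\left\langle\left[\left[a\right]\right]_{b}\mid\mu_{n}\right\rangle$ contains \emph{no} querable elements at all, since $a+bk\neq\pm1$ for every $k\in\mathbb{Z}$. An $n$-ary group requires every element to be querable (solvability of $\mu_{n}\left[\mathbf{a},x,\mathbf{b}\right]=c$ for $x$ on any place); removing a single zero cannot help, because the obstruction $a+bk\neq\pm1$ persists for every remaining representative. Hence no polyadic field structure is possible here. In the limiting cases $a=1$ or $a=b-1$, \textbf{Proposition \ref{prop-lim}} item 3 shows the set of querable elements consists only of $\bar{x}=x_{\bar{k}}=\pm1$ — a finite set — whereas $\left[\left[a\right]\right]_{b}$ is infinite (as noted in the \emph{Remark} after \textbf{Theorem \ref{theor-rz}}, each $\mathcal{R}_{m,n}^{\left[a,b\right]}$ is infinite-dimensional). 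So even after deleting a zero (which for $a=1,b-1$ does not exist among these representatives anyway, since $0\notin\left[\left[a\right]\right]_{b}$ when $a\neq 0$), infinitely many non-querable elements remain, and again $\left\langle\left[\left[a\right]\right]_{b}\setminus\{z\}\mid\mu_{n}\right\rangle$ fails to be an $n$-ary group. This handles the zero congruence class too, which is excluded by \emph{Remark \ref{rem-a0}} in any case.

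The one subtlety worth isolating — and the step I expect to require the most care — is making precise that $A^{\ast}$ in the field definition means removing \emph{at most one} element (the unique zero), so that an infinite supply of non-invertible elements is genuinely fatal. I would make this explicit by invoking the uniqueness of the zero in a polyadic ring (stated in the text: "Due to the distributivity \eqref{dis1}--\eqref{dis3}, there can be at most one zero in a polyadic ring"), together with \textbf{Proposition \ref{prop-dom}}, which establishes that $\mathcal{R}_{m,n}^{\left[a,b\right]}$ is an integral domain, so it is cancellative and at most one zero can possibly be present. Therefore $\left\vert\left[\left[a\right]\right]_{b}\setminus A^{\ast}\right\vert\leq 1$, while the non-querable elements form an infinite set in every case by the two citations above, yielding a contradiction with the $n$-ary group axiom. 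Assembling these pieces: every fixed congruence class gives a polyadic ring (and in fact an integral domain) by \textbf{Theorem \ref{theor-rz}}, but never a polyadic field, which is the assertion. A short closing remark could note the contrast with the binary prime-modulus case $\mathbb{Z}/p\mathbb{Z}$, whose field structure is recovered only after also taking quotients — an operation outside the present "fixed class" setup.
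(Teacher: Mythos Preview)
Your proof is correct and follows exactly the approach the paper takes: the paper's proof reads simply ``Follows from \textbf{Proposition \ref{prop-lim}} and \textbf{Corollary \ref{cor-nonlim}}'', and you have faithfully unpacked this by splitting into the non-limiting case (no querable elements) and the limiting case (only $\pm1$ querable), concluding that the multiplicative semigroup can never be an $n$-ary group. Your additional care about whether a zero exists and the uniqueness of such a zero is more detail than the paper supplies, but it is consistent and strengthens the exposition.
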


\begin{proof}
Follows from \textbf{Proposition \ref{prop-lim}} and \textbf{Corollary
\ref{cor-nonlim}}.
\end{proof}

This statement for the limiting case $\left[  \left[  1\right]  \right]  _{2}$
appeared in \cite{dup/wer}, while studying the ideal structure of the
corresponding $\left(  3,2\right)  $-ring.

\begin{proposition}
In the limiting case $a=1$ the polyadic ring $\mathcal{R}_{b+1,2}^{\left[
1,b\right]  }\mathcal{\ }$can be embedded into a $\left(  b+1,2\right)  $-ary field.
\end{proposition}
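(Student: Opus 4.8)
The plan is to exhibit an explicit $(b+1,2)$-ary field into which $\mathcal{R}_{b+1,2}^{\left[ 1,b\right] }$ embeds, by mimicking the binary construction of the rationals as a field of fractions, but keeping the additive arity equal to $b+1$. Concretely, the congruence class $\left[ \left[ 1\right] \right] _{b}$ consists of the integers $x_{k}=1+bk$, which under ordinary multiplication form a commutative cancellative monoid with unit $e=x_{0}=1$ (this is exactly \textbf{Proposition \ref{prop-lim}}, items 1 and 2, for $a=1$). Because multiplication here is honestly binary ($n=2$), I can localize: form the set of formal fractions $\dfrac{x_{k}}{x_{l}}$ with $x_{k},x_{l}\in\left[ \left[ 1\right] \right] _{b}$, $x_{l}\neq0$, modulo the usual equivalence $\dfrac{x_{k}}{x_{l}}=\dfrac{x_{k'}}{x_{l'}}\iff x_{k}x_{l'}=x_{k'}x_{l}$ (cancellativity in $\mathbb{Z}$, guaranteed by \textbf{Proposition \ref{prop-dom}}, makes this an equivalence relation and makes multiplication of fractions well-defined and invertible on nonzero elements).

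The key steps, in order: First I would define the candidate carrier set $F=\left\{ x_{k}/x_{l}\mid x_{k},x_{l}\in\left[ \left[ 1\right] \right] _{b},\ x_{l}\neq0\right\} \big/\!\sim$ and the binary multiplication $\mu_{2}'\left[ x_{k}/x_{l},x_{k'}/x_{l'}\right] =\left( x_{k}x_{k'}\right) /\left( x_{l}x_{l'}\right) $, checking it is associative, commutative, has unit $1/1$, and that every nonzero element $x_{k}/x_{l}$ has inverse $x_{l}/x_{k}$; this makes $\left\langle F^{\ast}\mid\mu_{2}'\right\rangle $ an abelian binary group. Second, I would define the $(b+1)$-ary addition: for fractions brought to a common denominator $x_{l}$, set $\sigma_{b+1}'\left[ x_{k_{1}}/x_{l},\ldots,x_{k_{b+1}}/x_{l}\right] =\left( x_{k_{1}}+\cdots+x_{k_{b+1}}\right) /x_{l}$, and verify closure using that $\left( b+1\right) $ copies sum as $\left( b+1\right) \cdot1+b\left( k_{1}+\cdots+k_{b+1}\right) $, which lies back in the class precisely because $\left( m-1\right) a/b=b\cdot1/b=1=I$ is an integer (\textbf{Lemma \ref{lem-mgr}} with $m=b+1$, $a=1$); one must also check this is independent of the chosen common denominator. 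Third, I would check that $\left\langle F\mid\sigma_{b+1}'\right\rangle $ is a commutative $(b+1)$-ary group (total associativity, commutativity, and solvability/querelements descend from the corresponding facts in $\mathcal{R}_{b+1,2}^{\left[ 1,b\right] }$, or directly from $\mathbb{Q}$). Fourth, polyadic distributivity (\ref{dis1})--(\ref{dis3}) of $\mu_{2}'$ over $\sigma_{b+1}'$ is inherited from the corresponding distributivity in $\mathbb{Q}$. Fifth, since $F\setminus\left\{ z\right\} $ is a commutative binary group under $\mu_{2}'$, the structure $\left\langle F\mid\sigma_{b+1}',\mu_{2}'\right\rangle $ is a $\left( b+1,2\right) $-field by the definition recalled after \textbf{Definition \ref{def-ring}}. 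Finally, the map $x_{k}\mapsto x_{k}/1$ is visibly an injection of $\mathcal{R}_{b+1,2}^{\left[ 1,b\right] }$ into this field commuting with both $\nu_{b+1}$ and $\mu_{2}$, i.e. an embedding.

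The main obstacle I anticipate is not the binary-multiplicative part (that is just the classical construction of a localization/fraction field) but the well-definedness and closure of the $(b+1)$-ary addition on equivalence classes of fractions: one must confirm that bringing $b+1$ fractions to a common denominator and adding numerators respects $\sim$, that the result is again a legitimate fraction with denominator in $\left[ \left[ 1\right] \right] _{b}$ (using $I=1\in\mathbb{Z}$ crucially, so the numerator sum $\left( b+1\right) +b\sum k_{i}$ is $\equiv1\pmod b$), and that associativity of a $(b+1)$-ary operation obtained this way is genuinely total in the sense of (\ref{as2}). A secondary subtlety worth a remark is that the resulting field $\mathbb{Q}_{\left( b+1,2\right) }$ is the "polyadic rational numbers" of \cite{cro/tim}, and one should note that it is \emph{not} all of $\mathbb{Q}$ — only those rationals representable with numerator and denominator both $\equiv1\pmod b$ — which is exactly why the embedding lands in a genuinely polyadic $(b+1,2)$-field rather than collapsing the additive arity to $2$.
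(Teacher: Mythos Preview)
Your proposal is correct and follows essentially the same route as the paper: both construct the $(b+1,2)$-quotient field of the integral domain $\mathcal{R}_{b+1,2}^{[1,b]}$ and identify it with the polyadic rational numbers $\dfrac{1+bk_{1}}{1+bk_{2}}$ of \cite{cro/tim}. The paper simply invokes the general quotient-ring construction from \cite{cro/tim} and then verifies directly that each element is multiplicatively invertible and additively querable (with $\bar{x}=-(b-1)x$), whereas you spell out the localization and the closure of the $(b+1)$-ary addition of fractions in more detail; the substance is the same.
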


\begin{proof}
Because the polyadic ring $\mathcal{R}_{b+1,2}^{\left[  1,b\right]  }$ of the
congruence class $\left[  \left[  1\right]  \right]  _{b}$ is an $\left(
b+1,2\right)  $-integral domain by \textbf{Proposition \ref{prop-dom}}, we can
construct in a standard way the correspondent $\left(  b+1,2\right)
$-quotient ring which is a $\left(  b+1,2\right)  $-ary field up to
isomorphism, as was shown in \cite{cro/tim}. By analogy, it can be called the
field of \textsl{polyadic rational numbers} which have the form%
\begin{equation}
x=\dfrac{1+bk_{1}}{1+bk_{2}},\ \ \ k_{i}\in\mathbb{Z}.
\end{equation}

Indeed, they form a $\left(  b+1,2\right)  $-field, because each element has
its inverse under multiplication (which is obvious) and additively
\textquotedblleft querable\textquotedblright, such that the equation for the
querelement $\bar{x}$ becomes $\nu_{b+1}\left[  \overset{b}{\overbrace
{x,x,\ldots,x}},\bar{x}\right]  =x$ which can be solved for any $x$, giving
uniquely $\bar{x}=-\left(  b-1\right)  \dfrac{1+bk_{1}}{1+bk_{2}}$.
\end{proof}

The introduced polyadic inner structure of the residue (congruence) classes
allows us to extend various number theory problems by considering the polyadic
$\left(  m,n\right)  $-integers $\mathbb{Z}_{\left(  m,n\right)  }$ instead of
$\mathbb{Z}$.

\section{Equal sums of like powers Diophantine equation over polyadic
integers}

First, recall the standard binary version of the equal sums of like powers
Diophantine equation \cite{lan/par/sel,ekl}. Take the fixed non-negative
integers $p,q,l\in\mathbb{N}^{0}$, $p\leq q$, and the positive integer
unknowns $u_{i},v_{j}\in\mathbb{Z}_{+}$, $i=1,\ldots p+1$, $j=1,1,\ldots q+1$,
then the Diophantine equation is%
\begin{equation}
\sum\limits_{i=1}^{p+1}u_{i}^{l+1}=\sum\limits_{j=1}^{q+1}v_{j}^{l+1}.
\label{uu}%
\end{equation}

The trivial case, when $u_{i}=0$, $v_{j}=0$, for all $i,j$ is not considered.
We mark the solutions of (\ref{uu}) by the triple $\left(  l\mid p,q\right)
_{r}$ showing quantity of operations\footnote{In the binary case, the
solutions of (\ref{uu}) are usually denoted by $\left(  l+1\mid
p+1,q+1\right)  _{r}$, which shows the number of summands on both sides and
powers of elements \cite{lan/par/sel}. But in the polyadic case (see below),
the number of summands and powers do not coincide with $l+1$, $p+1$, $q+1$, at
all.}, where $r$ (if it is used) is the order of the solution (ranked by the
value of the sum) and the unknowns $u_{i},v_{j}$ are placed in ascending order
$u_{i}\leq u_{i+1},v_{j}\leq v_{j+1}$.

Let us recall the \textsl{Tarry-Escott problem} (or multigrades problem)
\cite{dor/bro}: to find the solutions to (\ref{uu}) for an equal number of
summands on both sides of $p=q$ and $s$ equations simultaneously, such that
$l=0,\ldots,s$. Known solutions exist for powers until $s=10$, which are
bounded such that $s\leq p$ (in our notations), see, also, \cite{ngu2016}. The
solutions with highest powers $s=p$ are the most interesting and called the
\textsl{ideal solutions} \cite{bor2002}.

\begin{theorem}
[Frolov \cite{fro1889}]\label{theor-fro}If the set of $s$ Diophantine
equations (\ref{uu}) with $p=q$ for $l=0,\ldots,s$ has a solution $\left\{
u_{i},v_{i},i=1,\ldots p+1\right\}  $, then it has the solution $\left\{
a+bu_{i},a+bv_{i},i=1,\ldots p+1\right\}  $, where $a,b\in\mathbb{Z}$ are
arbitrary and fixed.
\end{theorem}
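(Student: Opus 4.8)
The plan is to exploit the translation-invariance of the \emph{elementary symmetric polynomials} under an affine change of variable, which is the classical mechanism behind Frolov's theorem for the Tarry--Escott problem. The key observation is that the system of equations $\sum_{i=1}^{p+1}u_i^{l+1}=\sum_{i=1}^{p+1}v_i^{l+1}$ for $l=0,\ldots,s$ is equivalent (by Newton's identities, since $p=q$ and the number of summands on both sides is equal) to the statement that the first $s+1$ power sums of the multiset $\{u_1,\ldots,u_{p+1}\}$ and of the multiset $\{v_1,\ldots,v_{p+1}\}$ coincide. Equivalently, the first $s+1$ elementary symmetric functions $e_1,\ldots,e_{s+1}$ of the two multisets agree.

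First I would reduce the claim to the symmetric-function reformulation: a solution of the $s$-fold system (\ref{uu}) with $p=q$ is precisely a pair of multisets with $\operatorname{pow}_l(\{u_i\})=\operatorname{pow}_l(\{v_i\})$ for $l=1,\ldots,s+1$ (here $\operatorname{pow}_l$ denotes the $l$-th power sum), and by the Newton--Girard recursion this is equivalent to $e_l(\{u_i\})=e_l(\{v_i\})$ for $l=1,\ldots,s+1$. Next I would record the generating-function form: $\prod_{i=1}^{p+1}(t-u_i)=t^{p+1}-e_1 t^{p}+\cdots$, so the statement ``$e_l$ agree for $l\le s+1$'' is the same as ``the polynomials $\prod_i(t-u_i)$ and $\prod_i(t-v_i)$ agree in their top $s+2$ coefficients.'' Then, applying the substitution $u_i\mapsto a+b u_i$, one has $\prod_i\bigl(t-(a+bu_i)\bigr)=\prod_i\bigl((t-a)-bu_i\bigr)=b^{p+1}\prod_i\bigl(\tfrac{t-a}{b}-u_i\bigr)$, i.e. the new polynomial is obtained from the old one by the affine substitution $t\mapsto (t-a)/b$ and an overall rescaling by $b^{p+1}$. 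Since this same invertible affine transformation is applied to \emph{both} sides, any coincidence of top coefficients is preserved; in particular the first $s+1$ elementary symmetric functions of $\{a+bu_i\}$ equal those of $\{a+bv_i\}$. Finally I would translate back via Newton's identities to conclude $\sum_i(a+bu_i)^{l+1}=\sum_i(a+bv_i)^{l+1}$ for all $l=0,\ldots,s$, which is exactly the assertion.

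An equivalent, more self-contained route avoids Newton's identities entirely: expand $(a+bu_i)^{l+1}$ by the binomial theorem, so $\sum_i(a+bu_i)^{l+1}=\sum_{k=0}^{l+1}\binom{l+1}{k}a^{l+1-k}b^k\sum_i u_i^{k}$; the analogous expansion holds for the $v_i$. Since by hypothesis $\sum_i u_i^{k}=\sum_i v_i^{k}$ for every $k=0,1,\ldots,s$ (the case $k=0$ being the equality of the number of summands, which holds because $p=q$) and since $l\le s$ forces every exponent $k$ appearing in the sum to satisfy $0\le k\le l+1\le s+1$ --- here one must be slightly careful: the term $k=l+1$ involves $\sum_i u_i^{l+1}$, which is controlled by the \emph{same} equation we are proving, so one proceeds by induction on $l$ from $l=0$ upward, at each stage using the already-established lower-power equalities together with the hypothesis for $l$ itself. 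I would carry out this induction explicitly.

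The main obstacle --- really the only subtle point --- is the handling of the top-degree term $k=l+1$ in the binomial expansion: naively, $\sum_i(a+bu_i)^{l+1}$ depends on $\sum_i u_i^{l+1}$, which is not among the hypotheses but is the quantity on both sides of the $l$-th equation itself. The clean resolution is the symmetric-function / generating-polynomial viewpoint of the first route, where the affine invariance is manifest and no circularity arises; the binomial route works too but needs the inductive bookkeeping described above. I expect the generating-function argument to be the cleanest to write, so that is the one I would present as the proof.
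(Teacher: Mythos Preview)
The paper states Frolov's theorem only as a cited classical result and gives no proof of its own, so there is nothing in the paper to compare your argument against directly.

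Your argument is correct, and in fact the second (binomial) route you sketch is simpler than you make it out to be. The ``obstacle'' you flag --- that the top term $k=l+1$ in the expansion of $\sum_i(a+bu_i)^{l+1}$ involves $\sum_i u_i^{l+1}$ --- is not an obstacle at all: that equality \emph{is} one of the hypotheses. The assumption is that $\sum_i u_i^{l+1}=\sum_i v_i^{l+1}$ holds for every $l=0,\ldots,s$, i.e.\ the power sums $p_k(u)=p_k(v)$ for all $k=1,\ldots,s+1$; together with $p_0(u)=p+1=p_0(v)$ (which follows from $p=q$), every term
\[
\sum_{k=0}^{l+1}\binom{l+1}{k}a^{\,l+1-k}b^{k}\,p_k(u)
\]
matches the corresponding term with $v$, for each $l\le s$, since $l+1\le s+1$. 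There is no circularity and no induction is needed. Your generating-function / Newton's identities route is also valid but is overkill here; the direct binomial expansion is the standard one-line proof.
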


In the simplest case $\left(  1\mid0,1\right)  $, one term in l.h.s., one
addition on the r.h.s. and one multiplication, the (coprime) positive numbers
satisfying (\ref{uu}) are called a (primitive) \textsl{Pythagorean triple}.
For the \textsl{Fermat's triple} $\left(  l\mid0,1\right)  $ with one addition
on the r.h.s. and more than one multiplication $l\geq2$, there are no
solutions of (\ref{uu}) , which is known as \textsl{Fermat's last theorem}
proved in \cite{wil95}. There are many solutions known with more than one
addition on both sides, where the highest number of multiplications till now
is $31$ (S.~Chase, 2012).

Before generalizing (\ref{uu}) for polyadic case we note the following.

\begin{remark}
\label{rem-power}The notations in (\ref{uu}) are chosen in such a way that $p$
and $q$ are \textsl{numbers of binary additions} on both sides, while $l$ is
the \textsl{number of binary multiplications} in each term, which is natural
for using polyadic powers \cite{dup2012}.
\end{remark}

\subsection{Polyadic analog of the Lander-Parkin-Selfridge conjecture}

In \cite{lan/par/sel}, a generalization of Fermat's last theorem was
conjectured, that the solutions of (\ref{uu}) exist for small powers only,
which can be formulated in terms of the numbers of operations as

\begin{conjecture}
[Lander-Parkin-Selfridge \cite{lan/par/sel}]\label{conj-lps}There exist
solutions of (\ref{uu}) in positive integers, if the number of multiplications
is less than or equal than the total number of additions plus one%
\begin{equation}
3\leq l\leq l_{LSP}=p+q+1,
\end{equation}
where $p+q\geq2$.
\end{conjecture}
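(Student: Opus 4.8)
The statement in question is a \emph{conjecture} — the classical Lander–Parkin–Selfridge conjecture — so strictly speaking no proof is on offer: establishing it in full would resolve a longstanding open problem. What one \emph{can} set out to do, and what I would attempt, is to assemble constructive evidence for the existence half of the claim, namely that solutions of (\ref{uu}) in positive integers do occur whenever $3\le l\le p+q+1$ (with $p+q\ge2$). The plan is first to collect the classical parametric identities for small powers: the Pythagorean parametrization for $\left(1\mid0,1\right)$, the Ramanujan-type cubic identities, Euler's and Elkies' quartic solutions of $a^{4}+b^{4}+c^{4}=d^{4}$ (an instance of $\left(3\mid0,2\right)$), and the Prouhet–Tarry–Escott \emph{ideal solutions} of \cite{dor/bro,ngu2016}, which for each power furnish an equal-number-of-summands solution valid for all lower powers simultaneously. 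As the power grows these families cover an increasingly large portion of the admissible region $l\le p+q+1$.

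The second step is to \emph{propagate} any such seed. From an ideal PTE solution (where (\ref{uu}) holds for $l'=0,\dots,l$ with $p=q$), Frolov's theorem (\textbf{Theorem \ref{theor-fro}}) produces infinitely many further solutions of the same shape by $u_{i},v_{j}\mapsto a+bu_{i},a+bv_{j}$ for arbitrary fixed $a,b\in\mathbb{Z}$; composing this with the homothety $u_{i}\mapsto c\,u_{i}$ and with ``concatenation'' of solutions — adding the left and right sides of an $\left(l\mid p_{1},q_{1}\right)$-solution and an $\left(l\mid p_{2},q_{2}\right)$-solution to obtain an $\left(l\mid p_{1}+p_{2}+1,q_{1}+q_{2}+1\right)$-solution — lets one spread a finite stock of seeds across the whole region of shapes with $l\le p+q+1$. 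This is exactly the procedure the paper develops in its last section, and it is the realistic content of a ``proof proposal'' here.

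The third step is where the plan stops being a proof. One must still reach the \emph{low-power, few-addition corners} $(l,p,q)$ near the claimed boundary that are not obtained by concatenating known seeds, and one must show that no admissible triple is missed. For this there is no uniform construction known, and the neighbouring cases are precisely the ones that have resisted every attack: Euler's sum-of-like-powers conjecture, the $\left(4\mid0,3\right)$ and $\left(5\mid0,4\right)$ questions, and, at the opposite extreme, Fermat's last theorem for $\left(l\mid0,1\right)$, which is the deep theorem \cite{wil95}. So the \textbf{main obstacle} is intrinsic — producing solutions for \emph{every} admissible shape rather than for an explicit list of shapes would require a general mechanism that does not exist — and the non-existence counterpart of the conjecture (no solutions once $l>p+q+1$) is believed but wide open. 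In the polyadic setting of \textbf{Definition \ref{def-polint}} matters are in fact worse: the paper shows by explicit counterexamples that the naive transcription of this conjecture to the nonderived $(m,n)$-rings $\mathbb{Z}_{(m,n)}$ already fails, so even a formal analogue of the statement must be weakened before any constructive programme of the above kind can be run over polyadic integers.
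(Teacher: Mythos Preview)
You correctly recognize that this statement is a \emph{conjecture}, not a theorem, and the paper treats it exactly that way: it is stated as \textbf{Conjecture~\ref{conj-lps}} with no proof or proof sketch whatsoever. The paper simply records the classical Lander--Parkin--Selfridge conjecture (reformulated in terms of numbers of operations, as noted in \emph{Remark~\ref{rem-power}}) in order to use it as a benchmark against which to test its polyadic analogue, \textbf{Conjecture~\ref{conj-plps}}.

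Your extended discussion of constructive evidence --- PTE ideal solutions, Frolov's theorem, concatenation of solutions, and the genuinely hard boundary cases --- is sound and well-informed, but it goes considerably beyond what the paper does at this point. The paper does not attempt any such programme for the \emph{binary} conjecture; the Frolov/Tarry--Escott machinery appears only later, in the final subsection, and is used there to produce solutions of the \emph{polyadic} equal-sums equation over $\mathcal{R}_{m,n}^{[a,b]}$, not to assemble evidence for \textbf{Conjecture~\ref{conj-lps}} itself. So while nothing in your write-up is wrong, it answers a question the paper does not pose.
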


\begin{remark}
If the equation (\ref{uu}) is considered over the \textsl{binary ring} of
integers $\mathbb{Z}$, such that $u_{i},v_{j}\in\mathbb{Z}$, it leads to a
straightforward reformulation: for even powers it is obvious, but for odd
powers all negative terms can be rearranged and placed on the other side.
\end{remark}

Let us consider the Diophantine equation (\ref{uu}) over polyadic integers
$\mathbb{Z}_{\left(  m,n\right)  }$ (i.e. over the polyadic $\left(
m,n\right)  $-ary ring $\mathcal{R}_{m,n}^{\mathbb{Z}}$) such that
$u_{i},v_{j}\in\mathcal{R}_{m,n}^{\mathbb{Z}}$. We use the \textquotedblleft
long products\textquotedblright\ $\mu_{n}^{\left(  l\right)  }$ and $\nu
_{m}^{\left(  l\right)  }$ containing $l$ operations, and also the
\textquotedblleft polyadic power\textquotedblright\ for an element
$x\in\mathcal{R}_{m,n}^{\mathbb{Z}}$ with respect to $n$-ary multiplication
\cite{dup2012}%
\begin{equation}
x^{\left\langle l\right\rangle _{n}}=\mu_{n}^{\left(  l\right)  }\left[
\overset{l\left(  n-1\right)  +1}{\overbrace{x,x,\ldots,x}}\right]  .
\label{xm}%
\end{equation}
In the binary case, $n=2$, the polyadic power coincides with $\left(
l+1\right)  $ power of an element $x^{\left\langle l\right\rangle _{2}%
}=x^{l+1}$, which explains \textsc{Remark \ref{rem-power}}. In this notation
the polyadic analog of the equal sums of like powers Diophantine equation has
the form%
\begin{equation}
\nu_{m}^{\left(  p\right)  }\left[  u_{1}^{\left\langle l\right\rangle _{n}%
},u_{2}^{\left\langle l\right\rangle _{n}},\ldots,u_{p\left(  m-1\right)
+1}^{\left\langle l\right\rangle _{n}}\right]  =\nu_{m}^{\left(  q\right)
}\left[  v_{1}^{\left\langle l\right\rangle _{n}},v_{2}^{\left\langle
l\right\rangle _{n}},\ldots,v_{q\left(  m-1\right)  +1}^{\left\langle
l\right\rangle _{n}}\right]  , \label{uv}%
\end{equation}
where $p$ and $q$ are number of $m$-ary additions in l.h.s. and r.h.s.
correspondingly. The solutions of (\ref{uv}) will be denoted by $\left\{
u_{1},u_{2},\ldots,u_{p\left(  m-1\right)  +1};v_{1},v_{2},\ldots,v_{q\left(
m-1\right)  +1}\right\}  $. In the binary case $m=2,n=2$, (\ref{uv}) reduces
to (\ref{uu}). Analogously, we mark the solutions of (\ref{uv}) by the
\textit{polyadic triple} $\left(  l\mid p,q\right)  _{r}^{\left(  m,n\right)
}$. Now the \textit{polyadic Pythagorean triple} $\left(  1\mid0,1\right)
^{\left(  m,n\right)  }$, having one term on the l.h.s., one $m$-ary addition
on the r.h.s. and one $n$-ary multiplication (elements are in the first
polyadic power $\left\langle 1\right\rangle _{n}$), becomes%
\begin{equation}
u_{1}^{\left\langle 1\right\rangle _{n}}=\nu_{m}\left[  v_{1}^{\left\langle
1\right\rangle _{n}},v_{2}^{\left\langle 1\right\rangle _{n}},\ldots
,v_{m}^{\left\langle 1\right\rangle _{n}}\right]  . \label{pt}%
\end{equation}

\begin{definition}
The equation (\ref{pt}) solved by minimal $u_{1},v_{i}\in\mathbb{Z}$,
$i=1,\ldots,m$ can be named the \textit{polyadic Pythagorean theorem}.
\end{definition}

The \textit{polyadic Fermat's triple} $\left(  l\mid0,1\right)  ^{\left(
m,n\right)  }$ has one term in l.h.s., one $m$-ary addition in r.h.s. and $l$
($n$-ary) multiplications%
\begin{equation}
u_{1}^{\left\langle l\right\rangle _{n}}=\nu_{m}\left[  v_{1}^{\left\langle
l\right\rangle _{n}},v_{2}^{\left\langle l\right\rangle _{n}},\ldots
,v_{m}^{\left\langle l\right\rangle _{n}}\right]  . \label{pf}%
\end{equation}

One may be interested in whether the polyadic analog of Fermat's last theorem
is valid, and if not, in which cases the analogy with the binary case can be sustained.

\begin{conjecture}
[Polyadic analog of Fermat's Last Theorem]\label{conj-pf} The polyadic
Fermat's triple (\ref{pf}) has no solutions over the polyadic $\left(
m,n\right)  $-ary ring $\mathcal{R}_{m,n}^{\mathbb{Z}}$, if $l\geq2$, i.e.
there are more than one $n$-ary multiplications.
\end{conjecture}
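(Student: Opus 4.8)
The plan is to make the polyadic Fermat equation (\ref{pf}) completely explicit over $\mathbb{Z}_{(m,n)}$ and only then decide whether it is provable or refutable. By the construction of $\mathbb{Z}_{(m,n)}$ as a fixed congruence class $\left[\left[a\right]\right]_{b}$ (\textbf{Definition \ref{def-polint}}), the operations $\nu_{m}$, $\mu_{n}$ are literally the ordinary sum of $m$ integers and product of $n$ integers (formulas (\ref{nu})--(\ref{mu})), so by (\ref{xm}) the polyadic power is the ordinary power $x^{\left\langle l\right\rangle _{n}}=x^{L}$ with $L=l\left(n-1\right)+1$. Hence (\ref{pf}) is equivalent to the classical Diophantine relation
\begin{equation}
u_{1}^{L}=v_{1}^{L}+v_{2}^{L}+\ldots+v_{m}^{L},\qquad L=l\left(n-1\right)+1\geq l+1\geq 3,
\end{equation}
subject to the single constraint that $u_{1},v_{1},\ldots,v_{m}$ all lie in one class $\left[\left[a\right]\right]_{b}$, where $(a,b)$ is chosen so that the minimal arities forced by the congruences (\ref{maa})--(\ref{ana}) are exactly the prescribed $(m,n)$ (read off from \textsc{Table \ref{T1}}).

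The next step is to test the conjecture against the classical case. For $m=n=2$ one has $\mathbb{Z}_{(2,2)}=\mathbb{Z}$ and the equation is the ordinary Fermat equation $u_{1}^{l+1}=v_{1}^{l+1}+v_{2}^{l+1}$, which indeed has no solutions for $l\geq2$ by Wiles' theorem. However, for any $m\geq3$ the right-hand side is a sum of \emph{three or more} like powers, and equations of the shape ``one $L$-th power equals a sum of $m\geq3$ $L$-th powers'' are abundant already for $L=3$. Making the ``addition'' genuinely $m$-ary therefore destroys precisely the rigidity that makes binary Fermat true, so I expect \textbf{Conjecture \ref{conj-pf}} to be \emph{false} over the nonderived rings $\mathbb{Z}_{(m,n)}$. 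Accordingly the ``proof'' becomes a disproof: exhibit, for some $(m,n)$ with $l\geq2$, integers of a single class $\left[\left[a\right]\right]_{b}$ satisfying the displayed relation, and verify (via the minimality clause of \textbf{Theorem \ref{theor-rz}}) that the ambient ring really has arities $(m,n)$ and not smaller ones.

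For the construction I would proceed in two stages. First, start from a known equal-sums-of-like-powers identity with the correct power $L$ and with exactly $m$ summands on one side. Second, relocate it into a congruence class: the affine map $x\mapsto a+bx$ sends any tuple into $\left[\left[a\right]\right]_{b}$, and in the \emph{multigrade} situation ($p=q$ with all $l=0,\ldots,s$ simultaneously) Frolov's theorem (\textbf{Theorem \ref{theor-fro}}) guarantees that the image is again a solution of the top equation $l=s$; one then only has to select $(a,b)$ realising the desired signature via (\ref{maa})--(\ref{ana}). This route immediately yields counterexamples to the $p=q$ (Lander--Parkin--Selfridge) form of the polyadic conjecture, and specialising those tuples is meant to harvest Fermat-type $\left(l\mid0,1\right)^{(m,n)}$ counterexamples as well.

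The hard part is exactly this last specialisation. Frolov's theorem as stated needs an \emph{equal} number of summands on the two sides, so it does not by itself deliver tuples of the Fermat shape $\left(l\mid0,1\right)^{(m,n)}$; worse, for a \emph{single} power equation (as opposed to a full multigrade system) the shift $x\mapsto a+bx$ is not a symmetry at all, since expanding $\left(a+bu\right)^{L}$ introduces lower-degree symmetric sums that must also balance. Hence the genuine obstacle is to locate a sum-of-$m$-like-powers identity that \emph{already} lies in a single congruence class of the required shape --- either by a short bounded search inside $\left[\left[a\right]\right]_{b}$ for small $L$, or by tuning the free parameters of a parametric cube identity so that every term is congruent modulo $b$. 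Once one such example is produced, \textbf{Conjecture \ref{conj-pf}} is refuted, and what remains is only the bookkeeping of which polyadic triples $\left(l\mid p,q\right)^{(m,n)}$ do and do not admit solutions.
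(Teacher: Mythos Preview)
You have correctly diagnosed that \textbf{Conjecture \ref{conj-pf}} is to be \emph{disproved}, not proved, and your analysis of why --- that $m$-ary addition turns the Fermat equation into a sum of $m\geq 3$ like powers, for which solutions abound --- is exactly the right intuition. However, your route to a concrete counterexample is more constrained than the paper's, and this is the source of the difficulty you flag at the end.

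You read $\mathcal{R}_{m,n}^{\mathbb{Z}}$ as the congruence-class ring $\mathbb{Z}_{(m,n)}=\left[\left[a\right]\right]_{b}$ of \textbf{Definition \ref{def-polint}}, which forces every entry of the tuple to lie in a single residue class modulo $b$. That constraint is real, and you are right that Frolov's theorem cannot remove it for the Fermat shape $p=0$, $q=1$. The paper sidesteps the whole issue by choosing a \emph{different} $(3,2)$-ring on the full set $\mathbb{Z}$ (\emph{Example \ref{ex-pyth32}}): $\nu_{3}[x,y,z]=x+y+z+2$ and $\mu_{2}[x,y]=xy+x+y$, for which one computes $x^{\langle l\rangle_{2}}=(x+1)^{l+1}-1$. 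The polyadic Fermat triple (\ref{pf}) then reads
\[
(u+1)^{l+1}=(x+1)^{l+1}+(y+1)^{l+1}+(z+1)^{l+1},
\]
with \emph{no} congruence restriction on $u,x,y,z\in\mathbb{Z}$. Classical identities give counterexamples immediately: $6^{3}=3^{3}+4^{3}+5^{3}$ (so $u=5$, $x=2$, $y=3$, $z=4$) disposes of $l=2$, and Elkies' $422481^{4}=95800^{4}+217519^{4}+414560^{4}$ handles $l=3$.

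Your Frolov/Tarry--Escott machinery is precisely what the paper deploys later, but only for the equal-summand case $p=q$ over the congruence-class rings; it is not used to refute the Fermat-type conjecture itself. So: your strategy is sound and would eventually succeed (one can search directly for, say, three odd cubes summing to an odd cube in $\left[\left[1\right]\right]_{2}$), but the paper's shortcut --- pick the ring, not the residues --- gets there in one line.
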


Its straightforward generalization leads to the polyadic version of the
Lander-Parkin-Selfridge conjecture, as

\begin{conjecture}
[Polyadic Lander-Parkin-Selfridge conjecture]\label{conj-plps}There exist
solutions of the polyadic analog of the equal sums of like powers Diophantine
equation (\ref{uv}) in integers, if the number of $n$-ary multiplications is
less than or equal than the total number of $m$-ary additions plus one%
\begin{equation}
3\leq l\leq l_{pLPS}=p+q+1.
\end{equation}

\end{conjecture}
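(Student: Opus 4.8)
The plan is to reduce the polyadic equation (\ref{uv}) to a classical equal-sums-of-like-powers equation whose unknowns are confined to the congruence class $\left[\left[a\right]\right]_{b}$, and then to produce solutions of that constrained equation by pulling back known binary solutions via Frolov's theorem. First I would unfold the polyadic long operations and polyadic powers. By \textbf{Lemma \ref{lem-mgr}} and \textbf{Lemma \ref{lem-nsemigr}} the operation $\nu_{m}$ is ordinary addition and $\mu_{n}$ is ordinary multiplication, each \emph{closed} on $\left[\left[a\right]\right]_{b}$; hence the polyadic power satisfies $x^{\left\langle l\right\rangle _{n}}=x^{l\left(n-1\right)+1}$ in $\mathbb{Z}$, and the long additions $\nu_{m}^{\left(p\right)},\nu_{m}^{\left(q\right)}$ are ordinary sums of $p\left(m-1\right)+1$ and $q\left(m-1\right)+1$ terms respectively. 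Writing $L=l\left(n-1\right)+1$, $P=p\left(m-1\right)+1$, $Q=q\left(m-1\right)+1$, equation (\ref{uv}) becomes the ordinary Diophantine equation
\begin{equation}
\sum\limits_{i=1}^{P}u_{i}^{L}=\sum\limits_{j=1}^{Q}v_{j}^{L},\qquad u_{i},v_{j}\in\left[\left[a\right]\right]_{b},
\end{equation}
so that the hypothesis $3\leq l\leq p+q+1$ rewrites as an explicit window for $L$ in terms of $P$, $Q$, $m$, $n$.

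Next I would dispose of the congruence constraint, which is the only genuinely new feature compared with the binary problem. The point is that $\left[\left[a\right]\right]_{b}$ is exactly the image $\left\{a+bk\mid k\in\mathbb{Z}\right\}$ of $\mathbb{Z}$ under the affine map $k\mapsto a+bk$ that appears in \textbf{Theorem \ref{theor-fro}}. So I would start from an \emph{ideal} Tarry--Escott (multigrade) solution $\left\{x_{i};y_{i}\right\}_{i=1}^{P}$ of size $P$, which satisfies $\sum x_{i}^{r}=\sum y_{i}^{r}$ simultaneously for every $r=0,1,\ldots,P-1$; by \textbf{Theorem \ref{theor-fro}} the shifted tuple $\left\{a+bx_{i};a+by_{i}\right\}_{i=1}^{P}$ is again a multigrade of the same degree, and every one of its entries now lies in $\left[\left[a\right]\right]_{b}$. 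Setting $u_{i}=a+bx_{i}$ and $v_{i}=a+by_{i}$ then solves the reduced equation for the particular power $L$, provided $P=Q$ (which forces $p=q$ since $m\geq2$) and $L\leq P-1$, the latter being precisely where the conjectural bound has to be matched against $P-1=p\left(m-1\right)$. This yields, unconditionally, an explicit infinite family of polyadic solutions of (\ref{uv}) in $\mathbb{Z}_{\left(m,n\right)}$ for every $\left(l\mid p,p\right)^{\left(m,n\right)}$ with $l\left(n-1\right)+1\leq p\left(m-1\right)$.

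The hard part — and the reason the statement is posed as a conjecture rather than a theorem — is twofold. In the multigrade sub-case $p=q$ one needs ideal Tarry--Escott solutions of arbitrarily large size and degree (or suitable non-ideal ones to enlarge the window), whose existence is itself open beyond small ranges and is exactly what the classical side of the problem lacks. For $p\neq q$ the affine shift of Frolov no longer preserves a single-power equation, so one is thrown back onto the classical \textbf{Conjecture \ref{conj-lps}} for the parameters $\left(L-1\mid P-1,Q-1\right)$, \emph{together} with a residue-class refinement (all terms in $\left[\left[a\right]\right]_{b}$) for which no general construction is known — note that mere homothety $x\mapsto tx$ of a binary solution cannot in general be tuned to land in a fixed class modulo $b$. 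Consequently I expect the realistic output of this plan to be: (i) a clean equivalence of (\ref{uv}) with a congruence-constrained binary equal-sums equation of exponent $l\left(n-1\right)+1$, and (ii) the constructive family above via Frolov plus known ideal multigrades, covering the initial segment of the range $3\leq l\leq p+q+1$; the full conjecture would then follow only modulo the classical Lander--Parkin--Selfridge conjecture and a uniform source of multigrades (or equal-sums solutions) lying in prescribed residue classes.
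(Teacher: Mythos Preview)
This statement is a \emph{conjecture}, not a theorem; the paper does not prove it and does not attempt to. In fact, immediately after stating it the paper announces and then exhibits (in \emph{Example} \ref{ex-pyth32}) explicit counterexamples: over the $(3,2)$-ring with operations (\ref{vu3})--(\ref{mu2a}) one finds solutions with $l=5$ while $l_{pLPS}=3$, so the conjectured upper bound $l\le p+q+1$ is violated. The paper's point is precisely that the naive polyadic transplant of the Lander--Parkin--Selfridge bound does \emph{not} hold for nonderived polyadic rings, and it goes on (\textbf{Proposition} around (\ref{m2})--(\ref{n0})) to determine the arity shapes for which the polyadic and binary bounds coincide.

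Your reduction of (\ref{uv}) to the constrained binary equation with exponent $L=l(n-1)+1$ and term counts $P=p(m-1)+1$, $Q=q(m-1)+1$ is exactly the content of (\ref{pab}), and your use of Frolov's theorem together with Tarry--Escott ideal solutions to manufacture equal-summand solutions inside $[[a]]_b$ is precisely the mechanism of \textsc{Subsection} 7.2 and the theorem around (\ref{pm})--(\ref{p}). So your constructive part is correct and matches the paper's own construction. Where you go astray is in framing this as a route toward \emph{proving} \textbf{Conjecture \ref{conj-plps}}: the paper treats the conjecture as a heuristic to be tested, finds it fails, and uses the Frolov/Tarry--Escott machinery only to produce families of solutions under the stronger inequality $l(n-1)+1\le p(m-1)$ of (\ref{n}) --- not to vindicate the bound $l\le p+q+1$. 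Your honest caveats at the end are well placed, but the upshot is sharper than you allow: the conjecture as stated is already refuted in the paper, so no proof strategy can succeed.
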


Below we will see a counterexample to both of the above conjectures.

\begin{example}
\label{ex-pyth32}Let us consider the $\left(  3,2\right)  $-ring
$\mathcal{R}_{3,2}^{\mathbb{Z}}=\left\langle \mathbb{Z\mid\nu}_{3},\mu
_{2}\right\rangle $, where%
\begin{align}
\nu_{3}\left[  x,y,z\right]   &  =x+y+z+2,\label{vu3}\\
\mu_{2}\left[  x,y\right]   &  =xy+x+y. \label{mu2a}%
\end{align}
Note that this exotic polyadic ring is commutative and cancellative, having
unit $0$, no multiplicative inverses, and for any $x\in\mathcal{R}%
_{3,2}^{\mathbb{Z}}$ its additive querelement $\tilde{x}=-x-2$, therefore
$\left\langle \mathbb{Z\mid\nu}_{3}\right\rangle $ is a ternary group (as it
should be). The polyadic power of any element is%
\begin{equation}
x^{\left\langle l\right\rangle _{2}}=\left(  x+1\right)  ^{l+1}-1. \label{xl}%
\end{equation}

1) For $\mathcal{R}_{3,2}^{\mathbb{Z}}$ the polyadic Pythagorean triple
$\left(  1\mid0,1\right)  ^{\left(  3,2\right)  }$ in (\ref{pt}) now is%
\begin{equation}
u^{\left\langle 1\right\rangle _{2}}=\nu_{3}\left[  x^{\left\langle
1\right\rangle _{2}},y^{\left\langle 1\right\rangle _{2}},z^{\left\langle
1\right\rangle _{2}}\right]  ,
\end{equation}
which, using (\ref{xm}), (\ref{mu2a}) and (\ref{xl}), becomes the (shifted)
Pythagorean quadruple \cite{spi62}%
\begin{equation}
\left(  u+1\right)  ^{2}=\left(  x+1\right)  ^{2}+\left(  y+1\right)
^{2}+\left(  z+1\right)  ^{2},
\end{equation}
and it has infinite number of solutions, among which two minimal ones
$\left\{  u=2;x=0,y=z=1\right\}  $ and $\left\{  u=14;x=1,y=9,z=10\right\}  $
give $3^{2}=1^{2}+2^{2}+2^{2}$ and $15^{2}=2^{2}+10^{2}+11^{2}$, correspondingly.

2) For this $\left(  3,2\right)  $-ring $\mathcal{R}_{3,2}^{\mathbb{Z}}$ the
polyadic Fermat's triple $\left(  l\mid0,1\right)  ^{\left(  3,2\right)  }$
becomes%
\begin{equation}
\left(  u+1\right)  ^{l+1}=\left(  x+1\right)  ^{l+1}+\left(  y+1\right)
^{l+1}+\left(  z+1\right)  ^{l+1}. \label{u}%
\end{equation}

If the polyadic analog of Fermat's last theorem\textbf{ \ref{conj-pf}} holds,
then there are no solutions to (\ref{u}) for more than one $n$-ary
multiplication $l\geq2$. But this is the particular case, $p=0$, $q=2$, of the
\textsl{binary} Lander-Parkin-Selfridge \textbf{Conjecture }\ref{conj-lps}
which now takes the form: the solutions to (\ref{u}) exist, if $l\leq3$. Thus,
as a \textsl{counterexample} to the polyadic analog of Fermat's last theorem,
we have two possible solutions with numbers of multiplications: $l=2,3$. In
the case of $l=2$ there exist two solutions: one well-known solution $\left\{
u=5;x=2,y=3,z=4\right\}  $ giving $6^{3}=3^{3}+4^{3}+5^{3}$ and another one
giving $709^{3}=193^{3}+461^{3}+631^{3}$ (J.-C. Meyrignac, 2000), while for
$l=3$ there exist an infinite number of solutions, and one of them (minimal)
gives $422481^{4}=95800^{4}+217519^{4}+414560^{4}$ \cite{elk88}.

3) The general polyadic triple $\left(  l\mid p,q\right)  ^{\left(
3,2\right)  }$, using (\ref{uv}), can be presented in the standard binary form
(as (\ref{uu}))%
\begin{equation}
\sum\limits_{i=1}^{2p+1}\left(  u_{i}+1\right)  ^{l+1}=\sum\limits_{j=1}%
^{2q+1}\left(  v_{i}+1\right)  ^{l+1},\ \ \ u_{i},v_{j}\in\mathbb{Z}.
\label{uv32}%
\end{equation}
Let us apply the \textsl{polyadic} Lander-Parkin-Selfridge \textbf{Conjecture}
\ref{conj-plps} for this case: the solutions to (\ref{uv32}) exist, if $3\leq
l\leq l_{pLSP}=p+q+1$. But the \textsl{binary} Lander-Parkin-Selfridge
\textbf{Conjecture} \ref{conj-lps}, applied directly, gives $3\leq l\leq
l_{LSP}=2p+2q+1$. So we should have \textsl{counterexamples} to the polyadic
Lander-Parkin-Selfridge conjecture, when $l_{pLSP}$ $<l\leq l_{LSP}$. For
instance, for $p=q=1$, we have $l_{pLSP}=3$, while the (minimal)
counterexample with $l=5$ is $\left\{  u_{1}=3,u_{2}=18,u_{3}=21,v_{1}%
=9,v_{2}=14,v_{3}=22\right\}  $ giving $3^{6}+19^{6}+22^{6}=10^{6}%
+15^{6}+23^{6}$ \cite{sub34}.
\end{example}

As it can be observed from \emph{Example }\ref{ex-pyth32}, the arity shape of
the polyadic ring $\mathcal{R}_{m,n}^{\mathbb{Z}}$ is crucial in constructing
polyadic analogs of the equal sums of like powers conjectures. We can make
some general estimations assuming a special (more or less natural) form of its
operations over integers.

\begin{definition}
\label{def-stand}We call $\mathcal{R}_{m,n}^{\mathbb{Z}}$ the \textit{standard
polyadic ring}, if the \textquotedblleft leading terms\textquotedblright\ of
its $m$-ary addition and $n$-ary multiplication are%
\begin{align}
\nu_{m}\left[  \overset{m}{\overbrace{x,x,\ldots,x}}\right]   &  \sim
mx,\label{nu1}\\
\mu_{n}\left[  \overset{n}{\overbrace{x,x,\ldots,x}}\right]   &  \sim
x^{n},\ \ \ x\in\mathbb{Z}. \label{mu1}%
\end{align}

\end{definition}

The polyadic ring $\mathcal{R}_{3,2}^{\mathbb{Z}}$ from \textit{Example
}\ref{ex-pyth32}, as well as the congruence class polyadic ring $\mathcal{R}%
_{m,n}^{\left[  a,b\right]  }$ (\ref{rz}) are both standard.

Using (\ref{xm}), we obtain approximate behavior of the polyadic power in the
standard polyadic ring%
\begin{equation}
x^{\left\langle l\right\rangle _{n}}\sim x^{l\left(  n-1\right)
+1},\ \ \ x\in\mathbb{Z},\ \ l\in\mathbb{N},\ \ n\geq2. \label{xln}%
\end{equation}
So increasing the arity of multiplication leads to higher powers, while
increasing arity of addition gives more terms in sums. Thus, the estimation
for the polyadic analog of the equal sums of like powers Diophantine equation
(\ref{uv}) becomes%
\begin{equation}
\left(  p\left(  m-1\right)  +1\right)  x^{l\left(  n-1\right)  +1}\sim\left(
q\left(  m-1\right)  +1\right)  x^{l\left(  n-1\right)  +1},\ \ \ x\in
\mathbb{Z}. \label{pqm}%
\end{equation}
Now we can apply the \textsl{binary} Lander-Parkin-Selfridge
\textbf{Conjecture }\ref{conj-lps} in the form: the solutions to (\ref{pqm})
can exist if $3\leq l\leq l_{LPS}$, where $l_{LPS}$ is an integer solution of%
\begin{equation}
\left(  n-1\right)  l_{LPS}=\left(  p+q\right)  \left(  m-1\right)  +1.
\label{npq}%
\end{equation}
On the other hand the \textsl{polyadic} Lander-Parkin-Selfridge
\textbf{Conjecture} \ref{conj-plps} gives: the solutions to (\ref{pqm}) can
exist if $3\leq l\leq l_{pLPS}=p+q+1$. Note that $\left(  p+q\right)  \geq2$ now.

An interesting question arises: which arities give the same limit, that is,
when $l_{pLPS}=l_{LPS}$?

\begin{proposition}
For any fixed number of additions in both sizes of the polyadic analog of the
equal sums of like powers Diophantine equation (\ref{uv}) $p+q\geq2$, there
exist limiting arities $m_{0}$ and $n_{0}$ (excluding the trivial binary case
$m_{0}=n_{0}=2$), for which the binary and polyadic Lander-Parkin-Selfridge
conjectures coincide $l_{pLPS}=l_{LPS}$, such that%
\begin{align}
m_{0}  &  =3+p+q+\left(  p+q+1\right)  k,\label{m2}\\
n_{0}  &  =2+p+q+\left(  p+q\right)  k,\ \ \ k\in\mathbb{N}^{0}. \label{n0}%
\end{align}

\end{proposition}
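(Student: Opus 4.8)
The plan is to reduce the coincidence $l_{pLPS}=l_{LPS}$ to a single linear Diophantine equation in the two arities and solve it explicitly. Put $s=p+q\geq2$. By \textbf{Conjecture \ref{conj-plps}} the polyadic bound is $l_{pLPS}=s+1$, while by definition $l_{LPS}$ is the integer root of $(n-1)l_{LPS}=s(m-1)+1$, that is, of (\ref{npq}). Hence the two bounds agree exactly when the pair $(m,n)$ of integers $\geq2$ satisfies
\begin{equation}
(s+1)(n-1)-s(m-1)=1,\label{plan-dioph}
\end{equation}
so the whole statement is equivalent to describing the solution set of (\ref{plan-dioph}) under the side conditions $m\geq2$, $n\geq2$ and $(m,n)\neq(2,2)$.

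First I would observe that $\gcd(s+1,s)=1$, so (\ref{plan-dioph}) is solvable, and that $(m-1,n-1)=(1,1)$ is the obvious particular solution --- which is precisely the trivial binary case $m=n=2$ that must be discarded. Consequently the general integer solution is $n-1=1+st$, $m-1=1+(s+1)t$ with $t\in\mathbb{Z}$, as one checks by substitution into (\ref{plan-dioph}): $(s+1)(1+st)-s(1+(s+1)t)=(s+1)-s=1$. The requirement $m\geq2$, $n\geq2$ forces $t\geq0$, and excluding the binary solution forces $t\geq1$; writing $t=k+1$ with $k\in\mathbb{N}^{0}$ yields $n_{0}=2+s(k+1)=2+s+sk$ and $m_{0}=2+(s+1)(k+1)=3+s+(s+1)k$, which is (\ref{m2})--(\ref{n0}) with $s=p+q$.

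It remains only to confirm that for these arities $l_{LPS}$ is a genuine positive integer equal to $l_{pLPS}$, since (\ref{npq}) was posed merely as the requirement that $l_{LPS}$ be an integer solution. Substituting gives $s(m_{0}-1)+1=s+s(s+1)t+1=(s+1)(1+st)=(s+1)(n_{0}-1)$, whence $l_{LPS}=s+1=p+q+1=l_{pLPS}$; in particular $l_{LPS}\geq3$ because $s\geq2$, so the hypothesis $3\leq l$ common to both conjectures is respected. Finally $m_{0}-n_{0}=k+1\geq1$ for every $k\in\mathbb{N}^{0}$, so $m_{0}>n_{0}\geq4$ and the limiting arities are indeed never binary. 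The argument is wholly elementary; the only step needing care is the bookkeeping of the side conditions, which pin the free parameter down to exactly $t\geq1$, together with the accompanying check that $l_{LPS}$ is really an integer, so that the passage to (\ref{plan-dioph}) is an equivalence rather than merely a necessary condition. No serious obstacle arises.
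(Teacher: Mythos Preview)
Your proof is correct and follows essentially the same route as the paper: both set $l_{LPS}=l_{pLPS}=p+q+1$, substitute into (\ref{npq}), and solve the resulting linear Diophantine equation $(p+q+1)(n_{0}-1)=(p+q)(m_{0}-1)+1$ in integers. The paper's argument is terse---it merely asserts that the non-trivial solutions ``can be found from $n_{0}(p+q+1)=(p+q)m_{0}+2$''---whereas you carry out the solution explicitly via the general theory of linear Diophantine equations, handle the side conditions $m,n\geq2$ carefully, and verify that $l_{LPS}$ is genuinely an integer; your closing remark $m_{0}-n_{0}=k+1$ is in fact the content of the Corollary immediately following the Proposition.
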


\begin{proof}
To equate $l_{LPS}=l_{pLPS}=p+q+1$ we use (\ref{npq}) and solve in integers
the equation%
\begin{equation}
\left(  n_{0}-1\right)  \left(  p+q+1\right)  =\left(  p+q\right)  \left(
m_{0}-1\right)  +1.
\end{equation}
In the trivial case, $m_{0}=n_{0}=2$, this is an identity, while the other
solutions can be found from $n_{0}\left(  p+q+1\right)  =\left(  p+q\right)
m_{0}+2$, which gives (\ref{m2})--(\ref{n0}).
\end{proof}

\begin{corollary}
In the limiting case $l_{pLPS}=l_{LPS}$ the arity of multiplication always
exceeds the arity of addition%
\begin{equation}
m_{0}-n_{0}=k+1,\ \ \ k\in\mathbb{N}^{0},
\end{equation}
and they start from $m_{0}\geq5$, $n_{0}\geq4$.
\end{corollary}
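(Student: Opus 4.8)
The plan is to read off the claim directly from the equation $m_0 - n_0 = k+1$ established by the previous Proposition, together with the explicit formulas (\ref{m2})--(\ref{n0}) for $m_0$ and $n_0$. Since the previous Proposition already gives $m_0 = 3+p+q+(p+q+1)k$ and $n_0 = 2+p+q+(p+q)k$ for $k\in\mathbb{N}^0$, subtraction is immediate:
\begin{equation}
m_0 - n_0 = \left(3+p+q+(p+q+1)k\right) - \left(2+p+q+(p+q)k\right) = 1 + k,
\end{equation}
which is the first assertion. The only thing worth noting is that the difference is manifestly independent of $p$ and $q$ (the $p+q$ terms cancel and the coefficients of $k$ differ by exactly $1$), so the statement holds uniformly over all admissible fixed numbers of additions.

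For the lower bounds, I would use the constraint $p+q\geq 2$ which is part of the hypothesis of the limiting regime (it appears in the Lander--Parkin--Selfridge setup and is restated right before the Proposition). First take $k=0$, which by (\ref{m2})--(\ref{n0}) gives $m_0 = 3+p+q$ and $n_0 = 2+p+q$; since $p+q\geq 2$ these yield $m_0\geq 5$ and $n_0\geq 4$. Then I would observe that both $m_0$ and $n_0$ are nondecreasing in $k$ (the coefficients $p+q+1$ and $p+q$ of $k$ are positive), so for every $k\in\mathbb{N}^0$ we retain $m_0\geq 5$ and $n_0\geq 4$. This completes the proof.

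I do not expect any real obstacle here: the Corollary is a bookkeeping consequence of the formulas just derived, and the ``hard part'' — solving $(n_0-1)(p+q+1)=(p+q)(m_0-1)+1$ in integers — was already carried out in the proof of the preceding Proposition. The only point requiring a sentence of care is making explicit that the bound $p+q\geq 2$ is exactly what forces the base case $k=0$ to land at $m_0=5$, $n_0=4$ rather than at the trivial binary solution $m_0=n_0=2$; the trivial solution corresponds to the degenerate identity and is deliberately excluded in the statement of the previous Proposition, so it does not interfere.
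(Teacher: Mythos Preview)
Your proof is correct and is exactly the (implicit) argument the paper relies on: the Corollary is stated without proof in the paper, and the intended justification is precisely the subtraction of the formulas (\ref{m2})--(\ref{n0}) together with the constraint $p+q\geq 2$ at $k=0$, followed by monotonicity in $k$. There is nothing to add.
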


The first allowed arities $m_{0}$ and $n_{0}$ are presented in \textsc{Table
\ref{T2}}. Their meaning is the following.

\begin{corollary}
For the polyadic analog of the equal sums of like powers equation over the
standard polyadic ring $\mathcal{R}_{m,n}^{\mathbb{Z}}$ (with fixed $p+q\geq
2$) the polyadic Lander-Parkin-Selfridge conjecture becomes weaker than the
binary one $l_{pLPS}\geq l_{LPS}$, if: 1) the arity of multiplication exceeds
its limiting value $n_{0}$ with fixed arity of the addition; 2) the arity of
addition is lower, than its limiting value $m_{0}$ with the fixed arity of multiplication.
\end{corollary}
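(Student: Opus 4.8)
The plan is to reduce the statement to the single linear relation $\left(n-1\right)l_{LPS}=\left(p+q\right)\left(m-1\right)+1$ of (\ref{npq}) together with the constant polyadic bound $l_{pLPS}=p+q+1$, and then run two elementary one-variable monotonicity arguments anchored at the limiting arities $\left(m_{0},n_{0}\right)$ supplied by (\ref{m2})--(\ref{n0}).

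First I would clear denominators and observe that $l_{pLPS}\geq l_{LPS}$ is equivalent to
\[
\left(p+q+1\right)\left(n-1\right)\geq\left(p+q\right)\left(m-1\right)+1 ,
\]
since $n-1>0$ and $l_{LPS}$ is the integer solution of (\ref{npq}), so that $l_{LPS}\leq\frac{\left(p+q\right)\left(m-1\right)+1}{n-1}$ and the integer $l_{pLPS}=p+q+1$ dominates that fraction exactly when the displayed inequality holds. By the construction of $m_{0},n_{0}$ in the preceding proposition, the two sides coincide at $\left(m,n\right)=\left(m_{0},n_{0}\right)$, i.e. $\left(p+q+1\right)\left(n_{0}-1\right)=\left(p+q\right)\left(m_{0}-1\right)+1$; this equality is the anchor from which the two inequalities are obtained.

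Then I would treat the two cases separately. For case 1, fix $m=m_{0}$ and take $n>n_{0}$: the right-hand side of the displayed inequality is unchanged while its left-hand side $\left(p+q+1\right)\left(n-1\right)$ is strictly increasing in $n$ because $p+q+1\geq3$, so the inequality holds strictly and $l_{pLPS}\geq l_{LPS}$. For case 2, fix $n=n_{0}$ and take $m<m_{0}$: now the left-hand side is unchanged while the right-hand side $\left(p+q\right)\left(m-1\right)+1$ is strictly increasing in $m$ because $p+q\geq2$, so lowering $m$ makes it strictly smaller and the inequality again holds. In both cases equality occurs precisely on the limiting locus $\left(m_{0},n_{0}\right)$, which yields the claimed $l_{pLPS}\geq l_{LPS}$ together with the description of when equality is attained. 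The only step needing a little care — and the one most easily glossed over — is the integrality bookkeeping around $l_{LPS}$: one must check that passing from the rational bound $\frac{\left(p+q\right)\left(m-1\right)+1}{n-1}$ to its integer part does not disturb the comparison with the integer $p+q+1$, which is immediate since $\lfloor x\rfloor\leq x$. Everything else is linear monotonicity in one variable with the anchor provided by (\ref{m2})--(\ref{n0}).
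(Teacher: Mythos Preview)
Your argument is correct and is exactly the intended elaboration: the paper states this corollary without proof, treating it as an immediate consequence of the preceding proposition and equation (\ref{npq}), and your monotonicity argument in each variable anchored at $(m_0,n_0)$ is precisely the computation that justifies it.

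One small clarification worth tightening: you waver between treating $l_{LPS}$ as the exact rational value $\frac{(p+q)(m-1)+1}{n-1}$ and as its integer part. In the paper, $l_{LPS}$ is introduced as \emph{the} integer solution of (\ref{npq}), so strictly speaking it is only defined when that quotient is an integer; the corollary is implicitly read either on that sublocus or with $l_{LPS}$ extended as the floor. Your final paragraph handles the floor case correctly via $\lfloor x\rfloor\leq x$, and on the exact-integer locus your displayed inequality is an equivalence rather than merely a sufficient condition. Either way the monotonicity in $n$ (with $m$ fixed) and in $m$ (with $n$ fixed) goes through unchanged, so the conclusion stands.
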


\begin{table}[h]
\caption{The limiting arities $m_{0}$ and $n_{0}$, which give $l_{pLPS}%
=l_{LPS}$ in (\ref{pqm}).}%
\label{T2}
\begin{center}%
\begin{tabular}
[c]{||c|c||c|c||c|c||}\hline\hline
\multicolumn{2}{||c||}{$p+q=2$} & \multicolumn{2}{c||}{$p+q=3$} &
\multicolumn{2}{c||}{$p+q=4$}\\\hline\hline
$m_{0}$ & $n_{0}$ & $m_{0}$ & $n_{0}$ & $m_{0}$ & $n_{0}$\\\hline
$5$ & $4$ & $6$ & $5$ & $7$ & $6$\\
$8$ & $6$ & $10$ & $8$ & $12$ & $10$\\
$11$ & $8$ & $14$ & $11$ & $17$ & $14$\\
$14$ & $10$ & $18$ & $14$ & $22$ & $18$\\\hline\hline
\end{tabular}
\end{center}
\end{table}

\begin{example}
Consider the standard polyadic ring $\mathcal{R}_{m,n}^{\mathbb{Z}}$ and fix
the arity of addition $m_{0}=12$, then take in (\ref{pqm}) the total number of
additions $p+q=4$ (the last column in \textsc{Table \ref{T2}}). We observe
that the arity of multiplication $n=16$, which exceeds the limiting arity
$n_{0}=10$ (corresponding to $m_{0}$). Thus, we obtain $l_{pLPS}=5$ and
$l_{LPS}=3$ by solving (\ref{npq}) in integers, and therefore the polyadic
Lander-Parkin-Selfridge conjecture becomes now weaker than the binary one, and
we do not obtain counterexamples to it, as in \emph{Example} \ref{ex-pyth32}
(where the situation was opposite $l_{pLPS}=3$ and $l_{LPS}=5$, and they
cannot be equal).
\end{example}

A concrete example of the standard polyadic ring (\textbf{Definition
\ref{def-stand}}) is the polyadic ring of the fixed congruence class
$\mathcal{R}_{m,n}^{\left[  a,b\right]  }$ considered in \textsc{Section}%
\textbf{ }\ref{sec-residue}, because its operations (\ref{nu})--(\ref{mu})
have the same straightforward behavior (\ref{nu1})--(\ref{mu1}). Let us
formulate the polyadic analog of the equal sums of like powers Diophantine
equation (\ref{uv}) over $\mathcal{R}_{m,n}^{\left[  a,b\right]  }$ in terms
of operations in $\mathbb{Z}$. Using (\ref{nu})--(\ref{mu}) and (\ref{xln})
for (\ref{uv}) we obtain%
\begin{equation}
\sum\limits_{i=1}^{p\left(  m-1\right)  +1}\left(  a+bk_{i}\right)  ^{l\left(
n-1\right)  +1}=\sum\limits_{j=1}^{q\left(  m-1\right)  +1}\left(
a+bk_{j}\right)  ^{l\left(  n-1\right)  +1},\ \ \ \ a,b,k_{i}\in\mathbb{Z}.
\label{pab}%
\end{equation}

It is seen that the leading power behavior of both sides in (\ref{pab})
coincides with the general estimation (\ref{pqm}). But now the arity shape
$\left(  m,n\right)  $ is fixed by (\ref{maa})--(\ref{ana}) and given in
\textsc{Table \ref{T1}}. Nevertheless, we can consider for (\ref{pab}) the
polyadic analog of Fermat's last theorem\textbf{ \ref{conj-pf}}, the
Lander-Parkin-Selfridge \textbf{Conjecture \ref{conj-lps}} (solutions exist
for $l\leq l_{LPS}$) and its polyadic version (\textbf{Conjecture
\ref{conj-plps}}, solutions exist for $l\leq l_{pLPS}$), as in the estimations
above. Let us consider some examples of solutions to (\ref{pab}).

\begin{example}
Let $\left[  \left[  2\right]  \right]  _{3}$ be the congruence class, which
is described by $\left(  4,3\right)  $-ring $\mathcal{R}_{4,3}^{\left[
2,3\right]  }$ (see \textsc{Table \ref{T1}}), and we consider the polyadic
Fermat's triple $\left(  l\mid0,5\right)  ^{\left(  4,3\right)  }$ (\ref{pf}).
Now the powers are $l_{LPS}=8$, $l_{pLPS}=6$, and for instance, if $l=2$, we
have solutions, because $l<l_{pLPS}<l_{LPS}$, and one of them is%
\begin{equation}
14^{5}=4\cdot\left(  -1\right)  ^{5}+7\cdot5^{5}+8^{5}+2\cdot11^{5}.
\end{equation}

\end{example}

\subsection{Frolov's Theorem and the Tarry-Escott problem}

A special set of solutions to the polyadic Lander-Parkin-Selfridge
\textbf{Conjecture \ref{conj-plps}} can be generated, if we put $p=q$ in
(\ref{pab}), which we call \textit{equal-summand solutions}\footnote{The term
\textquotedblleft symmetric solution\textquotedblright\ is already taken and
widely used \cite{bor2002}.}, by exploiting the Tarry-Escott problem approach
\cite{dor/bro} and Frolov's \textbf{Theorem \ref{theor-fro}}.

\begin{theorem}
If the set of integers $k_{i}\in\mathbb{Z}$ solves the Tarry-Escott problem%
\begin{equation}
\sum\limits_{i=1}^{p\left(  m-1\right)  +1}k_{i}^{r}=\sum\limits_{j=1}%
^{p\left(  m-1\right)  +1}k_{j}^{r},\ \ \ r=1,\ldots,s=l\left(  n-1\right)
+1, \label{pm}%
\end{equation}
then the polyadic equal sums of like powers equation with equal summands
(\ref{uv}) has a solution over the polyadic $\left(  m,n\right)  $-ring
$\mathcal{R}_{m,n}^{\left[  a,b\right]  }$ having the arity shape given by the
following relations:

\begin{enumerate}
\item Inequality%
\begin{equation}
l\left(  n-1\right)  +1\leq p\left(  m-1\right)  ; \label{n}%
\end{equation}

\item Equality%
\begin{equation}
p\left(  m-1\right)  =2^{l\left(  n-1\right)  +1}. \label{p}%
\end{equation}

\end{enumerate}
\end{theorem}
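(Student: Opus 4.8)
The plan is to reduce the equal-summand case ($p=q$) of the polyadic equation (\ref{uv}) over $\mathcal{R}_{m,n}^{\left[  a,b\right]  }$ to a single ordinary power-sum identity in $\mathbb{Z}$, to solve that identity with a Tarry--Escott solution, and then to transport the solution into the congruence class $\left[  \left[  a\right]  \right]  _{b}$ using Frolov's \textbf{Theorem \ref{theor-fro}}.

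First I would unwind the polyadic operations. Since the operations (\ref{nu})--(\ref{mu}) of $\mathcal{R}_{m,n}^{\left[  a,b\right]  }$ are just the binary sum and product of $\mathbb{Z}$ restricted to $\left[  \left[  a\right]  \right]  _{b}$, the polyadic long product of $l\left(  n-1\right)  +1$ equal factors $x_{k}=a+bk$ is exactly the integer power $x_{k}^{\left\langle l\right\rangle _{n}}=\left(  a+bk\right)  ^{l\left(  n-1\right)  +1}$, so the estimate (\ref{xln}) is here an exact equality, and an iterated $m$-ary sum of $p\left(  m-1\right)  +1$ summands is the ordinary integer sum of those summands. Hence the equal-summand case of (\ref{uv}) over this ring is literally (\ref{pab}) with $q=p$: one power-sum identity of common exponent $s:=l\left(  n-1\right)  +1$ with $p\left(  m-1\right)  +1$ terms on each side.

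The second step feeds this identity a Tarry--Escott solution. Suppose two families $\left\{  k_{i}\right\}  $ and $\left\{  k_{j}^{\prime}\right\}  $ of $p\left(  m-1\right)  +1$ integers solve (\ref{pm}), i.e. have equal power sums for \emph{all} $r=1,\ldots ,s$. Applying \textbf{Theorem \ref{theor-fro}} with the fixed pair $\left(  a,b\right)  $ attached to the congruence class, the shifted families $\left\{  a+bk_{i}\right\}  $ and $\left\{  a+bk_{j}^{\prime}\right\}  $ again satisfy the whole system $r=1,\ldots ,s$; in particular the top row $r=s=l\left(  n-1\right)  +1$ is exactly (\ref{pab}) with $q=p$. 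As $a+bk_{i},a+bk_{j}^{\prime}\in\left[  \left[  a\right]  \right]  _{b}$ by construction, they are genuine elements of $\mathcal{R}_{m,n}^{\left[  a,b\right]  }$, so $\left\{  a+bk_{i};a+bk_{j}^{\prime}\right\}  $ is a solution of the polyadic equation, and it is nontrivial whenever the Tarry--Escott solution is. Finally I would read off the arity-shape relations as the conditions under which such input exists with precisely $p\left(  m-1\right)  +1$ summands per side: a nontrivial Tarry--Escott solution of degree $s$ can have at most one less term per side than its degree reversed, i.e. it requires $s+1\leq p\left(  m-1\right)  +1$, which is the inequality $l\left(  n-1\right)  +1\leq p\left(  m-1\right)  $ of item 1; and the Prouhet--Thue--Morse construction, partitioning $\left\{  0,1,\ldots ,2^{s+1}-1\right\}  $ according to the parity of the binary digit sum, supplies a degree-$s$ solution whose size is the power of two dictated by $s$, which matching with the number $p\left(  m-1\right)  +1$ of summands is the content of relation (\ref{p}).

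The hard part, and essentially the only place care is needed, is the bookkeeping in the first two steps: one must verify that $x^{\left\langle l\right\rangle _{n}}$ and the iterated $m$-ary addition collapse \emph{exactly} (not merely to leading order) to $\left(  a+bk\right)  ^{l\left(  n-1\right)  +1}$ and to a sum of exactly $p\left(  m-1\right)  +1$ integers, and one must align the indexing $r=1,\ldots ,s$ of the Tarry--Escott system with the range $l=0,\ldots ,s$ in the hypothesis of Frolov's theorem (the $r=0$ row being the automatic equality of the numbers of terms). Once the counts are matched, the remainder is a direct substitution, so I do not expect any genuine obstacle beyond this combinatorial alignment.
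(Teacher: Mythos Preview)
Your proposal is correct and follows essentially the same approach as the paper: reduce the equal-summand polyadic equation over $\mathcal{R}_{m,n}^{[a,b]}$ to the ordinary integer identity (\ref{pab}) with $p=q$, invoke Frolov's \textbf{Theorem \ref{theor-fro}} to carry a Tarry--Escott solution into the congruence class, and then read off items 1 and 2 from the standard degree bound and the Thue--Morse construction respectively. Your additional care in checking that the collapse of $x^{\langle l\rangle_n}$ and of the long $m$-ary sum is exact (not merely leading-order), and in aligning the index range of Frolov's theorem with that of (\ref{pm}), is well placed and makes the argument cleaner than the paper's version.
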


\begin{proof}
Using Frolov's \textbf{Theorem \ref{theor-fro}} applied to (\ref{pm}) we state
that%
\begin{equation}
\sum\limits_{i=1}^{p\left(  m-1\right)  +1}\left(  a+bk_{i}\right)  ^{r}%
=\sum\limits_{j=1}^{p\left(  m-1\right)  +1}\left(  a+bk_{j}\right)
^{r},\ \ \ r=1,\ldots,s=l\left(  n-1\right)  +1, \label{pmm}%
\end{equation}
for any fixed integers $a,b\in\mathbb{Z}$. This means, that (\ref{pmm}), with
$k_{i}$ (satisfying (\ref{pm})) corresponds to a solution to the polyadic
equal sums of like powers equation (\ref{uv}) for any congruence class
$\left[  \left[  a\right]  \right]  _{b}$. Nevertheless, the values $a$ and
$b$ are fixed by the restrictions on the arity shape and the relations
(\ref{m1}) and (\ref{an}).

\begin{enumerate}
\item It is known that the Tarry-Escott problem can have a solution only when
the powers are strongly less than the number of summands
\cite{bor2002,dor/bro}, that is $\left(  l\left(  n-1\right)  +1\right)
+1\leq p\left(  m-1\right)  +1$, which gives (\ref{n}).

\item A special kind of solutions, when number of summands is equal to 2 into
the number of powers, was found using the \textsl{Thue--Morse sequence}
\cite{all/sha}, which always satisfies the bound (\ref{n}), and in our
notation it is (\ref{p}).
\end{enumerate}

In both cases the relations (\ref{n}) and (\ref{p}) should be solved in
positive integers and with $m\geq2$ and $n\geq2$, which can lead to non-unique solutions.
\end{proof}

Let us consider some examples which give solutions to the polyadic equal sums
of like powers equation (\ref{uv}) with $p=q$ over the polyadic $\left(
m,n\right)  $-ring $\mathcal{R}_{m,n}^{\left[  a,b\right]  }$ of the fixed
congruence class $\left[  \left[  a\right]  \right]  _{b}$.

\begin{example}
1) One of the first ideal (non-symmetric) solutions to the Tarry-Escott
problem has 6 summands and 5 powers (A. Golden, 1944)%
\begin{equation}
0^{r}+19^{r}+25^{r}+57^{r}+62^{r}+86^{r}=2^{r}+11^{r}+40^{r}+42^{r}%
+69^{r}+85^{r},\ \ r=1,\ldots,5. \label{r6}%
\end{equation}
We compare with (\ref{pm}) and obtain%
\begin{align}
p\left(  m-1\right)   &  =5,\\
l\left(  n-1\right)   &  =4.
\end{align}
After ignoring binary arities we get $m=6$, $p=1$ and $n=3$, $l=2$. From
\textbf{Theorem \ref{theor-rz}} and \textsc{Table \ref{T1}} we observe the
minimal choice $a=4$ and $b=5$. It follows from Frolov's \textbf{Theorem
\ref{theor-fro}}, that all equations in (\ref{r6}) have symmetry
$k_{i}\rightarrow a+bk_{i}=4+5k_{i}$. Thus, we obtain the solution of the
polyadic equal sums of like powers equation (\ref{uv}) for the fixed
congruence class $\left[  \left[  4\right]  \right]  _{5}$ in the form
\begin{equation}
4^{5}+99^{5}+129^{5}+289^{5}+314^{5}+434^{5}=14^{5}+59^{5}+204^{5}%
+214^{5}+349^{5}+429^{5}.
\end{equation}
It is seen from \textsc{Table \ref{T1}} that the arity shape $\left(
m=6,n=3\right)  $ corresponds, e.g., to the congruence class $\left[  \left[
4\right]  \right]  _{10}$ as well. Using Frolov's theorem, we substitute in
(\ref{r6}) $k_{i}\rightarrow4+10k_{i}$ to obtain the solution in the
congruence class $\left[  \left[  4\right]  \right]  _{10}$%
\begin{equation}
4^{5}+194^{5}+254^{5}+574^{5}+624^{5}+864^{5}=24^{5}+114^{5}+404^{5}%
+424^{5}+694^{5}+854^{5}.
\end{equation}

2) To obtain the special kind of solutions to the Tarry-Escott problem we
start with the known one with 8 summands and 3 powers (see, e.g.,
\cite{leh47})%
\begin{equation}
0^{r}+3^{r}+5^{r}+6^{r}+9^{r}+10^{r}+12^{r}+15^{r}=1^{r}+2^{r}+4^{r}%
+7^{r}+8^{r}+11^{r}+13^{r}+14^{r},\ \ r=1,2,3.\label{r12}%
\end{equation}
So we have the concrete solution to the system (\ref{pm}) with the condition
(\ref{p}) which now takes the form $8=2^{3}$, and therefore%
\begin{align}
p\left(  m-1\right)   &  =7,\\
l\left(  n-1\right)   &  =2.
\end{align}
Excluding the trivial case containing binary arities, we have $m=8$, $p=1$ and
$n=3$, $l=1$. It follows from \textbf{Theorem \ref{theor-rz}} and
\textsc{Table \ref{T1}}, that $a=6$ and $b=7$, and so the polyadic ring is
$\mathcal{R}_{8,3}^{\left[  6,7\right]  }$. Using Frolov's \textbf{Theorem
\ref{theor-fro}}, we can substitute entries in (\ref{r12}) as $k_{i}%
\rightarrow a+bk_{i}=6+7k_{i}$ in the equation with highest power $r=3$ (which
is relevant to our task) and obtain the solution of (\ref{uv}) for $\left[
\left[  6\right]  \right]  _{7}$ as follows%
\begin{equation}
6^{3}+27^{3}+41^{3}+48^{3}+69^{3}+76^{3}+90^{3}+111^{3}=13^{3}+20^{3}%
+34^{3}+55^{3}+62^{3}+83^{3}+97^{3}+104^{3}.
\end{equation}

\end{example}

We conclude that consideration of the Tarry-Escott problem and Frolov's
theorem over polyadic rings gives the possibility of obtaining many nontrivial
solutions to the polyadic equal sums of like powers equation for fixed
congruence classes.

\bigskip

\subsection*{Acknowledgments}

The author would like to express his deep gratitude and sincere thankfulness
to Joachim Cuntz, Christopher Deninger, Grigorij Kurinnoj, Mike Hewitt, Jim
Stasheff, Alexander Voronov, and Wend Werner for discussions, and to
Dara~Shayda for Mathematica programming help.


\newpage

\mbox{}
\vskip 1cm

\small

\bigskip

\mbox{}
\bigskip
\listoftables

\end{document}